\newcommand{\R}{{\mathbb R}}
\newtheorem{theorem}{Theorem}
\newtheorem{lemma}[theorem]{Lemma}
\newtheorem{proposition}[theorem]{Proposition}
\newtheorem{corollary}[theorem]{Corollary}
\let \ka=\kappa
\let \Om=\Omega
\begin{document}

\title[Boltzmann equation]{H\"older regularity of solutions of the steady Boltzmann equation with soft potentials}

\author{Kung-Chien Wu}
\address{Kung-Chien Wu, Department of Mathematics, National Cheng Kung
University, Tainan, Taiwan and National Center for Theoretical Sciences,
National Taiwan University, Taipei, Taiwan}
\email{kungchienwu@gmail.com}

\author{Kuan-Hsiang Wang}
\address{Kuan-Hsiang Wang, Department of Mathematics, National Kaohsiung Normal
University, Kaohsiung, Taiwan}
\email{khwang0511@gmail.com}
\thanks{2020 Mathematics Subject Classification: 35Q20; 82C40.}

\begin{abstract}
We consider the H\"older regularity of solutions to the steady Boltzmann equation with in-flow boundary condition
in bounded and strictly convex domains $\Om\subset\R^{3}$ for gases with cutoff soft potential $(-3<\gamma<0)$. We prove that there is a unique solution with a bounded $L^{\infty}$ norm in space and velocity. This solution is H\"older continuous, and it's order depends not only on the regularity of the incoming boundary data, but also on the potential power $\gamma$. The result for modulated soft potential case $-2<\gamma<0$ is similar to hard potential case $(0\leq\gamma<1)$ since we have $C^{1}$ velocity regularity from collision part. However, we observe that for very soft potential case $(-3<\gamma\leq -2)$, the regularity in velocity obtained by the collision part is lower (H\"older only), but the boundary regularity still can transfer to solution (in both space and velocity) by transport and collision part under the restriction of $\gamma$.
\end{abstract}

\keywords{Boltzmann equation; regularity; soft potential.}
\maketitle

\section{Introduction}

\subsection{The model}
In this paper, we study the regularity of solutions to the following steady Boltzmann equation with cutoff soft potential:
\begin{equation}\label{SB}
v\cdot\nabla_x F=\frac{1}{\kappa_n}Q(F, F), \quad (x, v) \in \Omega \times \mathbb{R}^3,
\end{equation}
where $\kappa_n$ is the Knudsen number.
In \eqref{SB},  $F(x,v)\geq 0$ denotes the density distribution function for gas particles with position $x=(x_{1},x_{2},x_{3})\in \Omega \subset\mathbb{R}^3$ and
microscopic velocity $v=(v_{1},v_{2},v_{3})\in {\mathbb{R}}^{3}$. The left-hand side of \eqref{SB} models the transport of particles and
the operator on the right-hand side models the effect of collisions during
the transport,
\begin{equation*}\label{Co}
Q(F, G)(v)=\int_{\mathbb{R}^3\times \mathbb{S}^2}|v-u|^{\gamma}b(\vartheta)\left[F(v')G(u')-F(v)G(u)\right]d\omega du,
\end{equation*}
where $\gamma=1$ denotes hard sphere case, $0\leq \gamma<1$ denotes hard potential case, $-3<\gamma<0$ denotes soft potential case, the post-collisional velocities of particles satisfy
\begin{equation*}
v'=v-\left[(v-u)\cdot\omega\right]\omega,\quad u'=u+\left[(v-u)\cdot\omega\right]\omega, \quad\omega\in\mathbb{S}^2,
\end{equation*}
and $\vartheta$ is defined by
\begin{equation*}
\cos\vartheta=\frac{\left|(v-u)\cdot\omega \right|}{|v-u|}.
\end{equation*}
Throughout this paper, we consider the soft potential ($-3<\gamma<0$),  $b(\vartheta )$
satisfies the Grad cutoff assumption
\begin{equation*}
0<b(\vartheta )\leq C \cos \vartheta
\end{equation*}%
for some constant $C>0$, and $\kappa_n=1$ in our problem.

We assume that $\Omega=\{x\in \mathbb{R}^3 : \xi(x)<0\}$ is connected and bounded with $\xi(x)$ be a smooth function and $\nabla \xi(x)\neq 0$ at the boundary $\xi(x)=0$.
The outward normal vector at $\partial\Omega$ is given by $n(x)= \frac{\nabla \xi(x)}{|\nabla \xi(x)|}$. We say that $\Omega$ is strictly convex if there is $c_\xi>0$ such that $\sum_{ij}\partial_{ij}\xi(x)\zeta^i\zeta^j\geq c_\xi|\zeta|^2$ for all $x\in \overline{\Omega}$ and all $\zeta\in\mathbb{R}^3$.
We denote the phase boundary of the phase space $\Omega\times\mathbb{R}^3$ as $\Gamma=\partial\Omega\times\mathbb{R}^3$, and split it into three parts, an outgoing boundary $\Gamma_+$, an incoming boundary $\Gamma_-$, and a singular boundary $\Gamma_0$ for grazing velocities:
\begin{align*}
\Gamma_{+}&=\{(x,v)\in\Gamma : n(x)\cdot v>0\};\\
\Gamma_-&=\{(x,v)\in\Gamma : n(x)\cdot v<0\};\\
\Gamma_0&=\{(x,v)\in\Gamma : n(x)\cdot v=0\}.
\end{align*}

Let $M_{T, w, \rho}$ be a local Maxwellian with density $\rho$, bulk velocity $w$, and temperature $T$ and be defined as
\begin{equation*}
M_{T, w, \rho}(x, v):=\rho(2\pi T)^{-\frac32}e^{-\frac{|v-w|^2}{2T}}.
\end{equation*}
Moreover, $M(v):=M_{1, 0, 1}(v)=(2\pi)^{-\frac32}e^{-\frac{|v|^2}{2}}$ is a global Maxwellian.
In this paper, the steady Boltzmann equation \eqref{SB} is given with the in-flow boundary condition and $\kappa_n=1$ as follows:
\begin{equation}\label{SBE}
\begin{cases}
v\cdot\nabla_x F =Q(F, F),& \text{ in  } \Omega \times \mathbb{R}^3,\\
F= \widetilde M,&  \text{ on } \Gamma_-,
\end{cases}
\end{equation}
where $\widetilde M(x, v)\geq 0$ is a given function and is taken as a small perturbation of the global Maxwellian $M(v)$. One example is the local Maxwellian $M_{T(x), 0, 1}$, which describes a non-isothermal wall temperature for the motion of rarefied gases in a bounded domain.

It is well known that the global Maxwellian $M(v)$ is a (steady-state) solution to \eqref{SB}.
Therefore, it is natural to consider \eqref{SBE} around the global Maxwellian $M(v)$ with the standard perturbation $f=f(x,v)$ to $M(v)$ as
\begin{equation*}
F=M+M^{\frac12}f.
\end{equation*}
After substituting $F$ into \eqref{SBE}, the problem for $f$ reads as
\begin{equation}\label{SBEf}
\begin{cases}
v\cdot\nabla_x f +Lf=N(f,f), &  \text{ in  } \Omega \times \mathbb{R}^3,\\
f = r, & \text{ on } \Gamma_-,
\end{cases}
\end{equation}
where $r=(\widetilde M-M)M^{-\frac12}$, $L$ is the linearized collision operator defined as
\begin{align*}
Lf&=-M^{-\frac12}\left[Q(M, M^{\frac12}f)+Q(M^{\frac12}f, M)\right]\\
\ &=\nu(v)f-Kf,
\end{align*}
and $N$ is the nonlinear operator defined as
\begin{align*}
N(f, g)(v)& =M^{-\frac12}(v)Q(M^{\frac12}f, M^{\frac12}g)(v) \\
\ & =\int_{\mathbb{R}^3\times \mathbb{S}^2}|v-u|^\gamma b(\vartheta) M^{\frac12}(u)\left(f(v')g(u')-f(v)g(u)\right) d\omega du \\
\ &=:N_{+}(f, g)(v)-N_{-}(f,g)(v).
\end{align*}
Here $\nu$ is a multiplicative operator, $K$ is an integral operator, and
$N_+$ and $N_-$ correspond to the gain part and loss part in $Q$ respectively. Moreover, the loss part $N_{-}$ can be rewritten as $N_-(f, g)(v)=f(v)\varphi(g)(v)$, where $\varphi(g)$ is defined as
\begin{equation}\label{Psi}
\varphi(g)(v)=\int_{\mathbb{R}^3\times\mathbb{S}^2}|v-u|^\gamma b(\vartheta) M^{\frac12}(u)g(u) d\omega du.
\end{equation}

\subsection{Notations}
Let us define some notations used in this paper.
Denote the $L^p(\Omega\times\mathbb{R}^3)$-norm by $\|\cdot\|_p$ for $p\in [1, \infty]$. For the phase boundary integration, we denote $d\Gamma =|n(x)\cdot v|dS_x dv$ with the surface measure $dS_x$ and denote the $L^p(\Gamma)$-norm by
$$|f|_\infty=\sup_{(x, v)\in \Gamma} |f(x,v)|$$
and
$$|f|_p=\left(\int_{\Gamma} |f(x, v)|^p d\Gamma\right)^{\frac1p}$$
 for $1\leq p<\infty$. Furthermore, we denote the $L^p(\Gamma_\pm)$-norm by $|f|_{p, \pm}=|f\mathbf{1}_{\Gamma_\pm}|_{p}$.
We use $C_a, C_b, \ldots$ to denote the positive constants depending on $a, b, \ldots$, respectively.
For simplicity of notations, hereafter, we abbreviate \textquotedblleft {\ $%
\leq C$} \textquotedblright \ to \textquotedblleft {\ $\lesssim $ }%
\textquotedblright , where $C$ is a positive constant depending only on
fixed numbers, and the notation $A \sim B$ denotes the statement that $A\lesssim B\lesssim A$.
$a^\pm$ denotes $a\pm\epsilon$ with $\epsilon>0$ arbitrary small.
Let $v_1, v_2\in\mathbb{R}^3$, the notation $v_{1,2}$ is defined as $ v_{1,2}=v_1$ if $|v_1|\geq |v_2|$ and $ v_{1,2}=v_2$ if $|v_1|< |v_2|$.
Denote $\langle v\rangle^s=(1+|v|^2)^{\frac s2}$ for $s\in\mathbb{R}$.
We define a function presenting the distance between $x\in\Omega$ and $\partial\Omega$ as $d_x=\inf_{y\in\partial\Omega}{|x-y|}$, and denote $d_{(x, y)}=\min\{d_x, d_y\}$.
We define a velocity weight function
\begin{equation*}\label{vw}
w(v)=w_{\tau, \kappa}(v)=\langle v\rangle^\tau e^{\kappa |v|^2},
\end{equation*}
where $\tau>0$ and $\ka >0$ are given constants. 

\subsection{Review of previous work and main result}

The regularity and singularity of solutions of the Boltzmann equation in bounded domains for various boundary conditions have been an interesting and challenging question in theoretical studies. The  Boltzmann equation reads as
\begin{equation*}
\partial_t F(t, x, v)+v\cdot \nabla_xF(t, x, v)=Q(F, F)(t, x, v).
\end{equation*}
For a general setting of the initial boundary value problems, we refer the reader to \cite{Ce, CIP, Gl}.
In the existing literature, there are a few papers concerning the regularity and continuity of solutions of the Boltzmann equation in the bounded domain.
In \cite{G10}, Guo studied the time decay and continuity of solutions of the Boltzmann equation with hard sphere and hard potential in bounded domains for four basic types of boundary conditions: in-flow, bounce-back reflection, specular reflection, and diffuse reflection. In convex domains, Guo showed that the continuity of these solutions away from the grazing set $\Gamma_0$.
After that, Kim \cite{K11} demonstrated that singularity (discontinuity) of the Boltzmann solutions does occur  in non-convex domains for in-flow, diffuse reflection, and bounce-back boundary conditions. Furthermore, the author showed that discontinuity propagates only along the forward characteristics emanating from the grazing set.

In \cite{GKTT17}, Guo-Kim-Tonon-Trescases studied the regularity problem for solutions of the Boltzmann equation in convex domains.
The authors showed the non-existence of second order spatial normal derivative of the Boltzmann solutions at the boundary even in convex domains or symmetric domains.
Furthermore, they constructed weighted $C^1$ solutions away from the grazing set for the specular, bounce-back, and diffuse reflection boundary conditions.
However, the weighted $C^1$ norm grows without bound as the time increases; therefore such estimates do not give any information on the stationary Boltzmann solution through large time behavior.
Based on the observation above, Chen \cite{Ch} investigated the regularity of stationary solutions to the linearized Boltzmann equation in convex domains.
Chen proved the stationary linearized Boltzmann solution is H\"older continuous with order $\frac12^-$ away from the boundary provided the incoming data have the same regularity.
Very recently, Chen and Kim \cite{CK22} considered the steady Boltzmann equation \eqref{SB} with the non-isothermal diffuse reflection boundary condition in strictly convex domains for fixed $\kappa_n \sim 1$. The authors constructed a locally $C^{1, \beta}$ solutions away from the grazing boundary, for any $0<\beta<1$, when the non-isothermal wall temperature $T(x)\in C^2(\partial\Omega)$ is a small fluctuation around any constant temperature.
For more related results, we refer the reader to \cite{Cao, CKL, CHK, DHWZ, GKTT16, KL, LY} and references therein.

We notice that there seems to be rarely concerned with the regularity of the Boltzmann solution for the soft potential case in the existing literature.
Based on the fact, the main goal of this paper is to study the H\"older regularity of the stationary Boltzmann solutions on the effects of the modulated soft potential and the boundary regularity.
Therefore, we assume that $\kappa_n=1$ in this paper for simplicity of presentation.
For more related problem involving the  Knudsen number, we refer the reader to \cite{CLT, EGM, EGKM13, EGKM18, JZ, WZW} and references therein.

Our main result is as follows.
\begin{theorem}\label{MT}
Let $-3<\gamma<0$ and let $\tau>3+|\gamma|$ and $0<\ka <\frac18$.
There exist $\delta_0>0$ and $C_0>0$ such that if
\begin{equation*}
| w_{\tau, \ka} r|_{\infty, -}\leq \delta_0,
\end{equation*}
then there exists a unique nonnegative solution $F_*=M+M^{\frac12}f_*$ to the steady problem \eqref{SBE} satisfying the estimate
\begin{equation*}
\| w_{\tau, \ka} f_*\|_\infty+| w_{\tau, \ka} f_*|_\infty \leq C_0| w_{\tau, \ka} r|_{\infty, -}.
\end{equation*}
Moreover, if $\Omega$ is strictly convex and $r$ is continuous on $\Gamma_-$, then $F_*$ is continuous on $\overline{\Omega}\times \mathbb{R}^3\setminus \Gamma_0$.
In addition, when $\Omega$ is strictly convex, assume further that there exist $m>0$ and $0<\beta\leq 1$ such that
\begin{equation}\label{rb}
|r(y_1, v_1)-r(y_2, v_2)|\leq m(|y_1-y_2|+|v_1-v_2|)^\beta
\end{equation}
for every $(y_1, v_1), (y_2, v_2)\in \Gamma_-$.
Let $0<\alpha< \min\{1, 3+\gamma\}$. 
Then the solution $F_*$ is H\"older continuous locally in $\Omega$ with order $\min\{\alpha, \beta\}$. More precisely, for every $x_1, x_2\in\Omega$ and $v_1, v_2\in\mathbb{R}^3$,
\begin{align*}\label{fH}
|f_*(x_1, v_1)-f_*(x_2, v_2)|\leq \widetilde C_0m \left(d_{(x_1, x_2)}^{-2}(|x_1-x_2|+|v_1-v_2|)\right)^{\min\{\alpha, \beta\}}
\end{align*}
for some constant $\widetilde C_0>0$ depending on $\alpha, \beta, \gamma$, and $\Omega$.
\end{theorem}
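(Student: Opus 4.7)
My plan decomposes into three parts matching the three assertions of Theorem \ref{MT}.

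\textbf{Existence and the weighted $L^\infty$ bound.} I would run a Picard iteration on the mild formulation along backward characteristics: with $t_b(x,v)$ the backward exit time, $f^{n+1}(x,v) = e^{-\nu(v)t_b}\,r(x-t_b v, v) + \int_0^{t_b} e^{-\nu(v)s}\bigl(K f^n + N(f^n,f^n)\bigr)(x-sv, v)\,ds$. The weight $w_{\ta,\ka}$ with $\ta>3+|\ga|$, $\ka<1/8$ is chosen precisely so that the standard estimates $\|w_{\ta,\ka} Kf\|_\infty \lesssim \|w_{\ta,\ka} f\|_\infty$ and $\|w_{\ta,\ka} N(f,g)\|_\infty \lesssim \|w_{\ta,\ka} f\|_\infty \|w_{\ta,\ka} g\|_\infty$ hold; large $\ta$ absorbs the $|v-u|^\ga$ singularity and the Gaussian factor in $w$ compensates the smallness of $\nu(v)$ at large $|v|$. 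Smallness of $|w_{\ta,\ka} r|_{\infty,-}$ then closes a contraction in the weighted $L^\infty$ ball, and a parallel iteration on $F$ itself keeps $F_* \geq 0$.

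\textbf{Continuity off $\Gamma_0$.} Strict convexity of $\Om$ gives continuity of the maps $(t_b, x_b)$ on $(\overline\Om\times\R^3)\setminus\Gamma_0$; dominated convergence inside the mild formula and the $L^\infty$ bound just proved then transfer continuity of $r$ on $\Gamma_-$ to continuity of $f_*$ off the grazing set, hence of $F_*$.

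\textbf{H\"older regularity.} This is the main new content. Writing the mild formula at $(x_1,v_1)$ and $(x_2,v_2)$ and subtracting produces three contributions: the boundary term, the linear integral of $Kf_*$, and the nonlinear integral of $N(f_*,f_*)$. For the boundary term I use the strict-convexity estimates $|t_b(x_1,v)-t_b(x_2,v)|+|x_b(x_1,v)-x_b(x_2,v)|\lesssim |x_1-x_2|/d_{(x_1,x_2)}$, and propagate velocity perturbations through the characteristic at the cost of another factor $d_{(x_1,x_2)}^{-1}$; combining with assumption \eqref{rb} yields a contribution of exponent $\be$ with the stated $d_{(x_1,x_2)}^{-2}$ factor. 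For the $Kf_*$ piece the crucial ingredient is pointwise H\"older regularity of the Grad-type kernel $k(v,u)$ of $K$ in its first argument: when $-2<\ga<0$ the kernel is Lipschitz in $v$ so the exponent is only limited by $\be$; when $-3<\ga\leq -2$ the singularity $|v-u|^\ga$ is less integrable and restricts the exponent to $\al<3+\ga$. Spatial H\"older regularity comes from the straight-line change of variable along the characteristic, producing one further $d_{(x_1,x_2)}^{-1}$. The nonlinear piece is bilinear and is dominated by $\|w_{\ta,\ka} f_*\|_\infty$ times the H\"older seminorm of $f_*$, hence absorbable into the left-hand side once $\de_0$ is small enough.

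The principal obstacle is the velocity-H\"older estimate of $K$ in the very-soft regime $-3<\ga\leq -2$: the pointwise bound on $k(v_1,u)-k(v_2,u)$ demands splitting the $u$-integral at a scale $|u-v|\sim |v_1-v_2|^\th$ for an optimally chosen $\th$, bounding the kernel crudely in the inner region and by a difference-quotient in the outer region, which is exactly what produces the threshold exponent $3+\ga$. Once this pointwise estimate is in hand, assembling the three contributions above via a Duhamel--Gr\"onwall loop gives the bound in Theorem \ref{MT}.
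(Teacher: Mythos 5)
Your outline contains two genuine gaps, both stemming from treating the linear collision term as if it were small. First, the existence step: in your Picard scheme the term $\int_0^{t_b}e^{-\nu(v)s}Kf^n(x-sv,v)\,ds$ contributes to the Lipschitz constant an amount of order $\|\nu^{-1}K\|$ in the weighted $L^\infty$ norm, which is $O(1)$; smallness of $|w_{\tau,\kappa}r|_{\infty,-}$ controls the size of the iterates but not the contraction factor of this linear part, so the contraction does not close. This is precisely why the paper does not put $Kf$ on the right-hand side: it inverts $\epsilon+v\cdot\nabla_x+\nu-\lambda K+\varphi(g_1)$ by a continuation in $\lambda$ (Proposition \ref{LEU}), and getting past $\lambda^*<1$ requires the a priori weighted $L^2$ estimate built on the coercivity \eqref{LP1} and the macroscopic estimate of Lemma \ref{L2E}, uniformly in $\epsilon$ and $\lambda$, before letting $\epsilon\to0$. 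Note also that the paper's scheme \eqref{is} keeps the loss term implicit, as $\varphi(f^j)f^{j+1}$, and only the gain $N_+$ explicit; lumping all of $N$ on the right as you do forfeits the damping $\Phi(g)\ge\tfrac12\nu(v)s$ used throughout Sections \ref{linear}--\ref{app-holder}.

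The same absorption problem breaks your H\"older argument: the $Kf_*$ contribution to the difference is $O(1)$ times the H\"older seminorm of $f_*$ and cannot be absorbed by taking $\delta_0$ small. The paper removes it by Vidav's double iteration, after which the dangerous piece ($F_3$, resp.\ $M_3$) is controlled by $\|f\|_\infty$ alone, the spatial regularity being generated by the change of variables $(\rho,\tau)\mapsto y$ over the full domain together with the kernel regularity of Lemma \ref{kH}; a single subtraction of the mild formula, with the change of variable only along the characteristic direction $v$, does not produce regularity in all spatial directions. More seriously, closing a Gr\"onwall-type loop directly in the $d_{(x_1,x_2)}^{-2}$-weighted H\"older norm cannot work: feeding such an estimate back through the Duhamel integral forces you to integrate $d_{(x_1-vs,x_2-vs)}^{-2\min\{\alpha,\beta\}}$ up to $s=t_b$, where the points reach the boundary, and this loses powers of $d$ and generates velocity singularities at each pass. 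This is exactly the point the paper makes about \eqref{T1x} versus \eqref{T1v}: the nonlinear iteration is closed in the refined norm \eqref{gxv} with weight $d^{-1}$ and the factor $(1+|v_{1,2}|^{-1}e^{-c\nu(v_{1,2})d/|v_{1,2}|})$, whose $|v|^{-1}$ singularity is integrable against $k(v,\eta)$ and hence absorbed by $K$ (Theorems \ref{T1} and \ref{T2}, and the dyadic boundary decompositions in Lemma \ref{LF2}); the stated $d^{-2}$ bound is recovered only at the very end via \eqref{e2}. Your proposal aims at the $d^{-2}$ bound directly and therefore misses this essential bookkeeping, although your identification of the kernel H\"older threshold $3+\gamma$ for $-3<\gamma\le-2$ does match Lemma \ref{kH}.
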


In order to prove our main theorem, we rewrite the equation (\ref{SBEf}) as
\begin{equation*}
\begin{cases}
v\cdot\nabla_x f+Lf+\varphi(f) f=N_{+}(f,f), & \text{ in } \Omega\times\mathbb{R}^3,\\
f=r, & \text{ on } \Gamma_-.
\end{cases}
\end{equation*}

Since we consider the problem in perturbative region, one may expect the regularity of solutions to \eqref{SBEf} could be determined by the regularity estimates of the stationary linearized Boltzmann equation:
 \begin{equation}\label{LSBE-0}
\begin{cases}
v\cdot\nabla_x f+Lf=0, & \text{ in } \Omega\times\mathbb{R}^3, \\
f=r, & \text{ on } \Gamma_-.
\end{cases}
\end{equation}
Therefore, the approach for our problem is by constructing the H\"older regularity of the linearized problem, then design a suitable iterative scheme to solve the nonlinear problem. Precisely, we design the iteration as follows:
\begin{equation}\label{is}
\begin{cases}
v\cdot\nabla_x f^{j+1}+Lf^{j+1}+\varphi(f^j) f^{j+1}=N_+(f^j, f^j), & \text{ in } \Omega\times\mathbb{R}^3,\\
f^{j+1}=r, & \text{ on } \Gamma_-.
\end{cases}
\end{equation}
In order to proceed, we need to study the existence and continuity of the following approximate steady Boltzmann equation:
\begin{equation}\label{approx-1}
\begin{cases}
v\cdot\nabla_x f+Lf+\varphi(g_1) f=g_2, & \text{ in } \Omega\times\mathbb{R}^3,\\
f=r, & \text{ on } \Gamma_-,
\end{cases}
\end{equation}
where $g_1, g_2$ are given. To achieve this goal, modify the approach in \cite{DHWZ}, we turn to consider the solvability of the following approximate problem:
\begin{equation}\label{approx-2}
\begin{cases}
\epsilon f^{\epsilon}+v\cdot\nabla_x f^\epsilon+\nu(v)f^\epsilon-\lambda Kf^\epsilon+\varphi(g_1) f^\epsilon=g_2, & \text{ in } \Omega\times\mathbb{R}^3,\\
f^\epsilon=r, & \text{ on } \Gamma_-.
\end{cases}
\end{equation}
Following the procedures of \cite[Lemma 3.5]{DHWZ}, we only obtain the existence of solutions of \eqref{approx-2} for $0\leq \lambda \leq \lambda^*$ for some $\lambda^*<1$ due to the extra term $\varphi(g_1)f^\epsilon$.
To obtain the existence for $\lambda=1$, we construct a velocity-weighted $L^2$ estimate independent of $\epsilon$ and $\lambda$, which is also used to obtain a unique solution constructed from the $L^\infty$ convergence of $f^\epsilon$ as $\epsilon \to 0$.

Next, we turn to study the H\"older regularity of the linearized Boltzmann equation (\ref{LSBE-0}). For each $x\in\overline\Omega$ and $v\neq 0$, we define the backward exit time:
\begin{equation*}
t_b(x,v)=\sup\{t>0 : x-sv \in\overline\Omega \text{ for } 0\leq s\leq t\}
\end{equation*}
and hence $x-t_b(x,v)v\in \partial\Omega$. We also define the backward exit position in $\partial\Omega$:
\begin{equation*}
x_b(x,v)=x-t_b(x,v)v\in\partial\Omega.
\end{equation*}
Then we always have $v\cdot n(x_b(x,v))< 0$ whenever $x\in \Omega$.
Note that for any $(x, v)$, we use $t_b(x, v)$ whenever it is well-defined.
To analyze the linearized problem, from the method of characteristics, we get the integral form of the stationary linearized Boltzmann solution:
\begin{equation*}\label{IE}
f(x,v)=f(x_b(x, v), v)e^{-\nu(v)t_b(x,v)}+\int_0^{t_b(x,v)}e^{-\nu(v)s}Kf(x-vs, v)ds
\end{equation*}
for every $x\in\Omega$ and $v\in\mathbb{R}^3\setminus\{0\}$. To obtain the H\"older regularity, we follow the pioneering work of Vidav \cite{V} to iterate $Kf$ once more to get
\begin{align*}
\ &f(x,v)\\
=&\ f\left(x_b(x,v), v\right)e^{-\nu(v)t_b(x,v)}\\
\ &+\int_0^{t_b(x,v)}e^{-\nu(v)s} \int_{\mathbb{R}^3}k(v,\eta)f\left(x_b(x-vs, \eta), \eta\right)e^{-\nu(\eta)t_b(x-vs, \eta)} d\eta ds\\
\ &+\int_0^{t_b(x,v)}e^{-\nu(v)s}\int_{\mathbb{R}^3} k(v,\eta)\int_0^{t_b(x-vs, \eta)}e^{-\nu(\eta)\tau}Kf(x-vs-\eta\tau, \eta)d\tau d\eta ds\\
=:&\ F_1(f)(x,v)+F_2(f)(x,v)+F_3(f)(x,v).
\end{align*}

Our estimate is based on \cite{Ch} for the linearized Boltzmann equation with hard potential. The velocity regularity of $F_i(f)$, $1\leq i\leq 3$, come directly from the velocity regularity of $r$ on $\Gamma_-$ and the H\"older regularity of the integral operator $K$. The space regularity of $F_{1}(f)$ and $F_{2}(f)$ come from the space regularity of $r$ on $\Gamma_-$. For the space regularity of $F_3(f)$, the key observation is that the regularity in velocity of $K(f)$ can be partially transferred to space due to transport and collision.

Based on the approach in \cite{Ch}, one need the estimate of $\nabla_{v}K$ to obtain the velocity regularity results of $F_{2}(f)$ and $F_{3}(f)$, and the space regularity result of $F_{3}(f)$. Therefore, to generalize the result \cite{Ch} from hard potential to soft potential in the linearized level, the main difficulty is that we do not have gradient estimate of the integral operator $K$ for $-3<\gamma\leq-2$ due to the singularity of the kernel function $k(v,\eta)$ . Indeed, one can show that $\nabla_{v}Kf$ is uniformly bounded in velocity for $f\in L^\infty(\Omega\times\mathbb{R}^3)$ if $\gamma>-2$.
For $-3<\gamma\leq-2$, instead of the gradient estimate, one can only show that the H\"older regularity of $Kf$ in velocity (see Lemma \ref{kH}).
Our main contribution here is that, for the soft potential case $-2<\gamma<0$, we get similar result for hard potential case. However, for the very soft potential case $-3<\gamma\leq -2$, based on the H\"older regularity of $Kf$, one can prove that the boundary regularity can transfer to the solution but with the restriction of the velocity regularity of $K$.

In our linearized estimate (see Theorem \ref{T1}), we provide two different kind of singularity behaviors (see (\ref{T1v}) and (\ref{T1x})) in our result. The estimate (\ref{T1x}) presents that the singularity occurs when the position is close to the boundary, while the estimate (\ref{T1v}) presents that part of the space singularity can be replaced as the velocity singularity when velocity goes to zero. The later one (see (\ref{T1x})) is the standard presentation for the boundary value problem. However, the former one (see (\ref{T1v})) is necessary for our nonlinear estimate and we will explain it below.

In order to achieve the H\"older regularity estimate of the nonlinear problem, one need to consider the approximate steady Boltzmann equation (\ref{approx-1}) with $g_{1}=g_{2}$. If we apply the estimate (\ref{T1x}) to the approximate problem, the space singularity will increase and therefore it is unable to solve the nonlinear problem. However, if one apply estimate (\ref{T1v}), then the integral operator $K$ will absorb the velocity singularity and there one can control the singularity.
With above preparation, one can design the iteration \eqref{is} to solve the nonlinear problem.

\subsection{Plan of the paper}
In Section \ref{pre}, we present some basic estimates for the operators $L, \nu, K$, and $N$, and also some basic properties of the backward exit time $t_b$ and the backward exit position $x_b$. The existence of solutions to the approximate problem \eqref{approx-1} will be proved in Section \ref{app-sol}. In Section \ref{linear}, we study the H\"older regularity of the linearized problem, and the H\"older regularity of the approximate problem will be discussed in Section \ref{app-holder}. Finally, the proof of Theorem \ref{MT} will be presented in section \ref{Pf-Main}.


\section{Preliminaries}\label{pre}

\subsection{Basic estimates for $L$ and $N$}

It is well known that the null space of the linearized collision operator $L$ is a five-dimensional vector space with the orthonormal basis $\{\phi_i\}_{i=0}^4$, where
\begin{equation*}
\text{Ker$(L)$} = \{\phi_0, \phi_i, \phi_4\}=\left\{M^{\frac12}, v_iM^{\frac12}, \frac{|v|^2-3}{\sqrt{6}}M^{\frac12}\right\}, \ i=1, 2,3.
\end{equation*}
Based on this property, we define the macro-micro decomposition for each $f\in L_v^2$: let $Pf$ (the macroscopic part) be the orthogonal projection of $f$ onto Ker$(L)$, i.e., $Pf=\sum_{i=0}^4\langle f, \phi_i\rangle\phi_i$, and $(I-P)f=f-Pf$ be the microscopic part of $f$. Furthermore, from \cite[Lemma 3]{G03}, there is a constant $C_0>0$ such that
\begin{equation}\label{LP1}
\langle Lf, f\rangle\geq C_0\int_{\mathbb{R}^3} \nu(v)|(I-P)f|^2 dv.
\end{equation}

The linearized collision operator $L$ consists of a multiplicative operator $\nu$ and an integral operator $K$:
\begin{equation*}
Lf=\nu(v)f - Kf,
\end{equation*}
where
\begin{equation*}
\nu(v)=\int_{\mathbb{R}^3\times\mathbb{S}^2}|v-u|^\gamma b(\vartheta) M(u) d\omega du
\end{equation*}
and
\begin{equation*}
Kf=-K_1f+K_2f
\end{equation*}
is defined as \cite{Gr}:
\begin{align*}
K_1f(v)&=\int_{\mathbb{R}^3\times \mathbb{S}^2}|v-u|^\gamma b(\vartheta) M^{\frac12}(v)M^{\frac12}(u)f(u) d\omega du;\\
K_2f(v)&=\int_{\mathbb{R}^3\times \mathbb{S}^2}|v-u|^\gamma b(\vartheta) M^{\frac12}(u)\left( M^{\frac12}(v')f(u')+ M^{\frac12}(u')f(v') \right) d\omega du.
\end{align*}

We next present a number of properties and estimates of the multiplicative operator $\nu(v)$ and the integral operator $K$, which can be found in  \cite{Ca, CH, GP, Gr, LLWW}.
For the multiplicative operator $\nu(v)$, there exist $0<\nu_1<\nu_2$ such that
\begin{equation*}
\nu_1\langle v\rangle^\gamma\leq \nu(v)\leq \nu_2\langle v\rangle^\gamma,
\end{equation*}
and for each multi-index $\alpha$,
\begin{equation*}
\left|\partial_v^\alpha \nu(v)\right|\lesssim \langle v\rangle^{\gamma-|\alpha|}.
\end{equation*}
For the integral operator $K$,
\begin{align*}
Kf &=-K_1f+K_2f =-\int_{\mathbb{R}^3}k_1(v, \eta)f(\eta)d\eta+\int_{\mathbb{R}^3}k_2(v, \eta)f(\eta)d\eta \\
\ &=:\int_{\mathbb{R}^3} k(v ,\eta) f(\eta) d\eta,
\end{align*}
the kernels $k_1$ and $k_2$ are symmetric, i.e., $k_i(v, \eta)=k_i(\eta, v)$, and satisfy
\begin{align*}
k_1(v, \eta)&\leq C |v-\eta|^\gamma e^{-\frac{|v|^2+|\eta|^2}{4}};\\
k_2(v, \eta)&\leq C_{\varepsilon}(1+|v|+|\eta|)^{\gamma-1} A(|v-\eta|) e^{-\frac{1-\varepsilon}{8}\left(|v-\eta|^2+\frac{(|v|^2-|\eta|^2)^2}{|v-\eta|^2}\right)}
\end{align*}
for any $0<\varepsilon<1$, together with
\begin{equation}\label{A}
A(|v-\eta|)=
\begin{cases}
|v-\eta|^{-1}, &\text{if } -1<\gamma<0,\\
|v-\eta|^{-1}\left|\ln|v-\eta|\right|, &\text{if } \gamma= -1,\\
|v-\eta|^{\gamma}, &\text{if } -3<\gamma<-1,
\end{cases}
\end{equation}
and their derivatives have the following estimates
\begin{align*}
\ &\left|\nabla_vk_1(v, \eta)\right|, \left|\nabla_\eta k_1(v, \eta)\right| \lesssim |v-\eta|^{\gamma-1}e^{-\frac{|v|^2+|\eta|^2}{4}};\\
\ &\left|\nabla_vk_2(v, \eta)\right|, \left|\nabla_\eta k_2(v, \eta)\right| \\
\lesssim &\ \frac{1+|v|}{(1+|v|+|\eta|)^{1-\gamma}}\frac{ A(|v-\eta|)}{|v-\eta|} e^{-\frac{1-\varepsilon}{8}\left(|v-\eta|^2+\frac{(|v|^2-|\eta|^2)^2}{|v-\eta|^2}\right)}.
\end{align*}

We need the following lemma.
\begin{lemma}
Let $-3<\widetilde\gamma<0$, $\tau\in\mathbb{R}$, and $\alpha\geq 0$. If $\alpha<3+\widetilde\gamma$, then
\begin{align}\label{A-1}
\int_{\mathbb{R}^3}  |v-\eta|^{\widetilde\gamma}e^{-\frac{1}{16}\left(|v-\eta|^2+\frac{(|v|^2-|\eta|^2)^2}{|v-\eta|^2}\right)} \langle \eta\rangle^\tau |\eta|^{-\alpha} d\eta \lesssim \langle v\rangle^{\tau-\alpha-1}
\end{align}
and
\begin{align}\label{A-2}
\int_{\mathbb{R}^3} |u|^{\widetilde\gamma}e^{-\frac{\kappa}{4}\left(|u|^2+\frac{(|u+v_1|^2-|v_1|^2)^2}{|u|^2} \right)} |v_2+u|^{-\alpha} du \lesssim \langle v_1\rangle^{-1}\langle v_2\rangle^{-\alpha}
\end{align}
for any $0<\kappa< \frac14$ and for any $v, v_1, v_2\in\mathbb{R}^3$. 
\end{lemma}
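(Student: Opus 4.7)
Plan. My strategy for both estimates is to exploit the structure of the quadratic form in the exponent by switching to spherical coordinates with the polar axis aligned along the distinguished vector; the factor $\langle v\rangle^{-1}$ (resp.\ $\langle v_{1}\rangle^{-1}$) is then produced by localising the angular variable to a thin equatorial strip.

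For \eqref{A-1} I first translate $u=v-\eta$ and write $u=r\omega$ with polar axis $\hat v$. A direct computation gives
\begin{equation*}
|u|^{2}+\frac{(|v|^{2}-|v-u|^{2})^{2}}{|u|^{2}}= 2(r-v\cdot\omega)^{2}+2(v\cdot\omega)^{2},
\end{equation*}
so the exponential factorises as $e^{-(r-v\cdot\omega)^{2}/8}\,e^{-(v\cdot\omega)^{2}/8}$. For $|v|\geq 1$ I set $t=v\cdot\omega=|v|\cos\theta$; since the integrand depends on $\omega$ only through $t$, the Jacobian of $d\omega$ contributes $(2\pi/|v|)\,dt$, which supplies the prefactor $|v|^{-1}$, while the two Gaussians pin $|t|$ and $|r-t|$ to $O(1)$, hence $r=O(1)$. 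On this main support $|\eta|^{2}=|v|^{2}+(r-t)^{2}-t^{2}\sim|v|^{2}$, so $\langle\eta\rangle^{\tau}|\eta|^{-\alpha}\lesssim|v|^{\tau-\alpha}$, yielding the bound $|v|^{\tau-\alpha-1}$; the complementary region $|t|>|v|/2$ is absorbed by $e^{-|v|^{2}/32}$. For $|v|\leq 1$ the target $\langle v\rangle^{\tau-\alpha-1}\sim 1$, so I only need a uniform bound: I drop the second Gaussian factor, split the $u$-integral into a fixed ball and its complement, and bound the interior piece by observing that the convolution $|\cdot|^{\widetilde\gamma}\ast|\cdot|^{-\alpha}$ is uniformly bounded on compact sets precisely when $\alpha<3+\widetilde\gamma$; this is exactly where the hypothesis enters.

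For \eqref{A-2} I repeat the procedure with polar axis $\hat v_{1}$. The analogous identity is
\begin{equation*}
|u|^{2}+\frac{(|u+v_{1}|^{2}-|v_{1}|^{2})^{2}}{|u|^{2}}= 2(r+v_{1}\cdot\omega)^{2}+2(v_{1}\cdot\omega)^{2},
\end{equation*}
so $e^{-\kappa(v_{1}\cdot\omega)^{2}/2}$ extracts $\langle v_{1}\rangle^{-1}$ via the same angular substitution (trivially if $|v_{1}|\leq 1$). On the support of the remaining Gaussian one has $r=O(1)$, hence $|u|=O(1)$; for $|v_{2}|\geq 2$ this gives $|v_{2}+u|\sim|v_{2}|$ and thus $|v_{2}+u|^{-\alpha}\lesssim\langle v_{2}\rangle^{-\alpha}$, while for $|v_{2}|\leq 2$ the right-hand side is of order one and I reduce to the same uniform compact-set convolution bound as in \eqref{A-1}.

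The main obstacle I anticipate is the bookkeeping of the several overlapping subregions: the Gaussian main support, the exponentially small tail $|t|>|v|/2$, and a small neighbourhood of the singular points of $r^{\widetilde\gamma+2}$ and $|v-r\omega|^{-\alpha}$ (resp.\ $|v_{2}+r\omega|^{-\alpha}$) must each be treated separately and matched to the right power of $\langle v\rangle$ (resp.\ $\langle v_{2}\rangle$). A further point to watch in \eqref{A-2} is that, unlike in \eqref{A-1}, the integrand does depend on the azimuthal angle $\phi$ through $|v_{2}+r\omega|$, so the $\phi$-integration cannot be trivially reduced to a factor of $2\pi$; however, at the level of a pointwise upper bound on $|v_{2}+r\omega|^{-\alpha}$ this causes no difficulty, and the estimate proceeds as above.
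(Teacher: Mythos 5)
Your treatment of \eqref{A-1} is correct, and at bottom it uses the same mechanism as the paper's proof: the angular integration of a Gaussian in $v\cdot\omega$ produces the factor $\langle v\rangle^{-1}$. You organize it differently — through the exact identity $|u|^{2}+\frac{(|v|^{2}-|v-u|^{2})^{2}}{|u|^{2}}=2(r-v\cdot\omega)^{2}+2(v\cdot\omega)^{2}$ and the substitution $t=|v|\cos\theta$, $d\omega=(2\pi/|v|)\,dt$, rather than the paper's split into $|\eta|\ge 2|v|$, $\tfrac{|v|}{2}<|\eta|<2|v|$, $|\eta|\le\tfrac{|v|}{2}$ — but the identity and the Jacobian are right, the integrand indeed depends on $\omega$ only through $t$, and the hypothesis $\alpha<3+\widetilde\gamma$ enters exactly where you say it does (uniform boundedness of $|\cdot|^{\widetilde\gamma}\ast|\cdot|^{-\alpha}$ on compact sets, needed when $|v|\lesssim 1$). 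The loose ends you flag (Gaussian tails, the region $|t|>|v|/2$) are routine.

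The genuine gap is in \eqref{A-2}, in the regime $|v_{1}|$ large and $|v_{2}|$ bounded. You assert that for $|v_{2}|\le 2$ ``the right-hand side is of order one''; it is not — it is $\langle v_{1}\rangle^{-1}\langle v_{2}\rangle^{-\alpha}\sim\langle v_{1}\rangle^{-1}$ — so the uniform compact-set convolution bound you fall back on proves only an $O(1)$ estimate and misses the required $\langle v_{1}\rangle^{-1}$. Your device for extracting that factor (bound $|v_{2}+r\omega|^{-\alpha}$ pointwise by a constant, then reduce the angular integral to $dt_{1}/|v_{1}|$) is unavailable precisely here, because the singular point $u=-v_{2}$ is neither azimuthally symmetric about $\hat v_{1}$ nor suppressed by the Gaussian: if $v_{2}\perp v_{1}$ and $|v_{2}|\sim 1$, then at $u=-v_{2}$ one has $|u|^{2}+\frac{(|u+v_{1}|^{2}-|v_{1}|^{2})^{2}}{|u|^{2}}=2$, so the exponential is of order one on part of a neighborhood of the singularity. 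The decay in $|v_{1}|$ must instead be obtained by quantifying the angular localization forced by $e^{-\frac{\kappa}{4}(r+2t_{1})^{2}}$: the contributing $u$ lie in a band of angular width $\sim 1/|v_{1}|$ about the equator $\hat u\perp\hat v_{1}$, i.e.\ a slab of thickness $\sim |u|/|v_{1}|$, and one then integrates $|u|^{\widetilde\gamma}|v_{2}+u|^{-\alpha}$ over that slab by slicing; this yields the factor $|v_{1}|^{-1}$ only when the two-dimensional sections of $|v_{2}+u|^{-\alpha}$ are integrable, i.e.\ $\alpha<2$ (for $2<\alpha<3$ the slab integral is only $O(|v_{1}|^{-(3-\alpha)})$, so dismissing the azimuthal dependence as causing ``no difficulty'' cannot be right uniformly over the stated range; the exponents actually used later in the paper satisfy $\alpha<1$, where the slicing argument does work). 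The same issue recurs in your $|v_{2}|\ge 2$ case on the complementary region $|u|\gtrsim|v_{2}|$, which contains $u=-v_{2}$: the factor $e^{-c|v_{2}|^{2}}$ you gain there beats $\langle v_{2}\rangle^{-\alpha}$ but does not by itself produce $\langle v_{1}\rangle^{-1}$. So for \eqref{A-2} the key step — extracting $\langle v_{1}\rangle^{-1}$ in the presence of the off-axis singularity — is missing from your sketch.
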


\begin{proof}
For \eqref{A-1}, we divide the proof into three cases.\\ 
$(i)$ $|\eta|\geq 2|v|$. In this case, one has $|v-\eta|\sim |\eta|$. Thus,  
\begin{align*}
\ &\int_{|\eta|\geq 2|v|} |v-\eta|^{\widetilde\gamma}e^{-\frac{1}{16}\left(|v-\eta|^2+\frac{(|v|^2-|\eta|^2)^2}{|v-\eta|^2}\right)} \langle \eta\rangle^\tau |\eta|^{-\alpha} d\eta\\
\lesssim &\int_{|\eta|\geq 2|v|} |\eta|^{\widetilde\gamma-\alpha}\langle \eta \rangle^{\tau} e^{-C|\eta|^2}  d\eta\\
\lesssim &\int_{0}^\pi\int_0^{2\pi}\int_{2|v|}^\infty \rho^{\widetilde\gamma-\alpha+2}(1+\rho)^\tau  e^{-C\rho^2} \sin\phi \ d\rho d\theta d\phi \\
\lesssim &\ e^{-\frac C2 |v|^2}.
\end{align*}
$(ii)$ $\frac{|v|}{2}<|\eta|<2|v|$. By using the spherical coordinates around $v$, we have
\begin{align*}
\ &\int_{\frac{|v|}{2}<|\eta|< 2|v|} |v-\eta|^{\widetilde\gamma}e^{-\frac{1}{16}\left(|v-\eta|^2+\frac{(|v|^2-|\eta|^2)^2}{|v-\eta|^2}\right)} \langle \eta\rangle^\tau |\eta|^{-\alpha} d\eta   \\
\lesssim &\ \langle v \rangle^{\tau} |v|^{-\alpha} \int_{\frac{|v|}{2}<|\eta|< 2|v|}|v-\eta|^{\widetilde\gamma} e^{-\frac{1}{16}\left(|v-\eta|^2+\frac{(|v|^2-|\eta|^2)^2}{|v-\eta|^2}\right)} d\eta\\
\lesssim &\ \langle v \rangle^{\tau} |v|^{-\alpha}\int_{0<|\eta|<3|v|} |\eta|^{\widetilde\gamma}e^{-\frac{1}{16}\left(|\eta|^2+\frac{(2v\cdot \eta+|\eta|^2)^2}{|\eta|^2}\right)} d\eta  \\
\lesssim &\ \langle v \rangle^{\tau} |v|^{-\alpha} \int_0^{3|v|} \rho^{\widetilde\gamma+2}e^{-\frac{1}{16}\rho^2}\left(\int_0^\pi e^{-\frac{1}{16}(2|v|\cos\phi+\rho)^2}\sin\phi d\phi \right) d\rho  \\
\lesssim &\ \langle v \rangle^{\tau-\alpha-1}.
\end{align*}
$(iii)$ $|\eta|\leq\frac{|v|}{2}$. One has  $|v-\eta|\sim |v|$. Hence 
\begin{align*}
\ &\int_{|\eta|\leq\frac{|v|}{2}}|v-\eta|^{\widetilde\gamma}e^{-\frac{1}{16}\left(|v-\eta|^2+\frac{(|v|^2-|\eta|^2)^2}{|v-\eta|^2}\right)} \langle \eta\rangle^\tau |\eta|^{-\alpha} d\eta \\
\lesssim &\ |v|^{\widetilde\gamma} e^{-C|v|^2}\int_{|\eta|\leq\frac{|v|}{2}} \langle \eta\rangle^\tau |\eta|^{-\alpha} d\eta\\
\lesssim &\ |v|^{\widetilde\gamma} e^{-C|v|^2}\int_0^{\frac{|v|}{2}} (1+\rho)^\tau \rho^{-\alpha+2}d\rho   \\
\lesssim &\ e^{-C|v|^2}.
\end{align*}

The estimates \eqref{A-2} is essential same as that of \eqref{A-1}, so we omit it here. This completes the proof.
\end{proof}

The following lemma shows the H\"older regularity for the kernel of the integral operator $K$, which plays a crucial rule in this paper.

\begin{lemma}\label{kH}
Let $-3<\gamma<0$. For every $v_1, v_2\in \mathbf{R}^3$, it holds that
\begin{equation*}
\int_{\mathbb{R}^3}|k(v_1,\eta)-k(v_2, \eta)| d\eta \lesssim |v_1-v_2|^{\widetilde\alpha_\gamma}\max\left\{\langle v_1\rangle^{\gamma-1}, \langle v_2\rangle^{\gamma-1}\right\},
\end{equation*}
where $\widetilde\alpha_\gamma$ is definded as
\begin{equation*}
\widetilde\alpha_\gamma=
\begin{cases}
1, & \text{ if } -2<\gamma<0,\\
1^-, & \text{ if } \gamma=-2,\\
3+\gamma, &\text{ if } -3<\gamma<-2.
\end{cases}
\end{equation*}
Furthermore, if $f\in L^{\infty}(\Omega\times\mathbb{R}^3)$, then we have
\begin{equation*}
\left|Kf(x, v_1)- Kf(x, v_2)\right|\lesssim \|f\|_{\infty}|v_1-v_2|^{\widetilde\alpha_\gamma}\max\left\{\langle v_1\rangle^{\gamma-1}, \langle v_2\rangle^{\gamma-1}\right\}.
\end{equation*}

\end{lemma}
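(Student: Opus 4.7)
My plan is to decompose $k = -k_1 + k_2$ and bound each piece separately. Since $k_1$ carries the strong Gaussian factor $e^{-(|v|^2+|\eta|^2)/4}$ in both $v$ and $\eta$, the factor dominates every polynomial weight in $v$; a mean value theorem argument together with a splitting $\{|v-\eta|\leq|v_1-v_2|\}\cup\{|v-\eta|>|v_1-v_2|\}$ (to avoid the non‑integrable $|v-\eta|^{\gamma-1}$ singularity of $\nabla_v k_1$ in the very soft case) easily yields a bound of the desired form, in fact much better than $\langle v\rangle^{\gamma-1}$. So I would treat $k_1$ briefly and focus on $k_2$, where the real issue lies.

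For $k_2$, the natural attempt is the mean value theorem along the segment $v_s=v_2+s(v_1-v_2)$, $s\in[0,1]$, reducing matters to estimating $\int_{\mathbb{R}^3} |\nabla_v k_2(v_s,\eta)|\,d\eta$ uniformly in $s$. The gradient bound provides $|\nabla_v k_2|\lesssim\frac{1+|v|}{(1+|v|+|\eta|)^{1-\gamma}}\cdot\frac{A(|v-\eta|)}{|v-\eta|}\cdot e^{-\frac{1-\varepsilon}{8}(|v-\eta|^2+(|v|^2-|\eta|^2)^2/|v-\eta|^2)}$. The short‑distance singularity of $A(|v-\eta|)/|v-\eta|$ is $|v-\eta|^{-2}$ if $-1<\gamma<0$, $|v-\eta|^{-2}|\ln|v-\eta||$ if $\gamma=-1$, and $|v-\eta|^{\gamma-1}$ if $-3<\gamma<-1$. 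This is locally integrable in $\mathbb{R}^3_\eta$ precisely when $\gamma>-2$. In that regime the MVT closes directly: invoking \eqref{A-1} with the appropriate $\widetilde\gamma$ and $\alpha=0$ (the admissibility $\alpha<3+\widetilde\gamma$ is equivalent to $\gamma>-2$) produces the prefactor $\langle v_s\rangle^{\gamma-1}$, and since $\langle v_s\rangle\sim\max\{\langle v_1\rangle,\langle v_2\rangle\}$ on the relevant regime (and the large‑$|v_1-v_2|$ case is handled trivially by the absolute bounds), we obtain the claimed Lipschitz estimate.

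For $-3<\gamma\leq-2$ the MVT fails because $\nabla_v k_2$ is no longer integrable near the diagonal. I would normalize to $|v_1-v_2|\leq 1$ (otherwise the bound follows by applying the pointwise upper bound on $k_2$ separately to $v_1$ and $v_2$), set $R=|v_1-v_2|$, and split the $\eta$‑integral into a near region $\{|\eta-v_1|\leq 2R\}\cup\{|\eta-v_2|\leq 2R\}$ and its complement. On the near region I use the absolute bound on $k_2$ for each $v_i$, and the singular factor $A(|v-\eta|)$ integrated against the volume element in $\mathbb{R}^3$ contributes $\int_{|\eta-v_i|\leq 2R}|v_i-\eta|^{\gamma}\,d\eta\lesssim R^{3+\gamma}$; the remaining Gaussian and polynomial factors supply the $\langle v\rangle^{\gamma-1}$ through \eqref{A-1}/\eqref{A-2}. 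On the far region the segment $v_s$ stays at distance $\geq R$ from $\eta$, so the MVT applies with $\nabla_v k_2$ truncated to $|v_s-\eta|\geq R$, and the resulting radial integral is of order $R^{3+\gamma}$ (or $R|\ln R|$ at the boundary $\gamma=-2$, explaining the $1^-$ exponent). Balancing the two regions gives the exponent $\widetilde\alpha_\gamma=3+\gamma$ for $-3<\gamma<-2$ and $1^-$ at $\gamma=-2$.

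The main obstacle is the bookkeeping: both steps must actually yield the polynomial weight $\max\{\langle v_1\rangle^{\gamma-1},\langle v_2\rangle^{\gamma-1}\}$, not a worse power. This relies on careful use of the Gaussian quadratic form $|v-\eta|^2+(|v|^2-|\eta|^2)^2/|v-\eta|^2$, which is the mechanism by which estimates \eqref{A-1} and \eqref{A-2} convert a Gaussian in $v-\eta$ into a decaying polynomial prefactor in $\langle v\rangle$. Once the estimate on $\int|k(v_1,\eta)-k(v_2,\eta)|\,d\eta$ is established, the corresponding bound on $|Kf(x,v_1)-Kf(x,v_2)|$ follows by pulling $\|f\|_\infty$ out of the integral.
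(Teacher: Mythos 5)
Your proposal is correct and takes essentially the same route as the paper: after dismissing $|v_1-v_2|\ge 1$ via the absolute bounds, the paper likewise splits the $\eta$-integral into the near balls $|\eta-v_i|\le 2|v_1-v_2|$ (estimated by the pointwise kernel bounds, giving $|v_1-v_2|^{3+\bar\gamma}$) and the far region $|\eta-v_1|\ge 2|v_1-v_2|$ (estimated by the mean value theorem along $v(s)=v_2+s(v_1-v_2)$, where $|\eta-v(s)|\ge|v_1-v_2|$ and the radial integral $\int_{|v_1-v_2|}^{1}\rho^{\bar\gamma+1}\,d\rho$ produces exactly the exponents $\widetilde\alpha_\gamma$, including the logarithm at $\gamma=-2$). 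The only cosmetic differences are that you treat $k_1$ and $k_2$ separately and use a direct MVT without splitting when $\gamma>-2$, whereas the paper applies the same near/far decomposition uniformly to the full kernel $k$ for all $-3<\gamma<0$.
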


\begin{proof}
For $|v_1-v_2|\geq 1$, by \eqref{A-1}, we have
\begin{align*}
\int_{\mathbb{R}^3}|k(v_1,\eta)-k(v_2, \eta)| d\eta &\leq \int_{\mathbb{R}^3}|k(v_1,\eta)|+|k(v_2, \eta)| d\eta \\
\ &\lesssim |v_1-v_2|^{\widetilde\alpha_\gamma}\max\left\{\langle v_1\rangle^{\gamma-2}, \langle v_2\rangle^{\gamma-2}\right\}.
\end{align*}
For $|v_1-v_2|<1$, we have
\begin{align*}
\ & \int_{\mathbb{R}^3}|k(v_1, \eta)-k(v_2, \eta)|d\eta\\
\leq &\int_{|\eta-v_1|\leq 2|v_1-v_2|}|k(v_1, \eta)|d\eta+\int_{|\eta-v_1|\leq 2|v_1-v_2|}|k(v_2, \eta)|d\eta\\
\ &\quad+\int_{|\eta-v_1|\geq 2|v_1-v_2|}|k(v_1, \eta)-k(v_2, \eta)|d\eta\\
=: &\ I_1+I_2+I_3\,.
\end{align*}
By using the spherical coordinates around $v_1$, we have
\begin{align*}
I_1&\lesssim \left(1+|v_1|\right)^{\gamma-1}\int_{|\eta-v_1|\leq 2|v_1-v_2|}A(|\eta-v_1|) e^{-\frac{1}{16}\left(|\eta-v_1|^2+\frac{(|\eta|^2-|v_1|^2)^2}{|\eta-v_1|^2}\right)}d\eta\\
\ &\lesssim \left(1+|v_1|\right)^{\gamma-1}\int_0^{2\pi}\int_0^{2|v_1-v_2|}A(\rho)\rho^{2}e^{-\frac{1}{16}\rho^2}\left(\int_0^\pi e^{-\frac{1}{16}(2|v_1|\cos\phi+\rho)^2}\sin\phi d\phi \right)d\rho d\theta\\
\ &\lesssim \left(1+|v_1|\right)^{\gamma-2}\int_0^{2|v_1-v_2|}\rho^{\bar\gamma+2}d\rho\\
\ &\lesssim \left(1+|v_1|\right)^{\gamma-2}|v_1-v_2|^{3+\bar\gamma},
\end{align*}
where $\bar\gamma$ is defind as
\begin{equation}\label{bg}
\bar\gamma=
\begin{cases}
-1, & \text{ if } -1<\gamma<0,\\
(-1)^-, & \text{ if }\gamma=-1,\\
\gamma, & \text{ if }-3<\gamma<-1.
\end{cases}
\end{equation}
For $I_2$, note that $|\eta-v_2|\leq |\eta-v_1|+|v_1-v_2|\leq 3|v_1-v_2|$, and therefore we conclude
\begin{equation*}
I_2\lesssim \left(1+|v_2|\right)^{\gamma-2}|v_1-v_2|^{3+\bar\gamma}.
\end{equation*}

For $I_3$, let $v(s)=v_2+s(v_1-v_2)$ for $s\in[0, 1]$. Note that in this case, we have
\begin{equation*}
|\eta-v(s)|\geq |\eta-v_1|-(1-s)|v_1-v_2|\geq |v_1-v_2|.
\end{equation*}
Thus one has
\begin{align*}
I_3&=\int_{|\eta-v_1|\geq 2|v_1-v_2|}\left|\int_0^1\left[ \frac{d}{ds}k(v(s), \eta)\right]ds\right|d\eta\\
\ &\lesssim |v_1-v_2|\int_0^1 \left(1+|v(s)|\right)^{\gamma} \\
\ &\qquad\qquad \times \int_{|\eta-v(s)|\geq |v_1-v_2|}|\eta-v(s)|^{\bar\gamma-1}e^{-\frac{1}{16}\left(|\eta-v(s)|^2+\frac{(|\eta|^2-|v(s)|^2)^2}{|\eta-v(s)|^2}\right)}d\eta ds\\
\ &\lesssim |v_1-v_2| \left(\int_0^1 \left(1+|v(s)|\right)^{\gamma-1} ds\right)\int_{|v_1-v_2|}^\infty \rho^{\bar\gamma+1}e^{-\frac{1}{16}\rho^2}d\rho \\
\ &\lesssim |v_1-v_2|\max\{\langle v_1\rangle^{\gamma-1}, \langle v_2\rangle^{\gamma-1}\}\left(\int_{|v_1-v_2|}^1\rho^{\bar\gamma+1}d\rho+\int_{1}^\infty\rho^{\bar\gamma+1}e^{-\frac{1}{16}\rho^2}d\rho \right) \\
\ &\lesssim
\begin{cases}
|v_1-v_2|\max\{\langle v_1\rangle^{\gamma-1}, \langle v_2\rangle^{\gamma-1}\}, & \text{ if } -2<\gamma< 0,\\
|v_1-v_2| \left| \ln|v_1-v_2| \right|  \max\{\langle v_1\rangle^{\gamma-1}, \langle v_2\rangle^{\gamma-1}\}, & \text{ if } \gamma=-2, \\
|v_1-v_2|^{3+\gamma}\max\{\langle v_1\rangle^{\gamma-1}, \langle v_2\rangle^{\gamma-1}\}, & \text{ if } -3<\gamma<-2.
\end{cases}
\end{align*}

In conclusion, we have
\begin{equation*}
I_3\lesssim |v_1-v_2|^{\widetilde\alpha_\gamma}\max\left\{\langle v_1\rangle^{\gamma-1}, \langle v_2\rangle^{\gamma-1}\right\}.
\end{equation*}
This complete the proof of the lemma.
\end{proof}

Next, we present some useful estimates for the nonlinear operator $N$.

\begin{lemma}(\cite[Lemma 2.4]{DHWZ})\label{Ne1}
Let $\tau>0$ and $0<\ka<\frac 18$. Then it holds that
\begin{align*}
\left| w(v)N_{+}(f, g)(v)\right| \lesssim \langle v\rangle^\gamma \|wf\|_\infty \|wg\|_\infty;\\
\left| w(v)N_{-}(f, g)(v)\right| \lesssim \langle v\rangle^\gamma \|wf\|_\infty \|wg\|_\infty.
\end{align*}
\end{lemma}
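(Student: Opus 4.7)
The plan is to prove the two bounds separately, relying on two standard ingredients: the Grad cutoff assumption on $b(\vartheta)$, which makes $\int_{\mathbb{S}^{2}} b(\vartheta)\,d\omega$ finite, and the standard kernel estimate
\begin{equation*}
\int_{\mathbb{R}^{3}} |v-u|^{\gamma} e^{-c|u|^{2}}\,du \lesssim \langle v\rangle^{\gamma}
\end{equation*}
for any fixed $c>0$ and any $-3<\gamma<0$. I would verify this by splitting $\mathbb{R}^{3}$ into three pieces: $|u|\leq |v|/2$ (where $|v-u|\sim |v|$, so the integrand is dominated by $|v|^{\gamma}$ times a Gaussian), $|u|\geq 2|v|$ (where the Gaussian decays exponentially), and the annulus $|v|/2<|u|<2|v|$ (where one changes variables to $\eta=v-u$ and uses that $\gamma+3>0$ makes $|\eta|^{\gamma}$ integrable near the origin).

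For the loss term, since $N_{-}(f,g)(v)=f(v)\varphi(g)(v)$, I would insert the identity $g(u)=w(u)^{-1}[w(u)g(u)]$ inside $\varphi(g)$, pull $\|wf\|_{\infty}\|wg\|_{\infty}$ outside, and reduce the bound to
\begin{equation*}
|w(v)N_{-}(f,g)(v)| \lesssim \|wf\|_{\infty}\|wg\|_{\infty}\int_{\mathbb{R}^{3}\times\mathbb{S}^{2}} |v-u|^{\gamma} b(\vartheta) M^{1/2}(u) w(u)^{-1}\,d\omega\,du.
\end{equation*}
The $\omega$ integral is finite by cutoff, and $M^{1/2}(u)w(u)^{-1}=(2\pi)^{-3/4}\langle u\rangle^{-\tau} e^{-(1/4+\kappa)|u|^{2}}$ has full Gaussian decay, so the standard estimate above with $c=1/4+\kappa$ produces the factor $\langle v\rangle^{\gamma}$.

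For the gain term, the key algebraic step is the weight redistribution. Using the energy identity $|v|^{2}+|u|^{2}=|v'|^{2}+|u'|^{2}$, one gets $e^{\kappa|v|^{2}}/(e^{\kappa|v'|^{2}} e^{\kappa|u'|^{2}})=e^{-\kappa|u|^{2}}$. For the polynomial part, the elementary bound $1+|v|^{2}\leq 2(1+|v'|^{2})(1+|u'|^{2})$ (valid because $|v|^{2}\leq|v'|^{2}+|u'|^{2}$) and $\tau>0$ give $\langle v\rangle^{\tau}\lesssim \langle v'\rangle^{\tau}\langle u'\rangle^{\tau}$. Combining,
\begin{equation*}
\frac{w(v)}{w(v')w(u')} \lesssim e^{-\kappa|u|^{2}}.
\end{equation*}
Writing $w(v)f(v')g(u')=\frac{w(v)}{w(v')w(u')}[w(v')f(v')][w(u')g(u')]$, pulling $\|wf\|_{\infty}\|wg\|_{\infty}$ out, and applying the cutoff on the angular integral, the problem reduces to
\begin{equation*}
\int_{\mathbb{R}^{3}} |v-u|^{\gamma} M^{1/2}(u) e^{-\kappa|u|^{2}}\,du \lesssim \langle v\rangle^{\gamma},
\end{equation*}
which is the same standard estimate with $c=1/4+\kappa$.

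The only mildly delicate step is the weight redistribution for the gain part, which hinges on the two post-collisional identities above; everything else is routine. The singular factor $|v-u|^{\gamma}$ is harmless at $u=v$ because $\gamma>-3$ makes it locally integrable in three dimensions, and the Gaussian weights dominate at infinity, so the regime $-3<\gamma<0$ is exactly where the three-region splitting succeeds. The hypothesis $\tau>0$ is essential for the polynomial redistribution, while $0<\kappa<\tfrac{1}{8}$ ensures $\tfrac{1}{4}+\kappa>0$ (which is automatic) and leaves room for later arguments in the paper where the exponent must be pushed.
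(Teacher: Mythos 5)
Your proposal is correct, and since the paper itself gives no proof of this lemma (it is quoted directly from \cite[Lemma 2.4]{DHWZ}), your argument is in effect the standard proof of the cited estimate: collision-invariance $|v|^2+|u|^2=|v'|^2+|u'|^2$ to redistribute the exponential weight, the elementary bound $\langle v\rangle^\tau\leq\langle v'\rangle^\tau\langle u'\rangle^\tau$ for the polynomial weight, the Grad cutoff to control the angular integral, and the convolution estimate $\int_{\mathbb{R}^3}|v-u|^\gamma e^{-c|u|^2}\,du\lesssim\langle v\rangle^\gamma$ (valid precisely because $-3<\gamma<0$) to produce the factor $\langle v\rangle^\gamma\sim\nu(v)$. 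The only cosmetic remark is that in your three-region splitting the case of small $|v|$ should be handled directly (the integral is simply bounded and $\langle v\rangle^\gamma\sim 1$ there), but this is implicit in your argument and does not affect its validity.
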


\begin{lemma}(\cite[Lemma 3.1]{KL}) 
It holds that
\begin{align}
\ &N_{+}(f, g)(v)\notag\\
=&\ 2\int_{\mathbb{R}^3}\int_{\eta\cdot u=0}\frac{|u+\eta|^\gamma}{|u|^2} b\left(\frac{|u|}{|u+\eta|}\right) M^{\frac12}(v+u+\eta)f(v+u)g(v+\eta) d\eta du \label{Ca1}\\
=&\ 2\int_{\mathbb{R}^3}\int_{u\cdot\eta=0}\frac{|u+\eta|^\gamma}{|\eta||u|} b\left(\frac{|u|}{|u+\eta|}\right) M^{\frac12}(v+u+\eta)f(v+u)g(v+\eta) du d\eta. \label{Ca2}
\end{align}
\end{lemma}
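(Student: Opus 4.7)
The two formulas are the classical Carleman representations of the gain operator, and my plan is to derive them by a single change of variables from the Grad parameterization $(u_*,\omega)\in\R^3\times\mathbb{S}^2$ (I write $u_*$ for the collisional-partner integration variable in the definition of $N_+$) to the Carleman variables
\[
u:=v'-v,\qquad \eta:=u_*'-v,
\]
followed by a measure-theoretic reshuffling on the orthogonality manifold $\{(u,\eta)\in\R^3\times\R^3 : u\cdot\eta=0\}$.

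First I would verify the kinematic substitution. From $v'-v=-[(v-u_*)\cdot\omega]\omega$ and $u_*'-u_*=[(v-u_*)\cdot\omega]\omega$ a direct expansion yields $u\cdot\eta=0$, $u_*=v+u+\eta$, $|v-u_*|=|u+\eta|$, and $\cos\vartheta=|u|/|u+\eta|$. Consequently the integrand
\[
|v-u_*|^\gamma\,b(\vartheta)\,M^{1/2}(u_*)\,f(v')\,g(u_*')
\]
rewrites as $|u+\eta|^\gamma\,b(|u|/|u+\eta|)\,M^{1/2}(v+u+\eta)\,f(v+u)\,g(v+\eta)$, which matches the integrands in both \eqref{Ca1} and \eqref{Ca2}.

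Next I would carry out the Jacobian calculation. Since $u$ is parallel to $\pm\omega$, the map $(u_*,\omega)\mapsto(u,\eta)$ is two-to-one, and a routine linear-algebra computation (fix $\omega$, decompose $u_*-v$ into its $\omega$-parallel and $\omega$-perpendicular components, and then parameterize the sphere by $\omega=u/|u|$) will give
\[
du_*\,d\omega=\frac{2}{|u|^2}\,du\,d\eta^{(2)}\Big|_{u\cdot\eta=0},
\]
where $d\eta^{(2)}$ is the $2$-dimensional Lebesgue measure on $u^\perp$ and the factor $2$ absorbs the $\omega\leftrightarrow-\omega$ redundancy. Substituting this into the definition of $N_+$ produces \eqref{Ca1}.

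Finally, the equivalence between \eqref{Ca1} and \eqref{Ca2} is a purely measure-theoretic identity on the orthogonality manifold. Introducing polar coordinates $u=r_u\omega_u$, $\eta=r_\eta\omega_\eta$ with $\omega_u\perp\omega_\eta$, both measures $|u|^{-2}\,du\,d\eta^{(2)}$ and $|u|^{-1}|\eta|^{-1}\,d\eta\,du^{(2)}$ reduce to $r_\eta\,dr_u\,dr_\eta$ times the common invariant measure on the Stiefel manifold $V_2(\R^3)$ of ordered orthonormal pairs, so the two iterated integrals agree. The main technical obstacle will be the Jacobian step: the singular $2$-dimensional measure on the moving plane $u^\perp$ and the factor of $2$ coming from the $\pm\omega$ symmetry are the easy places to slip up, whereas all other steps are pure substitution.
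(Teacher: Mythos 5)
Your proposal is correct, but note that the paper offers no proof of this lemma at all: it is quoted verbatim from Kim--Lee \cite[Lemma 3.1]{KL}, so there is nothing in the text to compare against except the citation. What you wrote is the standard self-contained derivation of the Carleman representation, and the key computations you sketch are right: with $V=v-u_*$ one has $u=-(V\cdot\omega)\omega$, $\eta=-V+(V\cdot\omega)\omega$, hence $u\perp\eta$, $u_*=v+u+\eta$, $|v-u_*|=|u+\eta|$, $\cos\vartheta=|u|/|u+\eta|$ (the paper writes $b(|u|/|u+\eta|)$ with the usual abuse of evaluating $b$ at $\cos\vartheta$); the factorization $du_*=dt\,d\eta^{(2)}$ on $\R\times\omega^{\perp}$ combined with $du=t^2\,dt\,d\omega$ and the two-to-one nature of $(t,\omega)\mapsto -t\omega$ produces exactly the factor $2/|u|^2$ in \eqref{Ca1}; and your polar-coordinate check that $|u|^{-2}du\,d\eta^{(2)}$ and $|u|^{-1}|\eta|^{-1}d\eta\,du^{(2)}$ both reduce to $r_\eta\,dr_u\,dr_\eta$ times the invariant measure on ordered orthonormal $2$-frames gives \eqref{Ca2}. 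The only things worth adding for completeness are routine: the degenerate sets $u=0$, $\eta=0$ are null, and the interchange of iterated integrals should be justified by Tonelli for the nonnegative cutoff kernel (or by absolute convergence, which holds wherever the lemma is applied, e.g.\ under the weighted $L^\infty$ bounds of Lemma \ref{Ne1}).
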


With \eqref{Ca1} and \eqref{Ca2}, we have the following lemma. 

\begin{lemma}\label{NH-1}
Let $\tau>0$ and $0<\kappa <\frac 18$. Then for every $x, x_1, x_2\in \Omega$ and for every $v, v_1, v_2\in \mathbb{R}^3\setminus \{0\}$,
\begin{align*}
\ &\ \left| N_+(g, g)(x_1, v)-N_+(g, g)(x_2, v)\right|   \\
\lesssim&\ \| w g\|_\infty \int_{\mathbb{R}^3}A(|u|)e^{-\frac{\kappa}{4}(|u|^2+\frac{(|u+v|^2-|v|^2)^2}{|u|^2})}\left| g(x_1, v+u)-g(x_2, v+u)\right| du; \notag\\
\ &\ \left| N_+(g, g)(x, v_1)-N_+(g, g)(x, v_2)\right|   \\
\lesssim &\ \| w g\|_\infty \int_{\mathbb{R}^3}A(|u|)\left(e^{-\frac{\kappa}{4}(|u|^2+\frac{(|u+v_1|^2-|v_1|^2)^2}{|u|^2})}+e^{-\frac{\kappa}{4}(|u|^2+\frac{(|u+v_2|^2-|v_2|^2)^2}{|u|^2})}\right) \notag\\
\ &\qquad\qquad \times \left| g(x, v_1+u)-g(x, v_2+u)\right| du \notag\\
\ &+\| w g\|_\infty^2 |v_1-v_2| ; \notag \\
\ &\ \left|\varphi(g)(x_1, v)-\varphi(g)(x_2, v)\right|  \\
\lesssim &\ \int_{\mathbb{R}^3} |v-u|^\gamma M^{\frac12}(u) \left| g(x_1, u)-g(x_2, u)\right| du ;\notag     \\
\ &\ \left|\varphi(g)(x, v_1)-\varphi(g)(x, v_2)\right| \\
\lesssim &\ \| w g\|_\infty|v_1-v_2|+ \int_{\mathbb{R}^3}|u|^\gamma M^{\frac12}(v_2+u)\left|g(x, v_1+u)- g(x, v_2+u)\right| du, \notag
\end{align*}
where $A(|u|)$ is defined as \eqref{A}.
\end{lemma}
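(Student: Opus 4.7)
The plan is to derive all four bounds from the two Carleman-type representations \eqref{Ca1}--\eqref{Ca2} of $N_+$ and the definition \eqref{Psi} of $\varphi$, combined with the telescoping identity $ab-cd=(a-c)b+c(b-d)$ applied to the two $g$-factors $g(\cdot,v+u)$ and $g(\cdot,v+\eta)$. The structural point is that the prefactor, cutoff weight $b$, and Maxwellian $M^{\frac12}(v+u+\eta)$ in \eqref{Ca1}--\eqref{Ca2} are independent of $x$; so spatial differences pass directly onto the two copies of $g$, while velocity differences produce only one additional commutator term coming from the $v$-dependence of $M^{\frac12}$.

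For the spatial estimate on $N_+(g,g)$ I write
\begin{align*}
&g(x_1,v+u)g(x_1,v+\eta)-g(x_2,v+u)g(x_2,v+\eta)\\
&\ =[g(x_1,v+u)-g(x_2,v+u)]g(x_1,v+\eta)+g(x_2,v+u)[g(x_1,v+\eta)-g(x_2,v+\eta)],
\end{align*}
bound the ``frozen'' factor by $\|wg\|_\infty\langle\cdot\rangle^{-\tau}e^{-\kappa|\cdot|^2}$, and for the first piece apply \eqref{Ca1} to integrate $\eta$ on $\{u\cdot\eta=0\}$. On this plane $|u+\eta|^2=|u|^2+|\eta|^2$, and writing $v=v_\perp+(v\cdot\widehat u)\widehat u$ one obtains $|v+u+\eta|^2=|v+u|^2-|v_\perp|^2+|v_\perp+\eta|^2$, so $M^{\frac12}(v+u+\eta)\,e^{-\kappa|v+\eta|^2}$ decouples into a $v$-dependent exponential times a Gaussian in $\eta$. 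The Gaussian integrates explicitly; the Grad cutoff $b\leq C\cos\vartheta$ cancels one power of $|u|^{-1}$ in the $|u|^{-2}$ prefactor; and the remaining $|u+\eta|^\gamma$ singularity contributes precisely the factor $A(|u|)$ according to the three regimes in \eqref{A}. This produces the bound $A(|u|)\,e^{-\frac{\kappa}{4}(|u|^2+(|u+v|^2-|v|^2)^2/|u|^2)}$ times the $g$-difference in $u$. The second piece is treated symmetrically via \eqref{Ca2}, now integrating $u$ on $\{u\cdot\eta=0\}$ and relabelling $\eta\leftrightarrow u$ at the end.

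The velocity bound for $N_+$ follows the same telescoping after further splitting
\[
M^{\frac12}(v_1{+}u{+}\eta)g(x,v_1{+}u)g(x,v_1{+}\eta)-M^{\frac12}(v_2{+}u{+}\eta)g(x,v_2{+}u)g(x,v_2{+}\eta)
\]
into a ``common Maxwellian'' piece (producing the two symmetric Gaussians centred at $v_1$ and $v_2$, exactly as above) plus a ``Maxwellian-difference'' piece $[M^{\frac12}(v_1+u+\eta)-M^{\frac12}(v_2+u+\eta)]\,g(x,v_2+u)g(x,v_2+\eta)$. The mean value theorem bounds the Maxwellian difference by $C|v_1-v_2|$ times a Gaussian, and after controlling both $g$-factors by $\|wg\|_\infty w^{-1}$ and integrating in $(u,\eta)$ via either Carleman form, this piece contributes exactly $\|wg\|_\infty^2|v_1-v_2|$.

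For $\varphi$, the spatial estimate is immediate from \eqref{Psi} since only $g(u)$ depends on $x$. For the velocity estimate I substitute $u\to v+u$ in \eqref{Psi} to rewrite $\varphi(g)(v)\lesssim\int|u|^\gamma M^{\frac12}(v+u)g(v+u)\,du$, and apply the splitting $M_1 g_1-M_2 g_2=(M_1-M_2)g_1+M_2(g_1-g_2)$: the $M$-difference yields $\|wg\|_\infty|v_1-v_2|$ after integrating the Gaussian times $|u|^\gamma$, and the $g$-difference gives the remaining integral. The main technical obstacle throughout is the planar integration producing the precise Gaussian weight $e^{-\frac{\kappa}{4}(|u+v|^2-|v|^2)^2/|u|^2}$: this requires the careful Gaussian decoupling described above, and in the singular regime $\gamma\leq -1$ a delicate treatment of the $|u+\eta|^\gamma$ factor near $\eta=0$ that produces the three cases in \eqref{A}.
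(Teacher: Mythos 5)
Your proposal is correct and is essentially the paper's approach: the paper omits this proof entirely, citing \cite[Lemma 3.2]{KL}, and your argument --- telescoping the two $g$-factors in the Carleman forms \eqref{Ca1}--\eqref{Ca2}, Gaussian decoupling on the plane $\{u\cdot\eta=0\}$ (using $\kappa<\tfrac18$ to absorb the frozen factor's weight into $e^{-\frac{\kappa}{4}(|u|^2+(|u+v|^2-|v|^2)^2/|u|^2)}$), the Grad cutoff taming the $|u|^{-2}$ prefactor so the planar integration yields $A(|u|)$, and the mean value theorem on the Maxwellian for the velocity and $\varphi(g)$ estimates --- is exactly that standard argument. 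One cosmetic point: after using $b\le C|u|/|u+\eta|$ the surviving planar singularity is $|u+\eta|^{\gamma-1}$ rather than $|u+\eta|^{\gamma}$, and it is precisely this exponent whose integration over the two-dimensional plane produces the three regimes of \eqref{A} with the split at $\gamma=-1$.
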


\begin{proof}
This proof is most same as that of \cite[Lemma 3.2]{KL} and is omitted here. 
\end{proof}

\subsection{Some estimates for $t_{b}$ and $x_{b}$} In this subsection, we will present some results about the backward exit time $t_{b}$ and it's associate position $x_{b}$, which will be used in our analysis.
\begin{proposition}(\cite[Propositions 3.1 and 3.3]{Ch})
Suppose that $\Omega$ is a bounded and strictly convex domain in $\mathbb{R}^3$. Let $x, x_1, x_2,\in\Omega$, $v, v_1, v_2\in\mathbb{R}^3\setminus\{0\}$, and let $\theta_{(v_1, v_2)}$ be the angle between $v_1$ and $v_2$. Then it holds that
\begin{align}
\left|x_b(x_1,v)-x_b(x_2,v)\right|&\lesssim d_{(x_1, x_2)}^{-1}|x_1-x_2|;\label{xbx}\\
\left|t_b(x_1,v)-t_b(x_2,v)\right|&\lesssim d_{(x_1, x_2)}^{-1} |x_1-x_2| |v|^{-1};\label{tbx}\\
\left|x_b(x,v_1)-x_b(x,v_2)\right|&\lesssim d_x^{-1}\theta_{(v_1, v_2)}. \label{xbv}
\end{align}
\end{proposition}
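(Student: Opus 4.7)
The plan is to apply the implicit function theorem to the defining relation $\xi(x-t_b(x,v)v)=0$ to obtain formulas for $\nabla_x t_b$, $\nabla_v t_b$, and $\nabla_v x_b$, then use a second-order Taylor expansion combined with the strict-convexity hypothesis to produce a quantitative lower bound on $|v\cdot n(x_b)|$, and finally integrate the resulting gradient estimates along segments inside $\Omega$ via the mean value theorem.

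First, implicit differentiation of $\xi(x-t_b v)=0$ in $x$ and in $v$ yields
\[
\nabla_x t_b=\frac{\nabla\xi(x_b)}{v\cdot\nabla\xi(x_b)},\qquad \nabla_v t_b=-\frac{t_b\,\nabla\xi(x_b)}{v\cdot\nabla\xi(x_b)},\qquad \nabla_v x_b=-(\nabla_v t_b)\otimes v-t_b I,
\]
all well-defined because $v\cdot n(x_b)<0$ away from grazing. Since $|\nabla\xi|$ is bounded above and below on $\partial\Omega$, the remaining task is to estimate $|v\cdot n(x_b)|$ from below.

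For this coercivity bound I would expand $\xi$ by second-order Taylor with integral remainder along the segment from $x_b$ to $x=x_b+t_b v$. The strict-convexity assumption $\zeta^T D^2\xi(y)\zeta\geq c_\xi|\zeta|^2$ gives
\[
\xi(x)\;\geq\;t_b\,v\cdot\nabla\xi(x_b)+\tfrac{c_\xi}{2}\,t_b^2|v|^2.
\]
Since $\xi(x)\leq 0$ and $v\cdot\nabla\xi(x_b)<0$, rearranging produces $t_b|v\cdot\nabla\xi(x_b)|\geq|\xi(x)|\gtrsim d_x$ (using that $\xi$ vanishes linearly on $\partial\Omega$), hence
\[
|v\cdot n(x_b(x,v))|\;\gtrsim\;d_x/t_b(x,v).
\]
Combined with the trivial diameter bound $t_b|v|\leq\operatorname{diam}(\Omega)$, one obtains
\[
|\nabla_x t_b(x,v)|\;\lesssim\;\frac{t_b}{d_x}\;\lesssim\;\frac{1}{|v|\,d_x},\qquad |\nabla_v x_b(x,v)|\;\lesssim\;\frac{t_b^2|v|}{d_x}+t_b,
\]
the latter being $\lesssim d_x^{-1}$ when $|v|=1$.

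The three claimed inequalities then follow by integration. Since $\Omega$ is convex, the segment $[x_1,x_2]$ lies in $\Omega$, and the standard concavity of the distance function on $\overline\Omega$ gives $d_x\geq d_{(x_1,x_2)}$ along this segment, so integrating the bound on $|\nabla_x t_b|$ produces \eqref{tbx}; then \eqref{xbx} follows immediately from $x_b(x_i,v)=x_i-t_b(x_i,v)v$ together with \eqref{tbx}. For \eqref{xbv}, I would exploit the scale invariance $x_b(x,\lambda v)=x_b(x,v)$ for $\lambda>0$ to reduce to $|v_1|=|v_2|=1$, and then integrate $|\nabla_v x_b|\lesssim d_x^{-1}$ along the great-circle arc on $\mathbb{S}^2$ joining $v_1$ and $v_2$, whose length equals $\theta_{(v_1,v_2)}$. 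The main obstacle is the coercivity step: one has to combine the linear vanishing $|\xi(x)|\gtrsim d_x$ with the strict-convexity quadratic term in the correct way to extract a $d_x^{-1}$ scaling (rather than the weaker $d_x^{-1/2}$ one would obtain from the quadratic term alone), and then absorb the resulting factor of $t_b$ using the diameter bound to convert it into the desired $|v|^{-1}d_x^{-1}$ form.
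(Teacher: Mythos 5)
Your proposal is correct in substance, and it is worth noting that the paper gives no proof of this proposition at all: it is quoted from \cite{Ch} (Propositions 3.1 and 3.3), so the only comparison available is with the ingredients the paper uses elsewhere. Your route — implicit differentiation of $\xi(x-t_b(x,v)v)=0$, a quantitative non-grazing bound $|v\cdot n(x_b)|\gtrsim d_x/t_b$, and integration of the resulting gradient bounds along the segment $[x_1,x_2]$ (using concavity of $x\mapsto d_x$ on the convex set $\Omega$) and along the great-circle arc after reducing to $|v_1|=|v_2|=1$ — is a legitimate, self-contained alternative. Your coercivity step is precisely the inequality \eqref{XY} that the paper derives geometrically (projection onto the tangent plane at $x_b$) in its proof of \eqref{tb4}, whereas you obtain it analytically from the Taylor expansion of $\xi$ along the chord together with $|\xi(x)|\gtrsim d_x$; both work, but you should justify $|\xi(x)|\gtrsim d_x$ explicitly (for instance, $-\xi$ is nonnegative, concave on $\overline\Omega$ and vanishes on $\partial\Omega$, hence $-\xi(x)\geq d_x\max_{\overline\Omega}(-\xi)/\operatorname{diam}\Omega$), since ``vanishes linearly'' by itself only covers a neighborhood of $\partial\Omega$. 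Your gradient formulas agree with \eqref{tbd} up to a sign convention, which is immaterial for magnitudes; the diameter bound $t_b|v|\leq R$ converts $t_b/d_x$ into $(|v|d_x)^{-1}$ as you say; and \eqref{xbx} indeed follows from \eqref{tbx} because $d_{(x_1,x_2)}\lesssim 1$. Two routine points deserve an explicit sentence: differentiability of $t_b$ along the whole segment and along the spherical arc holds because every intermediate point is interior and strict convexity excludes grazing backward exits from interior points; and in your coercivity computation the quadratic (strict-convexity) term is simply dropped, so convexity alone suffices there — the $d_x^{-1}$ scaling comes entirely from $|\xi(x)|\gtrsim d_x$, not from balancing against the quadratic term as your closing remark suggests.
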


\begin{proposition}(\cite[Proposition 3.2]{Ch})\label{P3.2}
Suppose that $\Omega$ is a bounded and strictly convex domain in $\mathbb{R}^3$. Let $x\in\Omega$, $v\in\mathbb{R}^3\setminus\{0\}$, and $y\in \overline{x x_b(x,v)}$, then
\begin{equation*}
\left|\overline{y x_b(x,v)}\right|\leq\frac{R}{d_x}d_y,
\end{equation*}
where $R$ is the diameter of $\Omega$.
\end{proposition}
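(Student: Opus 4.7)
The plan is to exploit the standard fact that on any convex domain $\Omega$, the boundary-distance function $d(y)=\inf_{z\in\partial\Omega}|y-z|$ is concave. Since strict convexity implies convexity, this applies here, and the bound will follow almost immediately by evaluating the concavity inequality along the segment $\overline{x\,x_b(x,v)}$.

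Concretely, I would parametrize the segment by $y=(1-t)x+t\,x_b(x,v)$ for $t\in[0,1]$, so that
$$|\overline{y\,x_b(x,v)}|=(1-t)\,|x-x_b(x,v)|\leq (1-t)R,$$
using $x,x_b(x,v)\in\overline{\Omega}$ and $R=\operatorname{diam}(\Omega)$. Concavity of $d$, combined with $d(x_b(x,v))=0$, then yields
$$d_y\geq (1-t)\,d(x)+t\,d(x_b(x,v))=(1-t)\,d_x,$$
so that $1-t\leq d_y/d_x$. Combining the two inequalities gives
$$|\overline{y\,x_b(x,v)}|\leq (1-t)R\leq \frac{R}{d_x}\,d_y,$$
which is exactly the claimed estimate.

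The only real content is the concavity of $d$, which admits a short self-contained proof: for $y\in\Omega$ one has $d(y)=\sup\{r>0:B(y,r)\subseteq\Omega\}$. Given $y_1,y_2\in\Omega$ with $r_i:=d(y_i)$, and setting $y_t:=(1-t)y_1+ty_2$ and $r_t:=(1-t)r_1+tr_2$, any point $y_t+u$ with $|u|\leq r_t$ can be written as the convex combination $(1-t)(y_1+u\,r_1/r_t)+t(y_2+u\,r_2/r_t)$ of points that lie in $B(y_i,r_i)\subseteq\Omega$; by convexity of $\Omega$, this convex combination is again in $\Omega$. Hence $B(y_t,r_t)\subseteq\Omega$ and $d(y_t)\geq r_t$, which is concavity.

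I do not foresee any serious obstacle. Strict convexity of $\Omega$, being stronger than convexity, is more than enough for the concavity input; in fact, ordinary convexity already suffices for the present proposition. The strict convexity hypothesis plays its essential role in the companion continuity estimates \eqref{xbx}--\eqref{xbv}, where the curvature bound $c_\xi>0$ prevents $x_b$ and $t_b$ from degenerating as the velocity sweeps toward grazing directions, but for the scalar comparison above the convex-domain inequality is all that is needed.
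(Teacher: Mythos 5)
Your proof is correct. Note that the paper itself offers no argument for this proposition: it is quoted verbatim from \cite[Proposition 3.2]{Ch}, so there is no in-paper proof to compare against. Your route---parametrizing $y=(1-t)x+t\,x_b(x,v)$, bounding $|\overline{y\,x_b(x,v)}|\leq(1-t)R$, and using concavity of the boundary-distance function together with $d(x_b(x,v))=0$ to get $1-t\leq d_y/d_x$---is sound, and your self-contained proof of concavity (the convex combination $B(y_t,r_t)\subseteq(1-t)B(y_1,r_1)+tB(y_2,r_2)\subseteq\Omega$) is the standard one, modulo the trivial care of working with open balls or passing to the closure; applying the concavity inequality with one endpoint on $\partial\Omega$ is justified by continuity of $d$ on $\overline{\Omega}$. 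This is essentially equivalent to the usual geometric argument behind the cited result (the cone, i.e.\ convex hull, of $B(x,d_x)$ and the vertex $x_b(x,v)$ lies in $\Omega$, so $d_y\geq \frac{|\overline{y\,x_b(x,v)}|}{|\overline{x\,x_b(x,v)}|}d_x$ and $|\overline{x\,x_b(x,v)}|\leq R$), just phrased through concavity of $d$. Your closing observation is also accurate: only convexity is used here, and the strict convexity hypothesis is needed elsewhere (e.g.\ for \eqref{xbx}--\eqref{xbv} and the smoothness of $t_b$, $x_b$), not for this estimate.
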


The following lemma shows that the singularity in velocity can be transferred to the singularity in space. 

\begin{lemma}
Let $-3<\gamma<0$, $s\geq 0$, and $0< d\leq R$ for some $R>0$. For any $v_1, v_2\in\mathbb{R}^3\setminus\{0\}$ with $|v_1|\geq |v_2|$, we have
\begin{equation}\label{e2}
\left(\frac{d}{|v_1|}\right)^se^{-\frac{\nu(v_2)}{|v_1|} d}\leq C_{\gamma, s, R}\langle v_1\rangle^{-s}.
\end{equation}
\end{lemma}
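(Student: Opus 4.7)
The plan is to split the estimate according to whether $|v_1|$ is comparable to $\langle v_1\rangle$ or not, i.e., on the two regimes $|v_1|\geq 1$ and $|v_1|\leq 1$.

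In the regime $|v_1|\geq 1$ the inequality will be essentially trivial: since the exponential factor $e^{-\nu(v_2)d/|v_1|}\leq 1$ and $d\leq R$, we immediately obtain
\begin{equation*}
\left(\frac{d}{|v_1|}\right)^s e^{-\frac{\nu(v_2)}{|v_1|}d}\leq \frac{R^s}{|v_1|^s},
\end{equation*}
and then $|v_1|^{-s}\lesssim \langle v_1\rangle^{-s}$ because $|v_1|\sim\langle v_1\rangle$ in this range. So the only work happens when $|v_1|<1$.

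In the regime $|v_1|\leq 1$, the weight $\langle v_1\rangle^{-s}$ is essentially $1$, so the task is simply to show that the left-hand side stays bounded. Here the ratio $d/|v_1|$ can blow up, so the exponential factor must do the work. The key observation is that since $|v_2|\leq|v_1|\leq 1$ we have $\langle v_2\rangle\leq\sqrt{2}$, hence by the lower bound $\nu(v_2)\geq\nu_1\langle v_2\rangle^{\gamma}$ and $\gamma<0$,
\begin{equation*}
\nu(v_2)\geq \nu_1\cdot 2^{\gamma/2}=:c_\gamma>0.
\end{equation*}
Setting $t=d/|v_1|$, the claim reduces to the elementary bound
\begin{equation*}
\sup_{t>0} t^s e^{-c_\gamma t}=\left(\tfrac{s}{e c_\gamma}\right)^s,
\end{equation*}
which is a finite constant depending only on $\gamma$ and $s$. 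Combined with $\langle v_1\rangle^{-s}\geq 2^{-s/2}$ on $|v_1|\leq 1$, this gives the desired inequality in this case.

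The only subtlety to be flagged is that one must use $|v_2|\leq|v_1|$ to get $\nu(v_2)$ bounded below; otherwise for soft potentials $\nu(v_2)\to 0$ as $|v_2|\to\infty$ and the exponential factor would become useless. Apart from that, no further argument is needed, and the final constant $C_{\gamma,s,R}$ is just the maximum of the two constants produced by the two regimes.
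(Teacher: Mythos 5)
Your proof is correct and follows essentially the same route as the paper: split on $|v_1|\leq 1$ versus $|v_1|>1$, use $|v_2|\leq|v_1|\leq 1$ to get $\nu(v_2)\gtrsim 1$ so that $t^s e^{-c_\gamma t}$ is uniformly bounded in the first regime, and use the trivial bound $e^{-\nu(v_2)d/|v_1|}\leq 1$ with $d\leq R$ in the second. Your remark about needing $|v_2|\leq|v_1|$ for the lower bound on $\nu(v_2)$ is exactly the point the paper's "$\nu(v_2)\sim 1$" step relies on.
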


\begin{proof}
For $|v_1|\leq 1$, one has $\nu(v_2) \sim 1$. Thus,
$$\left(\frac{d}{|v_1|}\right)^se^{-\frac{\nu(v_2)}{|v_1|} d}\leq \left(\frac{d}{|v_1|}\right)^se^{-C_\gamma\frac{d}{|v_1|} }\leq C_{\gamma, s}.$$
For $|v_1|>1$, we have
$$\left(\frac{d}{|v_1|}\right)^se^{-\frac{\nu(v_2)}{|v_1|} d}\leq R^s |v_1|^{-s}.$$
This completes the proof.
\end{proof}

\begin{lemma}
Let $-3<\gamma<0$ and $a>-1$. For any $x, x_1, x_2 \in \Omega$ and $v\in\mathbb{R}^3\setminus\{0\}$, we have
\begin{align}
\int_0^{t_b(x,v)}s^a e^{-\nu(v)s}ds &\lesssim \langle v\rangle^{-1-a};\label{tb1}\\
\left| \int_{t_b(x_2, v)}^{t_b(x_1, v)} e^{-\nu(v)s}ds\right| &\lesssim d_{(x_1, x_2)}^{-1}|x_1-x_2||v|^{-1}e^{-\frac{\nu(v)}{|v|}d_{(x_1, x_2)}}.\label{tb2}
\end{align}
Let $v_1, v_2\in\mathbb{R}^3$ with $|v_1|\geq |v_2|>0$ and $\bar v_2=\frac{|v_1|}{|v_2|}v_2$, then
\begin{align}
\ & \int_0^{t_b(x,v_1)}\left|e^{-\nu(v_2)s}-e^{-\nu(v_1)s}\right|ds \lesssim |v_1-v_2| ;\label{tb3}\\
\ & \left| \int_{t_b(x, \bar v_2)}^{t_b(x, v_1)} \nu(v_1) e^{-\nu(v_1)s}ds\right| \lesssim d_x^{-1}|v_1-v_2| \langle v_1\rangle^{-1}; \label{tb4}\\
\ & \left| \int_{t_b(x, \bar v_2)}^{t_b(x, v_2)}\nu(v_2) e^{-\nu(v_2)s}ds\right| \lesssim d_{x}^{-1}|v_1-v_2| \langle v_1\rangle^{-1}. \label{tb5}
\end{align}
\end{lemma}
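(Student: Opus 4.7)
The plan is to prove the five estimates in order, sharing a common recipe: apply the mean value theorem to $e^{-\nu(\cdot)s}$, control time- and position-differences via \eqref{tbx} and \eqref{xbv} together with Proposition~\ref{P3.2}, and use the absorption inequality \eqref{e2} to turn factors of $e^{-\nu d/|v|}$ into powers of $\langle v\rangle^{-1}$. The routine estimates are \eqref{tb1}--\eqref{tb3}; the delicate ones are \eqref{tb4} and \eqref{tb5}.

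For \eqref{tb1} I split on $|v|$: when $|v|\leq 1$ one has $\nu(v)\sim 1$, so I extend the upper limit to $+\infty$ and use $\int_0^{\infty}s^a e^{-cs}\,ds=\Gamma(a+1)/c^{a+1}\lesssim 1\sim\langle v\rangle^{-(a+1)}$; when $|v|>1$ the bound $t_b(x,v)\leq R/|v|$ combined with $e^{-\nu s}\leq 1$ gives $\int_0^{R/|v|}s^a\,ds\lesssim |v|^{-(a+1)}$. For \eqref{tb2} I assume WLOG $t_b(x_1,v)\geq t_b(x_2,v)$, dominate the integral by $(t_b(x_1,v)-t_b(x_2,v))e^{-\nu(v)t_b(x_2,v)}$, and apply \eqref{tbx} to the first factor while using $t_b(x_2,v)\geq d_{x_2}/|v|\geq d_{(x_1,x_2)}/|v|$ in the exponential. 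For \eqref{tb3} I use $|e^{-\nu(v_1)s}-e^{-\nu(v_2)s}|\leq s|\nu(v_1)-\nu(v_2)|e^{-\nu(v_1)s}$ (the minimum is $\nu(v_1)$ since $|v_1|\geq|v_2|$ and $\gamma<0$) together with the uniform Lipschitz bound $|\nu(v_1)-\nu(v_2)|\lesssim|v_1-v_2|$ coming from $|\nabla\nu|\lesssim 1$; for $|v_1|\leq 1$ I extend to $[0,\infty)$ giving $\int_0^\infty s\nu e^{-\nu s}\,ds\lesssim\nu^{-1}\sim 1$, while for $|v_1|>1$ I integrate on $[0,R/|v_1|]$ and the resulting $|v_1|^{-2}$ factor combines with the Lipschitz bound.

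The core of the argument is \eqref{tb4}. I evaluate $\int_{t_b(x,\bar v_2)}^{t_b(x,v_1)}\nu(v_1)e^{-\nu(v_1)s}\,ds=|e^{-\nu(v_1)t_b(x,\bar v_2)}-e^{-\nu(v_1)t_b(x,v_1)}|$ and apply
$$|e^{-a}-e^{-b}|\leq \nu(v_1)\,|t_b(x,v_1)-t_b(x,\bar v_2)|\,e^{-\nu(v_1)\min(t_b(x,v_1),t_b(x,\bar v_2))}.$$
Since $|v_1|=|\bar v_2|$, the triangle inequality gives
$$|t_b(x,v_1)-t_b(x,\bar v_2)|=\frac{\big||x-x_b(x,v_1)|-|x-x_b(x,\bar v_2)|\big|}{|v_1|}\leq\frac{|x_b(x,v_1)-x_b(x,\bar v_2)|}{|v_1|},$$
which by \eqref{xbv} is $\lesssim d_x^{-1}\theta_{(v_1,\bar v_2)}/|v_1|$. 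The identity $|v_1-\bar v_2|=2|v_1|\sin(\theta_{(v_1,\bar v_2)}/2)$ combined with $|\bar v_2-v_2|=|v_1|-|v_2|\leq|v_1-v_2|$ yields $\theta_{(v_1,\bar v_2)}=\theta_{(v_1,v_2)}\lesssim|v_1-v_2|/|v_1|$, and hence $|t_b(x,v_1)-t_b(x,\bar v_2)|\lesssim d_x^{-1}|v_1-v_2|/|v_1|^2$. Using $\min t_b\geq d_x/|v_1|$ and \eqref{e2} with a suitably chosen power of $(d_x/|v_1|)$ then converts the remaining $\nu(v_1)\,e^{-\nu(v_1)d_x/|v_1|}/|v_1|^2$ into the weight $d_x^{-1}\langle v_1\rangle^{-1}$. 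For \eqref{tb5} I exploit that $v_2$ and $\bar v_2$ are collinear, so $x_b(x,v_2)=x_b(x,\bar v_2)$; writing $\ell=|x-x_b(x,v_2)|\in[d_x,R]$, the integral equals $|e^{-\nu(v_2)\ell/|v_1|}-e^{-\nu(v_2)\ell/|v_2|}|\leq ze^{-z}\,(|v_1|-|v_2|)/|v_2|$ with $z=\nu(v_2)\ell/|v_1|\geq\nu(v_2)d_x/|v_1|$, and \eqref{e2} again finishes.

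The main obstacle will be the careful balancing in \eqref{tb4} and \eqref{tb5}: after the MVT step one is left with a product of $|t_b-t_b|$, a power of $\nu(v_i)$, and the exponential $e^{-\nu(v_i)d_x/|v_1|}$, and it is only by inserting the right power $(d_x/|v_1|)^s$ from \eqref{e2} that the small-$|v|$/small-$d_x$ regimes yield the claimed weight $d_x^{-1}\langle v_1\rangle^{-1}$. Getting this bookkeeping exactly right, and checking that the same choice of $s$ works uniformly in both $|v_1|\lessgtr 1$ and $d_x\lessgtr|v_1|$, is where most of the work goes.
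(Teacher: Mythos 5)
Your treatments of \eqref{tb1}--\eqref{tb3} are fine and essentially match the paper's, but your argument for \eqref{tb4} has a genuine gap. After the mean value step you bound $|t_b(x,v_1)-t_b(x,\bar v_2)|\le |x_b(x,v_1)-x_b(x,\bar v_2)|/|v_1|\lesssim d_x^{-1}|v_1-v_2|/|v_1|^2$ via \eqref{xbv}, and pair it only with $e^{-\nu(v_1)d_x/|v_1|}$. You are then left needing $\nu(v_1)|v_1|^{-2}e^{-\nu(v_1)d_x/|v_1|}\lesssim \langle v_1\rangle^{-1}$, which is false in the regime $d_x\lesssim |v_1|\ll 1$ (take $d_x=|v_1|\to 0$: the left side is of order $|v_1|^{-2}$ while the right side is of order $1$). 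Nor can \eqref{e2} rescue it: \eqref{e2} only trades a factor $(d/|v_1|)^s$ \emph{matched with} $e^{-\nu d/|v_1|}$ for $\langle v_1\rangle^{-s}$, so absorbing your $|v_1|^{-2}$ costs $d_x^{-2}$ and produces $d_x^{-3}|v_1-v_2|\langle v_1\rangle^{\gamma-2}$ rather than the claimed $d_x^{-1}|v_1-v_2|\langle v_1\rangle^{-1}$; the power of $d_x$ matters downstream. The paper avoids this by splitting on the angle $\theta_{(v_1,\bar v_2)}$: for $\theta\ge \pi/3$ it uses the trivial bound $|e^{-A}-e^{-B}|\lesssim \theta\, e^{-\nu(v_1)d_x/|v_1|}$, which carries only \emph{one} factor $|v_1|^{-1}$ and hence costs only one $d_x^{-1}$ through \eqref{e2}; for $\theta<\pi/3$ it uses the exact gradient formula \eqref{tbd} together with \eqref{XY}, so that the large factor $t_b(x,v(t))\,|\overline{xx_b(x,v(t))}|/(d_x|v(t)|)$ is paired with $e^{-\nu(v_1)|\overline{xx_b(x,v(t))}|/|v(t)|}$ with the \emph{same} chord length, and \eqref{e2} applies with matched $d$, leaving only the single explicit $d_x^{-1}$ from \eqref{XY}. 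Your route, which replaces the chord length by $d_x$ before invoking \eqref{e2}, cannot reproduce this.

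A similar, smaller problem occurs in \eqref{tb5}. Your bound $z e^{-z}\,(|v_1|-|v_2|)/|v_2|$ is a valid upper bound for the integral, but it is not dominated by the target: for $|v_2|\ll |v_1|$ the factor $(|v_1|-|v_2|)/|v_2|$ is unbounded (fix $|v_1|=1$, $d_x$, $\ell$, and let $|v_2|\to 0$), and \eqref{e2} leaves an uncontrolled ratio $|v_1|/|v_2|$. In that regime the mean value theorem is too lossy; one must instead drop the difference, bound the quantity by $e^{-\nu(v_2)t_b(x,\bar v_2)}$ alone, use $t_b(x,\bar v_2)e^{-\nu(v_2)t_b(x,\bar v_2)}\lesssim \langle v_1\rangle^{-1}$ from \eqref{e2} with $d=|\overline{xx_b(x,v_2)}|$, and use $[t_b(x,\bar v_2)]^{-1}\le |v_1|/d_x\lesssim |v_1-v_2|/d_x$ (since $|v_1|\lesssim|v_1-v_2|$ when $|v_2|<|v_1|/2$), which is exactly the paper's case split. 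Your argument does work for $|v_2|\ge |v_1|/2$, so what is missing is the second regime.
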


\begin{proof}
For \eqref{tb1}, we divide the proof into two cases. \\ 
$(i)$  $|v|\leq 1$. One has $\nu(v)\sim 1$ and $t_b(x,v)=\frac{\left|\overline{xx_b(x,v)}\right|}{|v|}\leq \frac{R}{|v|}$, where $R$ is the diameter of $\Omega$. We then have
\begin{equation*}
\int_0^{t_b(x,v)} s^{a}e^{-\nu(v)s}ds\leq \int_0^\infty s^a e^{-C_\gamma s} ds=C_{\gamma, a}.
\end{equation*}
$(ii)$ $|v|>1$. One has $\nu(v)\sim |v|^\gamma$. We then have
\begin{equation*}
\int_0^{t_b(x,v)} s^{a}e^{-\nu(v)s}ds\leq \int_0^{\frac{R}{|v|}} s^a e^{-C_\gamma |v|^\gamma s} ds=|v|^{-1-a}\int_0^Rt^a dt= C_{a, \Omega} |v|^{-1-a}.
\end{equation*}

For \eqref{tb2}, using the mean value theorem and \eqref{tbx}, we have
\begin{align*}
\left| \int_{t_b(x_2, v)}^{t_b(x_1, v)} e^{-\nu(v)s}ds\right| &\leq \left|t_b(x_1, v)-t_b(x_2, v)\right|e^{-\nu(v)\min\left\{t_b(x_1, v), t_b(x_2, v)\right\}}\\
\ &\lesssim d_{(x_1, x_2)}^{-1} |x_1-x_2| |v|^{-1}e^{-\frac{\nu(v)}{|v|}d_{(x_1, x_2)}}\,.
\end{align*}

For \eqref{tb3}, using the mean value theorem and \eqref{tb1}, we have
\begin{align*}
\ &\int_0^{t_b(x,v_1)}\left|e^{-\nu(v_2)s}-e^{-\nu(v_1)s}\right|ds =\int_0^{t_b(x,v_1)}\left| \int_{\nu(v_1)}^{\nu(v_2)}se^{-st}dt\right|ds \\
\leq & \int_0^{t_b(x,v_1)}s|\nu(v_2)-\nu(v_1)| e^{-\min\{\nu(v_1), \nu(v_2)\}s} ds\\
\leq &\ |v_1-v_2|\left| \nabla_v \nu(v(t))\right|\int_0^{t_b(x,v_1)} se^{-\frac{\nu_1}{\nu_2}\nu(v_1)s}ds\\
\lesssim &\ |v_1-v_2|,
\end{align*}
where $v(t)= v_1+t(v_2-v_1)$ for some $0\leq t\leq 1$.

For \eqref{tb4}, let $\theta_{(v_1, \bar v_2)}$ be the angle between $v_1$ and $\bar v_2$. One can see that $|v_1-\bar v_2|\leq 2|v_1-v_2|$ and
\begin{equation*}
\sin\left(\theta_{(v_1, \bar v_2)}/2\right)=\frac{\frac12|v_1-\bar v_2|}{|v_1|}\geq \frac{1}{\pi}\theta_{(v_1, \bar v_2)}.
\end{equation*}
Thus
\begin{equation*}
\theta_{(v_1, \bar v_2)}\lesssim |v_1-v_2||v_1|^{-1}.
\end{equation*}
Now we divide the proof into two cases: \\ 
$(i)$  $\theta_{(v_1, \bar v_2)}\geq \frac13\pi$. By using \eqref{e2}, we have
\begin{align*}
\left| \int_{t_b(x, \bar v_2)}^{t_b(x, v_1)} \nu(v_1) e^{-\nu(v_1)s}ds\right| &=\left|e^{-\nu(v_1)t_b(x, v_1)}-e^{-\nu(v_1)t_b(x, \bar v_2)}\right|\\
\ &\lesssim  \theta_{(v_1, \bar v_2)}e^{-\frac{\nu(v_1)}{|v_1|}\min\{|\overline{xx_b(x, v_1)}|, |\overline{xx_b(x, v_2)}|\}}\\
\ &\lesssim  |v_1-v_2| |v_1|^{-1}e^{-\frac{\nu(v_1)}{|v_1|}d_x}\\
\ &\lesssim  d_x^{-1}|v_1-v_2| \langle v_1\rangle^{-1}. 
\end{align*}
$(ii)$  $\theta_{(v_1, \bar v_2)}<\frac13\pi$. From \cite[Lemma 2]{G10}, we have
\begin{equation}\label{tbd}
\nabla_v t_b(x,v)=\frac{t_b(x, v) n(x_b(x, v))}{v\cdot n(x_b(x, v))}.
\end{equation}
Let $Y$ be the projection of $x$ on the tangent plane $T_{x_b(x,v)}(\partial\Omega)$. By the convexity of domain $\Om$, $\overline{xY}$ intersects $\partial\Omega$ at one point $X$. Then
\begin{equation}\label{XY}
\frac{|v\cdot n(x_b(x, v))|}{|v|}=\frac{|\overline{xY}|}{|\overline{xx_b(x, v)}|}\geq \frac{|\overline{xX}|}{|\overline{xx_b(x, v)}|}\geq \frac{d_x}{|\overline{xx_b(x, v)}|}.
\end{equation}
Let $v(t)=v_1+t(\bar v_2-v_1)$ for some $0\leq t\leq 1$. Then $\frac{\sqrt{3}}{2}|v_1|\leq |v(t)|\leq |v_1|$. By using the mean value theorem, \eqref{e2}, \eqref{tbd}, and \eqref{XY}, we have
\begin{align*}
\left| \int_{t_b(x, \bar v_2)}^{t_b(x, v_1)} \nu(v_1)e^{-\nu(v_1)s}ds\right|&=\left|e^{-\nu(v_1)t_b(x, v_1)}-e^{-\nu(v_1)t_b(x, \bar v_2)}\right|\\
\ &=\left|(v_1-\bar v_2)\cdot e^{-\nu(v_1)t_b(x, v(t))}\nu(v_1)\nabla_v t_b(x, v(t))\right|\\
\ &\lesssim |v_1-v_2| \nu(v_1)\frac{|\overline{xx_b(x, v(t))}|}{d_x|v(t)|}t_b(x, v(t))e^{-\frac{\nu(v_1)}{|v(t)|}}|\overline{xx_b(x, v(t))}|\\
\ &\lesssim d_x^{-1}|v_1-v_2| \langle v_1\rangle^{\gamma-2}. 
\end{align*}

For \eqref{tb5}, note that we have $x_b(x, \bar v_2)=x_b(x, v_2)$ and $|\bar v_2-v_2|\leq |v_1-v_2|$.
We divide the proof into two cases: \\ 
$(i)$ $\frac{|v_1|}{2}\leq |v_2|\leq |v_1|$. By using the mean value theorem and \eqref{e2}, we have
\begin{align*}
\ &\left| \int_{t_b(x, \bar v_2)}^{t_b(x, v_2)} \nu_2)e^{-\nu(v_2)s}ds\right| \\
\leq &\ \nu(v_2)\left(\frac{|\overline{xx_b(x, v_2)}|}{|v_2|}-\frac{|\overline{xx_b(x,\bar v_2)}|}{|\bar v_2|}\right)e^{-\nu(v_2)\min\{t_b(x, v_2), t_b(x, \bar v_2)\}}\\
\leq &\ \nu(v_2)|v_1-v_2| \frac{|\overline{xx_b(x, v_2)}|}{|v_1|^2}e^{-\frac{\nu(v_2)}{2|v_1|}|\overline{xx_b(x, v_2)}|}\\
\lesssim &\ d_x^{-1}|v_1-v_2| e^{-c\frac{\nu(v_1)}{|v_1|}d_x}\langle v_1\rangle^{\gamma-2}.
\end{align*}
$(ii)$ $|v_2|< \frac{|v_1|}{2}$. One has $|v_1|\sim |v_1-v_2|$. Thus, by \eqref{e2}, we have
\begin{align*}
\left| \int_{t_b(x, \bar v_2)}^{t_b(x, v_2)} \nu(v_2)e^{-\nu(v_2)s}ds\right|&= \left(e^{-\nu(v_2)t_b(x,\bar v_2)}-e^{-\nu(v_2)t_b(x, v_2)}\right)\\
\ &\leq \left[t_b(x, \bar v_2)\right]^{-1}\left[t_b(x, \bar v_2)\right]e^{-\nu(v_2)t_b(x,\bar v_2)}\\
\ &\leq \frac{|v_1|}{|\overline{xx_b(x,v_2)}|}\langle v_1\rangle^{-1}\\
\ &\lesssim d_x^{-1}|v_1-v_2| \langle v_1\rangle^{-1}.
\end{align*}
This complete the proof of the lemma.
\end{proof}


\section{Approximate Steady Boltzmann equation}\label{app-sol}

In this section, we will construct the solution to problem \eqref{approx-1}.
The precise statement of the main result in this section is as follows:
\begin{proposition}\label{LEU}
Let $-3<\gamma<0$, and let $\tau>3+|\gamma|$ and $0<\ka<\frac 18$. Assume that $\|wg_1\|_\infty+\|\nu^{-1}wg_2\|_\infty+|wr|_{\infty, -}<\infty$. Then there exists $\delta_1>0$ such that if $\|wg_1\|_\infty\leq \delta_1$, there is a unique solution $f$ to problem \eqref{approx-1} satisfying
\begin{equation*}
\|wf\|_{\infty}+|wf|_\infty\leq C\left(\|\nu^{-1}wg_2\|_\infty+|wr|_{\infty, -}\right).
\end{equation*}
Moreover, if $\Omega$ is strictly convex, $g_1$ and $g_2$ are continuous on $\Omega\times\mathbb{R}^3$, and $r$ is continuous on $\Gamma_-$, then $f$ is continuous away from $\Gamma_0$.
\end{proposition}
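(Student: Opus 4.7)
The plan is to solve \eqref{approx-1} as the $\epsilon \to 0$ limit of solutions to the damped problem \eqref{approx-2} with $\lambda = 1$. As noted just after the statement, following [DHWZ, Lemma 3.5] one obtains existence in \eqref{approx-2} only for $\lambda \in [0,\lambda^*]$ with $\lambda^*<1$ because of the extra $\varphi(g_1)f^\epsilon$ term, so the heart of the argument is a velocity-weighted $L^2$ a priori bound for \eqref{approx-2} that is uniform in $\epsilon \in (0,1]$ and $\lambda \in [0,1]$. To derive it, I would multiply \eqref{approx-2} by $w^2 f^\epsilon$ and integrate over $\Omega \times \mathbb{R}^3$. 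The transport term produces the boundary contribution $\tfrac12 |wf^\epsilon|_{2,+}^2 - \tfrac12 |wr|_{2,-}^2$, the combination $\nu(v)f^\epsilon - \lambda Kf^\epsilon$ is controlled via the spectral gap \eqref{LP1} applied to $(I-P)f^\epsilon$, and the macroscopic part $Pf^\epsilon$ is estimated separately by testing \eqref{approx-2} against the five basis functions of $\mathrm{Ker}(L)$. The extra term $\varphi(g_1)f^\epsilon$ is absorbed as a small perturbation using Lemma \ref{Ne1}, which gives $|\varphi(g_1)(v)| \lesssim \langle v\rangle^\gamma \|wg_1\|_\infty \leq \delta_1 \langle v\rangle^\gamma$, by choosing $\delta_1$ sufficiently small. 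The outcome is
\begin{equation*}
\|wf^\epsilon\|_2 + |wf^\epsilon|_{2,+} \lesssim \|\nu^{-1/2} wg_2\|_2 + |wr|_{2,-}.
\end{equation*}

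With this uniform bound in hand, the method of continuity (the set of solvable $\lambda$ is open via an $\epsilon$-damped fixed point and closed via the uniform estimate, starting from $\lambda=\lambda^*$) extends existence in \eqref{approx-2} to $\lambda = 1$ for every $\epsilon \in (0,1]$. The next step upgrades to $L^\infty$ via Vidav's double iteration: writing $f^\epsilon$ along characteristics and iterating $Kf^\epsilon$ once, I would split the inner $\eta$-integral into $|\eta|\geq N$ (small by the decay of $k(v,\eta)$ established in Section \ref{pre}) and $|\eta|<N$, and on the bounded region use the change of variables $(s,\eta)\mapsto y = x-vs-\eta\tau$ combined with Hölder's inequality to bound the contribution by the $L^2$ norm of $f^\epsilon$ already controlled above. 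The term $\varphi(g_1)f^\epsilon$ is absorbed using $\|\nu^{-1}\varphi(g_1)\|_\infty \lesssim \delta_1$. This produces
\begin{equation*}
\|wf^\epsilon\|_\infty + |wf^\epsilon|_\infty \lesssim \|\nu^{-1}wg_2\|_\infty + |wr|_{\infty,-},
\end{equation*}
uniformly in $\epsilon$. Applying the same linear estimate to the difference $f^{\epsilon_1}-f^{\epsilon_2}$, which solves \eqref{approx-2} with source $(\epsilon_2-\epsilon_1)f^{\epsilon_2}$, shows that $\{f^\epsilon\}$ is Cauchy in the weighted $L^\infty$ topology; the limit $f$ solves \eqref{approx-1} with the asserted estimate, and uniqueness follows by applying the same bound to the difference of two solutions.

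For the continuity statement, assuming strict convexity of $\Omega$ and continuity of $g_1,g_2,r$, I would represent $f$ along characteristics as
\begin{equation*}
f(x,v) = r(x_b,v)\,e^{-\int_0^{t_b}(\nu+\varphi(g_1))\,d\sigma} + \int_0^{t_b} e^{-\int_0^{s}(\nu+\varphi(g_1))\,d\sigma}\bigl(Kf+g_2\bigr)(x-sv,v)\,ds.
\end{equation*}
Away from $\Gamma_0$, strict convexity yields continuity of $t_b$ and $x_b$ in $(x,v)$; continuity of $Kf$ in $(x,v)$ follows by dominated convergence from the kernel bounds of Section \ref{pre}, and continuity of $\varphi(g_1)$ along the trajectory is analogous. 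The main obstacle is the uniform-in-$\lambda$ weighted $L^2$ estimate: $\varphi(g_1)f^\epsilon$ does not inherit the spectral structure of $L$, so the macroscopic estimate on $Pf^\epsilon$ must be reworked so that the perturbation $\varphi(g_1)f^\epsilon$ is dominated by the coercive part of $\langle Lf^\epsilon,f^\epsilon\rangle$ together with the $\epsilon$-damping, independently of $\lambda \in [0,1]$; this is what forces the smallness threshold $\delta_1$ in the statement.
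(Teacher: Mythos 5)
Your overall architecture is the same as the paper's: work with the damped, $\lambda$-interpolated problem \eqref{approx-2}, prove an a priori $L^2$ estimate via the energy identity, the coercivity \eqref{LP1} on the microscopic part and a separate macroscopic estimate obtained by testing against $\mathrm{Ker}(L)$, absorb $\varphi(g_1)f^\epsilon$ by the smallness $\delta_1$, continue in $\lambda$ to $\lambda=1$, upgrade to weighted $L^\infty$ by the Vidav-type double iteration (which the paper simply cites from [DHWZ, Lemma 3.4] as Lemma \ref{DHWZ1}/Corollary \ref{Co1}), and pass to the limit $\epsilon\to0$ by a Cauchy argument. However, two steps are genuinely flawed as written. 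First, you cannot multiply \eqref{approx-2} by $w^2f^\epsilon$ and then invoke \eqref{LP1}: that inequality expresses coercivity of $L$ in the \emph{unweighted} $L^2_v$ pairing, and $\langle Lf, w^2f\rangle$ with $w=\langle v\rangle^\tau e^{\kappa|v|^2}$ has no sign coming from \eqref{LP1}; similarly, the macroscopic estimate obtained by testing against the basis of $\mathrm{Ker}(L)$ (Lemma \ref{L2E}) is an unweighted statement. The correct (and the paper's) route is to take the multiplier $f^\epsilon$ itself, obtain control of $\|\nu^{1/2}f^\epsilon\|_2$ and $|f^\epsilon|_{2,+}$, and let the exponential weight enter only at the $L^\infty$ stage, where Corollary \ref{Co1} bounds $\|wf^\epsilon\|_\infty$ by $\|\nu^{1/2}f^\epsilon\|_2$ plus data. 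Second, your claimed uniformity in all $\lambda\in[0,1]$ is not substantiated: when the macroscopic lemma is applied with $g=g_2-\varphi(g_1)f-(1-\lambda)\nu f$, the term $(1-\lambda)\|\nu f\|_2$ is only negligible for $\lambda$ near $1$, which is precisely why the paper runs two regimes ($\epsilon$-dependent bounds on $[0,\lambda^*]$, where $(1-\lambda)\|\nu^{1/2}f\|_2^2$ is itself coercive, and $\epsilon$- and $\lambda$-uniform bounds only on $[\lambda^*,1]$). Moreover, your closing suggestion that $\varphi(g_1)f^\epsilon$ be dominated ``together with the $\epsilon$-damping'' cannot work, since the estimate must survive $\epsilon\to0$; the absorption must come solely from $\widehat C\|wg_1\|_\infty\leq\widehat C\delta_1$ against the coercive terms.

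The continuity argument also has a gap: it is circular to claim that ``continuity of $Kf$ in $(x,v)$ follows by dominated convergence from the kernel bounds.'' Indeed $Kf(x,v)=\int_{\mathbb{R}^3}k(v,\eta)f(x,\eta)\,d\eta$, and continuity in $x$ requires $f(x',\eta)\to f(x,\eta)$ pointwise, i.e.\ exactly the continuity of $f$ being proved (only the $v$-continuity is free from Lemma \ref{kH} and $f\in L^\infty$); the Duhamel representation expresses $f$ in terms of $Kf$ and so cannot by itself bootstrap $x$-continuity. The paper instead observes that the $\lambda=0$ problem is solved explicitly along characteristics and is continuous away from $\Gamma_0$ (using smoothness of $t_b,x_b$ there for strictly convex $\Omega$), that every contraction iterate built from it is continuous, and that all limits in the construction (in $\lambda$ and in $\epsilon$) are uniform in $L^\infty$, so continuity passes to the solution. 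Your outline can be repaired along those lines, but as stated it does not establish the continuity assertion of the proposition.
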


To prove this proposition, following the approach in \cite{DHWZ}, we turn to consider the solvability of the approximate boundary-value problem:
\begin{equation}\label{1elf1}
\begin{cases}
\epsilon f+v\cdot\nabla_x f+\nu(v)f-\lambda Kf+\varphi(g_1) f=g_2, & \text{ in } \Omega\times\mathbb{R}^3,\\
f=r, & \text{ on } \Gamma_-.
\end{cases}
\end{equation}

In what follows we give the {\it a priori $L^\infty$-estimate} and {\it a priori $L^2$-estimate} for our problem.
Consider the following approximate linearized steady problem
\begin{equation}\label{1elf}
\begin{cases}
\epsilon f+v\cdot\nabla_x f+\nu(v)f-\lambda Kf=g, & \text{ in } \Omega\times\mathbb{R}^3,\\
f=r, & \text{ on } \Gamma_-.
\end{cases}
\end{equation}
Let $h(x, v)=w(v)f(x, v)$. Then problem \eqref{1elf} for $h$ reads as
\begin{equation}\label{1elh}
\begin{cases}
\epsilon h+v\cdot\nabla_x h+\nu(v)h=\lambda K_{w}h+wg, & \text{ in } \Omega\times\mathbb{R}^3,\\
h=wr, & \text{ on } \Gamma_-,
\end{cases}
\end{equation}
where $K_{w}h=wK(h/w)$. For the result of the $L^\infty$ bound of solutions to problem \eqref{1elh}, we have the following lemma whose proof is essential same as that in \cite[Lemma 3.4]{DHWZ} and is omitted here.

\begin{lemma}\label{DHWZ1}
Let $\epsilon>0$, $0\leq\lambda\leq 1$, $\tau>3$, and $0<\ka<\frac18$. Assume that $h$ is the solution of \eqref{1elh}. Then
\begin{equation*}
\|h\|_\infty+|h|_{\infty}\leq C\left( \|\nu^{-1}wg\|_\infty+|wr|_{\infty, -}+\left\| \frac{\nu^{\frac12}h}{w}\right\|_2\right),
\end{equation*}
where $C>0$ does not depend on $\epsilon$ and $\lambda$.
\end{lemma}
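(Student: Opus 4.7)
The plan is to integrate along backward characteristics, apply Vidav's double iteration to the $K_w$ term, and then control the resulting double integral by the weighted $L^{2}$ norm via a change of variables. I treat the worst case $\lambda=1$; the case $0\le\lambda<1$ is identical with an extra harmless factor.

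\textbf{Step 1 (mild formulation).} Along the characteristic $s\mapsto x-sv$, equation \eqref{1elh} is an ODE in $s$, so integrating from the boundary yields
\[
h(x,v)=(wr)(x_b,v)e^{-(\epsilon+\nu(v))t_b}+\int_{0}^{t_b(x,v)}e^{-(\epsilon+\nu(v))s}\bigl[K_w h+wg\bigr](x-sv,v)\,ds.
\]
The boundary term is bounded by $|wr|_{\infty,-}$. The source term is bounded by $\|\nu^{-1}wg\|_{\infty}$, using $\nu(v)\int_{0}^{t_b}e^{-\nu(v)s}\,ds\le 1$.

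\textbf{Step 2 (Vidav double iteration).} The difficulty is the $K_w h$ term. A direct pointwise bound would require $\sup_v\nu(v)^{-1}\!\int|k_w(v,\eta)|\,d\eta$ to be small, which is false (only bounded). Instead, substitute the same representation into $h(x-sv,\eta)$:
\[
\int_{0}^{t_b(x,v)}\!\!e^{-(\epsilon+\nu(v))s}\!\int_{\R^{3}}k_w(v,\eta)\,h(x-sv,\eta)\,d\eta\,ds
\]
expands into three pieces: a boundary piece (bounded using $\int k_w(v,\eta)\,d\eta\lesssim\langle v\rangle^{-1}$, which follows from \eqref{A-1} provided $\tau>3$); a source piece (again bounded by $\|\nu^{-1}wg\|_\infty$); and the genuine double integral
\[
\mathcal I(x,v)=\int_{0}^{t_b(x,v)}\!\!e^{-(\epsilon+\nu(v))s}\!\int_{\R^{3}}k_w(v,\eta)\!\int_{0}^{t_b(x-sv,\eta)}\!\!e^{-(\epsilon+\nu(\eta))\tau}K_w h(x-sv-\tau\eta,\eta)\,d\tau\,d\eta\,ds.
\]

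\textbf{Step 3 ($L^{2}$ control of $\mathcal I$).} Split $\mathcal I$ according to four regions, truncating by a large parameter $N$ and a small parameter $\delta>0$: (i) $|v|\ge N$, (ii) $|\eta|\ge 2N$, (iii) $|v-\eta|\le\delta$ or $|\eta-\zeta|\le\delta$, and (iv) the remaining bounded, non-degenerate region. In (i)--(iii) the factor $\nu(v)^{-1}$ (or a direct kernel estimate using \eqref{A-1} and the smallness of the truncation) provides a small constant times $\|h\|_\infty$, absorbable into the left-hand side. In region (iv), kernels are uniformly bounded; by Cauchy--Schwarz in $\eta$ and the change of variables $y=x-sv-\tau\eta$ (whose Jacobian is $\tau^{3}$, controllable away from $\tau=0$ in the truncated region), one rewrites the remaining piece as a spatial integral of $|h|^{2}$ with a weight dominated by $\nu/w^{2}$, so it is bounded by $\|\nu^{1/2}h/w\|_{2}$.

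\textbf{Step 4 (conclusion).} Combining Steps 1--3, choosing $N$ large and $\delta$ small to absorb the $O(N^{\gamma})+O(\delta^{c})$ multiple of $\|h\|_\infty$, we obtain
\[
\|h\|_{\infty}\le C\bigl(\|\nu^{-1}wg\|_{\infty}+|wr|_{\infty,-}+\|\nu^{1/2}h/w\|_{2}\bigr).
\]
The trace bound $|h|_{\infty}$ is controlled by taking the supremum along characteristics meeting $\Gamma_{+}$; the characteristic formula supplies it with the same right-hand side. Nowhere does the argument see $\epsilon$ (which only helps) or $\lambda\in[0,1]$, so $C$ is independent of both, as required.

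\textbf{Main obstacle.} The crux is Step 3: the Vidav kernel $k_w\otimes k_w$ has the soft-potential singularities recorded in \eqref{A} and is not square-integrable in general. The assumption $\tau>3$ guarantees that $\langle v\rangle^{-1}$-type decay is inherited by the iterated kernel, while the change-of-variables argument requires one to stay away from the degenerate set $\{v\parallel\eta\}\cup\{\tau=0\}$, forcing the careful region decomposition in (iii).
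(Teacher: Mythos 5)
Your proposal takes essentially the same route as the paper, which omits the proof of this lemma and defers to \cite[Lemma 3.4]{DHWZ}: the characteristic (Duhamel) representation of \eqref{1elh}, Vidav's double iteration of $K_w$, truncation of the iterated kernel in the large-velocity and near-singular regions to produce small constants absorbable by $\|h\|_\infty$, and a change of variables $y=x-sv-\tau\eta$ (with the small-$\tau$ contribution treated separately) converting the remaining double integral into the weighted $L^2$ quantity $\|\nu^{1/2}h/w\|_2$, uniformly in $\epsilon$ and $\lambda\in[0,1]$. This is correct in structure; the only looseness is cosmetic (for soft potentials the smallness at large $|v|$ comes from the kernel decay $\int k_w(v,\eta)\,d\eta\lesssim\langle v\rangle^{\gamma-1}$ combined with $\int_0^{t_b}e^{-\nu(v)s}\,ds\lesssim\nu(v)^{-1}$, not from $\nu(v)^{-1}$ alone, and the Cauchy--Schwarz step must also handle the inner kernel $k_w(\eta,\zeta)$), so no further comparison is needed.
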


For the $L^\infty$ bound of solutions to problem \eqref{1elf1}, we have the following result.

\begin{corollary}\label{Co1}
Let $\epsilon>0$, $0\leq\lambda\leq 1$, $\tau>3$, and $0<\ka<\frac 18$. There exists $\delta_1>0$ independent of $\epsilon$ and $\lambda$ such that if $\|wg_1\|_\infty\leq \delta_1$ and $f$ is a solution of \eqref{1elf1}, then
\begin{equation}\label{ap1}
\|wf\|_\infty+|wf|_{\infty}\leq C\left( \|\nu^{-1}wg_2\|_\infty+|wr|_{\infty, -}+\| \nu^{\frac12}f\|_2\right),
\end{equation}
where $C>0$ does not depend on $\epsilon$ and $\lambda$.
\end{corollary}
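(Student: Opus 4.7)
The plan is to reduce Corollary~\ref{Co1} to Lemma~\ref{DHWZ1} by treating the extra zeroth-order term $\varphi(g_1)f$ as a forcing on the right-hand side. Specifically, I would rewrite \eqref{1elf1} as
$$\epsilon f + v\cdot\nabla_x f + \nu(v)f - \lambda Kf = g_2 - \varphi(g_1)f,$$
so that $f$ satisfies \eqref{1elf} with $g$ replaced by $\widetilde g := g_2 - \varphi(g_1)f$. Applying Lemma~\ref{DHWZ1} directly to this reformulation (and noting that its constant is independent of $\epsilon$ and $\lambda$) yields
$$\|wf\|_\infty + |wf|_\infty \leq C\!\left(\|\nu^{-1}wg_2\|_\infty + \|\nu^{-1}w\,\varphi(g_1)f\|_\infty + |wr|_{\infty,-} + \|\nu^{1/2}f\|_2\right).$$

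The key observation is that the new error term $\|\nu^{-1}w\,\varphi(g_1)f\|_\infty$ is linear in $\|wf\|_\infty$ with a small prefactor. To see this, recall that the loss part of the nonlinear operator satisfies $N_-(f,g_1)(v) = f(v)\varphi(g_1)(v)$, so Lemma~\ref{Ne1} gives
$$|w(v)\,\varphi(g_1)(v)\,f(v)| = |w(v)\,N_-(f,g_1)(v)| \lesssim \langle v\rangle^{\gamma}\,\|wf\|_\infty\,\|wg_1\|_\infty.$$
Combined with the two-sided bound $\nu(v) \sim \langle v\rangle^{\gamma}$, this yields
$$\|\nu^{-1}w\,\varphi(g_1)f\|_\infty \lesssim \|wg_1\|_\infty\,\|wf\|_\infty \leq C\delta_1\,\|wf\|_\infty.$$

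Inserting this into the previous inequality gives
$$\|wf\|_\infty + |wf|_\infty \leq C\!\left(\|\nu^{-1}wg_2\|_\infty + |wr|_{\infty,-} + \|\nu^{1/2}f\|_2\right) + C\delta_1\,\|wf\|_\infty.$$
Finally, I would choose $\delta_1 > 0$ small enough that $C\delta_1 \leq \tfrac12$ and absorb the last term into the left-hand side to obtain \eqref{ap1}. Since the constant $C$ from Lemma~\ref{DHWZ1} and the constant in Lemma~\ref{Ne1} are both independent of $\epsilon$ and $\lambda$, the threshold $\delta_1$ is likewise independent of $\epsilon,\lambda$, as required. There is no real obstacle here: the argument is a clean perturbation of Lemma~\ref{DHWZ1}, and the smallness hypothesis on $g_1$ is tuned precisely so that the $\varphi(g_1)f$ term can be absorbed.
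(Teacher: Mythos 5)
Your proposal is correct and follows essentially the same route as the paper: the paper likewise applies Lemma \ref{DHWZ1} with $g=g_2-\varphi(g_1)f$, uses the bound $|\varphi(g_1)(x,v)|\leq \widehat C\|wg_1\|_\infty\nu(v)$ coming from Lemma \ref{Ne1} to control $\|\nu^{-1}w\,\varphi(g_1)f\|_\infty\lesssim \|wg_1\|_\infty\|wf\|_\infty$, and then absorbs this term for $\|wg_1\|_\infty$ small. No substantive difference.
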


\begin{proof}
Note that, from Lemma \ref{Ne1}, we have
\begin{equation}\label{ng1}
|\varphi(g)(x, v)| \leq \widehat C\|w g\|_{\infty} \nu(v),
\end{equation}
where $\widehat C>0$ depends only on $\gamma$.
Applying Lemma \ref{DHWZ1} with $g=g_2-\varphi(g_1)f$, it follows from \eqref{ng1} that
\begin{align*}
\ &\ \|wf\|_\infty+|wf|_\infty \\
\leq&\ C \left(\|\nu^{-1}w\left[ g_2-\varphi(g_1)f \right] \|_\infty+|wr|_{\infty, -}+\| \nu^{\frac12}f \|_2\right)\\
\leq &\ C\widehat C\|wg_1\|_\infty\|wf\|_\infty +C \left(\|\nu^{-1}wg_2\|_\infty+|wr|_{\infty, -}+\| \nu^{\frac12}f \|_2\right).
\end{align*}
Thus, the inequality \eqref{ap1} holds provided that $\|wg_1\|_\infty$ is small enough.
\end{proof}

The following lemma gives an {\it a priori $L^2$ estimate}.

\begin{lemma}\label{L2E}
Let $\epsilon>0$ and $0\leq \lambda\leq 1$. Assume that $f$ is a solution of
\begin{equation*}
\begin{cases}
\epsilon f+v\cdot\nabla_x f+\lambda Lf=g, & \text{ in } \Omega\times\mathbb{R}^3,\\
f=r, & \text{ on } \Gamma_-,
\end{cases}
\end{equation*}
in the weak sense of
\begin{equation*}
\int_{\Gamma} \psi f (v\cdot n(x)) dvdS(x) -\int_{\Omega\times\mathbb{R}^3}(v\cdot\nabla_x\psi) f dvdx=\int_{\Omega\times\mathbb{R}^3}\left(-\epsilon f-\lambda Lf+g\right)\psi dvdx,
\end{equation*}
for any $\psi \in H^1(\Omega\times\mathbb{R}^3)$.
Then
\begin{equation*}
\| Pf\|_2\leq C \left(\epsilon\|f\|_2+\lambda\|\nu^{\frac12}(I-P) f\|_2+\|g\|_2+|f|_{2,+}+|r|_{2, -}\right),
\end{equation*}
where $C>0$ does not depend on $\epsilon$ and $\lambda$.
\end{lemma}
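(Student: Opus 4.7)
The plan is to use the classical macroscopic estimate \'a la Guo, testing the weak formulation against carefully chosen test functions that isolate each hydrodynamic moment of $Pf$. First, decompose
$$Pf(x,v) = a(x)\, M^{1/2}(v) + b(x)\cdot v\, M^{1/2}(v) + c(x)\, \frac{|v|^2-3}{\sqrt 6}\, M^{1/2}(v),$$
so that orthonormality of $\{\phi_i\}_{i=0}^{4}$ gives $\|Pf\|_2^2 \sim \|a\|_{L^2(\Omega)}^2 + \|b\|_{L^2(\Omega)}^2 + \|c\|_{L^2(\Omega)}^2$. The goal is then to bound each of $a,b,c$ separately in $L^2(\Omega)$.

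For each macroscopic component, I would introduce an auxiliary scalar or tensor potential by solving a Poisson-type problem. To estimate $c$, let $\Phi_c \in H^2(\Omega)$ solve $-\Delta \Phi_c = c$ with a convenient boundary condition (Dirichlet or Neumann) so that $\|\Phi_c\|_{H^2(\Omega)} \lesssim \|c\|_{L^2(\Omega)}$. Then take
$$\psi_c(x,v) = M^{1/2}(v)\bigl(|v|^2 - \beta_c\bigr)\bigl(v\cdot \nabla \Phi_c(x)\bigr),$$
where $\beta_c$ is chosen so that $\int \psi_c \phi_0\,dv = 0$ (killing the $a$-contribution); the $b$-contribution vanishes by parity in $v$, and the $(I-P)f$ contribution is controllable. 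After integration by parts, the term $\int (v\cdot \nabla_x \psi_c)\, f\, dv\, dx$ in the weak formulation produces $\|c\|_{L^2}^2$ up to a positive multiplicative constant. Analogous test functions, namely polynomials in $v$ times $M^{1/2}$ multiplied by gradients of solutions to auxiliary elliptic problems, isolate the $a$- and $b$-coefficients.

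Having generated $\|a\|^2, \|b\|^2, \|c\|^2$ on the left, I estimate the remaining terms in the weak identity. The $\epsilon f$ and $g$ terms are handled by Cauchy--Schwarz against $\|\psi\|_2 \lesssim \|\Phi\|_{H^1}\lesssim \|a\|+\|b\|+\|c\|$. The collision term satisfies $\lambda\langle Lf,\psi\rangle = \lambda\langle L(I-P)f,\psi\rangle$ because $LPf = 0$, and using $|\langle Lh,\psi\rangle|\lesssim \|\nu^{1/2}h\|_2\|\nu^{1/2}\psi\|_2$ together with the fact that $\psi$ carries rapid Gaussian decay in $v$, this is bounded by $\lambda\|\nu^{1/2}(I-P)f\|_2(\|a\|+\|b\|+\|c\|)$. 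The boundary integral $\int_\Gamma \psi f(v\cdot n)\,dS\,dv$ is split according to $\Gamma_\pm$: on $\Gamma_-$ we substitute $f = r$ and on $\Gamma_+$ we keep $f$, and in both cases trace-estimate the $v$-weighted norm of $\psi$ by $\|\Phi\|_{H^1(\Omega)}$ to produce $(|r|_{2,-}+|f|_{2,+})(\|a\|+\|b\|+\|c\|)$. Summing the three estimates and absorbing the common factor $\|a\|+\|b\|+\|c\|$ via Cauchy--Schwarz with a small parameter yields the claimed inequality with constants independent of $\epsilon$ and $\lambda$.

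The main obstacle is the $b$-estimate: to separate the quadratic cross-terms $b_i b_j$ one must introduce a tensor-valued potential $\Phi_b^{ij}$ solving a decoupled elliptic system arranged so that only diagonal contributions $b_i^2$ survive (Guo's trick), and this requires careful bookkeeping of the polynomial prefactors and the boundary condition for $\Phi_b^{ij}$. A secondary technical point is selecting the boundary conditions for $\Phi_a,\Phi_b,\Phi_c$ compatibly with the splitting $v\cdot n \gtrless 0$, so that the boundary integral on $\Gamma_+$ is genuinely absorbed into $|f|_{2,+}$ and the one on $\Gamma_-$ into $|r|_{2,-}$, with no loss in the constants.
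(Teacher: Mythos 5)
Your proposal follows essentially the same route as the paper: the paper's proof simply defers to the macroscopic estimate of \cite[Lemma 3.3]{EGKM13}, which is exactly the test-function method you describe (elliptic potentials $\Phi_a,\Phi_b,\Phi_c$, velocity-polynomial test functions isolating $a$, $b$, $c$, Guo's tensor trick for $b$, and the boundary split into $|f|_{2,+}$ and $|r|_{2,-}$). Your sketch is consistent with that argument, including the adaptation of the $c$-type estimate to $a$ for the in-flow boundary condition.
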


\begin{proof}
Recall that $Pf (x, v)=\left(a(x)+b(x)\cdot v+c(x)\frac{|v|^2-3}{\sqrt{6}}\right)M^{\frac 12}(v)$ on $\Omega\times\mathbb{R}^3$.
The estimates of $b(x)$ and $c(x)$ are essential same as that in \cite[Lemma 3.3]{EGKM13}, except the estimate of $a(x)$. For the estimate of $a(x)$, it is similar as that of $c(x)$ under the case of in-flow boundary condition. Therefore, we omit the details of the proof here for simplicity of presentation.
\end{proof}

Now we are in a position to prove Proposition \ref{LEU}:
\begin{proof}[Proof of Proposition \ref{LEU}]
Given $\epsilon>0$. We first consider the solvability of the following boundary value problem for $\lambda$:
\begin{equation}\label{1el}
\begin{cases}
\mathbb{L}_\lambda:=\epsilon f+v\cdot\nabla_x f+\nu(v)f+\varphi(g_1) f-\lambda Kf=g_2, & \text{ in } \Omega\times\mathbb{R}^3,\\
f=r, & \text{ on } \Gamma_-.
\end{cases}
\end{equation}
Moreover, we denote $\mathbb{L}_\lambda^{-1}$ to be the solution operator associated with problem \eqref{1el}.
Because the proof is very long, we divide it into several steps.

{\bf Step 1. The existence of $\mathbb{L}_0^{-1}$.} Let $h(x, v)= w(v) f(x, v)$, then the problem \eqref{1el} for $h$ with $\lambda=0$ reads as
 \begin{equation*}\label{1el'}
\begin{cases}
\epsilon h+v\cdot\nabla_x h+\nu(v)h+\varphi(g_1)h =g_2, & \text{ in } \Omega\times\mathbb{R}^3,\\
h=wr, & \text{ on } \Gamma_-.
\end{cases}
\end{equation*}
By the method of characteristics, for almost every $(x, v)\in\overline\Omega\times\mathbb{R}^3\backslash \left(\Gamma_0\cup \Gamma_-\right)$, one has
\begin{align*}
h(x, v) &= w(v)r(x_b(x,v), v) e^{-(\epsilon+\nu(v))t_b(x, v)-\int_0^{t_b(x,v)}\varphi(g_1)(x-v\ell, v)d\ell}\\
\ &\ +\int_0^{t_b(x,v)} e^{-(\epsilon+\nu(v))s-\int_0^{s}\varphi(g_1)(x-v\ell, v)d\ell}w(v)g_2(x-vs, v)ds
\end{align*}
and for $(x, v)\in\Gamma_-$, one has
$$h(x, v)=w(v)r(x, v).$$
By \eqref{ng1}, we have
\begin{equation*}
e^{(-\epsilon+\nu(v))s-\int_0^{s}\varphi(g_1)(x-v\ell, v)d\ell}\leq e^{-(\epsilon+\frac{\nu(v)}{2})s}
\end{equation*}
provided that $\|wg_1\|_\infty$ is small enough. Thus, we deduce
\begin{equation*}
\|h\|_\infty+|h|_\infty \leq |wr|_{\infty, -}+ C_1\epsilon^{-1}\|wg_2\|_\infty.
\end{equation*}
Let $(x, v)\in\overline\Omega\times\mathbb{R}^3\backslash \Gamma_0$. If $\Omega$ is strictly convex, then it holds that $v\cdot n(x_b(x,v))<0$, which implies that $t_b(x, v)$ and $x_b(x, v)$ are smooth by \cite[Lemma 2]{G10}. Therefore, if $g_1, g_2$, and $r$ are continuous, we conclude that $h$ is continuous away from $\Gamma_0$.

{\bf Step 2. The existence of $\mathbb{L}_\lambda^{-1}$ for $\lambda \in [0, \lambda^*]$.}
We first give an \textit{a priori estimate}. For any given $\lambda\in[0, \lambda^*]$ with $\lambda^*=1-\widehat C\|wg_1\|_\infty$, where $\widehat C$ is as in \eqref{ng1}. Multiplying  the first equation of \eqref{1el} by $f$ and using the Green's identity, one has
\begin{align*}
\epsilon \|f\|_2^2+\frac 12|f|_{2,+}^2+\lambda\langle Lf, f\rangle+(1-\lambda)\|\nu^{\frac12}f\|_2^2+\langle \varphi(g_1)f, f\rangle=\langle g_2 ,f\rangle+\frac12|r|_{2,-}^2,
\end{align*}
where $\langle \cdot, \cdot\rangle$ denotes the $L^2(\Omega\times\mathbb{R}^3)$ inner product.
It follows from $\langle Lf, f\rangle\geq 0$ and \eqref{ng1} that
\begin{align*}
\epsilon \|f\|_2^2+\frac 12|f|_{2,+}^2+(1-\lambda-\widehat C\|wg_1\|_\infty )\|\nu^{\frac12}f\|_2^2\leq \ \frac{\epsilon}{2}\|f\|_2^2+\frac{1}{2\epsilon}\|g_2\|_2^2+\frac12|r|_{2,-}^2.
\end{align*}
Thus,
\begin{equation}\label{*1}
\|f\|_2+|f|_{2,+}=\|\mathbb{L}_\lambda^{-1}g_2\|_2+|\mathbb{L}_\lambda^{-1}g_2|_{2,+} \leq C_\epsilon\left (\|g_2\|_2+|r|_{2, -}\right).
\end{equation}
Furthermore, by Corollary \ref{Co1} and \eqref{*1}, we have
\begin{align}\label{*2}
\ &\ \|wf\|_\infty+|wf|_\infty \\
\leq &\ C_2\left( \|\nu^{-1}w g_2\|_\infty+|wr|_{\infty, -}+\|\nu^{\frac12}f\|_2\right)\notag\\
\leq &\ C_\epsilon \left( \|\nu^{-1}w g_2\|_\infty+|wr|_{\infty, -}+\|g_2\|_2+|r|_{2, -}\right)\notag\\
\leq &\ C_\epsilon \left( \|\nu^{-1}w g_2\|_\infty+|wr|_{\infty, -}\right).\notag
\end{align}
Let $\nu^{-1}w\bar g_2\in L^\infty(\Omega\times\mathbb{R}^3)$ and $\nu^{-1}w\tilde g_2\in L^\infty(\Omega\times\mathbb{R}^3)$. Assume that $\bar f=\mathbb{L}_\lambda^{-1}\bar g_2$ and $\tilde f=\mathbb{L}_\lambda^{-1}\tilde g_2$ are the solutions to \eqref{1el} with $g_2$ replaced by $\bar g_2$ and $\tilde g_2$, respectively. Then
\begin{equation*}\label{1el-}
\begin{cases}
\epsilon (\bar f-\tilde f)+v\cdot\nabla_x (\bar f-\tilde f)+\nu(v)(\bar f-\tilde f)+\varphi(g_1)(\bar f-\tilde f)-\lambda K(\bar f-\tilde f)=\bar g_2-\tilde g_2, \\
\bar f-\tilde f |_{\Gamma_-}=0.
\end{cases}
\end{equation*}
By \eqref{*1} and \eqref{*2}, we have
\begin{align}
\|\bar f-\tilde f\|_2+|\bar f-\tilde f|_{2,+}&\leq C_\epsilon\|\bar g_2-\tilde g_2\|_\infty; \label{*3} \\
\|w(\bar f-\tilde f)\|_\infty+|w(\bar f-\tilde f)|_\infty & \leq C_\epsilon \|\nu^{-1}w(\bar g_2-\tilde g_2)\|_\infty.\label{*4}
\end{align}
Evidently, the uniqueness of solutions to \eqref{1el} follows from \eqref{*3}--\eqref{*4}. Note that the constant $C_\epsilon$ in \eqref{*1}--\eqref{*4} does not depend on $\lambda \in [0, \lambda^*]$, which is crucial to extend $\mathbb{L}_0^{-1}$ to $\mathbb{L}_\lambda^{-1}$ by a bootstrap argument. We now define a Banach space
\begin{equation*}
B=\left\{ f(x, v) : wf\in L^\infty(\Omega\times\mathbb{R}^3)\cap L^\infty(\Gamma), f(x, v)|_{\Gamma_-}=r(x, v) \right\}.
\end{equation*}
Then we define a map on $B$ by $T_\lambda f=\mathbb{L}_0^{-1}(\lambda Kf+g_2).$ By \eqref{*2}, one has $T_\lambda : B \to B$.
For any $f_1, f_2\in B$, by using \eqref{*4}, we have
\begin{align*}
\ & \|w(T_\lambda f_1-T_\lambda f_2)\|_\infty\\
\leq&\ C_\epsilon \|\nu^{-1}w(\lambda Kf_1+g_2-\lambda Kf_2-g_2)\|_\infty\\
\leq &\ \lambda C_\epsilon\|w(f_1-f_2)\|_\infty.
\end{align*}
We can take $\lambda_\epsilon>0$ small sufficiently such that $\lambda_\epsilon C_\epsilon\leq \frac12$, then $T_\lambda :B \to B$ is a contraction mapping for $\lambda\in [0, \lambda_\epsilon]$. Hence, $T_\lambda$ has a fixed point, i.e., there is $f\in B$ such that $f=T_\lambda f=\mathbb{L}_0^{-1}(\lambda Kf+g_2)$.
Define $T_{\lambda_\epsilon+\lambda}f=\mathbb{L}_{\lambda_\epsilon}^{-1}(\lambda Kf+g_2)$. Note that estimates \eqref{*1}--\eqref{*4} for $\mathbb{L}_{\lambda_\epsilon}^{-1}$ are independent of $\lambda_{\epsilon}$. By repeating the previous procedures, we conclude that $T_{\lambda_\epsilon+\lambda}: B\to B$ is a contraction mapping for $\lambda \in [0,\lambda_\epsilon]$. Step by step, we can obtain the existence of $\mathbb{L}_\lambda^{-1}$ for $\lambda\in [0, \lambda^*]$ with $\lambda^*=1-\widehat C\|wg_1\|_\infty$. Evidently, the operator $\mathbb{L}_\lambda^{-1}$ satisfies \eqref{*1}--\eqref{*4}.
Furthermore, the fixed point $f$ can be found by defining a sequence $\{f_n\}$ with $f_0\equiv 0$ and $f_{n+1}=T_\lambda(f_n)$ for $n\geq 1$, then $\|f_n-f\|_\infty \to 0$.
Since the sequence under consideration always converges in $L^\infty$, we conclude that the solution is continuous away from $\Gamma_0$ when $\Omega$ is strictly convex.

{\bf Step 3. The existence of $\mathbb{L}_\lambda^{-1}$ for $\lambda \in [\lambda^*, 1]$.}
Note that $\lambda^*> \frac12$ for $\|wg_1\|_\infty$ sufficiently small.
We first give an \textit{a priori estimate}.
For any given $\lambda\in[\lambda^*, 1]$ with $\lambda^*=1-\widehat C\|wg_1\|_\infty$, multiplying the equation of \eqref{1el} by $f$ and using the Green's identity, one has
\begin{align*}
\epsilon \|f\|_2^2+\frac 12|f|_{2,+}^2+\lambda\langle Lf, f\rangle+(1-\lambda)\|\nu^{\frac12}f\|_2^2=-\langle \varphi(g_1)f, f\rangle+\langle g_2 ,f\rangle+\frac12|r|_{2,-}^2.
\end{align*}
It follows from \eqref{LP1} and \eqref{ng1} that
\begin{align}\label{*5}
\ &\ \epsilon \|f\|_2^2+\frac 12|f|_{2,+}^2+\lambda C_0\|\nu^{\frac12}(I-P)f\|_2^2 \\
\leq &\ (\lambda-\lambda^*)\|\nu^{\frac12}f\|_2^2+ | \langle g_2 ,f\rangle |+\frac12|r|_{2,-}^2.\notag
\end{align}
Applying Lemma \ref{L2E} with $g=g_2-\varphi(g_1)f-(1-\lambda)\nu(v)f$, we have
\begin{align}\label{*6}
\| Pf\|_2 &\leq C_3 \left[ \epsilon\|f\|_2+\lambda\|\nu^{\frac12}(I-P) f\|_2+\|g\|_2+|f|_{2,+}+|r|_{2, -}\right]  \\
\ & \leq C_3 \left[ \epsilon\|f\|_2+\lambda\|\nu^{\frac12}(I-P) f\|_2+\left(\widehat C\|wg_1\|_\infty+1-\lambda\right) \|\nu f\|_2 \right. \notag\\
\ &\ \quad\quad \left.  +\|g_2\|_2+|f|_{2,+}+|r|_{2, -} \right]. \notag
\end{align}
Combining \eqref{*5} and \eqref{*6}, we deduce from $\lambda\geq\lambda^*>\frac 12$ that
\begin{align*}
\ &\ \|\nu^{\frac12}f\|_2+|f|_{2, +} \\
\leq &\ \|Pf\|_2+\|\nu^{\frac12}(I-P)f\|_2+|f|_{2, +}\notag\\
\leq &\ C_4 \left[ \epsilon\|f\|_2+\lambda\|\nu^{\frac12}(I-P) f\|_2+\left(\widehat C\|wg_1\|_\infty+1-\lambda\right) \|\nu f\|_2 \right.  \notag\\
\ &\quad\quad \left. +\|g_2\|_2+|f|_{2,+}+|r|_{2, -} \right] \notag\\
\leq &\ \overline C_4 \left[ \left(\widehat C\|wg_1\|_\infty+1-\lambda+(\lambda-\lambda^*)\right)\|\nu^{\frac12}f\|_2+|\langle g_2, f\rangle|^{\frac12}+\|g_2\|_2+|r|_{2, -} \right]\notag\\
\leq &\ \overline C_4 \left(\|wg_1\|_\infty \|\nu^{\frac12}f\|_2+ \delta \|\nu^{\frac12}f\|_2+C_\delta\|\nu^{-\frac12}g_2\|_2+|r|_{2,-} \right).\notag
\end{align*}
Taking $\|wg_1\|_\infty$ and $\delta>0$ small enough, we have
\begin{align}\label{*8}
\|\nu^{\frac12}f\|_2+|f|_{2, +} \leq \overline{C}_\delta\left(\|\nu^{-\frac12}g_2\|_2+|r|_{2,-}\right) \leq \widetilde C_\delta\left( \|\nu^{-1}wg_2\|_\infty+|wr|_{\infty, -} \right).
\end{align}
Applying Corollary \ref{Co1} with \eqref{*8}, we have
\begin{equation}\label{*9}
\|wf\|_\infty+|wf|_\infty\leq C_5 \left(\|\nu^{-1}w g_2\|_\infty+|wr|_{\infty, -}\right).
\end{equation}
Let $\nu^{-1}w\bar g_2\in L^\infty(\Omega\times\mathbb{R}^3)$ and $\nu^{-1}w\tilde g_2\in L^\infty(\Omega\times\mathbb{R}^3)$. Assume that $\bar f=\mathbb{L}_\lambda^{-1}\bar g_2$ and $\tilde f=\mathbb{L}_\lambda^{-1}\tilde g_2$ are two solutions to \eqref{1el} with $g_2$ replaced by $\bar g_2$ and $\tilde g_2$, respectively.
Applying \eqref{*8} and \eqref{*9} gives
\begin{align}
\|\nu^{\frac12}(\bar f-\tilde f)\|_2+|\bar f-\tilde f|_{2,+}&\leq C_6\|\nu^{-\frac12}(\bar g_2-\tilde g_2)\|_2;\label{*10}\\
\|w(\bar f-\tilde f)\|_\infty+|w(\bar f-\tilde f)|_\infty &\leq C_6\|\nu^{-1}w(\bar g_2-\tilde g_2)\|_\infty.\label{*11}
\end{align}
Note that the constant $C_6$ in \eqref{*10}--\eqref{*11} does not depend on $\epsilon$ and $\lambda \in [\lambda^*, 1]$.
We define a map on $B$ by $T_{\lambda^*+\lambda}f=\mathbb{L}_{\lambda^*}^{-1}(\lambda Kf+g_2)$. For any $f_1, f_2\in B$, by using \eqref{*11}, we have
\begin{align*}
\ &\ \|w(T_{\lambda^*+\lambda}f_1-T_{\lambda^*+\lambda}f_2)\|\infty\\
\leq &\ C_6\|\nu^{-1}w(\lambda Kf_1+g_2-\lambda Kf_2-g_2)\|_\infty \\
\leq &\ \lambda \overline C_6\|w(f_1-f_2)\|_\infty.
\end{align*}
We take $\lambda_0>0$ sufficiently small such that $\lambda_0\overline C_6\leq \frac12$, then $T_{\lambda^*+\lambda} : B\to B$ is a contraction mapping for $\lambda\in (0, \lambda_0]$. Step by step, we can obtain the existence of $\mathbb{L}_1^{-1}$ and $\mathbb{L}_1^{-1}$ satisfies \eqref{*8}--\eqref{*11}.
Furthermore, since the sequence under consideration always converges in $L^\infty$, the solution is continuous away from $\Gamma_0$ when $\Omega$ is strictly convex.

{\bf Step 4. The convergence of $f^\epsilon$ as $\epsilon \to 0^+$.}
Let $f^\epsilon$ be the solution of \eqref{1el} with $\lambda=1$ obtained by step 3. For any $\epsilon_1, \epsilon_2>0$, $f^{\epsilon_1}-f^{\epsilon_2}$ satisfies
\begin{equation*}
\begin{cases}
v\cdot \nabla_x(f^{\epsilon_1}-f^{\epsilon_2}) +L(f^{\epsilon_1}-f^{\epsilon_2}) +\varphi(g_1)(f^{\epsilon_1}-f^{\epsilon_2})= -\epsilon_1f^{\epsilon_1}+\epsilon_2f^{\epsilon_2},\\
f^{\epsilon_1}-f^{\epsilon_2}|_{\Gamma_-}=0.
\end{cases}
\end{equation*}
Repeating the similar arguments in \eqref{*5}--\eqref{*9}, we have
\begin{align}\label{*12}
\| \nu^{\frac12}(f^{\epsilon_1}-f^{\epsilon_2})\|_2^2+|f^{\epsilon_1}-f^{\epsilon_2}|_{2,+}^2 & \lesssim \|\nu^{-\frac12} (-\epsilon_1f^{\epsilon_1}+\epsilon_2f^{\epsilon_2})\|_2^2 \\
\ &\lesssim  (\epsilon_1^2+\epsilon_2^2)\left(\|\nu^{-\frac12}f^{\epsilon_1}\|_2^2+\|\nu^{-\frac12}f^{\epsilon_2}\|_2^2\right) \notag\\
\ &\lesssim  (\epsilon_1^2+\epsilon_2^2)\left(\| wf^{\epsilon_1}\|_\infty^2+\|\ wf^{\epsilon_2}\|_\infty^2\right) \notag\\
\ &\lesssim  (\epsilon_1^2+\epsilon_2^2)\left(\| \nu^{-1}wg_2\|_\infty^2+ |wr|_{\infty, -}^2 \right). \notag
\end{align}
Finally, applying Corollary \ref{Co1} to $\nu^{\frac12}(f^{\epsilon_1}-f^{\epsilon_2})$ and \eqref{*12}, it follows from $\tau>3+|\gamma|$ that
\begin{align*}
\ &\ \|w\nu^{\frac12}(f^{\epsilon_1}-f^{\epsilon_2})\|_\infty+|w \nu^{\frac12}(f^{\epsilon_1}-f^{\epsilon_2})|_\infty \\
\lesssim &\ \| w(\epsilon_1f^{\epsilon_1}-\epsilon_2f^{\epsilon_2})\|_\infty+\|\nu^{\frac 12}(f^{\epsilon_1}-f^{\epsilon_2})\|_2  \\
\lesssim &\ (\epsilon_1+\epsilon_2)\left(\| \nu^{-1}wg_2\|_\infty+ |wr|_{\infty, -} \right).
\end{align*}
This implies that there exists a function $f$ such that $\|\nu w(f^\epsilon-f)\|_\infty \to 0$ as $\epsilon\to 0^+$, and it holds that
\begin{equation*}
\|wf\|_\infty+|wf|_\infty\leq C \left( \|\nu^{-1}wg_2\|_\infty+|wr|_{\infty, -}\right).
\end{equation*}
Furthermore, since the sequence under consideration always converges in $L^\infty$, the solution is continuous away from $\Gamma_0$.
This completes the proof.
\end{proof}


\section{H\"older regularity of stationary linearized problem}\label{linear}

In this section we study the H\"older regularity of solutions to the stationary linearized Boltzmann problem \eqref{LSBE-0}. 
The main result of this section is stated as follows:
\begin{theorem}\label{T1}
Let $-3<\gamma<0$, and let $\tau>3+|\gamma|$ and $0<\ka<\frac18$. Assume that $| wr|_{\infty, -}<\infty$. Then there exists a unique solution $f$ to problem \eqref{LSBE-0} satisfying
\begin{equation*}
\|wf\|_\infty+|wf|_\infty \leq C|wr|_{\infty, -}
\end{equation*}
for some positive constant $C=C_{\gamma, \tau, \Omega}$. 
Moreover, if $\Omega$ is strictly convex and $r$ is continuous on $\Gamma_-$, then $f$ is continuous on $\overline\Omega\times\mathbb{R}^3\backslash\Gamma_0.$
Assume further that $r$ satisfies \eqref{rb} when $\Omega$ is strictly convex.
Let $0<\alpha< \min\{1, 3+\gamma\}$.
Then, there exist $\widetilde C=\widetilde C_{\alpha, \beta, \gamma, \Omega}>0$ and $c=c_{\gamma}>0$ such that  for every $x_1, x_2\in\Omega$ and $v_1, v_2\in\mathbb{R}^3\setminus\{0\}$,
\begin{align}\label{T1v}
\ & |f(x_1, v_1)-f(x_2, v_2)| \leq \widetilde C(|wr|_{\infty, -}+m) \\
\ & \times \left(d_{(x_1, x_2)}^{-1} (|x_1-x_2|+|v_1-v_2|)(1+|v_{1,2}|^{-1}e^{-c\frac{\nu(v_{1,2})}{|v_{1,2}|}d_{(x_1, x_2)}})\right)^{\min\{\alpha, \beta\}}. \notag
\end{align}
Moreover, it holds that
\begin{align}\label{T1x}
\ &|f(x_1, v_1)-f(x_2, v_2)|\\
\lesssim &\ (|wr|_{\infty, -}+m)\left(d_{(x_1, x_2)}^{-2}(|x_1-x_2|+|v_1-v_2|)\right)^{\min\{\alpha, \beta\}}. \notag
\end{align}
\end{theorem}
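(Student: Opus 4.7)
The plan is to first invoke Proposition \ref{LEU} with $g_1 \equiv 0$ and $g_2 \equiv 0$, which immediately yields a unique solution $f$ of \eqref{LSBE-0} satisfying $\|wf\|_\infty + |wf|_\infty \lesssim |wr|_{\infty,-}$, together with the continuity of $f$ on $\overline\Omega \times \mathbb{R}^3 \setminus \Gamma_0$ when $\Omega$ is strictly convex and $r$ is continuous. Once these preliminaries are settled, only the two H\"older estimates \eqref{T1v} and \eqref{T1x} remain to be proved. I will establish \eqref{T1v} first and then deduce \eqref{T1x} from it by invoking \eqref{e2} to absorb the factor $|v_{1,2}|^{-1} e^{-c\nu(v_{1,2}) d_{(x_1,x_2)}/|v_{1,2}|}$ into one extra power of $d_{(x_1,x_2)}^{-1}$; this is exactly why the exponent jumps from $d^{-1}$ in \eqref{T1v} to $d^{-2}$ in \eqref{T1x}.

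The proof of \eqref{T1v} rests on the Vidav-type splitting $f = F_1(f) + F_2(f) + F_3(f)$ already recorded in the introduction, obtained by substituting the Duhamel formula into itself once. I estimate the three pieces separately. For $F_1(f)(x,v) = r(x_b(x,v),v) e^{-\nu(v) t_b(x,v)}$, both the space- and velocity-H\"older moduli reduce to the H\"older modulus \eqref{rb} of $r$ composed with $x_b$, together with Lipschitz-type control of $t_b$. Combining \eqref{xbx}--\eqref{xbv} with \eqref{rb} yields a bound of the form $\bigl(d_{(x_1,x_2)}^{-1}(|x_1-x_2|+|v_1-v_2|)\bigr)^\beta$, while the difference of the exponentials is handled by \eqref{tb2}--\eqref{tb5} and produces the $|v_{1,2}|^{-1} e^{-c\nu d/|v|}$ factor through \eqref{e2}. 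For $F_2$, which carries one additional $k(v,\eta)$-integration and still an exit-data factor of $r$, the velocity regularity is supplied by the H\"older estimate of Lemma \ref{kH} with exponent $\widetilde\alpha_\gamma$, and the space regularity is again inherited from $r \circ x_b$. The restriction $\alpha < \min\{1, 3+\gamma\}$ is chosen precisely so that $\alpha \leq \widetilde\alpha_\gamma$ uniformly in the whole range $-3<\gamma<0$.

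The main obstacle is the space regularity of the double-collision term $F_3$. In the very soft range $-3 < \gamma \leq -2$ one cannot integrate by parts in $v$ inside $K$, so I will instead exploit the following observation: along the inner characteristic $y = x - vs - \eta\tau$, a spatial shift $x \mapsto x + h$ can be converted, via the change of variable $\eta \mapsto \eta + h/\tau$, into a velocity shift in the outer kernel $k(v,\eta)$ and in the second argument of $Kf$. Applying Lemma \ref{kH} (and, where $\widetilde\alpha_\gamma > \alpha$, interpolating against the $L^\infty$ bound on $f$) produces a H\"older modulus of order $(|h|/\tau)^\alpha$, which is integrable near $\tau=0$ precisely because $\alpha<1$; the $\tau$-integration is further controlled by $e^{-\nu(\eta)\tau}$ and \eqref{tb1}. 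The velocity regularity of $F_3$ is in turn supplied directly by Lemma \ref{kH} applied to the outer kernel, while the kernel bounds on $k_1, k_2$ together with the moment estimates \eqref{A-1}--\eqref{A-2} guarantee uniform convergence of all integrals.

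The most delicate bookkeeping will be arranging the various $d^{-1}$-weights arising from \eqref{xbx}, \eqref{xbv}, \eqref{tb4}, \eqref{tb5}, and Proposition \ref{P3.2} so that they coalesce into the single factor $d_{(x_1,x_2)}^{-1}$ appearing in \eqref{T1v}, rather than compounding. Summing the three contributions and using the already-established $L^\infty$ bound on $f$ wherever $f$ appears undifferentiated then gives \eqref{T1v}, and \eqref{T1x} follows immediately by applying \eqref{e2} as indicated above.
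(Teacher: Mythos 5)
Your proposal is correct and follows essentially the paper's own route: Proposition \ref{LEU} with $g_1=g_2=0$ gives existence, uniqueness, the weighted $L^\infty$ bound and continuity; the Vidav splitting $f=F_1(f)+F_2(f)+F_3(f)$ is estimated with $F_1,F_2$ controlled through the boundary regularity of $r$ and the $t_b,x_b$ estimates, $F_3$ through transferring the velocity H\"older regularity of $K$ (Lemma \ref{kH}) to space via the transport, and \eqref{e2} converts \eqref{T1v} into \eqref{T1x}. The only real deviation is cosmetic: you realize the space-to-velocity transfer in $F_3$ by the translation $\eta\mapsto\eta+(x_1-x_2)/\tau$ at fixed $\tau$, whereas the paper (Lemma \ref{Gx}) changes variables $(\eta,\tau)\mapsto(y,\rho)$ and compares the kernels along the two directions $\frac{x_i-y}{|x_i-y|}$; both hinge on Lemma \ref{kH}, the $L^\infty$ bound and $\alpha<\min\{1,3+\gamma\}$, and the ``delicate bookkeeping'' you defer is precisely the dyadic layering near $\partial\Omega$ (the sets $\Omega_n$ and Proposition \ref{P3.2}) carried out in Lemma \ref{LF2}.
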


In order to prove the result, as mentioned in introduction, we will discuss the H\"older regularity for $F_1(f), F_2(f)$, and $F_3(f)$.
We first investigate the H\"older regularity to $F_{3}(f)$.
Define
\begin{equation*}
G(f)(x,v)=\int_{\mathbb{R}^3}\int_0^{t_b(x, \eta)}k(v,\eta)e^{-\nu(\eta)\tau}Kf(x-\eta\tau, \eta)d\tau d\eta.
\end{equation*}
Then
$$F_3(f)(x,v)=\int_0^{t_b(x,v)}e^{-\nu(v)s}G(x-vs, v)ds.$$
Following the idea of \cite{Ch}, we use the spherical coordinates $(\rho, \phi, \theta)$ by $\eta=(\rho\cos\theta\sin\phi,$ $\rho\sin\theta\sin\phi, \rho\cos\phi)$ and  $|\eta|=\rho$,
then do the change of variable from $\tau$ to $r$ by $r=\rho \tau.$
We then get
\begin{align*}
\ &G(f)(x,v)\\
=&\int_0^\infty\int_0^\pi\int_0^{2\pi}\int_0^{t_b(x, \eta)}k(v,\eta)e^{-\nu(\eta)\tau}Kf(x-\eta\tau, \eta)\rho^2\sin\phi\ d\tau d\theta  d\phi d\rho\\
=&\int_0^\infty\int_0^\pi\int_0^{2\pi}\int_0^{\left|\overline{xx_b(x, \eta)}\right|}k(v, \eta)e^{-\nu(\eta)\frac{r}{\rho}}Kf\left(x-\frac{\eta}{\rho}r, \eta\right)\rho\sin\phi\ dr d\theta  d\phi d\rho.
\end{align*}
From the convexity of the domain $\Omega$, we can parametrize $\Omega$ by $(r, \theta, \phi)$. Let $y=x-\frac{\eta}{\rho}r$, then $y\in\Omega$, $r=|x-y|$, $\eta=\frac{x-y}{|x-y|}\rho$, and
\begin{align*}
\ & G(f)(x, v)\\
=&\int_0^\infty\int_\Omega k\left(v, \frac{(x-y)}{|x-y|}\rho\right)e^{-\nu\left(\frac{x-y}{|x-y|}\rho\right)\frac{|x-y|}{\rho}}Kf\left(y,  \frac{(x-y)}{|x-y|}\rho\right)\frac{\rho}{|x-y|^2}dy d\rho.
\end{align*}

The next lemma is the H\"older regularity of $G(f)$ for space variable.
\begin{lemma}\label{Gx}
Let $-3<\gamma<0$ and $f\in L^\infty(\Omega\times\mathbb{R}^3)$. Let $0<\alpha<\min\{1, 3+\gamma\}$. Then there exists $C=C_{\gamma, \Omega, \alpha}>0$ such that
\begin{equation}\label{ieHx}
|G(f)(x_1, v)-G(f)(x_2, v)|\leq C\|f\|_{\infty}|x_1-x_2|^{\alpha}. 
\end{equation}
for every $x_1, x_2\in\Omega$ and $v\in\mathbb{R}^3\setminus\{0\}$.
\end{lemma}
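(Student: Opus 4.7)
The plan is to exploit the $y$-representation of $G(f)(x,v)$ displayed just before the lemma, namely
\begin{equation*}
G(f)(x,v)=\int_0^\infty\!\!\int_\Omega k(v,\rho\hat u_{x,y})\,e^{-\nu(\rho\hat u_{x,y})|x-y|/\rho}\,Kf(y,\rho\hat u_{x,y})\,\tfrac{\rho}{|x-y|^2}\,dy\,d\rho,
\end{equation*}
with $\hat u_{x,y}=(x-y)/|x-y|$. The crucial point is that in this form $x$ enters the integrand only through the direction $\hat u_{x,y}$ (appearing as a velocity argument in $k$, $\nu$, and $Kf$) and through the scalar $|x-y|$; thus spatial regularity of $G(f)$ can be traded for velocity regularity of $Kf$, which is exactly what Lemma \ref{kH} provides. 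Set $d:=|x_1-x_2|$; since $\Omega$ is bounded and $|G(f)|\lesssim\|f\|_\infty$ uniformly, I may assume $d$ small and split $\Omega=A\cup B$ with $A=\{y:|y-x_1|\leq 2d\}$.

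For the near region $A$ I bound each of $G(f)(x_i,v)$ restricted to $y\in A$ by $C\|f\|_\infty d^\alpha$ directly. Reverting to $(\eta,\tau)$-coordinates with $\eta=\rho\hat u$, $\tau=|x_i-y|/\rho$ converts $A$ into $\{\tau\lesssim d/|\eta|\}$, and combining the pointwise bound $|Kf|\lesssim\|f\|_\infty$ with the interpolation
\begin{equation*}
\int_0^{cd/|\eta|}e^{-\nu(\eta)\tau}\,d\tau\leq \min\!\left(\tfrac{cd}{|\eta|},\tfrac{1}{\nu(\eta)}\right)\leq \left(\tfrac{cd}{|\eta|}\right)^\alpha\nu(\eta)^{\alpha-1}
\end{equation*}
produces the factor $d^\alpha$, while the residual $\eta$-integral $\int|k(v,\eta)|\,|\eta|^{-\alpha}\langle\eta\rangle^{\gamma(\alpha-1)}\,d\eta$ is finite by inequality \eqref{A-1} since $\alpha<3$.

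For the far region $B$ we have $|y-x_i|\gtrsim d$, $|x_1-y|\sim|x_2-y|$, and $|\hat u_{x_1,y}-\hat u_{x_2,y}|\lesssim d/|x_1-y|$. I expand the difference of the integrands telescopically by perturbing one factor at a time: the $Kf$-factor by Lemma \ref{kH}, giving a bound $\lesssim\|f\|_\infty(\rho d/|x_1-y|)^{\widetilde\alpha_\gamma}\max_i\langle\rho\hat u_{x_i,y}\rangle^{\gamma-1}$; the $k(v,\rho\hat u)$-factor by its pointwise gradient bounds and the mean value theorem; the exponential by $|e^a-e^b|\leq|a-b|e^{\max(a,b)}$ combined with $|\nabla_v\nu|\lesssim\langle v\rangle^{\gamma-1}$ and the Lipschitz dependence of $|x-y|$ on $x$; and the weight $1/|x-y|^2$ by $|r_1^{-2}-r_2^{-2}|\lesssim d/r_1^3$. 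In spherical coordinates based at $x_1$ the Jacobian $r^2\,dr\,d\hat u$ cancels the $1/|x-y|^2$ factor, so each differenced term yields a radial integral $\int_d^R r^{-s}dr$ which, upon interpolation against the bounded diameter $R$, contributes a factor $d^\alpha$ (up to an innocuous logarithm in the borderline cases); the remaining velocity integrals are then handled by \eqref{A-1} together with the bounds on $k_1,k_2,\nabla k_1,\nabla k_2$.

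The main obstacle is precisely the $Kf$-difference in the far region. In the very soft regime $-3<\gamma\leq-2$, Lemma \ref{kH} supplies only the Hölder exponent $\widetilde\alpha_\gamma=3+\gamma<1$, and after the $\int_d^R r^{-\widetilde\alpha_\gamma}\,dr$ interpolation one is forced to require $\alpha<3+\gamma$. The remaining three factors behave essentially Lipschitz (up to logs) and impose only the cap $\alpha<1$, so the combined restriction $\alpha<\min\{1,3+\gamma\}$ stated in the lemma is sharp within this scheme.
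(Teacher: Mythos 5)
Your near-region estimate and your handling of the $Kf$-difference in the far region are sound and run parallel to the paper's own estimates (the terms called $G_{k3}$, $G_{K2}$ and $G_{K1}$ in its proof); the only slip there is cosmetic: the integral $\int|k(v,\eta)|\,|\eta|^{-\alpha}\langle\eta\rangle^{\gamma(\alpha-1)}d\eta$ is finite by \eqref{A-1} because $\alpha<3+\bar\gamma$ (with $\bar\gamma$ as in \eqref{bg}), not because ``$\alpha<3$''; your hypothesis $\alpha<\min\{1,3+\gamma\}$ does cover this. The genuine gap is in the far-region treatment of the factor $k(v,X(x_i))$, $X(x_i)=\rho(x_i-y)/|x_i-y|$. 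You propose to difference it ``by its pointwise gradient bounds and the mean value theorem'' over the whole far region, but $|\nabla_\eta k(v,\eta)|\lesssim A(|v-\eta|)|v-\eta|^{-1}\sim|v-\eta|^{\bar\gamma-1}$ is \emph{not locally integrable} in $\eta$ once $\gamma\le-2$, and as $(y,\rho)$ ranges over the far region the segment joining $X(x_1)$ and $X(x_2)$ can pass arbitrarily close to (or through) the singular point $\eta=v$. Hence the mean value bound is useless on the set where $|v-X(x_1)|\lesssim\rho|x_1-x_2|/|x_1-y|$, and even off that set the $\eta$-integral of $|v-\eta|^{\bar\gamma-1}$ diverges for $-3<\gamma\le-2$ unless one first uses a quantitative lower bound such as \eqref{Gk4} to convert part of the singularity into powers of $|x_1-x_2|$.

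This is precisely why the paper performs a further decomposition of the far region according to whether $|v-X(x_1)|\ge 4\rho|x_1-x_2|/|x_1-y|$ or not (its sets $D_1$ and $D_2$): on $D_1$ the mean value theorem is used together with \eqref{Gk4} and the trade-off $|v-\eta|^{\bar\gamma-1}\le|v-\eta|^{\max\{\bar\gamma-1,-3+\epsilon\}}\bigl(\rho|x_1-x_2|/r\bigr)^{\min\{0,2+\gamma-\epsilon\}}$, while on $D_2$ no differencing is attempted at all: the two terms are bounded separately and the smallness of the velocity shell $|v-\eta|<8\rho|x_1-x_2|/r$ is exploited by inserting $\bigl(\rho|x_1-x_2|/(r|v-\eta|)\bigr)^{a_3}\ge1$, which again costs integrability of $|v-\eta|^{-a_3}$ against $k$ and caps the exponent at $3+\gamma$. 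Consequently your claim that the non-$Kf$ factors are ``essentially Lipschitz (up to logs) and impose only the cap $\alpha<1$'' is false in the very soft range $-3<\gamma\le-2$ (and unsubstantiated as stated even for $-2<\gamma<0$, since Lemma \ref{kH} itself shows the integrated modulus of continuity of $k$ in its second argument is only H\"older of order $\widetilde\alpha_\gamma$): the restriction $\alpha<3+\gamma$ enters through the $k$-difference as well, not only through $Kf$. As written, your scheme does not establish \eqref{ieHx} for $-3<\gamma\le-2$; it needs the additional velocity-shell decomposition (or an equivalent device) to handle the region where the perturbed direction is near $v$.
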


\begin{proof}
Let $R$ be the diameter of $\Omega$.
By \eqref{A-1}, \eqref{tb1}, and $f\in L^\infty(\Omega\times\mathbb{R}^3)$, we have
\begin{equation}\label{GB}
\left| G(f)(x, v)\right| \lesssim \|f\|_\infty \langle v\rangle^{2\gamma-5}.
\end{equation}
\eqref{GB} implies that \eqref{ieHx} is obvious for $|x_1-x_2|\geq1$. Now, we only need to consider the case of $|x_1-x_2|<1$.
For simplicity of notation, let $X(x)=\frac{x-y}{|x-y|}\rho$.
Then
\begin{align*}
\ &|G(f)(x_1, v)-G(f)(x_2, v)|\\
\leq &\int_0^\infty\int_\Omega \left|k\left(v, X(x_1)\right) \frac {e^{-\nu\left(X(x_1)\right)\frac{|x_1-y|}{\rho}}}{|x_1-y|^2}-k\left(v, X(x_2)\right)\frac{e^{-\nu\left(X(x_2)\right)\frac{|x_2-y|}{\rho}}}{|x_2-y|^2}\right|  \\
\ &\qquad\qquad\qquad\qquad\qquad\times \left| Kf\left(y, X(x_2)\right)\right|\rho\ dy d\rho\\
\ &\ +\int_0^\infty\int_\Omega \left|k\left(v, X(x_1)\right)\right| e^{-\nu\left(X(x_1)\right)\frac{|x_1-y|}{\rho}}\frac{\rho}{|x_1-y|^2}   \\
\ &\qquad\qquad\qquad\qquad\qquad \times \left| Kf\left(y, X(x_1)\right)-Kf\left(y, X(x_2)\right)\right|dy d\rho\\
=:&\ G_k+G_K.
\end{align*}

For $G_k$, we divide the domain of the integration into three subdomains:
\begin{align*}
D_1&=\left\{(\rho, y)\in (0,\infty)\times\Omega : |y-x_1|\geq 4|x_1-x_2|, \left|v-X(x_1)\right|\geq 4\rho\frac{|x_1-x_2|}{|x_1-y|} \right\};\\
D_2&=\left\{(\rho, y)\in (0,\infty)\times\Omega : |y-x_1|\geq 4|x_1-x_2|, \left|v-X(x_1)\right|< 4\rho\frac{|x_1-x_2|}{|x_1-y|} \right\};\\
D_3&=\left\{(\rho, y)\in (0,\infty)\times\Omega : |y-x_1|< 4|x_1-x_2| \right\}.
\end{align*}
It follows from \eqref{A-1} that
\begin{align*}
G_k &\lesssim  \|f\|_{\infty} \int_0^\infty\int_\Omega \langle \rho\rangle^{\gamma-2}\rho \left|k\left(v, X(x_1)\right)\frac {e^{-\nu\left(X(x_1)\right)\frac{|x_1-y|}{\rho}}}{|x_1-y|^2}    \right.\\
\ &\qquad\qquad\qquad\qquad\qquad\qquad\qquad\left. -k\left(v, X(x_2)\right)\frac{e^{-\nu\left(X(x_2)\right)\frac{|x_2-y|}{\rho}}}{|x_2-y|^2}\right|dy d\rho\\
\ &\lesssim \|f\|_\infty \left( \iint_{D_1} \cdots\cdots +\iint_{D_2}\cdots\cdots+\iint_{D_3}\cdots\cdots\right)\\
\ &=: \|f\|_\infty \left( G_{k1}+G_{k2}+G_{k3}\right).
\end{align*}
Applying the fundamental theorem of calculus to $G_{k1}$, we have
\begin{align*}
G_{k1}=\iint_{D_1}\langle\rho\rangle^{\gamma-2}\rho \left|\int_0^1\frac{d}{ds}\left[k\left(v, X(x(s))\right)\frac{e^{-\nu\left(X(x(s))\right)\frac{|x(s)-y|}{\rho}}}{|x(s)-y|^2}\right] ds\right|dy d\rho,
\end{align*}
where $x(s)=x_2+s(x_1-x_2)$.
With straight calculations one has
\begin{align*}
\ &\left|\frac{d}{ds}\left[k\left(v, X(x(s))\right)\frac{e^{-\nu\left(X(x(s))\right)\frac{|x(s)-y|}{\rho}}}{|x(s)-y|^2}\right]\right|\notag\\
\leq &\ |x_1-x_2| e^{-\nu\left(X(x(s))\right)\frac{|x(s)-y|}{\rho}} \left( \left| \nabla_{\eta}k\left(v, X(x(s))\right)\right|  \frac{2\rho }{|x(s)-y|^3}\right.\notag\\
&\left. +\left|k\left(v, X(x(s))\right)\right| \left[\frac{2}{|x(s)-y|^3}+\frac{\nu\left(X(x(s))\right)}{\rho|x(s)-y|^2} +\frac{2\left|\nabla\nu\left(X(x(s))\right)\right|}{|x(s)-y|^2} \right]\right).\notag
\end{align*}
Thus,
\begin{align*}
\ & G_{k1}\\
\lesssim &\ |x_1-x_2| \int_0^1\int_{D_1} \left| \nabla_{\eta}k\left(v, X(x(s))\right)\right| e^{-\nu\left(X(x(s))\right)\frac{|x(s)-y|}{\rho}} \frac{\langle\rho\rangle^{\gamma-2}\rho^2}{|x(s)-y|^3}\ dy d\rho ds\\
\ &+|x_1-x_2|\int_0^1\int_{D_1} \left|k\left(v, X(x(s))\right)\right|e^{-\nu\left(X(x(s))\right)\frac{|x(s)-y|}{\rho}}\frac{\langle\rho\rangle^{\gamma-2}\rho}{|x(s)-y|^3} \ dy d\rho ds   \\
\ &+|x_1-x_2|\int_0^1\int_{D_1} \left|k\left(v, X(x(s))\right)\right|e^{-\nu\left(X(x(s))\right)\frac{|x(s)-y|}{\rho}}\frac{\langle\rho\rangle^{2\gamma-2}}{|x(s)-y|^2}\ dy d\rho ds   \\
\ &+|x_1-x_2|\int_0^1\int_{D_1} \left|k\left(v, X(x(s))\right)\right|e^{-\nu\left(X(x(s))\right)\frac{|x(s)-y|}{\rho}}\frac{\langle\rho\rangle^{2\gamma-3}\rho}{|x(s)-y|^2} \ dy d\rho ds   \\
=:&\ G_{k11}+G_{k12}+G_{k13}+G_{k14}.
\end{align*}
It is easy to observe that
\begin{align}
\left|\frac{x_1-y}{|x_1-y|}-\frac{x_2-y}{|x_2-y|}\right| & =\left|\frac{(x_1-x_2)|x_2-y|+(x_2-y)\left(|x_2-y|-|x_1-y|\right)}{|x_1-y||x_2-y|}\right| \label{Gk1}  \\
\ &\leq 2\frac{|x_1-x_2|}{|x_1-y|}.\notag
\end{align}
For $(\rho, y)\in D_1$, by \eqref{Gk1}, one has
\begin{align}
|y-x_2| & \geq\frac 34|y-x_1|\geq 3|x_1-x_2|; \label{Gk2}\\
|y-x(s)| & \geq \frac 12|y-x_1|;\label{Gk3}\\
\left|v-X(x(s))\right| & \geq 4\rho\frac{|x_1-x_2|}{|x_1-y|}-2\rho\frac{(1-s)|x_1-x_2|}{|x_1-y|}\geq \rho\frac{|x_1-x_2|}{|x(s)-y|}.\label{Gk4}
\end{align}
Applying the parametrizations $x(s)-y=(r\cos\theta\sin\phi, r\sin\theta\sin\phi, r\cos\phi)$ and $\eta=(\rho\cos\theta\sin\phi, \rho\sin\theta\sin\phi, \rho\cos\phi)$ and using \eqref{A-1}, \eqref{e2}, \eqref{Gk3}, and \eqref{Gk4}, we have
\begin{align*}
\ &G_{k11}\\
\lesssim &\ |x_1-x_2|\int_0^1\int_{D_1}\left| \nabla_{\eta}k\left(v, \eta\right)\right| e^{-\frac{\nu(\eta)}{\rho}r} \frac{\langle \rho \rangle^{\gamma-2}\rho^2}{r}\sin\phi \ dr d\theta d\phi  d\rho ds\\
\lesssim &\ |x_1-x_2|\langle v\rangle^\gamma \int_0^1\int_{D_1} |v-\eta|^{\bar\gamma-1}e^{-\frac{1}{16}\left(|v-\eta|^2+\frac{\left(|v|^2-|\eta|^2\right)^2}{|v-\eta|^2}\right)}\frac{\langle \rho \rangle^{\gamma-2}\rho^2}{r} \\
\ &\qquad\qquad\qquad\qquad\qquad  e^{-\nu(\eta)\frac{r}{\rho}}\sin\phi\ dr d\theta d\phi d\rho ds\\
\lesssim &\ |x_1-x_2|\langle v\rangle^\gamma \int_0^1\int_{D_1} |v-\eta|^{\max\{\bar\gamma-1, -3+\epsilon \}}e^{-\frac{1}{16}\left(|v-\eta|^2+\frac{\left(|v|^2-|\eta|^2\right)^2}{|v-\eta|^2}\right)}\frac{\langle \rho \rangle^{\gamma-2}\rho^2}{r}\\
\ & \qquad\qquad \qquad\qquad\qquad \left(\frac{\rho}{r}|x_1-x_2|\right)^{\min\{0, 2+\gamma-\epsilon\}} e^{-\nu(\eta)\frac{r}{\rho}}\sin\phi\ dr d\theta d\phi d\rho ds\\
\lesssim &\ |x_1-x_2|^{ \min\{1, 3+\gamma-\epsilon \} } \langle v\rangle^\gamma \left(\int_{2|x_1-x_2|}^R r^{-1+a_1} dr\right) \times \\
\ &\left( \int_{\mathbb{R}^3}|v-\eta|^{\max\{\bar\gamma-1, -3+\epsilon \}}e^{-\frac{1}{16}\left(|v-\eta|^2+\frac{\left(|v|^2-|\eta|^2\right)^2}{|v-\eta|^2}\right)}\langle \eta \rangle^{\gamma-2}|\eta|^{-a_1} d\eta\right) \\
\lesssim &\  |x_1-x_2|^{ \min\{1-\epsilon, 3+\gamma-\epsilon \} }, 
\end{align*}
where $\bar\gamma$ is as in \eqref{bg}, $a_1=0$ if $-2<\gamma<0$ and $a_1=\frac{\epsilon}{2}$ if $-3<\gamma \leq -2$.

By using \eqref{A-1}, \eqref{e2}, and \eqref{Gk3}, we deduce
\begin{align*}
\ &G_{k12}\\
\lesssim &\ |x_1-x_2|\int_{D_1} \left| k\left(v, \eta\right)\right|  e^{-\nu(\eta)\frac{r}{\rho}}\frac{\langle \rho \rangle^{\gamma-2}\rho}{r}\sin\phi \ dr  d\theta d\phi d\rho \\
\lesssim &\ |x_1-x_2|\int_{\mathbb{R}^3}\left| k\left(v, \eta \right)\right|\frac{ \langle \eta \rangle^{\gamma-2}}{|\eta|} \left(\int_{2|x_1-x_2|}^R \frac{1}{r} \left(\frac{|\eta|}{r}\right)^{a_2}\left(\frac{r}{|\eta|}\right)^{a_2}e^{-\frac{\nu(\eta)}{|\eta|}r} dr\right)d\eta\\
\lesssim &\ |x_1-x_2|\left(\int_{\mathbb{R}^3}\left| k\left(v, \eta\right)\right| \langle \eta\rangle^{\gamma-2} |\eta|^{-1+a_2} d\eta\right)\left(\int_{2|x_1-x_2|}^R r^{-1-a_2} dr\right)\\
\lesssim &\ |x_1-x_2|^{\min\{1-\epsilon, 3+\gamma-\epsilon\}}, 
\end{align*}
\begin{align*}
\ &G_{k13}\\
\lesssim &\ |x_1-x_2|\int_{\mathbb{R}^3}\left| k\left(v, \eta\right)\right|\frac{ \langle \eta \rangle^{2\gamma-2}}{|\eta|^{2}}\left(\int_{2|x_1-x_2|}^R \left(\frac{|\eta|}{r}\right)^{1+a_2}\left(\frac{r}{|\eta|}\right)^{1+a_2}e^{-\frac{\nu(\eta)}{|\eta|}r} dr\right)d\eta\\
\lesssim &\ |x_1-x_2| \left(\int_{\mathbb{R}^3}\left| k\left(v, \eta\right)\right| \langle \eta \rangle^{2\gamma-2} |\eta|^{-1+a_2}  d\eta\right)\left(\int_{2|x_1-x_2|}^R r^{-1-a_2} dr\right)\\
\lesssim &\  |x_1-x_2|^{\min\{1-\epsilon, 3+\gamma-\epsilon\}}, 
\end{align*}
and
\begin{align*}
\ &G_{k14}\\
\lesssim &\ |x_1-x_2|\int_{\mathbb{R}^3}\left| k\left(v, \eta\right)\right| \langle \eta \rangle^{2\gamma-3}|\eta|^{-1}\left(\int_{2|x_1-x_2|}^R \left(\frac{r}{|\eta|}\right)^{-1 }\left(\frac{r}{|\eta|}\right) e^{-\frac{\nu(\eta)}{|\eta|}r} dr\right)d\eta\\
\lesssim &\ |x_1-x_2| \left(\int_{\mathbb{R}^3}\left| k\left(v, \eta\right)\right| \langle \eta \rangle^{2\gamma-3} d\eta\right)\left(\int_{2|x_1-x_2|}^R r^{-1} dr\right)\\
\lesssim &\  |x_1-x_2|^{1-\epsilon}, 
\end{align*}
where $a_2=\max\{|\gamma|-2-\epsilon, 0\}$.

For $(\rho, y)\in D_2$, by \eqref{Gk1}, one has \eqref{Gk2},
\begin{align}
|x_2-y| & \leq |x_1-y|+|x_1-x_2|\leq \frac 54|x_1-y|; \notag\\
\left|v-\frac{x_2-y}{|x_2-y|}\rho\right| & <\left|v-\frac{x_1-y}{|x_1-y|}\rho\right|+ \left| \frac{x_1-y}{|x_1-y|}\rho-\frac{x_2-y}{|x_2-y|}\rho\right|<\frac{15}{2}\rho\frac{|x_1-x_2|}{|x_2-y|}.\label{Gk6}
\end{align}
Applying the parametrizations $x_i-y=(r\cos\theta\sin\phi, r\sin\theta\sin\phi, r\cos\phi)$ and $\eta=(\rho\cos\theta\sin\phi, \rho\sin\theta\sin\phi, \rho\cos\phi)$ and using \eqref{A-1}, \eqref{e2}, \eqref{Gk2}, and \eqref{Gk6}, we have
\begin{align*}
\ &\ G_{k2}\\
\lesssim & \sum_{i=1}^2\int_{D_2}\left| k\left(v, X(x_i)\right)\right| e^{-\nu\left(X(x_i)\right)\frac{|x_i-y|}{\rho}}\langle\rho\rangle^{\gamma-2}\rho |x_i-y|^{-2}  dy d\rho\\
\lesssim&  \int_{\tiny\begin{array}{l}
r\geq 3|x_1-x_2| \\
|v-\eta|<8\frac{\rho}{r}|x_1-x_2|
\end{array}}
\left| k\left(v, \eta\right)\right| e^{-\nu(\eta)\frac{r}{\rho}} \langle \rho\rangle^{\gamma-2}\rho \sin\phi \ dr  d\theta d\phi d\rho \\
\lesssim& \int_{\tiny\begin{array}{l}
r\geq 3|x_1-x_2| \\
|v-\eta|<8\frac{\rho}{r}|x_1-x_2|
\end{array}}
\left| k\left(v, \eta\right)\right| \frac{\left(\frac{\rho}{r}|x_1-x_2|\right)^{a_3}}{|v-\eta|^{a_3}} \langle \rho\rangle^{\gamma-2}\rho e^{-\nu(\eta)\frac{r}{\rho}}\sin\phi \ dr  d\theta d\phi d\rho\\
\lesssim &\ |x_1-x_2|^{a_3} \int_{\mathbb{R}^3}\left| k\left(v, \eta\right)\right| |v-\eta|^{-a_3}\langle \eta\rangle^{\gamma-2}|\eta|^{-1+a_3}\\
\ & \quad \times \left(\int_{3|x_1-x_2|}^R r^{-a_3}\left(\frac{|\eta|}{r}\right)^{\max\{0, |\gamma|-2+\frac{\epsilon}{2}\}}\left(\frac{r}{|\eta|}\right)^{\max\{0, |\gamma|-2+\frac{\epsilon}{2}\}} e^{-\frac{\nu(\eta)}{|\eta|}r}  dr \right) d\eta \\
\lesssim &\  |x_1-x_2|^{a_3} \left( \int_{\mathbb{R}^3}\left| k\left(v, \eta\right)\right| |v-\eta|^{-a_3}\langle \eta\rangle^{\gamma-2}|\eta|^{-a_1} d\eta\right) \left(\int_{3|x_1-x_2|}^R r^{-1+a_1} dr\right) \\
\lesssim &\ |x_1-x_2|^{\min\{1-\epsilon, 3-|\gamma|-\epsilon\} }, 
\end{align*}
where $a_1$ is as above and $a_3=\min\{1, 3+\gamma-\epsilon\}>0$.

For $G_{k3}$, we have $|y-x_2|\leq |y-x_1|+|x_1-x_2|< 5|x_1-x_2|$. Thus
\begin{align*}
\ &G_{k3}\\
\lesssim &\ \int_0^\infty\int_0^{2\pi}\int_0^\pi\int_{0}^{5|x_1-x_2|} \left| k\left(v, \eta\right)\right| \langle \rho\rangle^{\gamma-2}\rho e^{-\nu(\eta) \frac{r}{\rho}}\sin\phi \ dr  d\theta d\phi d\rho \\
\lesssim &\ \int_{\mathbb{R}^3}\left| k\left(v, \eta\right)\right| \langle \eta\rangle^{\gamma-2}|\eta|^{-1}\left(\int_0^{5|x_1-x_2|}\left(\frac{r}{|\eta|}\right)^{-a_2}\left(\frac{r}{|\eta|}\right)^{a_2}e^{-\nu(\eta)\frac{r}{|\eta|}} dr\right)d\eta\\
\lesssim &\ \left(\int_{\mathbb{R}^3}\left| k\left(v, \eta\right)\right| \langle \eta\rangle^{\gamma-2}|\eta|^{-1-a_2} d\eta\right)\left(\int_0^{5|x_1-x_2|} r^{-a_2} dr\right)\\
\lesssim &\ |x_1-x_2|^{\min\{1, 3+\gamma-\epsilon\}}, 
\end{align*}
where $a_2$ is as above.
Hence
\begin{equation*}
G_k\lesssim\|f\|_{\infty}|x_1-x_2|^{\min\{1-\epsilon, 3+\gamma-\epsilon\}}. 
\end{equation*}

For $G_K$, we divide the domain of the integration into two subdomains: $\widetilde D_1=\left\{y\in \Omega : |y-x_1|\geq |x_1-x_2|\right\}$ and $\widetilde D_2=\left\{y\in\Omega : |y-x_1|< |x_1-x_2|\right\}$. We name the corresponding integrals as $G_{K1}$ and $G_{K2}$, respectively.
Using Lemma \ref{kH}, \eqref{A-1}, \eqref{e2}, and \eqref{Gk1}, we deduce
\begin{align*}
\ &G_{K1}\\
\lesssim &\ \|f\|_{\infty}\int_0^\infty\int_{\widetilde D_1} \left|k\left(v, X(x_1)\right)\right| e^{-\nu\left(X(x_1)\right)\frac{|x_1-y|}{\rho}}\frac{\rho\langle \rho\rangle^{\gamma-1}}{|x_1-y|^2} \left| X(x_1)-X(x_2)\right|^{\widetilde\alpha_\gamma}dy d\rho\\
\lesssim &\ \|f\|_{\infty}|x_1-x_2|^{\widetilde\alpha_\gamma}\int_0^\infty\int_{\widetilde D_1} \left|k\left(v, X(x_1) \right)\right| e^{-\nu\left(X(x_1)\right)\frac{|x_1-y|}{\rho}} \frac{\langle\rho\rangle^{\gamma-1} \rho^{1+\widetilde\alpha_\gamma} }{|x_1-y|^{2+\widetilde\alpha_\gamma}} dyd\rho\\
\lesssim &\ \|f\|_{^\infty}|x_1-x_2|^{\widetilde\alpha_\gamma}\int_{\mathbb{R}^3}\left|k\left(v, \eta\right)\right|\langle \eta\rangle^{\gamma-1} |\eta|^{-1+\widetilde\alpha_\gamma}\\
\ &\qquad\qquad\qquad\qquad\qquad \left(\int_{|x_1-x_2|}^Rr^{-\widetilde\alpha_\gamma}\left(\frac{r}{|\eta|}\right)^{-1+\widetilde\alpha_\gamma}\left(\frac{r}{|\eta|}\right)^{1-\widetilde\alpha_\gamma}e^{-\frac{\nu(\eta)}{|\eta|}r}dr\right) d\eta\\
\lesssim &\ \|f\|_{\infty}|x_1-x_2|^{\widetilde\alpha_\gamma}\left(\int_{\mathbb{R}^3} \left|k\left(v, \eta\right)\right|\langle\eta\rangle^{\gamma-1} d\eta\right)\left(\int_{|x_1-x_2|}^Rr^{-1} dr\right)\\
\lesssim &\ \|f\|_{\infty}|x_1-x_2|^{\min\{1-\epsilon, 3+\gamma-\epsilon\}}. 
\end{align*}
For $G_{K2}$, by Lemma \ref{kH}, \eqref{A-1}, and \eqref{e2}, it follows from $\left| \frac{x_1-y}{|x_1-y|}-\frac{x_2-y}{|x_2-y|}\right|\leq 2$ that
\begin{align*}
\ &G_{K2}\\
\lesssim &\ \|f\|_{\infty}\int_0^\infty\int_{D_2} \left|k\left(v, X(x_1)\right)\right| e^{-\nu\left(X(x_1)\right)\frac{|x_1-y|}{\rho}}\frac{\rho\langle \rho\rangle^{\gamma-1}}{|x_1-y|^2} \left| X(x_1)-X(x_2)\right|^{\widetilde\alpha_\gamma}dy d\rho\\
\lesssim &\ \|f\|_{\infty}\int_0^\infty\int_0^{\pi}\int_0^{2\pi}\int_0^{|x_1-x_2|} \left|k\left(v, \eta \right)\right| e^{-\nu(\eta)\frac{r}{\rho}}\langle\rho\rangle^{\gamma-1}\rho^{1+\widetilde\alpha_\gamma}\sin\phi dr d\theta d\phi d\rho\\
\lesssim &\ \|f\|_{\infty}\int_{\mathbb{R}^3} \left|k\left(v, \eta\right)\right| \langle\eta\rangle^{\gamma-1} |\eta|^{\widetilde\alpha_\gamma-1}\int_0^{|x_1-x_2|}\left(\frac{r}{|\eta|} \right)^{-1+\widetilde\alpha_\gamma}\left(\frac{r}{|\eta|}\right)^{1-\widetilde\alpha_\gamma} e^{-\frac{\nu(\eta)}{|\eta|}r} dr d\eta\\
\lesssim &\ \|f\|_{\infty}\left(\int_{\mathbb{R}^3} \left|k\left(v, \eta\right)\right| \langle\eta\rangle^{\gamma-1} d\eta \right)\int_0^{|x_1-x_2|}r^{-1+\widetilde\alpha_\gamma}dr \\
\lesssim &\ \|f\|_\infty |x_1-x_2|^{\widetilde\alpha_\gamma}. 
\end{align*}
Hence
\begin{equation*}
G_K\lesssim\|f\|_{\infty}|x_1-x_2|^{\min\{1-\epsilon, 3+\gamma-\epsilon\}}. 
\end{equation*}
We complete the proof of the lemma.
\end{proof}

The next lemma is the H\"older regularity of $G(f)$ for velocity variable.
\begin{lemma}\label{Gv}
Let $-3<\gamma<0$ and $f\in L^\infty(\Omega\times\mathbb{R}^3)$. Then, for any $x\in\Omega$ and $v_1, v_2\in\mathbb{R}^3$
\begin{align*}
|G(f)(x, v_1)-G(f)(x,v_2)|&\lesssim  \|f\|_{\infty}|v_1-v_2|^{\widetilde\alpha_\gamma}, 
\end{align*}
where $\widetilde\alpha_\gamma$ is as in Lemma \ref{kH}.
\end{lemma}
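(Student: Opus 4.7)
The plan is to exploit the observation that in the integral representation
\begin{equation*}
G(f)(x,v)=\int_{\mathbb{R}^3}\int_0^{t_b(x,\eta)} k(v,\eta)\, e^{-\nu(\eta)\tau} Kf(x-\eta\tau,\eta)\,d\tau\,d\eta,
\end{equation*}
the velocity variable $v$ enters only through the kernel $k(v,\eta)$, so that in the difference
\begin{equation*}
G(f)(x,v_1)-G(f)(x,v_2)=\int_{\mathbb{R}^3}\bigl[k(v_1,\eta)-k(v_2,\eta)\bigr]\left(\int_0^{t_b(x,\eta)} e^{-\nu(\eta)\tau} Kf(x-\eta\tau,\eta)\,d\tau\right)d\eta
\end{equation*}
the entire $v$-dependence is concentrated in the factor $k(v_1,\eta)-k(v_2,\eta)$. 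The velocity regularity of $G(f)$ should therefore be inherited directly from the H\"older regularity of the kernel $k$, which is exactly the content of Lemma \ref{kH}.

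The execution proceeds in three short steps. First, I bound the inner integral pointwise in $\eta$: using $|Kf(y,\eta)|\leq \|Kf\|_\infty\lesssim \|f\|_\infty$, which follows from the estimates on $k_1,k_2$ recalled in Section \ref{pre} combined with \eqref{A-1}, together with the time integral estimate \eqref{tb1} at $a=0$ which yields $\int_0^{t_b(x,\eta)}e^{-\nu(\eta)\tau}d\tau\lesssim \langle\eta\rangle^{-1}\leq 1$, one arrives at
\begin{equation*}
|G(f)(x,v_1)-G(f)(x,v_2)|\lesssim \|f\|_\infty \int_{\mathbb{R}^3}|k(v_1,\eta)-k(v_2,\eta)|\,d\eta.
\end{equation*}
Second, a direct application of Lemma \ref{kH} bounds the right-hand side by $\|f\|_\infty |v_1-v_2|^{\widetilde\alpha_\gamma}\max\{\langle v_1\rangle^{\gamma-1},\langle v_2\rangle^{\gamma-1}\}$, and since $\gamma<0$ the $\max$ factor is bounded by $1$, yielding the desired estimate.

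There is essentially no serious obstacle in this lemma, in sharp contrast to its spatial counterpart Lemma \ref{Gx}. In the spatial case the geometric factor $|x-y|^{-2}$ and the exponential $e^{-\nu(\eta)|x-y|/\rho}$ both depend on $x$, forcing the delicate splitting of the integration domain into near, far, and angular regions and the use of the fundamental theorem of calculus along $x(s)=x_2+s(x_1-x_2)$. Here, however, neither the transport segment $x-\eta\tau$ nor the decay factor $e^{-\nu(\eta)\tau}$ depends on $v$, so the whole argument collapses to an invocation of Lemma \ref{kH} after a uniform pointwise control of $Kf$. The only technical point warranting care is verifying $\|Kf\|_\infty\lesssim \|f\|_\infty$, a routine consequence of the kernel bounds together with \eqref{A-1}.
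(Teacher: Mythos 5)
Your proposal is correct and follows essentially the same route as the paper: take the difference inside the $\eta$-integral, bound $|Kf|\lesssim\|f\|_\infty$ and the time integral via \eqref{tb1}, then invoke Lemma \ref{kH}. The only difference is cosmetic — the paper retains the weights $\langle\eta\rangle^{\gamma-2}$ and $\langle\eta\rangle^{-1}$ to record a sharper bound with the factor $\max\{\langle v_1\rangle^{2\gamma-4},\langle v_2\rangle^{2\gamma-4}\}$, while you discard them immediately, which suffices for the stated unweighted estimate.
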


\begin{proof}
Using Lemma \ref{kH}, \eqref{A-1}, and \eqref{tb1}, we have
\begin{align*}
\ &|G(f)(x, v_1)-G(f)(x, v_2)|\\
\leq &\ \int_{\mathbb{R}^3}\int_0^{t_b(x,\eta)}|k(v_1, \eta)-k(v_2, \eta)|e^{-\nu(\eta)\tau}\left|Kf(x-\eta\tau, \eta)\right| d\tau d\eta\\
\lesssim &\ \|f\|_{\infty}\int_{\mathbb{R}^3}|k(v_1, \eta)-k(v_2, \eta)|\langle\eta\rangle^{\gamma-2} \left(\int_0^{t_b(x, \eta)}e^{-\nu(\eta)\tau}d\tau\right)d\eta\\
\lesssim &\ \|f\|_{\infty}|v_1-v_2|^{\widetilde\alpha_\gamma}\max\{\langle v_1\rangle^{2\gamma-4}, \langle v_2\rangle^{2\gamma-4}\}.
\end{align*}
This completes the proof of the lemma.
\end{proof}

Now, we are ready to estimate $F_{3}(f)$.
\begin{lemma}\label{LF3}
Let $-3<\gamma<0$ and $f\in L^\infty(\Omega\times\mathbb{R}^3)$, and let $0<\alpha<\min\{1, 3+\gamma\}$. Then, for every $x, x_1, x_2\in\Omega$ and for every $v, v_1, v_2\in\mathbb{R}^3\setminus\{0\}$,
\begin{align}
\ & \left| F_3(f)(x_1,v)-F_3(f)(x_2, v)\right| \label{F3x} \\
\lesssim &\ \|f\|_{\infty}\left( d_{(x_1, x_2)}^{-1}|x_1-x_2|(1+|v|^{-1}e^{-\frac{\nu(v)}{|v|}d_{(x_1, x_2)}})\right)^{\alpha}; \notag\\
\ &\left| F_3(f)(x, v_1)-F_3(f)(x, v_2)\right| \lesssim \|f\|_{\infty}\left( d_{x}^{-1}|v_1-v_2|\right)^{\alpha}.  \label{F3v}
\end{align}
\end{lemma}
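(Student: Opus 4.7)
The plan is to reduce both estimates to the H\"older regularity and uniform control of $G(f)$ established in Lemmas \ref{Gx}--\ref{Gv} and the pointwise bound \eqref{GB}, combined with the backward exit-time estimates collected in Section \ref{pre}, and then to convert the resulting linear-in-displacement bounds into the desired $\alpha$-H\"older bounds through the elementary interpolation $\min(1,z)\leq z^\alpha$ valid for $z\geq 0$ and $0\leq\alpha\leq 1$.

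For the spatial estimate \eqref{F3x}, I would assume without loss of generality that $t_b(x_1,v)\geq t_b(x_2,v)$ and decompose
\begin{equation*}
F_3(f)(x_1,v)-F_3(f)(x_2,v)=I_1+I_2,
\end{equation*}
where $I_1=\int_0^{t_b(x_2,v)}e^{-\nu(v)s}\bigl[G(f)(x_1-vs,v)-G(f)(x_2-vs,v)\bigr]\,ds$ and $I_2=\int_{t_b(x_2,v)}^{t_b(x_1,v)}e^{-\nu(v)s}G(f)(x_1-vs,v)\,ds$. Lemma \ref{Gx} together with \eqref{tb1} gives $|I_1|\lesssim\|f\|_\infty|x_1-x_2|^\alpha\langle v\rangle^{-1}$, while the uniform bound \eqref{GB} combined with \eqref{tb2} gives the linear bound $|I_2|\lesssim\|f\|_\infty d_{(x_1,x_2)}^{-1}|x_1-x_2||v|^{-1}e^{-\nu(v)d_{(x_1,x_2)}/|v|}\langle v\rangle^{2\gamma-5}$ alongside the trivial bound $|I_2|\lesssim\|f\|_\infty$. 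In the regime $|x_1-x_2|\geq d_{(x_1,x_2)}$ the target expression already exceeds a constant multiple of $\|f\|_\infty$, so the claim reduces to the uniform $L^\infty$ bound on $F_3(f)$; in the regime $|x_1-x_2|<d_{(x_1,x_2)}$, the factor $|x_1-x_2|^\alpha$ in $|I_1|$ is absorbed as $d_{(x_1,x_2)}^\alpha(d_{(x_1,x_2)}^{-1}|x_1-x_2|)^\alpha$, and the two bounds on $I_2$ interpolate via $\min(1,z)\leq z^\alpha$ to $(d_{(x_1,x_2)}^{-1}|x_1-x_2|)^\alpha(|v|^{-1}e^{-\nu(v)d_{(x_1,x_2)}/|v|})^\alpha$, which fits inside the target $[d_{(x_1,x_2)}^{-1}|x_1-x_2|(1+|v|^{-1}e^{-\nu(v)d_{(x_1,x_2)}/|v|})]^\alpha$.

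For the velocity estimate \eqref{F3v}, I would assume $|v_1|\geq|v_2|$ without loss of generality and split
\begin{equation*}
F_3(f)(x,v_1)-F_3(f)(x,v_2)=J_1+J_2+J_3,
\end{equation*}
where $J_1=\int_0^{t_b(x,v_1)}e^{-\nu(v_1)s}\bigl[G(f)(x-v_1s,v_1)-G(f)(x-v_2s,v_2)\bigr]\,ds$, $J_2=\int_0^{t_b(x,v_1)}\bigl[e^{-\nu(v_1)s}-e^{-\nu(v_2)s}\bigr]G(f)(x-v_2s,v_2)\,ds$, and $J_3$ is the contribution of the exit-time mismatch. For $J_1$, I would insert the intermediate $G(f)(x-v_1s,v_2)$ and apply Lemma \ref{Gv} to the velocity slot and Lemma \ref{Gx} to the spatial displacement $s|v_1-v_2|$, then use \eqref{tb1}; since $\alpha\leq\widetilde\alpha_\gamma$, this yields $|J_1|\lesssim\|f\|_\infty|v_1-v_2|^\alpha$. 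For $J_2$, \eqref{tb3} together with \eqref{GB} gives $|J_2|\lesssim\|f\|_\infty|v_1-v_2|$. For $J_3$, I would introduce $\bar v_2=\tfrac{|v_1|}{|v_2|}v_2$ to split the interval into $[t_b(x,v_1),t_b(x,\bar v_2)]$ and $[t_b(x,\bar v_2),t_b(x,v_2)]$, then apply \eqref{tb4} and \eqref{tb5} after factoring $\nu(v_i)^{-1}$ out of the integrals; this yields $|J_3|\lesssim\|f\|_\infty d_x^{-1}|v_1-v_2|$. Interpolating each of these linear estimates against the trivial $|F_3(f)|\lesssim\|f\|_\infty$ delivers the claimed $(d_x^{-1}|v_1-v_2|)^\alpha$ rate.

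The principal difficulty lies in the spatial case: the factor $|v|^{-1}e^{-\nu(v)d_{(x_1,x_2)}/|v|}$ arising from \eqref{tb2} is not uniformly bounded (it can be large for small $|v|$ with $|v|\gg d_{(x_1,x_2)}$), so the linear-in-$|x_1-x_2|$ bound on $I_2$ cannot be raised to an $\alpha$-power in isolation without losing its structure. The remedy is precisely the interpolation against the uniform bound together with the restriction $\alpha<\min\{1,3+\gamma\}$, which ensures that all accompanying $\langle v\rangle$-weights coming from \eqref{GB} and the $t_b$ estimates are absorbed into the implicit constant. A parallel $d_x^{-1}$ issue in the velocity estimate is cured by the same interpolation device.
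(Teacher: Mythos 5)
Your argument for \eqref{F3x} is essentially the paper's proof: same splitting at $t_b(x_2,v)$, Lemma \ref{Gx} plus \eqref{tb1} for the common range, \eqref{GB} plus \eqref{tb2} for the tail, and the same "argument $<1$, so linear powers beat $\alpha$-powers" interpolation. For \eqref{F3v} your organization differs slightly: the paper first splits $F_3(x,v_1)-F_3(x,v_2)$ into the two differences $F_3(x,v_1)-F_3(x,\bar v_2)$ and $F_3(x,\bar v_2)-F_3(x,v_2)$ with $\bar v_2=\frac{|v_1|}{|v_2|}v_2$, and only then decomposes each over $[0,t_b(x,\bar v_2)]$ plus a tail, so that both characteristics stay in $\overline\Omega$ and the tails are exactly in the form of \eqref{tb4}, \eqref{tb5}; you instead compare $v_1$ with $v_2$ directly in the $G$- and exponential-differences and reserve $\bar v_2$ for the exit-time mismatch. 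That route works, but two points in your sketch need repair. First, as written $J_1$ and $J_2$ integrate up to $t_b(x,v_1)$ while containing $G(f)(x-v_2s,v_2)$, which is undefined when $t_b(x,v_2)<t_b(x,v_1)$ (note $|v_1|\geq|v_2|$ does not order the exit times); you must truncate at $\min\{t_b(x,v_1),t_b(x,v_2)\}$ and put the leftover into $J_3$. Second, the tail carries a single weight $e^{-\nu(v_1)s}$ or $e^{-\nu(v_2)s}$, whereas \eqref{tb4} and \eqref{tb5} are stated for $\nu(v_1)e^{-\nu(v_1)s}$ on $[t_b(x,\bar v_2),t_b(x,v_1)]$ and $\nu(v_2)e^{-\nu(v_2)s}$ on $[t_b(x,\bar v_2),t_b(x,v_2)]$; matching them requires using $\nu(v_2)\geq\nu(v_1)$ (soft potential, $|v_1|\geq|v_2|$), e.g.\ when the tail weight is $e^{-\nu(v_1)s}$ one uses $t_b(x,\bar v_2)\leq t_b(x,v_2)$ to enlarge the interval and apply only \eqref{tb4}, and the extracted factors $\nu(v_i)^{-1}$ are absorbed by the $\langle v\rangle^{2\gamma-5}$ from \eqref{GB} (this absorption is automatic since $\gamma-5<0$; it has nothing to do with the restriction $\alpha<\min\{1,3+\gamma\}$, which is only needed for Lemma \ref{Gx} and for $\alpha\leq\widetilde\alpha_\gamma$). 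With these fixes your proof is correct and buys nothing essentially different from the paper's.
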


\begin{proof}
By \eqref{tb1} and \eqref{GB}, we have
\begin{equation}\label{F3B}
|F_3(f)(x,v)|\leq \int_0^{t_b(x, v)}e^{-\nu(v)s}\left| G(f)(x-vs, v)\right| ds\lesssim \|f\|_{\infty}\langle v\rangle^{2\gamma-6}.
\end{equation}
For \eqref{F3x}, since \eqref{F3B}, it is obvious for $d_{(x_1, x_2)}^{-1}|x_1-x_2| (1+|v|^{-1}e^{-\frac{\nu(v)}{|v|}d_{(x_1, x_2)}})\geq 1$.
Therefore, we consider the case where  $d_{(x_1, x_2)}^{-1}|x_1-x_2| (1+|v|^{-1}e^{-\frac{\nu(v)}{|v|}d_{(x_1, x_2)}})< 1$.
Moreover, we assume without loss of generality that $t_b(x_1, v)\geq t_b(x_2, v)$.
By using Lemma \ref{Gx}, \eqref{tb1}, \eqref{tb2}, and \eqref{GB}, we have
\begin{align*}
\ & \left|F_3(f)(x_1, v)-F_3(f)(x_2, v)\right| \label{F3-1}\\
\leq &\ \int_0^{t_b(x_2, v)}e^{-\nu(v)s}\left| G(f)(x_1-vs, v)-G(f)(x_2-vs, v)\right| ds \notag\\
\ &\ +\int_{t_b(x_2, v)}^{t_b(x_1, v)}e^{-\nu(v)s}\left| G(f)(x_1-vs, v)\right| ds \notag\\
\lesssim &\ \|f\|_{\infty}\left(|x_1-x_2|^{\alpha}+\langle v\rangle^{2\gamma-5}\int_{t_b(x_2, v)}^{t_b(x_1, v)}e^{-\nu(v)s} ds\right) \notag\\
\lesssim &\ \|f\|_{\infty}\left( |x_1-x_2|^{\alpha}+d_{(x_1, x_2)}^{-1}|x_1-x_2| |v|^{-1}e^{-\frac{\nu(v)}{|v|}d_{(x_1, x_2)}} \right). \notag \\
\lesssim &\ \|f\|_{\infty}\left(d_{(x_1, x_2)}^{-1}|x_1-x_2|(1+|v|^{-1}e^{-\frac{\nu(v)}{|v|}d_{(x_1, x_2)}})\right)^{\alpha}.
\end{align*}

For \eqref{F3v}, since \eqref{F3B}, it is obvious for $d_x^{-1}|v_1-v_2|\geq 1$. Therefore, we consider the case of $d_x^{-1}|v_1-v_2|< 1$.
Moreover, we only do the $|v_1|\geq |v_2|$ case.
Let $\bar v_2=\frac{|v_1|}{|v_2|}v_2$, it is easy to obtain that $|v_2-\bar v_2|\leq |v_1-v_2|$ and $|v_1-\bar v_2|\leq 2|v_1-v_2|$.
Then
\begin{align*}
\ &\left| F_3(f)(x, v_1)-F_3(f)(x, v_2) \right| \\
\leq &\ \left|F_3(f)(x, v_1)-F_3(f)(x, \bar v_2)\right|+\left| F_3(f)(x, \bar v_2)-F_3(f)(x, v_2)\right|.
\end{align*}
Then we assume without loss of generality that $t_b(x, v_1)\geq t_b(x, \bar v_2)$. Using Lemmas \ref{Gx} and \ref{Gv}, \eqref{tb1}, \eqref{tb3}, \eqref{tb4}, and \eqref{GB}, we have
\begin{align*}
\ &|F_3(f)(x, v_1)-F_3(f)(x, \bar v_2)|\\
\leq &  \int_0^{t_b(x, \bar v_2)}e^{-\nu(\bar v_2)s}\left| G(f)(x-v_1s, v_1)-G(f)(x- \bar v_2s, \bar v_2)\right| ds\\
\ & + \int_{0}^{t_b(x, \bar v_2)}\left|e^{-\nu(v_1)s}-e^{-\nu(\bar v_2)s}\right|\left| G(f)(x-v_1s, v_1)\right| ds \\
\ &+ \int_{t_b(x, \bar v_2)}^{t_b(x, v_1)}e^{-\nu(v_1)s}\left| G(f)(x-v_1s, v_1)\right| ds \\
\lesssim &\ \|f\|_{\infty}\left( \int_0^{t_b(x, \bar v_2)}e^{-\nu(\bar v_2)s}\left(| v_1s- \bar v_2s|^{\alpha}+| v_1- \bar v_2|^{\widetilde\alpha_\gamma}\right) ds\right.\\
\ &\ \left.\qquad + \langle v_1\rangle^{2\gamma-5}\int_{0}^{t_b(x, \bar v_2)}\left|e^{-\nu(v_1)s}-e^{-\nu(\bar v_2)s}\right| ds+ \langle v_1\rangle^{2\gamma-5} \int_{t_b(x, \bar v_2)}^{t_b(x, v_1)}e^{-\nu(v_1)s}ds\right)\\
\lesssim &\ \|f\|_{\infty}\left( |v_1-v_2|^{\alpha}+|v_1-v_2|^{\widetilde\alpha_\gamma}+|v_1-v_2|+d_x^{-1}|v_1-v_2|\right)\\
\lesssim &\ \|f\|_{\infty}\left( d_x^{-1} |v_1-v_2|\right)^{\alpha}.
\end{align*}
For the remain term, we now have $t_b(x, v_2)\geq t_b(x, \bar v_2)$. Using Lemmas \ref{Gx} and \ref{Gv}, \eqref{tb1}, \eqref{tb3}, \eqref{tb5}, and \eqref{GB}, we have
\begin{align*}
\ &|F_3(f)(x, v_2)-F_3(f)(x, \bar v_2)|\\
\leq &  \int_0^{t_b(x, \bar v_2)}e^{-\nu(\bar v_2)s}\left| G(f)(x-v_2s, v_2)-G(f)(x-\bar v_2s, \bar v_2)\right| ds\\
\ & + \int_{0}^{t_b(x, \bar v_2)}\left|e^{-\nu(v_2)s}-e^{-\nu(\bar v_2)s}\right|\left| G(f)(x-v_2s, v_2)\right| ds \\
\ &+ \int_{t_b(x, \bar v_2)}^{t_b(x, v_2)}e^{-\nu(v_2)s}\left| G(f)(x-v_2s, v_2)\right| ds \\
\lesssim &\ \|f\|_{\infty}\left( \int_0^{t_b(x, \bar v_2)}e^{-\nu(\bar v_2)s}\left(|v_2s-\bar v_2s|^{\alpha}+|v_2-\bar v_2|^{\widetilde\alpha_\gamma}\right) ds \right.\\
\ &\ \left. \qquad\qquad+ \int_{0}^{t_b(x, \bar v_2)}\left|e^{-\nu(v_2)s}-e^{-\nu(\bar v_2)s}\right| ds+ \int_{t_b(x, \bar v_2)}^{t_b(x, v_2)}e^{-\nu(v_2)s}ds\right)\\
\lesssim &\ \|f\|_{\infty}\left( |v_1-v_2|^{\alpha}+|v_1-v_2|^{\widetilde\alpha_\gamma}+|v_1-v_2|+d_x^{-1}|v_1-v_2|\right)\\
\lesssim &\ \|f\|_{\infty}\left( d_x^{-1} |v_1-v_2|\right)^{\alpha}.
\end{align*}
This completes the proof.
\end{proof}




The following lemma is the H\"older regularity of $F_1(f)$. 

\begin{lemma}\label{LF1}
Let $-3<\gamma<0$ and $0<\beta\leq 1$, and let $0<\alpha<\min\{1, 3+\gamma\}$. Suppose that $f\in L^\infty(\Gamma_-)$ and there exists $m>0$ such that
\begin{equation}\label{fb}
|f(y_1, v_1)-f(y_2, v_2)|\leq m\left(|y_1-y_2|+|v_1-v_2|\right)^\beta
\end{equation}
for every $(y_i, v_i)\in \Gamma_-$. Then for every $x, x_1, x_2\in\Omega$ and $v, v_1, v_2\in\mathbb{R}^3\setminus\{0\}$,
\begin{align}
\ & \left|F_1(f)(x_1, v)-F_1(f)(x_2, v)\right|  \label{F1x}\\
\lesssim &\ (m+|f|_{\infty, -})\left(d_{(x_1, x_2)}^{-1} |x_1-x_2|(1+|v|^{-1}e^{-\frac{\nu(v)}{|v|}d_{(x_1, x_2)}})\right)^{\min\{\alpha, \beta\} }; \notag\\
\ & \left|F_1(f)(x, v_1)-F_1(f)(x, v_2)\right| \label{F1v}\\
\lesssim &\ (m+|f|_{\infty, -}) \left( d_x^{-1} |v_1-v_2|(1+|v_{1,2}|^{-1}e^{-\frac{\nu(v_{1,2})}{|v_{1,2}|}d_{x}} )\right)^{\min\{\alpha, \beta \}}. \notag
\end{align}
\end{lemma}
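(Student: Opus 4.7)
The plan is to exploit the explicit formula
$F_1(f)(x,v)=f(x_b(x,v),v)\,e^{-\nu(v)t_b(x,v)}$
and to split every difference into an ``$f$-piece'', handled via the boundary Hölder hypothesis \eqref{fb} together with the exit-position estimates \eqref{xbx}, \eqref{xbv}, and an ``exponential-piece'', handled via the travel-time estimates \eqref{tb2}--\eqref{tb5}. Since $|F_1(f)|\le |f|_{\infty,-}$, it is enough to prove \eqref{F1x} and \eqref{F1v} in the regime where the bracketed right-hand side is strictly less than $1$; writing $A=d_{(x_1,x_2)}^{-1}|x_1-x_2|$ (respectively $d_x^{-1}|v_1-v_2|$) and $B=|v|^{-1}e^{-\nu(v)d/|v|}$ for the companion factor, this regime exactly gives $A<1$ and $AB<1$, which is what I shall use to upgrade raw exponents $\beta$ or $1$ to the target $\min\{\alpha,\beta\}$.

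For \eqref{F1x} I would write
\begin{align*}
F_1(f)(x_1,v)-F_1(f)(x_2,v) &= \bigl[f(x_b(x_1,v),v)-f(x_b(x_2,v),v)\bigr]\,e^{-\nu(v)t_b(x_1,v)}\\
&\quad + f(x_b(x_2,v),v)\,\bigl[e^{-\nu(v)t_b(x_1,v)}-e^{-\nu(v)t_b(x_2,v)}\bigr].
\end{align*}
The first bracket is at most $m(d_{(x_1,x_2)}^{-1}|x_1-x_2|)^\beta$ by \eqref{fb} and \eqref{xbx}. The second bracket equals $\nu(v)\int_{t_b(x_2,v)}^{t_b(x_1,v)}e^{-\nu(v)s}\,ds$; combining \eqref{tb2} with $\nu(v)\lesssim 1$, it is bounded by $d_{(x_1,x_2)}^{-1}|x_1-x_2|\cdot|v|^{-1}e^{-\nu(v)d_{(x_1,x_2)}/|v|}$. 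In the non-trivial regime each of these pieces is of the form $A^\beta$ or $AB$ with $A,AB<1$, hence dominated by $(A(1+B))^{\min\{\alpha,\beta\}}$, which yields \eqref{F1x}.

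For \eqref{F1v} I assume without loss of generality that $|v_1|\ge|v_2|$ and imitate the auxiliary-vector device used for $F_3$: set $\bar v_2=(|v_1|/|v_2|)v_2$, so that $|\bar v_2|=|v_1|$, $\nu(\bar v_2)=\nu(v_1)$, $x_b(x,\bar v_2)=x_b(x,v_2)$, $|\bar v_2-v_2|\le|v_1-v_2|$, and $\theta_{(v_1,\bar v_2)}\lesssim|v_1-v_2|/|v_1|$ (from the half-angle identity as used in the proof of \eqref{tb4}). Triangulating, $|F_1(f)(x,v_1)-F_1(f)(x,v_2)|\le|F_1(f)(x,v_1)-F_1(f)(x,\bar v_2)|+|F_1(f)(x,\bar v_2)-F_1(f)(x,v_2)|$. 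In the first summand $\nu$ is common, so its decomposition parallels the spatial case: the $f$-difference is controlled by \eqref{fb} combined with \eqref{xbv} and the bound on $\theta_{(v_1,\bar v_2)}$, while the exponential difference is controlled by \eqref{tb4}, which already produces the desired $|v_{1,2}|^{-1}e^{-\nu(v_{1,2})d_x/|v_{1,2}|}$ shape. In the second summand $\bar v_2$ and $v_2$ share direction (hence the same $x_b$), so I split it into three pieces: (i) the $v$-variation of $f$, giving $m|\bar v_2-v_2|^\beta\le m|v_1-v_2|^\beta$; (ii) the $\nu$-variation $|e^{-\nu(\bar v_2)T}-e^{-\nu(v_2)T}|$ at $T=t_b(x,\bar v_2)$, handled by the mean value theorem together with $|\nabla\nu|\lesssim\langle v\rangle^{\gamma-1}$ and the decay bound \eqref{e2} applied to $Te^{-\nu(v_1)T}$; and (iii) the $t_b$-variation $|e^{-\nu(v_2)t_b(x,\bar v_2)}-e^{-\nu(v_2)t_b(x,v_2)}|$, handled by \eqref{tb5}.

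The main obstacle is packaging the various contributions — some naturally of order $|v_1-v_2|^\beta$, some of order $d_x^{-1}|v_1-v_2|\cdot|v_{1,2}|^{-1}e^{-\nu(v_{1,2})d_x/|v_{1,2}|}$, and others of order $d_x^{-1}|v_1-v_2|\cdot\langle v_{1,2}\rangle^{-1}$ — into the single target factor $(d_x^{-1}|v_1-v_2|(1+|v_{1,2}|^{-1}e^{-\nu(v_{1,2})d_x/|v_{1,2}|}))^{\min\{\alpha,\beta\}}$. The subtle case is the $f$-piece contribution from the first summand, where the $\theta$-estimate introduces an extra $|v_1|^{-1}$ in $m(d_x^{-1}|v_1-v_2|\,|v_1|^{-1})^\beta e^{-\nu(v_1)d_x/|v_1|}$; writing this as $m(Ay)^\beta$ with $y=|v_1|^{-1}e^{-\nu(v_1)d_x/|v_1|}$ and noting $Ay\le A(1+B)<1$ in the non-trivial regime, together with $\beta\ge\min\{\alpha,\beta\}$, one gets $(Ay)^\beta\le(Ay)^{\min\{\alpha,\beta\}}\le(A(1+B))^{\min\{\alpha,\beta\}}$. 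All remaining pieces absorb analogously (using $d_x\le R$ to convert a pure $|v_1-v_2|^\beta$ into $A^\beta$), and residual $\langle v_{1,2}\rangle^{-1}$ factors are bounded by $1$.
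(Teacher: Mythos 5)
Your proposal is correct and follows essentially the same route as the paper: the same splitting of the $F_1$ difference into a boundary-data piece (handled by \eqref{fb} with \eqref{xbx}, \eqref{xbv} and the $\theta$-bound) and an exponential piece (handled by \eqref{tb2}, \eqref{tb4}, \eqref{tb5}, \eqref{e2} and the mean value theorem), together with the same auxiliary vector $\bar v_2=\frac{|v_1|}{|v_2|}v_2$ and the same restriction to the regime where the bracket is below $1$ to upgrade exponents to $\min\{\alpha,\beta\}$. The only (harmless) deviation is that you use radiality of $\nu$ to drop the $\nu$-variation term in the first summand, which the paper estimates separately even though it vanishes.
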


\begin{proof}
By $f\in L^\infty(\Gamma_-)$, we have
\begin{equation}\label{F1B}
\left| F_1(f)(x, v) \right| \leq |f|_{\infty, -}.
\end{equation}
For \eqref{F1x}, since \eqref{F1B}, it  is obvious for $d_{(x_1, x_2)}^{-1} |x_1-x_2|(1+|v|^{-1}e^{-\frac{\nu(v)}{|v|}d_{(x_1, x_2)}})\geq 1$.
Therefore, we consider the case where $d_{(x_1, x_2)}^{-1} |x_1-x_2|(1+|v|^{-1}e^{-\frac{\nu(v)}{|v|}d_{(x_1, x_2)}})< 1$.
By using \eqref{xbx}, \eqref{tb2}, and \eqref{fb}, we have
\begin{align*}
\ & \left| F_1(f)(x_1, v)-F_1(f)(x_2, v)\right| \\
\leq&\ \left|f(x_b(x_1,v), v)-f(x_b(x_2,v), v)\right|e^{-\nu(v)t_b(x_1,v)}\notag \\
\ &\ +\left|f(x_b(x_2,v), v)\right|\left|e^{-\nu(v)t_b(x_1,v)}-e^{-\nu(v)t_b(x_2,v)}\right| \notag \\
\leq&\ m\left|x_b(x_1, v)-x_b(x_2,v)\right|^\beta + |f|_{\infty, -}\left|\int_{t_b(x_2,v)}^{t_b(x_1, v)}-\nu(v)e^{-\nu(v)s}ds\right| \notag \\
\lesssim &\ m\left(d_{(x_1, x_2)}^{-1} |x_1-x_2|\right)^\beta +|f|_{\infty, -}d_{(x_1, x_2)}^{-1} |x_1-x_2| |v|^{-1}e^{-\frac{\nu(v)}{|v|}d_{(x_1, x_2)}} \notag \\
\lesssim &\ (m+|f|_{\infty, -})\left(d_{(x_1, x_2)}^{-1} |x_1-x_2|(1+|v|^{-1}e^{-\frac{\nu(v)}{|v|}d_{(x_1, x_2)}})\right)^{\min\{\alpha, \beta\} }.
\end{align*}

For \eqref{F1v}, we only do the case of $|v_1|\geq |v_2|$. From \eqref{F1B}, \eqref{F1v} is obvious for $d_x^{-1} |v_1-v_2|(1+|v_1|^{-1}e^{-\frac{\nu(v_1)}{|v_1|}d_{x}} )\geq 1$. Therefore, we consider the case for $d_x^{-1} |v_1-v_2|(1+|v_1|^{-1}e^{-\frac{\nu(v_1)}{|v_1|}d_{x}} )< 1$.
Let $\bar v_2:=\frac{|v_1|}{|v_2|}v_2$. Then
\begin{align*}
\ &\ \left|F_1(f)(x, v_1)-F_1(f)(x, v_2)\right| \\
\leq &\ \left|F_1(f)(x, v_1)-F_1(f)(x, \bar v_2)\right|+\left|F_1(f)(x, \bar v_2)-F_1(f)(x, v_2)\right|.
\end{align*}
Note that $|v_2-\bar v_2|\leq |v_1-v_2|$, $|v_1-\bar v_2|\leq 2|v_1-v_2|$, and $x_b(x, v_2)=x_b(x, \bar v_2)$.
By using \eqref{xbv}, \eqref{tb1}, \eqref{tb3}, \eqref{tb4}, \eqref{fb}, and the mean value theorem, we have
\begin{align*}
\ &\left| F_1(f)(x, v_1)-F_1(f)(x, \bar v_2)\right|\\
\leq &\ \left|f(x_b(x, v_1), v_1)-f(x_b(x, \bar v_2), \bar v_2)\right|e^{-\nu(v_1)t_b(x, v_1)}+\left|f(x_b(x, \bar v_2), \bar v_2)\right| \\
\ &\ \times \left(\left|e^{-\nu(v_1)t_b(x, v_1)}-e^{-\nu(v_1)t_b(x, \bar v_2)}\right|+\left|e^{-\nu(v_1)t_b(x, \bar v_2)}-e^{-\nu(\bar v_2)t_b(x, \bar v_2)}\right| \right)\\
\leq &\ m\left(|x_b(x, v_1)-x_b(x, \bar v_2)|+|v_1-\bar v_2|\right)^\beta e^{-\nu(v_1)t_b(x, v_1)}\\
\ & +|f|_{\infty, -}\left| \int_{t_b(x, \bar v_2)}^{t_b(x, v_1)}-\nu(v_1)e^{-\nu(v_1)s}ds\right|\\
\ & +|f|_{\infty, -}|\bar v_1-v_2| t_b(x, v_2)  |\nabla \nu(v_1(t))|  e^{-\nu(v_1(t))t_b(x, v_2)} \\
\lesssim &\ m\left(d_x^{-1}\theta_{(v_1, \bar v_2)}+|v_1-v_2|\right)^\beta e^{-\nu(v_1)t_b(x, v_1)}+|f|_{\infty, -}d_x^{-1}|v_1-v_2| +|f|_{\infty, -}|v_1-v_2|\\
\lesssim &\ m\left(d_x^{-1}\frac{|v_1-v_2|}{|v_1|}+|v_1-v_2|\right)^\beta e^{-\nu(v_1)t_b(x, v_1)}+|f|_{\infty, -}d_x^{-1}|v_1-v_2|,
\end{align*}
where $v_1(t)=v_1+t(\bar v_2-v_1)$ for some $t\in [0,1]$.
Using \eqref{tb1}, \eqref{tb3}, \eqref{tb5}, \eqref{fb} and the mean value theorem, we have
\begin{align*}
\ &\left| F_1(f)(x, v_2)-F_1(f)(x, \bar v_2)\right|\\
\leq &\ \left|f(x_b(x,v_2), v_2)-f(x_b(x, \bar v_2), \bar v_2)\right|e^{-\nu(v_2)t_b(x, v_2)}+\left|f(x_b(x,\bar v_2), \bar v_2)\right|\\
\ &\ \times \left(\left|e^{-\nu(v_2)t_b(x,v_2)}-e^{-\nu(v_2)t_b(x,\bar v_2)}\right|+\left|e^{-\nu(v_2)t_b(x, \bar v_2)}-e^{-\nu(\bar v_2)t_b(x,\bar v_2)}\right| \right)\\
\leq &\ m \left(|x_b(x, v_2)-x_b(x, \bar v_2)|+|v_2-\bar v_2|\right)^\beta+|f|_{\infty, -}\left| \int_{t_b(x, \bar v_2)}^{t_b(x, v_2)}-\nu(v_2)e^{-\nu(v_2)s}ds\right|\\
\ &\ +|f|_{\infty, -}|v_2-\bar v_2| t_b(x, \bar v_2)  |\nabla \nu(v_2(t))|  e^{-\nu(v_2(t))t_b(x, \bar v_2)}  \\
\lesssim &\ m |v_1-v_2|^\beta+|f|_{\infty, -} d_x^{-1}|v_1-v_2|,
\end{align*}
where $v_2(t)=v_2+t(\bar v_2- v_2)$ for some $t\in [0,1]$.
Thus,
\begin{align*}
\ & \left| F_1(f)(x, v_1)-F_1(f)(x, v_2) \right| \\
\lesssim &\ m \left(d_x^{-1}|v_1-v_2| (1+|v_1|^{-1}e^{-\frac{\nu(v_1)}{|v_1|}d_x}) \right)^\beta+|f|_{\infty, -}d_x^{-1}|v_1-v_2| \\
\lesssim &\ \left(m+|f|_{\infty, -}\right) \left(d_x^{-1}|v_1-v_2| (1+|v_1|^{-1}e^{-\frac{\nu(v_1)}{|v_1|}d_x}) \right)^{\min\{\alpha, \beta\}}.
\end{align*}
This completes the proof of the theorem.
\end{proof}


We give the H\"older regularity of $F_2(f)$ in the following lemma. Note that
\begin{align*}
F_2(f)(x,v) 
=\int_0^{t_b(x,v)}e^{-\nu(v)s}\int_{\mathbb{R}^3}k(v, \eta)F_1(f)(x-vs, \eta)d\eta ds.
\end{align*}

\begin{lemma}\label{LF2}
Let $-3<\gamma<0$ and $0<\beta\leq 1$. Let $0<\alpha<\min\{1, 3+\gamma\}$. Suppose that $f\in L^\infty(\Gamma_-)$ satisfies the regularity condition \eqref{fb}.
Then, for every $x, x_1, x_2\in\Omega$ and for every $v, v_1, v_2\in\mathbb{R}^3\setminus\{0\}$,
\begin{align}
\ & \left|F_2(f)(x_1, v)-F_2(f)(x_2, v)\right| \label{F2x}\\
\lesssim &\ (m+|f|_{\infty, -}) \left(d_{(x_1, x_2)}^{-1} |x_1-x_2|(1+|v|^{-1} e^{-\frac{\nu(v)}{4|v|}d_{(x_1, x_2)}})\right)^{\min\{\alpha, \beta\}}; \notag\\
\ & \left|F_2(f)(x, v_1)-F_2(f)(x, v_2)\right| \lesssim (m+|f|_{\infty, -}) \label{F2v}\\
\lesssim &\ (m+|f|_{\infty, -}) \left( d_x^{-1} |v_1-v_2|(1+| v_{1,2}|^{-1}e^{-c\frac{\nu(v_{1,2})}{|v_{1,2}|}d_x})\right)^{\min\{\alpha, \beta\}}. \notag
\end{align}
\end{lemma}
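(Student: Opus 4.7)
The plan is to view $F_2$ as a time integral of an auxiliary function analogous to the function $G$ used in Lemma \ref{LF3}. Define
$$H(y,v):=\int_{\mathbb{R}^3}k(v,\eta)F_1(f)(y,\eta)\,d\eta,\qquad F_2(f)(x,v)=\int_0^{t_b(x,v)}e^{-\nu(v)s}H(x-vs,v)\,ds.$$
Using $|F_1(f)|\le |f|_{\infty,-}$ together with \eqref{A-1}, we get $|H(y,v)|\lesssim |f|_{\infty,-}\langle v\rangle^{\gamma-1}$, so $|F_2(f)|$ obeys an $L^\infty$ bound analogous to \eqref{F3B}. This reduces the two claims to H\"older estimates of $H$ in $y$ and $v$, which we in turn derive from the corresponding estimates \eqref{F1x}--\eqref{F1v} for $F_1$ combined with the integral operator bounds \eqref{A-1} and Lemma \ref{kH}.

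For \eqref{F2x}, assuming without loss of generality that $t_b(x_1,v)\ge t_b(x_2,v)$ and that $d_{(x_1,x_2)}^{-1}|x_1-x_2|(1+|v|^{-1}e^{-\nu(v)d_{(x_1,x_2)}/(4|v|)})<1$, I would write
\begin{align*}
F_2(f)(x_1,v)-F_2(f)(x_2,v) &= \int_0^{t_b(x_2,v)}e^{-\nu(v)s}[H(x_1-vs,v)-H(x_2-vs,v)]\,ds \\
&\quad + \int_{t_b(x_2,v)}^{t_b(x_1,v)}e^{-\nu(v)s}H(x_1-vs,v)\,ds.
\end{align*}
The tail term is controlled by the pointwise bound on $H$ together with \eqref{tb2}. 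For the main term, I would insert the space H\"older estimate \eqref{F1x} of $F_1$, integrate in $\eta$ against $k(v,\eta)$ (handling the factor $1+|\eta|^{-1}e^{-\nu(\eta)d_{(\cdot,\cdot)}/|\eta|}$ via \eqref{A-1} and \eqref{e2}), and bound $d_{(x_1-vs,x_2-vs)}$ from below via Proposition \ref{P3.2}: for $y_i=x_i-vs$ on the segment $\overline{x_i x_b(x_i,v)}$, one has $d_{y_i}\gtrsim d_{x_i}(t_b(x_i,v)-s)|v|/R$, which links $d_{(x_1-vs,x_2-vs)}$ to both $d_{(x_1,x_2)}$ and the remaining time $t_b-s$. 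After carrying out the $s$-integration with the aid of \eqref{tb1}, the exponential decay in $\nu(v)s$ absorbs residual powers of $(t_b-s)^{-1}$ and produces the claimed factor $d_{(x_1,x_2)}^{-\min\{\alpha,\beta\}}$.

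For \eqref{F2v}, I introduce the auxiliary velocity $\bar v_2=\frac{|v_1|}{|v_2|}v_2$ so that $x_b(x,v_2)=x_b(x,\bar v_2)$, and split $F_2(f)(x,v_1)-F_2(f)(x,v_2)=[F_2(f)(x,v_1)-F_2(f)(x,\bar v_2)]+[F_2(f)(x,\bar v_2)-F_2(f)(x,v_2)]$. In each pair, the difference is broken into (a) a change of the spatial argument $x-vs$ inside $H$, controlled by the space estimate for $H$ developed in Step 2; (b) a change of the kernel $k(v,\cdot)$, handled via Lemma \ref{kH}; (c) a change in the exponential $e^{-\nu(v)s}$, handled by \eqref{tb3}; and (d) a change in the upper limit $t_b(x,v)$, handled by \eqref{tb4}--\eqref{tb5} together with the pointwise bound on $H$. \textbf{The main technical obstacle} is step (a): controlling the compound singularity $d_{(x_1-vs,x_2-vs)}^{-1}\bigl(1+|\eta|^{-1}e^{-\nu(\eta)d_{(\cdot,\cdot)}/|\eta|}\bigr)$ coming from \eqref{F1x} inside an integral against the singular kernel $k(v,\eta)$, especially in the very soft regime $-3<\gamma\le-2$ where \eqref{A} is itself singular. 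As in Lemma \ref{Gx}, this is handled by a three-region splitting of the $\eta$-domain mirroring $D_1,D_2,D_3$ and by redistributing powers of $(t_b-s)$ between the spatial singularity and the exponential via \eqref{e2}; the restriction $\alpha<\min\{1,3+\gamma\}$ is exactly what is needed for these $\eta$-integrals to converge and for the final $s$-integral in $[0,t_b]$ to yield the stated H\"older exponent $\min\{\alpha,\beta\}$.
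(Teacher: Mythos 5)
Your reduction (pointwise bound for the inner integral, splitting off the tail $[t_b(x_2,v),t_b(x_1,v)]$ via \eqref{tb2}, inserting \eqref{F1x} and integrating in $\eta$ with \eqref{A-1}, and the $\bar v_2$ splitting with Lemma \ref{kH}, \eqref{tb3}--\eqref{tb5} for the velocity part) matches the paper's skeleton, but the decisive step is exactly the one you pass over: how to integrate $d_{(x_1-vs,\,x_2-vs)}^{-\min\{\alpha,\beta\}}$ along the characteristic. Your mechanism — substitute the pointwise bound $d_{x_i-vs}\gtrsim \frac{d_{x_i}}{R}|v|\,(t_b(x_i,v)-s)$ from Proposition \ref{P3.2} and let $e^{-\nu(v)s}$ ``absorb'' the resulting $(t_b-s)^{-a}$ — does not produce \eqref{F2x}. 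Writing $a=\min\{\alpha,\beta\}$ and $d=d_{(x_1,x_2)}$, the boundary layer $\{s: d_{(x_1-vs,x_2-vs)}<d/2\}$ has length up to $\sim R/(2|v|)$, is reached only after time $s\ge d/(2|v|)$, and your substitution gives there $\bigl(\tfrac{d|v|}{R}\bigr)^{-a}\int_0^{R/(2|v|)}u^{-a}\,du \sim d^{-a}|v|^{-1}$, i.e.\ a contribution of order $(d^{-1}|x_1-x_2|)^{a}\,|v|^{-1}e^{-c\nu(v)d/|v|}$: the singular factor $|v|^{-1}e^{-c\nu d/|v|}$ appears to the \emph{first} power, not to the power $a$ as in \eqref{F2x} (trading it via \eqref{e2} instead costs an extra $d^{-(1-a)}$). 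In the regime $|v|\le 1$, $d\lesssim |v|$, $|x_1-x_2|$ tiny, this exceeds the claimed bound by a factor $\sim|v|^{-(1-a)}$, and it cannot be rescued by interpolation with the trivial $L^\infty$ bound; for the very soft range $-3<\gamma\le -2$ the loss is not even harmless downstream, since a full $|v|^{-1}$ singularity is no longer absorbed by \eqref{A-1}--\eqref{A-2}. The same defect appears in your step (a) for \eqref{F2v}.

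What the paper actually does, and what is missing from your plan, is a dyadic decomposition of the trajectory by distance to the boundary: the layers $\Omega_n=\{d(\cdot,\partial\Omega)\le 2^{-n}d\min\{1,|v|\}\}$ with entry times $A_n$, the lower bound \eqref{S1} valid for $s\le A_{n+1}$, and — crucially — the time-in-layer estimate $A_{n+1}-A_n\lesssim 2^{-n}$ from Proposition \ref{P3.2} (\eqref{321}, \eqref{323}), so that each shell contributes $2^{an}\cdot 2^{-n}$ and the sum converges since $a<1$, while $s\ge A_n\ge d/(2|v|)$ supplies the factor $e^{-\nu(v)d/(2|v|)}$ attached to $(1+|v|^{-1})^{a}$. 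Two further ingredients are needed that your outline does not supply: the block $[A_0,A_1]$ (where the time spent may be as long as $1/|v|$) is handled by harvesting $e^{-\frac12\nu(v)A_0}$ and integrating the leftover exponential rather than multiplying by the interval length; and beyond the truncation index $N_1$ chosen so that $2^{-N_1}\le\bigl(d^{-1}|x_1-x_2|(1+|v|^{-1})\bigr)^{a}$, the H\"older estimate \eqref{F1x} is abandoned in favour of the plain $L^\infty$ bound of $F_1$ on a sliver of length $\lesssim 2^{-N_1}$ (the paper's $I_4$), with the analogous construction $\Omega_n'$, $B_n$, $\widetilde B_n$ repeated for the velocity difference. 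Also, your suggestion to mirror the $D_1,D_2,D_3$ splitting of Lemma \ref{Gx} for the $\eta$-integral is not where the difficulty lies here: the $\eta$-integration is routine via \eqref{A-1} and Lemma \ref{kH}; the delicate point is the $s$-integration just described.
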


\begin{proof}
By \eqref{A-1} and \eqref{tb1}, we have
\begin{equation}\label{F2B}
|F_2(f)(x, v)| \leq C_{\gamma, \Omega} |f|_{\infty, -}\langle v\rangle^{\gamma-3}.
\end{equation}
For \eqref{F2x}, from \eqref{F2B}, it is obvious for $d_{(x_1, x_2)}^{-1} |x_1-x_2|(1+|v|^{-1} e^{-\frac{\nu(v)}{4|v|}d_{(x_1, x_2)}})\geq 1$.
Therefore, we consider the case where $d_{(x_1, x_2)}^{-1} |x_1-x_2|(1+|v|^{-1} e^{-\frac{\nu(v)}{4|v|}d_{(x_1, x_2)}})<1$.
We divide $\Omega$ as follows: let $n\in\mathbb{N}$,
\begin{align*}
\Omega_0 &=\left\{x\in\Omega : d(x, \partial\Omega)\leq 2^{-1} d_{(x_1, x_2)}\right\};\\
\Omega_n &=\left\{x\in\Omega : d(x, \partial\Omega)\leq 2^{-n} d_{(x_1, x_2)} \min\{1, |v|\}\right\}.
\end{align*}
Denote $y_n$ the intersection of $\overline{x_1x_b(x_1, v)}$ with $\partial \Omega_n \backslash \partial\Omega$ and denote $z_n$ the intersection of $\overline{x_2x_b(x_2, v)}$ with $\partial \Omega_n \backslash \partial\Omega$.
From Proposition \ref{P3.2}, we have
\begin{align}
|\overline{y_nx_b(x_1, v)}|,  |\overline{z_nx_b(x_2, v)}| &\leq R2^{-n}\min\{1, |v|\}, n\geq 1;\label{321}\\
|\overline{y_0x_b(x_1, v)}|,  |\overline{z_0x_b(x_2, v)}|& \leq R2^{-n}.\notag 
\end{align}
Note that one has $|\overline{x_1y_n}|, |\overline{x_2z_n}| \geq 2^{-1}d_{(x_1, x_2)}$ for $n=0, 1,\ldots$.
Let $N_1\in\mathbb{N}$ satisfy
\begin{equation}\label{N1}
2^{-N_1}\leq \left(d_{(x_1, x_2)}^{-1} |x_1-x_2| (1+|v|^{-1}) \right)^{\min\{\alpha, \beta\}} 
\end{equation}
For simplicity of notations, we denote $A_n=\min\left\{\frac{|\overline{x_1y_n}|}{|v|}, \frac{|\overline{x_2z_n}|}{|v|}\right\}$ and $a=\min\{\alpha, \beta\}$.
Without loss of generality, we assume $t_b(x_1, v)\geq t_b(x_2, v)$, then
\begin{align*}
\ &\left| F_2(f)(x_1, v)-F_2(f)(x_2, v)\right| \\
\leq& \sum_{n=1}^{N_1-1}\int_{A_n}^{A_{n+1}}e^{-\nu(v)s}\int_{\mathbb{R}^3}|k(v,\eta)|\left| F_1(f)(x_1-vs, \eta)-F_1(f)(x_2-vs, \eta)\right| d\eta ds\\
\ &\ +\int_{A_0}^{A_1}e^{-\nu(v)s}\int_{\mathbb{R}^3}|k(v,\eta)|\left| F_1(f)(x_1-vs, \eta)-F_1(f)(x_2-vs, \eta)\right| d\eta ds\\
\ &\ +\int_{0}^{A_0}e^{-\nu(v)s}\int_{\mathbb{R}^3}|k(v,\eta)|\left| F_1(f)(x_1-vs, \eta)-F_1(f)(x_2-vs, \eta)\right| d\eta ds\\
\ &\ +\int_{A_{N_1}}^{t_b(x_2, v)}e^{-\nu(v)s}\int_{\mathbb{R}^3}|k(v,\eta)|\left| F_1(f)(x_1-vs, \eta)-F_1(f)(x_2-vs, \eta)\right| d\eta ds\\
\ &\ + \int_{t_b(x_2, v)}^{t_b(x_1, v)}e^{-\nu(v)s}\int_{\mathbb{R}^3}\left| k(v,\eta)F_1(f)(x_1-vs, \eta) \right| d\eta ds \\
=:&\ I_1+I_2+I_3+I_4+I_5.
\end{align*}
It is easy to see that
\begin{align}\label{S1}
d_{(x_1-vs, x_2-vs)}^{-1} \leq 2^{n+1}d_{(x_1, x_2)}^{-1}(1+|v|^{-1})
\end{align}
for $s\in \left[A_n, A_{n+1}\right]$, $n=0, 1,\ldots.$
Moreover, by \eqref{321}, one has
\begin{align}\label{323}
A_{n+1}-A_n \leq \max\{|\overline{y_nx_b(x_1, v)}|, |\overline{z_nx_b(x_2, v)}|\}|v|^{-1}\leq R2^{-n}, n\geq 1.
\end{align}

Let's estimate $I_1$.
By using the mean value theorem, \eqref{A-1}, \eqref{F1x}, \eqref{S1}, and \eqref{323}, it follows from $a<1$ that
\begin{align*}
I_1 & \lesssim (m+|f|_{\infty, -})|x_1-x_2|^a \\
\ &\qquad \times \sum_{n=1}^{N_1-1} \int_{A_n}^{A_{n+1}}e^{-\nu(v)s} \int_{\mathbb{R}^3}|k(v,\eta)| \left[d_{(x_1-vs, x_2-vs)}^{-1}(1+|\eta|^{-1})\right]^{a} d\eta ds\\
\ &\lesssim (m+|f|_{\infty, -}) \left( d_{(x_1, x_2)}^{-1}|x_1-x_2|(1+|v|^{-1})\right)^{a} \sum_{n=1}^{N_1-1} 2^{a n} \left(A_{n+1}-A_n\right)e^{-\nu(v)A_n} \\
\ &\lesssim (m+|f|_{\infty, -}) \left(d_{(x_1, x_2)}^{-1}|x_1-x_2|(1+|v|^{-1})\right)^{a}  \\
\ &\qquad \times \left(\sum_{n=1}^{N_1-1} 2^{(a-1) n}\right)e^{-\frac{\nu(v)}{|v|}\min\{|\overline{x_1y_n}|, |\overline{x_2z_n}|\}}\\
\ &\lesssim (m+|f|_{\infty, -}) \left( d_{(x_1, x_2)}^{-1}|x_1-x_2|(1+|v|^{-1})\right)^{a}e^{-\frac{\nu(v)}{2|v|}d_{(x_1, x_2)}}.
\end{align*}
For $I_2$ and $I_3$, by applying \eqref{A-1}, \eqref{tb1}, \eqref{F1x}, and \eqref{S1}, we have
\begin{align*}
I_2 
&\lesssim (m+|f|_{\infty, -})\left(d_{(x_1, x_2)}^{-1}|x_1-x_2|(1+|v|^{-1}\right)^a e^{-\frac12\nu(v)A_0} \int_{A_0}^{A_{1}}e^{-\frac12\nu(v)s} ds\\
\ &\lesssim (m+|f|_{\infty, -})\left(d_{(x_1, x_2)}^{-1}|x_1-x_2|(1+|v|^{-1}\right)^a e^{-\frac{\nu(v)}{4|v|}d_{(x_1, x_2)}},
\end{align*}
and
\begin{align*}
I_3 &\ \lesssim (m+|f|_{\infty, -})|x_1-x_2|^{a} \int_{0}^{A_{0}}e^{-\nu(v)s} \langle v\rangle^{\gamma-2} \left[ 2^{-1}d_{(x_1, x_2)}\}\right]^{-a} ds\\
\ &\lesssim (m+|f|_{\infty, -})\left(d_{(x_1, x_2)}^{-1}|x_1-x_2|\right)^a.
\end{align*}
For $I_4$, by using \eqref{321} and \eqref{N1}, we have
\begin{align*}
I_{4} &\lesssim  |f|_{\infty, -}\int_{A_{N_1}}^{t_b(x_2,v)}\langle v\rangle^{\gamma-2} e^{-\nu(v)s} ds\\
\ &\lesssim |f|_{\infty, -}\left(\frac{|\overline{x_2x_b(x_2,v)}|}{|v|}-\min\left\{\frac{|\overline{x_1y_{N_1}}|}{|v|}, \frac{|\overline{x_2z_{N_1}}|}{|v|}\right\}\right)\langle v\rangle^{\gamma-2}e^{-\nu(v)A_{N_1}}\\
\ &\lesssim |f|_{\infty, -}\max\{|\overline{y_{N_1}x_b(x_1, v)}|, |\overline{z_{N_1}x_b(x_2, v)}|\}\frac{\langle v\rangle^{\gamma-2}}{|v|}e^{-\frac{\nu(v)}{2|v|}d_{(x_1, x_2)}} \\
\ &\lesssim |f|_{\infty, -}2^{-N_1}R \min\{1, |v|\}\frac{\langle v\rangle^{\gamma-2}}{|v|}e^{-\frac{\nu(v)}{2|v|}d_{(x_1, x_2)}} \\
\ &\lesssim  |f|_{\infty, -}\left( d_{(x_1, x_2)}^{-1}|x_1-x_2|(1+|v|^{-1}) \right)^a e^{-\frac{\nu(v)}{2|v|}d_{(x_1, x_2)}}.
\end{align*}
For $I_5$, by \eqref{tb2}, we have
\begin{align*}
I_5 &\lesssim |f|_{\infty, -}d_{(x_1, x_2)}^{-1}|x_1-x_2| |v|^{-1} e^{-\frac{\nu(v)}{|v|}d_{(x_1, x_2)}}. 
\end{align*}
Summing up, one get \eqref{F2x}.


For \eqref{F2v}, we only do the case of $|v_1|\geq |v_2|$.
Since \eqref{F2B}, \eqref{F2v} is obvious for $d_x^{-1}|v_1-v_2|(1+e^{-c\frac{\nu(v_1)}{|v_1|}d_x})\geq 1$. Therefore, we consider the case for $d_x^{-1}|v_1-v_2|(1+e^{-c\frac{\nu(v_1)}{|v_1|}d_x}) < 1$.
We now divide $\Omega$ as follows: let $n\in\mathbb{N}$,
\begin{align*}
\Omega_0' &=\left\{y\in\Omega : d(y, \partial\Omega)\leq 2^{-1}d_{x}\right\},\\
\Omega_n' &=\left\{y\in\Omega : d(y, \partial\Omega)\leq 2^{-n}d_{x}\min\{1, |v_1|\}\right\}.
\end{align*}
Denote $y_n'$ the intersection of $\overline{xx_b(x, v_1)}$ with $\partial \Omega_n' \backslash \partial\Omega$ and denote $z_n'$ the intersection of $\overline{xx_b(x, v_2)}$ with $\partial \Omega_n' \backslash \partial\Omega$.
From Proposition \ref{P3.2}, we have
\begin{align}
\max\left\{|\overline{y_n'x_b(x, v_1)}|,  |\overline{z_n'x_b(x, v_2)}|  \right\} &\leq R 2^{-n}\min\{1, |v_1|\}, n\geq1; \label{v1} \\
\max\left\{|\overline{y_0'x_b(x, v_1)}|,  |\overline{z_0'x_b(x, v_2)}|  \right\} &\leq R 2^{-1}. \notag 
\end{align}
Let $N_2=N_2(x, v_1, v_2)\in\mathbb{N}$ satisfy
\begin{equation}\label{N2}
2^{-N_2}\leq \left( d_{x}^{-1}|v_1-v_2|(1+|v_1|^{-1})\right)^a. 
\end{equation}
Let $\bar v_2=\frac{|v_1|}{|v_2|}v_2$. Note that $|v_1-\bar v_2|\leq 2|v_1-v_2|$ and $|v_2- \bar v_2|\leq |v_1-v_2|$.
Then
\begin{align*}
\left|F_2(x,v_1)-F_2(x, v_2)\right| & \leq \left|F_2(x,v_1)-F_2(x, \bar v_2)\right| + \left|F_2(x,\bar v_2)-F_2(x, v_2)\right|\\
\ &=:II+III.
\end{align*}

For $II$, without loss of generality, we assume that $t_b(x, v_1)\geq t_b(x, \bar v_2)$. Then
\begin{align*}
II &\leq \int_0^{t_b(x, \bar v_2)}e^{-\nu(\bar v_2)s}\int_{\mathbb{R}^3}|k(\bar v_2, \eta)| \left| F_1(f)(x-v_1s, \eta)-F_1(f)(x-\bar v_2s, \eta)\right| d\eta ds\\
\ &\ + \int_0^{t_b(x, \bar v_2)}e^{-\nu(\bar v_2)s}\int_{\mathbb{R}^3}|k(v_1, \eta)-k(\bar v_2, \eta)| \left| F_1(f)(x-v_1s, \eta)\right| d\eta ds\\
\ &\ + \int_{0}^{t_b(x, \bar v_2)}\left|e^{-\nu(v_1)s}-e^{-\nu(\bar v_2)s}\right|\int_{\mathbb{R}^3}|k(v_1, \eta)|\left| F_1(f)(x-v_1s, \eta)\right| d\eta ds\\
\ &\ + \int_{t_b(x, \bar v_2)}^{t_b(x, v_1)}e^{-\nu(v_1)s}\int_{\mathbb{R}^3}|k(v_1, \eta)|\left| F_1(f)(x-v_1s, \eta)\right| d\eta ds\\
\ &=:II_1+II_2+II_3+II_4.
\end{align*}
For $II_1$, let $B_n=\min\left\{\frac{|\overline{xy_{n}'}|}{|v_1|}, \frac{|\overline{xz_{n}'}|}{|\bar v_2|} \right\}$, then
\begin{align*}
II_1&= \sum_{n=1}^{N_2-1}\int_{B_n}^{B_{n+1}}\cdots\cdots +\int_{B_0}^{B_1}\cdots\cdots+\int_{0}^{B_0}\cdots\cdots +\int_{B_{N_2}}^{t_b(x, v_2)}\cdots\cdots \\
\ &=: II_{11}+II_{12}+II_{13}+II_{14}.
\end{align*}
It is easy to see that
\begin{align}\label{S2}
d_{(x-v_1s, x-\bar v_2s)}^{-1} \leq 2^{n+1}d_{x}^{-1}(1+|v_1|^{-1})
\end{align}
for $s\in \left[B_n, B_{n+1}\right]$, $n=0, 1,\ldots.$
Moreover, by \eqref{v1}, one has
\begin{align}\label{324}
B_{n+1}-B_n \leq \max\{|\overline{y_n'x_b(x, v_1)}|, |\overline{z_n'x_b(x, v_2)}|\}|\bar v_2|^{-1}\leq R2^{-n}, n\geq 1.
\end{align}
For $II_{11}$, by using the mean value theorem, \eqref{A-1}, \eqref{F1x}, \eqref{S2}, and \eqref{324}, we have
\begin{align*}
II_{11} & \lesssim (m+|f|_{\infty, -})|v_1-\bar v_2|^a \\
\ &\qquad \times \sum_{n=1}^{N_2-1}\int_{B_n}^{B_{n+1}}s^ae^{-\nu(\bar v_2)s} \int_{\mathbb{R}^3}|k(\bar v_2,\eta)| \left[d_{(x-v_1s, x-\bar v_2s)}^{-1}(1+|\eta|^{-1})\right]^a d\eta ds\\
\ &\lesssim (m+|f|_{\infty, -})\left(d_{x}^{-1}|v_1-v_2| (1+|v_1|^{-1})\right)^{a}\langle v_1\rangle^{\gamma-2} \\
\ &\qquad \times \sum_{n=1}^{N_2-1} 2^{a n}\left(B_{n+1}-B_n\right)\left(\frac{s_n}{|\bar v_2|}\right)^a e^{-\frac{\nu(\bar v_2)}{|\bar v_2|}s_n}\\
\ & \lesssim (m+|f|_{\infty, -})\left(d_{x}^{-1}|v_1-v_2| (1+|v_1|^{-1})\right)^{a} e^{-c\frac{\nu(v_1)}{|v_1|}d_x} \left(\sum_{n=1}^{N_2-1} 2^{(a-1) n}\right) \\
\ &\lesssim (m+|f|_{\infty, -})\left(d_{x}^{-1}|v_1-v_2| (1+|v_1|^{-1})\right)^{a} e^{-c\frac{\nu(v_1)}{|v_1|}d_x} ,
\end{align*}
where $\min\left\{|\overline{xy_{n}'}|, |\overline{xz_{n}'}| \right\}\leq s_n\leq \min \{|\overline{xy_{n+1}'}|, |\overline{xz_{n+1}'}| \}$.

For $II_{12}$ and $II_{13}$, by using \eqref{A-1}, \eqref{tb1}, \eqref{F1x}, and \eqref{S2}, we have
\begin{align*}
\ &\ II_{12} \\
\lesssim &\ (m+|f|_{\infty, -})\left(d_x^{-1}|v_1-v_2|(1+|v_1|^{-1})\right)^a e^{-\frac12\nu(\bar v_2)B_0}  \int_{B_0}^{B_1}s^a e^{-\frac12\nu(\bar v_2)s} ds\\
\lesssim &\ (m+|f|_{\infty, -})\left(d_x^{-1}|v_1-v_2|(1+|v_1|^{-1})\right)^a e^{-c\frac{\nu(v_1)}{|v_1|}d_x}
\end{align*}
and
\begin{align*}
II_{13} &\lesssim (m+|f|_{\infty, -})|v_1- v_2|^a  \int_{0}^{B_0}s^a e^{-\nu(\bar v_2)s} \left(2^{-1}d_x \right)^{-a} \langle \bar v_2 \rangle^{\gamma-2}ds\\
\ &\lesssim (m+|f|_{\infty, -})\left(d_x^{-1}|v_1-v_2| \right)^a.
\end{align*}
For $II_{14}$, by using the mean value theorem, \eqref{v1}, and \eqref{N2}, we have
\begin{align*}
II_{14}&\lesssim |f|_{\infty, -}\left(t_b(x, \bar v_2)-B_{N_2}\right)\langle v_1\rangle^{\gamma-2} e^{-\nu(\bar v_2)B_{N_2}} \\
\ &\lesssim |f|_{\infty, -}\max\{|\overline{y_{N_2}'x_b(x, v_1)}|, |\overline{z_{N_2}'x_b(x, v_2)}| \} |v_1|^{-1} \langle v_1\rangle^{\gamma-2} e^{-\frac{\nu(\bar v_2)}{2|\bar v_2|}d_x} \\
\ &\lesssim |f|_{\infty, -}R 2^{-N_2}\min\{1, |v_1|\}|v_1|^{-1} \langle v_1\rangle^{\gamma-2} e^{-\frac{\nu(\bar v_2)}{2|\bar v_2|}d_{x}} \\
\ &
\lesssim |f|_{\infty, -}\left( d_x^{-1}|v_1-v_2|(1+|v_1|^{-1}) \right)^a e^{-c\frac{\nu(v_1)}{|v_1|}d_{x}}.
\end{align*}
By using Lemma \ref{kH}, \eqref{A-1}, \eqref{tb1}, \eqref{tb3}, and \eqref{tb4}, we have
\begin{align*}
\ &\ II_2+II_3+II_4 \\
\lesssim &\ |f|_{\infty, -}\left(|v_1-v_2|^{\widetilde\alpha_\gamma}+|v_1-v_2|+d_x^{-1}|v_1-v_2| \right).
\end{align*}
Summing up, one get
\begin{align*}
II &  \lesssim (m+|f|_{\infty, -}) \left( d_x^{-1}|v_1-v_2|(1+|v_1|^{-1} e^{-c\frac{\nu(v_1)}{|v_1|}d_x})\right)^a.
\end{align*}

For $III$, one has that $t_b(x, v_2)\geq t_b(x, \bar v_2)$. Thus,
\begin{align*}
III &\leq \int_0^{t_b(x, \bar v_2)}e^{-\nu(\bar v_2)s}\int_{\mathbb{R}^3}|k(\bar v_2, \eta)| \left| F_1(f)(x-v_2s, \eta)-F_1(f)(x-\bar v_2s, \eta)\right| d\eta ds\\
\ &\ + \int_0^{t_b(x, \bar v_2)}e^{-\nu(\bar v_2)s}\int_{\mathbb{R}^3}|k(v_2, \eta)-k(\bar v_2, \eta)| \left| F_1(f)(x-v_2s, \eta)\right| d\eta ds\\
\ &\ + \int_{0}^{t_b(x, \bar v_2)}\left|e^{-\nu(v_2)s}-e^{-\nu(\bar v_2)s}\right|\int_{\mathbb{R}^3}|k(v_2, \eta)|\left| F_1(f)(x-v_2s, \eta)\right| d\eta ds\\
\ &\ + \int_{t_b(x, \bar v_2)}^{t_b(x, v_2)}e^{-\nu(v_2)s}\int_{\mathbb{R}^3}|k(\bar v_2, \eta)|\left| F_1(f)(x-v_2s, \eta)\right| d\eta ds\\
\ &=:III_1+III_2+III_3+III_4.
\end{align*}
For $III_1$, let $\widetilde B_n=\frac{|\overline{xz_{n}'}|}{|\bar v_2|}$, then
\begin{align*}
III_1& = \sum_{n=1}^{N_2-1}\int_{\widetilde B_n}^{\widetilde B_{n+1}}\cdots\cdots  +\int_{\widetilde B_0}^{\widetilde B_1}\cdots\cdots +\int_0^{\widetilde B_0}\cdots\cdots+\int_{\widetilde B_{N_2}}^{t_b(x, \bar v_2)}\cdots\cdots \\
\ & =: III_{11}+III_{12}+III_{13}+III_{14}.
\end{align*}
For $III_{11}$, by using the mean value theorem, \eqref{A-1}, \eqref{F1x}, \eqref{S2}, and \eqref{324}, we have
\begin{align*}
\ & III_{11}\\
\lesssim &\ (m+|f|_{\infty, -})\left(d_x^{-1}|v_1-v_2|(1+|v_1|^{-1})\right)^a \sum_{n=1}^{N_2-1} 2^{an} \int_{\widetilde B_n}^{\widetilde B_{n+1}}s^a e^{-\nu(\bar v_2)s}   ds \\
\lesssim &\ (m+|f|_{\infty, -})\left(d_x^{-1} |v_1-v_2|(1+|v_1|^{-1}) \right)^{a} \sum_{n=1}^{N_2-1} 2^{(a-1) n}\left(\frac{s_n}{|\bar v_2|}\right)^a e^{-\frac{\nu(\bar v_2)}{|\bar v_2|}s_n}\\
\lesssim &\ (m+|f|_{\infty, -})\left(d_x^{-1} |v_1-v_2|(1+|v_1|^{-1}) \right)^{a}e^{-c\frac{\nu(v_1)}{|v_1|} d_x}\left( \sum_{n=1}^{N_2-1} 2^{(a-1) n}\right) \\
\lesssim &\ (m+|f|_{\infty, -})\left(d_x^{-1} |v_1-v_2|(1+|v_1|^{-1}) \right)^{a}e^{-c\frac{\nu(v_1)}{|v_1|} d_x}, 
\end{align*}
where $|\overline{xy_n'}| \leq s_n \leq  |\overline{xy_{n+1}'}| $.
For $III_{12}$ and $III_{13}$, by using \eqref{A-1}, \eqref{tb1}, \eqref{F1x}, and \eqref{S2}, we have
\begin{align*}
\ &\ III_{12} \\
\lesssim &\ (m+|f|_{\infty, -})\left(d_x^{-1}|v_1-v_2|(1+|v_1|^{-1})\right)^a e^{-\frac12\nu(\bar v_2)\widetilde B_0}  \int_{\widetilde B_0}^{\widetilde B_1}s^a e^{-\frac12\nu(\bar v_2)s} ds\\
\lesssim &\ (m+|f|_{\infty, -})\left(d_x^{-1}|v_1-v_2|(1+|v_1|^{-1})\right)^a e^{-c\frac{\nu(v_1)}{|v_1|}d_x} 
\end{align*}
and
\begin{align*}
III_{13} &\lesssim (m+|f|_{\infty, -})|v_1- v_2|^a  \int_{0}^{\widetilde B_0}s^a e^{-\nu(\bar v_1)s} \left(2^{-1}d_x \right)^{-a} \langle \bar v_2 \rangle^{\gamma-2}ds\\
\ &\lesssim (m+|f|_{\infty, -})\left(d_x^{-1}|v_1-v_2| \right)^a. 
\end{align*}
For $III_{14}$, by using the mean value theorem, \eqref{v1}, and \eqref{N2}, we have
\begin{align*}
III_{14} 
&\lesssim |f|_{\infty, -}\langle v_1 \rangle^{\gamma-2} |\overline{y_{N_2}'x_b(x, v_2)}| |\bar v_2|^{-1} e^{-\frac{\nu(\bar v_2)}{|\bar v_2|}|\overline{xy_{N_2}'}|} \\
\ &\lesssim |f|_{\infty, -} \langle v_1 \rangle^{\gamma-2}R 2^{-N_2}\min\{1, |v_1|\}|v_1|^{-1} e^{-c\frac{\nu(v_1)}{|v_1|}d_{x}} \\
\ &\lesssim  |f|_{\infty, -}\left( d_{x}^{-1}|v_1-v_2|(1+|v_1|^{-1})\right)^a e^{-c\frac{\nu(v_1)}{|v_1|}d_{x}} \langle v_1 \rangle^{\gamma-2}.
\end{align*}
By using Lemma \ref{kH}, \eqref{A-1}, \eqref{tb1}, \eqref{tb3}, and \eqref{tb5}, we have
\begin{align*}
III_2+III_3+III_4 \lesssim |f|_{\infty, -} \left(|v_1-v_2|^{\widetilde\alpha_\gamma}+|v_1-v_2|+d_x^{-1}|v_1-v_2|\right).
\end{align*}
Summing up, one get
\begin{align*}
III & \lesssim (m+|f|_{\infty, -})\left( d_x^{-1}|v_1-v_2|(1+|v_1|^{-1} e^{-c\frac{\nu(v_1)}{|v_1|}d_x}) \right)^a.
\end{align*}
This completes the proof.
\end{proof}

From the results of Proposition \ref{LEU}, Lemmas \ref{LF3}--\ref{LF2}, and \eqref{e2}, we complete the proof of Theorem \ref{T1}.


\section{H\"older regularity of the approximate steady Boltzmann equation}\label{app-holder}

In this section, we will study the H\"older regularity of solution of the following approximate steady Boltzmann equation:
\begin{equation}\label{1f1g}
\begin{cases}
v\cdot\nabla_x f+Lf+\varphi(g) f=N_{+}(g, g), & \text{ in } \Omega\times\mathbb{R}^3,\\
f=r, & \text{ on } \Gamma_-,
\end{cases}
\end{equation}
where $\Omega$ is a bounded and strictly convex domain and $\varphi(g)$ is defined in \eqref{Psi}.
According to the regularity estimate \eqref{T1v} obtained in Theorem \ref{T1}, we suppose that $g$ satisfies the following estimate:
\begin{align}\label{gxv}
\ &\ |g(x_1, v_1)-g(x_2, v_2)|  \\
\leq &\ C_g\left(d_{(x_1, x_2)}^{-1} (|x_1-x_2|+|v_1-v_2|)(1+|v_{1,2}|^{-1}e^{-c\frac{\nu(v_{1,2})}{|v_{1,2}|}d_{(x_1, x_2)}})\right)^a, \notag
\end{align}
for every $x_1, x_2\in \Omega$ and $v_1, v_2\in\mathbb{R}^3\setminus\{0\}$,
where $0<a<\min\{1, 3+\gamma\}$ and $C_g>0$ depends on $g$.
Moreover, from Lemma \ref{Ne1}, we have
\begin{equation*}
\left| \varphi(g)(x,v) \right| \leq \widehat C\| w g\|_\infty \nu(v).
\end{equation*}
Assume that $g$ satisfies
\begin{equation}\label{g2}
\widehat C\| w g\|_\infty\leq \frac 12
\end{equation}
for $\tau>0$ and $0<\ka<\frac 18$.

The H\"older regularity estimate of problem (\ref{1f1g}) is stated as follows:
\begin{theorem}\label{T2}
Assume that $r\in L^\infty(\Gamma_-)$ satisfies \eqref{rb} and $g$ satisfies \eqref{gxv} and \eqref{g2} for $\tau>3+|\gamma|$ and $0<\kappa< \frac18$.
Let $f\in L^\infty(\Omega\times\mathbb{R}^3)$ be a solution of \eqref{1f1g}.
Then, there exists $C=C_{a, \beta, \gamma, \Omega}>0$ such that  for every $x_1, x_2\in\Omega$ and $v_1, v_2\in\mathbb{R}^3\setminus\{0\}$,
\begin{align*}
 |f(x_1, v_1)&-f(x_2, v_2)| \leq C(m+C_{f, g}+\widetilde C_{f,g}+\widetilde C_g ) \notag\\
\ & \times  \left(d_{(x_1, x_2)}^{-1}(|x_1-x_2|+|v_1-v_2|)(1+|v_{1,2}|^{-1}e^{-c\frac{\nu(v_{1,2})}{|v_{1,2}|}d_{(x_1, x_2)}})\right)^{\min\{a, \beta\}}, \notag
\end{align*}
where
\begin{align*}
C_{f,g}&=|f|_{\infty, -}\left(1+C_g+\| wg\|_\infty\right); \\
\widetilde C_{f,g} &=\|f\|_{\infty}\left(1+C_g+\| wg\|_\infty\right);\\
\widetilde C_g &= (C_g+C_g\| wg\|_\infty+\| wg\|_\infty+\| wg\|_\infty^2)\| wg\|_\infty.\\
\end{align*}
\end{theorem}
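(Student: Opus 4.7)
The plan is to mimic the Vidav double-Duhamel argument that underlies Theorem \ref{T1}. Rewriting \eqref{1f1g} as $v\cdot\nabla_x f+\nu(v)f+\varphi(g)f=Kf+N_+(g,g)$ and integrating along characteristics, for every $(x,v)\in\Omega\times(\mathbb{R}^3\setminus\{0\})$ I get
\begin{equation*}
f(x,v)=r(x_b,v)\,E(x,v,t_b)+\int_0^{t_b}E(x,v,s)\bigl[Kf(x-vs,v)+N_+(g,g)(x-vs,v)\bigr]ds,
\end{equation*}
where $E(x,v,s):=\exp\bigl(-\nu(v)s-\int_0^s\varphi(g)(x-v\ell,v)\,d\ell\bigr)$. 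By \eqref{g2} we have $E(x,v,s)\leq e^{-\nu(v)s/2}$, so after iterating $Kf$ once more I obtain a decomposition $f=\widetilde F_1+\widetilde F_2+\widetilde F_3+\widetilde F_4+\widetilde F_5$, where $\widetilde F_1,\widetilde F_2,\widetilde F_3$ are the analogues of $F_1,F_2,F_3$ from Section \ref{linear} with $e^{-\nu s}$ replaced by $E$, while $\widetilde F_4,\widetilde F_5$ are the source contributions from $N_+(g,g)$ (direct and once $K$-iterated, respectively).

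For $\widetilde F_1,\widetilde F_2,\widetilde F_3$ I repeat the proofs of Lemmas \ref{LF1}, \ref{LF2}, \ref{LF3} essentially verbatim. The replacement of $e^{-\nu(v)s}$ by $E$ only costs a factor $c<1/2$ in front of $\nu(v)/|v|$ in the exponential decay bounds, since the $\varphi(g)$-contribution is a monotonically decreasing factor bounded by $1$. Whenever I form differences of the exponential weights, the mean value theorem produces an error
\begin{equation*}
|E(x_1,v,s)-E(x_2,v,s)|\lesssim e^{-c\nu(v)s}\int_0^s|\varphi(g)(x_1-v\ell,v)-\varphi(g)(x_2-v\ell,v)|\,d\ell,
\end{equation*}
which by Lemma \ref{NH-1} and \eqref{gxv} is controlled by $\|wg\|_\infty C_g$ times the asserted H\"older modulus (and similarly in the velocity difference, where an extra $\|wg\|_\infty|v_1-v_2|$ term appears from the velocity regularity of $\varphi(g)$). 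Summed with the original estimates of Section \ref{linear} applied to $r$ and $f$, these produce the contributions proportional to $m+|f|_{\infty,-}(1+C_g+\|wg\|_\infty)$ and $\|f\|_\infty(1+C_g+\|wg\|_\infty)$ appearing in $C_{f,g}$ and $\widetilde C_{f,g}$.

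The source pieces $\widetilde F_4,\widetilde F_5$ are handled via Lemma \ref{NH-1}: the space-H\"older modulus of $N_+(g,g)$ is dominated, after using \eqref{gxv}, by $\|wg\|_\infty C_g$ times the RHS of \eqref{gxv}, while its velocity-H\"older modulus gains an additional $\|wg\|_\infty^2|v_1-v_2|$. The innermost $N_+(g,g)$ in $\widetilde F_5$ requires only its $L^\infty$ bound $\|wg\|_\infty^2$, so Lemmas \ref{Gx} and \ref{Gv} apply with $\|f\|_\infty$ replaced by $\|wg\|_\infty^2$. All of these contributions are absorbed into $\widetilde C_g=(C_g+C_g\|wg\|_\infty+\|wg\|_\infty+\|wg\|_\infty^2)\|wg\|_\infty$, and the singular factor $\langle v\rangle^\gamma$ from $\varphi(g)$ and $N_\pm$ is absorbed by the velocity weight as in \eqref{GB} and \eqref{F2B}.

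The main obstacle is the bookkeeping: I must track how each of $m, |f|_{\infty,-}, \|f\|_\infty, C_g, \|wg\|_\infty$ enters each of the five pieces, and verify that the exponential gain $e^{-c\nu(v_{1,2})/|v_{1,2}|\cdot d_{(x_1,x_2)}}$ survives every mean-value correction coming from the $\varphi(g)$-weight. The smallness condition \eqref{g2} is exactly what is needed to preserve $E\leq e^{-\nu(v)s/2}$, which is what drives the trajectory decomposition near the boundary used in Lemma \ref{LF2}. Summing the contributions of $\widetilde F_1,\ldots,\widetilde F_5$ and choosing $c=c_\gamma>0$ smaller than that of Theorem \ref{T1} if necessary, I obtain the stated bound.
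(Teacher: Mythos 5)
Your overall strategy coincides with the paper's: the same characteristics representation with the weight $e^{-\Phi(g)}$ (your $E$), the bound $E\leq e^{-\nu(v)s/2}$ from \eqref{g2}, and the same five-term decomposition — your $\widetilde F_1,\widetilde F_2,\widetilde F_3,\widetilde F_4,\widetilde F_5$ are exactly the paper's $M_1, M_2, M_3, N_1, N_2$, and the treatment of $\widetilde F_1$--$\widetilde F_4$ (rerun Lemmas \ref{LF1}--\ref{LF3} with the extra mean-value corrections for $E$-differences controlled through Lemma \ref{NH-1} and \eqref{gxv}, plus the LF2-style dyadic splitting near the boundary) is what the paper carries out in Lemmas \ref{LM1}--\ref{LM2}, \ref{LN1} and the auxiliary estimates \eqref{N11}--\eqref{N16}, \eqref{M11}--\eqref{M13}.

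There is, however, a concrete flaw in your handling of the once-$K$-iterated source term $\widetilde F_5$ (the paper's $N_2(g)$). You claim that ``the innermost $N_+(g,g)$ requires only its $L^\infty$ bound $\|wg\|_\infty^2$, so Lemmas \ref{Gx} and \ref{Gv} apply with $\|f\|_\infty$ replaced by $\|wg\|_\infty^2$.'' This cannot work as stated. In the proof of Lemma \ref{Gx}, after the change of variables the space difference produces the term $G_K$, which is the difference of the innermost function evaluated at the two velocities $X(x_1)=\frac{x_1-y}{|x_1-y|}\rho$ and $X(x_2)$; controlling it is precisely where the velocity H\"older regularity of the innermost function enters. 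For $Kf$ this regularity is free at cost $\|f\|_\infty$ thanks to the kernel estimate of Lemma \ref{kH}, but there is no analogue of Lemma \ref{kH} for $N_+(g,g)$: by Lemma \ref{NH-1} its velocity modulus involves $|g(x,v_1+u)-g(x,v_2+u)|$, so one must invoke \eqref{gxv}, i.e. estimate \eqref{N+v}, which costs $C_g\|wg\|_\infty$ (not $\|wg\|_\infty^2$) and, more importantly, carries the singular factor $d_y^{-a}$ inherited from \eqref{gxv}; this reintroduces the near-boundary difficulty you claimed to bypass for this term and requires either an integrability/dyadic argument in $y$ or the paper's route, which avoids the issue by first proving the H\"older regularity of $N_1(g)$ (Lemma \ref{LN1}, via \eqref{N11}--\eqref{N16}) and then treating $N_2$ in the manner of $F_2$ (Lemma \ref{LN2}) using \eqref{N1x}--\eqref{N1v}. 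The final constant $\widetilde C_g$ does contain the needed $C_g\|wg\|_\infty$ contribution, so the statement is not threatened, but your justification for this piece is incorrect and must be replaced by one of the two arguments above.
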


To prove the result, from the method of characteristics, for every $x\in\Omega$ and $v \in \mathbb{R}^3\setminus\{0\}$, one has
\begin{align*}
\ &\ f(x,v)\\
=&\ f\left(x_b(x,v), v\right)e^{-(\nu(v)t_b(x,v)+\int_0^{t_b(x,v)}\varphi(g)(x-v\ell, v)d\ell)}\\
\ &+\int_0^{t_b(x,v)}   e^{-(\nu(v)s+\int_0^s\varphi(g)(x-v\ell, v)d\ell)} \left(Kf+N_+(g,g)\right)(x-vs, v) dv ds.
\end{align*}
For simplicity of notation, we denote
\begin{equation*}
\Phi(g)(x, v, s)= \nu(v)s+\int_0^s \varphi(g)(x-v\ell, v)d\ell.
\end{equation*}
We iterate the term $Kf$ once more to get
\begin{align*}
\ & f(x, v) \\
= &\ M_1(f, g)(x,v)+M_2(f, g)(x,v)+M_3(f, g)(x,v)+N_1(g)(x, v)+N_2(g)(x, v),
\end{align*}
where
\begin{align*}
M_1(f, g)(x,v) &:= f\left(x_b(x,v), v\right)e^{-\Phi(g)(x, v, t_b(x, v) )};\\
M_2(f, g)(x,v) &:= \int_0^{t_b(x,v)} e^{-\Phi(g)(x, v, s)}\int_{\mathbb{R}^3}k(v, \eta) M_1(f, g)(x-vs, \eta) d\eta ds;\\
M_3(f, g)(x,v) &:= \int_0^{t_b(x,v)} e^{-\Phi(g)(x, v, s)}\int_{\mathbb{R}^3}k(v, \eta) \\
\ &\qquad \times \int_0^{t_b(x-vs, \eta)} e^{-\Phi(g)(x-vs, \eta, \tau)} Kf\left(x-vs-\eta\tau, \eta \right) d\tau d\eta ds;\\
N_1(g)(x, v)   &:= \int_0^{t_b(x,v)} e^{-\Phi(g)(x, v, s)}N_+(g ,g)(x-vs, v)ds;  \\
N_2(g)(x, v)   &:= \int_0^{t_b(x,v)} e^{-\Phi(g)(x, v, s)} \int_{\mathbb{R}^3}k(v, \eta) N_1(g)(x-vs, \eta) d\tau d\eta ds.
\end{align*}

The following lemma shows the H\"older regularity for the nonlinear operators.

\begin{lemma}
 Let $\tau>0$ and $0<\kappa< \frac 18$. Assume that $g$ satisfies \eqref{gxv}. Then for every $x, x_1, x_2\in\Omega$ and $v, v_1, v_2\in\mathbb{R}^3\setminus\{0\}$, we have
\begin{align}
\ &\ \left|N_+(g, g)(x_1, v)-N_+(g, g)(x_2, v)\right| \label{N+x} \\
\lesssim &\ C_g\| wg\|_\infty\left(d_{(x_1, x_2)}^{-1}|x_1-x_2| \right)^a \langle v\rangle^{\gamma-2} ; \notag \\
\ &\ \left|N_+(g, g)(x, v_1)-N_+(g, g)(x, v_2)\right|  \label{N+v}\\
\lesssim &\ \| wg\|_\infty \left(C_g \left[ d_{x}^{-1}|v_1-v_2| \right]^a +\| w g\|_\infty|v_1-v_2|\right)\max\{\langle v_1\rangle^{\gamma-2}, \langle v_2 \rangle^{\gamma-2} \}; \notag \\
\ & \left|\varphi(g)(x_1, v)-\varphi(g)(x_2, v)\right| \label{N-x}\\
\lesssim &\ C_g   \left(d_{(x_1, x_2)}^{-1}|x_1-x_2| \right)^a \langle v\rangle^\gamma; \notag \\
\ & \left|\varphi(g)(x, v_1)-\varphi(g)(x, v_2)\right| \label{N-v} \\
\lesssim &\ \| wg\|_\infty |v_1-v_2| \langle v_1\rangle^\gamma + C_g  \left( d_{x}^{-1}|v_1-v_2| \right)^a\max\{\langle v_1\rangle^\gamma, \langle v_2\rangle^\gamma\} . \notag
\end{align}
\end{lemma}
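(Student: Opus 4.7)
The plan is to derive each of the four inequalities by combining the corresponding integral representation in Lemma \ref{NH-1} with the assumed H\"older bound (\ref{gxv}) on $g$, and then integrating using (\ref{A-1})--(\ref{A-2}) together with (\ref{e2}).

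For the two spatial estimates (\ref{N+x}) and (\ref{N-x}), the velocity arguments inside the $g$-difference coincide, so applying (\ref{gxv}) with $v_1 = v_2$ specializes it to
\[
|g(x_1,v_*) - g(x_2,v_*)| \lesssim C_g\bigl(d_{(x_1,x_2)}^{-1}|x_1-x_2|\bigl(1+|v_*|^{-1}e^{-c\nu(v_*)/|v_*|\,d_{(x_1,x_2)}}\bigr)\bigr)^a,
\]
where $v_* = u$ in the $\varphi$ case and $v_* = v+u$ in the $N_+$ case. Plugging this into the third (resp.\ first) inequality of Lemma \ref{NH-1} and integrating against $|v-u|^\gamma M^{\frac12}(u)\,du$ (resp.\ $A(|u|)e^{-\kappa(|u|^2+(|u+v|^2-|v|^2)^2/|u|^2)/4}\,du$) via (\ref{A-1}) and (\ref{A-2}) produces the two targets, with the $\langle v\rangle^\gamma$ weight in (\ref{N-x}) coming from the Gaussian in $M^{\frac12}$ and the $\langle v\rangle^{\gamma-2}$ weight in (\ref{N+x}) arising from (\ref{A-2}) combined with the $\langle\eta\rangle^{\gamma-1}$ factor intrinsic to the $A$-kernel.

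The two velocity estimates (\ref{N+v}) and (\ref{N-v}) are obtained by the same scheme, now with the spatial variable held fixed (so that $d_{(x_1,x_2)}=d_x$) and using the second (resp.\ fourth) inequality of Lemma \ref{NH-1}. The explicit summands $\|wg\|_\infty^2|v_1-v_2|$ in (\ref{N+v}) and $\|wg\|_\infty|v_1-v_2|\langle v_1\rangle^\gamma$ in (\ref{N-v}) are inherited directly from the non-integral boundary terms already present in those inequalities, while the symmetric structure producing $\max\{\langle v_1\rangle^{\gamma-2},\langle v_2\rangle^{\gamma-2}\}$ and $\max\{\langle v_1\rangle^\gamma,\langle v_2\rangle^\gamma\}$ reflects that the Gaussian weight in the integrand appears in two copies, one centered at $v_1$ and the other at $v_2$, each of which is processed independently by (\ref{A-2}).

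The main technical obstacle will be controlling the $|v_*|^{-1}e^{-c\nu(v_*)/|v_*|\,d}$ factor coming from (\ref{gxv}), which is unbounded near $v_* = 0$. Two mechanisms combine to defeat it: the restriction $a<\min\{1,3+\gamma\}$ ensures that the induced $|v_*|^{-a}$ singularity is integrable in $\mathbb{R}^3$ against the collision kernels, and (\ref{e2}) trades any residual $d$-power for an $\langle v_*\rangle^{-1}$ velocity decay, so that the final bound retains exactly the clean scaling $(d^{-1}\cdot\text{increment})^a$ required by the targets. All remaining steps are routine computations of the same flavor as those carried out in Section \ref{linear}.
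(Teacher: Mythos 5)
Your proposal takes essentially the same route as the paper, whose entire proof of this lemma is a one-line appeal to Lemma \ref{NH-1}, \eqref{A-2}, and \eqref{gxv}: insert the H\"older bound for the $g$-increments into the integral representations of Lemma \ref{NH-1} and integrate the kernels, the $|v_*|^{-a}$ singularity being admissible precisely because $a<\min\{1,3+\gamma\}$ (with \eqref{e2} and \eqref{A-1}--\eqref{A-2} doing the bookkeeping). The only loose point is your attribution of the $\langle v\rangle^{\gamma-2}$ weight to a ``$\langle\eta\rangle^{\gamma-1}$ factor intrinsic to the $A$-kernel,'' which does not appear in the statement of Lemma \ref{NH-1} as written, but the paper supplies no further detail on this either.
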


\begin{proof}
By applying Lemma \ref{NH-1}, \eqref{A-2}, and \eqref{gxv}, we get \eqref{N+x}--\eqref{N-v}.
\end{proof}

We need the following two lemmas, and their proofs are most same as that of Lemma \ref{LF2} and are omitted here for simplicity of presentation.

\begin{lemma}
 Let $\tau>0$ and $0<\kappa< \frac 18$. Assume that $g$ satisfies \eqref{gxv}. Let $x_1, x_2 \in \Omega$ and $v\in\mathbb{R}^3\setminus\{0\}$ with $t_b(x_1, v)\geq t_b(x_2, v)$. Then
\begin{align}
\ & \int_0^{t_b(x_2, v)}e^{-\frac12 \nu(v)s}\int_0^s \left| \varphi(g)(x_1-v\ell, v) - \varphi(g)(x_2-v\ell, v)\right| d\ell  ds  \label{N11}\\
\lesssim &\ (C_g+\| wg\|_\infty) \left(d_{(x_1, x_2)}^{-1}|x_1-x_2| (1+|v|^{-1} e^{-\frac{\nu(v)}{8|v|}d_{(x_1, x_2)}}) \right)^a;\notag\\
\ & \int_0^{t_b(x_2, v)}e^{-\frac12 \nu(v)s}\left| N_+(g)(x_1-vs, v) - N_+(g)(x_2-vs, v)\right| ds  \label{N12}\\
\lesssim &\ (C_g+\| wg\|_\infty)\| wg\|_\infty \left(d_{(x_1, x_2)}^{-1}|x_1-x_2| (1+|v|^{-1} e^{-\frac{\nu(v)}{8|v|}d_{(x_1, x_2)}})\right)^a.\notag
\end{align}
Let $x\in \Omega$ and $v_1, v_2\in\mathbb{R}^3$ with $0<|v_2|\leq |v_1|$ and $t_b(x, v_1)\geq t_b(x, \bar v_2)$, where $\bar v_2=\frac{|v_1|}{|v_2|}v_2$. Then
\begin{align}
\ & \int_0^{t_b(x, \bar v_2)}e^{-\frac12 \nu(\bar v_2)s}\int_0^s \left| \varphi(g)(x-v_1\ell, v_1) - \varphi(g)(x- \bar v_2\ell, \bar v_2)\right| d\ell  ds  \label{N13}\\
\lesssim &\ (C_g+\| wg\|_\infty) d_{x}^{-a}|v_1-v_2|^a (1+|v_1|^{-a} e^{-c\frac{\nu(v_1)}{|v_1|}d_x})+\| wg\|_\infty |v_1-v_2| ;\notag\\
\ & \int_0^{t_b(x, \bar v_2)}e^{-\frac12 \nu(\bar v_2)s}\left| N_+(g)(x-v_1s, v_1) - N_+(g)(x-\bar v_2s, \bar v_2)\right| ds  \label{N14}\\
\lesssim &\ (C_g+\| wg\|_\infty)\| wg\|_\infty d_{x}^{-a}|v_1-v_2|^a (1+|v_1|^{-a} e^{-c\frac{\nu(v_1)}{|v_1|}d_x})+\| wg\|_\infty^2 |v_1-v_2|;\notag\\
\ & \int_0^{t_b(x, \bar v_2)}e^{-\frac12 \nu(\bar v_2)s}\int_0^s \left| \varphi(g)(x-v_2\ell, v_2) - \varphi(g)(x- \bar v_2\ell, \bar v_2)\right| d\ell  ds  \label{N15}\\
\lesssim &\ (C_g+\| wg\|_\infty) d_{x}^{-a}|v_1-v_2|^a (1+|v_1|^{-a} e^{-c\frac{\nu(v_1)}{|v_1|}d_x})+\| wg\|_\infty |v_1-v_2|;\notag\\
\ & \int_0^{t_b(x, \bar v_2)}e^{-\frac12 \nu(\bar v_2)s}\left| N_+(g)(x-v_2s, v_2) - N_+(g)(x-\bar v_2s, \bar v_2)\right| ds  \label{N16}\\
\lesssim &\ (C_g+\| wg\|_\infty)\| wg\|_\infty d_{x}^{-a}|v_1-v_2|^a (1+|v_1|^{-a} e^{-c\frac{\nu(v_1)}{|v_1|}d_x})+\| wg\|_\infty^2|v_1-v_2|.\notag
\end{align}
\end{lemma}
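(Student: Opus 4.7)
The plan is to mirror the proof of Lemma~\ref{LF2}, replacing the H\"older estimate of $F_1(f)$ by \eqref{N+x}--\eqref{N-v} and the pointwise bound $|f|_{\infty,-}$ by the sup-bounds $|\varphi(g)|\lesssim\|wg\|_\infty\nu(v)$ and $|N_+(g,g)|\lesssim\|wg\|_\infty^2\nu(v)$ supplied by Lemma~\ref{Ne1}.

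For \eqref{N11}, I would first apply Fubini to swap the $s$- and $\ell$-integrals, reducing the double integral to
\[
\int_0^{t_b(x_2,v)}e^{-\tfrac12\nu(v)s}\!\int_0^s(\cdots)\,d\ell\,ds\lesssim \frac{1}{\nu(v)}\!\int_0^{t_b(x_2,v)}e^{-\tfrac12\nu(v)\ell}\bigl|\varphi(g)(x_1-v\ell,v)-\varphi(g)(x_2-v\ell,v)\bigr|\,d\ell,
\]
so that the $\langle v\rangle^\gamma$ produced by \eqref{N-x} is compensated by $\nu(v)^{-1}$. I would then decompose the time interval exactly as in Lemma~\ref{LF2}: let $\Omega_n,\,y_n,\,z_n,\,A_n$ be as defined there, fix $N_1$ by \eqref{N1}, and split into $\sum_{n=1}^{N_1-1}\int_{A_n}^{A_{n+1}}+\int_{A_0}^{A_1}+\int_0^{A_0}+\int_{A_{N_1}}^{t_b(x_2,v)}$. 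On each interior shell I would apply \eqref{N-x} together with the shell estimate $d_{(x_1-v\ell,x_2-v\ell)}^{-1}\le 2^{n+1}d_{(x_1,x_2)}^{-1}(1+|v|^{-1})$; the geometric sum $\sum 2^{(a-1)n}$ converges since $a<1$, and combined with the factor $e^{-\frac{\nu(v)}{2|v|}\min\{|\overline{x_1y_n}|,|\overline{x_2z_n}|\}}$ produces the $C_g$ part of the bound. On the near-boundary piece $[A_{N_1},t_b(x_2,v)]$, where \eqref{N-x} would blow up, I would instead use $|\varphi(g)|\lesssim\|wg\|_\infty\nu(v)$ and exploit the short length $\lesssim 2^{-N_1}\min\{1,|v|\}/|v|$ of the interval together with the choice of $N_1$; this is the source of the $\|wg\|_\infty$ contribution. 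Estimate \eqref{N12} follows identically after replacing \eqref{N-x} and $\|wg\|_\infty\nu(v)$ by \eqref{N+x} and $\|wg\|_\infty^2\nu(v)$, yielding the extra $\|wg\|_\infty$ factor.

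For the velocity-perturbation estimates \eqref{N13}--\eqref{N16}, I would use the standard reduction $\bar v_2=\frac{|v_1|}{|v_2|}v_2$ of Lemma~\ref{LF2} and split each integrand as
\[
\varphi(g)(x-v_1\ell,v_1)-\varphi(g)(x-\bar v_2\ell,\bar v_2)=\bigl[\varphi(g)(x-v_1\ell,v_1)-\varphi(g)(x-\bar v_2\ell,v_1)\bigr]+\bigl[\varphi(g)(x-\bar v_2\ell,v_1)-\varphi(g)(x-\bar v_2\ell,\bar v_2)\bigr],
\]
and analogously for $N_+$. The first bracket is a purely spatial difference of magnitude $\lesssim|v_1-v_2|\ell$, which after Fubini reduces to the same dyadic scheme, but now with shells $\Omega_n'$ defined relative to $d_x$ and cutoff $N_2$ from \eqref{N2}; this produces the $(d_x^{-1}|v_1-v_2|)^a$ term with the velocity-adapted boundary factor $(1+|v_1|^{-a}e^{-c\nu(v_1)/|v_1|\,d_x})$. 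The second bracket is a pure velocity difference $|v_1-\bar v_2|\le 2|v_1-v_2|$ controlled directly by \eqref{N-v} (resp. \eqref{N+v}); after $\ell$- and $s$-integration its second summand survives as $\|wg\|_\infty|v_1-v_2|$ (resp. $\|wg\|_\infty^2|v_1-v_2|$) and furnishes the last term in each of \eqref{N13}--\eqref{N16}. Estimates \eqref{N15}, \eqref{N16} work by the same decomposition with $v_2$ replacing $v_1$; since the $s$-integration stops at $t_b(x,\bar v_2)$, no boundary-piece analogous to $I_5$ in Lemma~\ref{LF2} arises and \eqref{tb5} is not needed.

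The principal obstacle will be producing the precise exponential constant $\tfrac{\nu(v)}{8|v|}$ (resp. $c\nu(v_1)/|v_1|$) in the boundary factor: this forces splitting the outer exponential $e^{-\frac12\nu(v)s}$ into two copies, one combined with $A_0\gtrsim d_{(x_1,x_2)}/|v|$ to extract the boundary decay and the other used to dominate the convergent geometric sum $\sum 2^{(a-1)n}$. A secondary nuisance will be bookkeeping the velocity weights $\langle v\rangle^\gamma/\nu(v)$ and $\langle v\rangle^{-1}$ arising from \eqref{e2} and from the sup-bounds on $\varphi(g)$ and $N_+(g,g)$, so that no spurious weight survives in the final estimates.
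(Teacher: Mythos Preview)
Your plan is precisely what the paper intends: it omits the proof entirely, stating only that the argument ``is most same as that of Lemma~\ref{LF2}.'' Your Fubini reduction followed by the dyadic shell decomposition (with the $A_n$, $N_1$ for the spatial estimates and $\Omega_n'$, $B_n$, $N_2$ for the velocity estimates) is exactly the mechanism of Lemma~\ref{LF2}, transplanted with \eqref{N+x}--\eqref{N-v} in place of \eqref{F1x}--\eqref{F1v} and the sup-bounds from Lemma~\ref{Ne1} in place of $|f|_{\infty,-}$.

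One point to tighten: in your treatment of the ``pure velocity'' bracket $[\varphi(g)(x-\bar v_2\ell,v_1)-\varphi(g)(x-\bar v_2\ell,\bar v_2)]$ (and its $N_+$ analogue), you say it is ``controlled directly'' by \eqref{N-v}/\eqref{N+v} and that only the $\|wg\|_\infty|v_1-v_2|$ piece survives. But \eqref{N-v} and \eqref{N+v} each carry a $C_g\bigl(d_{x-\bar v_2\ell}^{-1}|v_1-\bar v_2|\bigr)^a$ summand in which the distance $d_{x-\bar v_2\ell}$ degenerates as $\ell\to t_b(x,\bar v_2)$. This term therefore also needs the same dyadic treatment you already set up for the spatial bracket; it then merges with the $(C_g+\|wg\|_\infty)d_x^{-a}|v_1-v_2|^a(1+|v_1|^{-a}e^{-c\nu(v_1)d_x/|v_1|})$ contribution rather than disappearing. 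This is a bookkeeping correction, not a gap in the scheme.
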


\begin{lemma}
 Let $\tau>0$ and $0<\kappa< \frac 18$. Assume that $g$ satisfies \eqref{gxv}. Let $x_1, x_2 \in \Omega$ and $v\in\mathbb{R}^3\setminus\{0\}$ with $t_b(x_1, v)\geq t_b(x_2, v)$. Then
\begin{align}
\ & \left(\int_0^{t_b(x_2, v)} \left| \varphi(g)(x_1-v\ell, v) - \varphi(g)(x_2-v\ell, v)\right|  d\ell \right) e^{-\frac12 \nu(v)t_b(x_2, v)}  \label{M11}\\
\lesssim &\ (C_g+\| wg\|_\infty) \left( d_{(x_1, x_2)}^{-1}|x_1-x_2| (1+|v|^{-1})\right)^a e^{-\frac{\nu(v)}{4|v|}d_{(x_1, x_2)}}). \notag
\end{align}
Let $x\in \Omega$ and $v_1, v_2\in\mathbb{R}^3$ with $0<|v_2|\leq |v_1|$ and $t_b(x, v_1)\geq t_b(x, \bar v_2)$, where $\bar v_2=\frac{|v_1|}{|v_2|}v_2$. Then
\begin{align}
\ & \left(\int_0^{t_b(x, \bar v_2)}\left| \varphi(g)(x-v_1\ell, v_1) - \varphi(g)(x- \bar v_2\ell, \bar v_2)\right| d\ell\right) e^{-\frac12 \nu(v_1)t_b(x, \bar v_2)} \label{M12}\\
\lesssim &\ C_g \left( d_{x}^{-1}|v_1-v_2|(1+|v_1|^{-1}) \right)^a e^{-c\frac{\nu(v_1)}{|v_1|}d_x}+\| wg\|_\infty |v_1-v_2| ;\notag\\
\ & \left(\int_0^{t_b(x, \bar v_2)} \left| \varphi(g)(x-v_2\ell, v_2) - \varphi(g)(x- \bar v_2\ell, \bar v_2)\right| d\ell \right) e^{-\frac12 \nu(\bar v_2)t_b(x, \bar v_2)}  \label{M13}\\
\lesssim &\ C_g \left(d_{x}^{-1}|v_1-v_2| (1+|v_1|^{-1})\right)^a e^{-c\frac{\nu(v_1)}{|v_1|}d_x}+\| wg\|_\infty |v_1-v_2|.\notag
\end{align}
\end{lemma}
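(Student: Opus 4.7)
The plan is to mirror the shell-decomposition strategy used in the proof of Lemma \ref{LF2}, with the role of $F_1(f)$ played by $\varphi(g)$. The two inputs that replace \eqref{F1x}--\eqref{F1v} and $|F_1(f)|\leq |f|_{\infty,-}$ are: the H\"older bounds \eqref{N-x}--\eqref{N-v} for $\varphi(g)$, and the pointwise bound $|\varphi(g)(x,v)|\leq \widehat C\,\|wg\|_\infty\nu(v)$ from \eqref{ng1}. Throughout, the exponential prefactor $e^{-\frac12\nu(v)t_b(x_2,v)}$ (resp. $e^{-\frac12\nu(v_1)t_b(x,\bar v_2)}$ etc.) will supply the $e^{-c\nu(v)d/|v|}$ decay appearing on the right-hand sides, via the convexity bound $t_b(x_2,v)|v|\geq d_{(x_1,x_2)}$ and \eqref{e2}.

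For \eqref{M11}, I first partition $\Omega$ into boundary shells, exactly as in the proof of \eqref{F2x}: set $\Omega_0=\{d(\cdot,\partial\Omega)\leq 2^{-1}d_{(x_1,x_2)}\}$ and $\Omega_n=\{d(\cdot,\partial\Omega)\leq 2^{-n}d_{(x_1,x_2)}\min\{1,|v|\}\}$ for $n\geq 1$, let $y_n,z_n$ denote the intersections of $\overline{x_ix_b(x_i,v)}$ with $\partial\Omega_n\setminus\partial\Omega$, and set $A_n=\min\{|\overline{x_1y_n}|,|\overline{x_2z_n}|\}/|v|$. Choose $N_1\in\mathbb{N}$ with $2^{-N_1}\leq(d_{(x_1,x_2)}^{-1}|x_1-x_2|(1+|v|^{-1}))^a$ and split
\[
\int_0^{t_b(x_2,v)}=\sum_{n=1}^{N_1-1}\int_{A_n}^{A_{n+1}}+\int_{A_0}^{A_1}+\int_0^{A_0}+\int_{A_{N_1}}^{t_b(x_2,v)}.
\]
On each inner shell, use \eqref{N-x} together with the geometric bound $d_{(x_1-v\ell,x_2-v\ell)}^{-1}\leq 2^{n+1}d_{(x_1,x_2)}^{-1}(1+|v|^{-1})$ (analogous to \eqref{S1}); on the remaining $[A_{N_1},t_b(x_2,v)]$ piece, invoke $|\varphi(g)|\lesssim\|wg\|_\infty\nu(v)$ together with the choice of $N_1$ and Proposition \ref{P3.2}. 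Multiplying by the external exponential and applying \eqref{e2} converts the residual $\langle v\rangle^\gamma$ coming from \eqref{N-x} into the desired $e^{-\nu(v)d_{(x_1,x_2)}/(4|v|)}$ factor.

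For \eqref{M12} and \eqref{M13} I would introduce the auxiliary velocity $\bar v_2=\frac{|v_1|}{|v_2|}v_2$ so that $x_b(x,\bar v_2)=x_b(x,v_2)$ and $|v_1-\bar v_2|,|v_2-\bar v_2|\leq 2|v_1-v_2|$, and partition $\Omega$ using $\Omega_n'=\{d(\cdot,\partial\Omega)\leq 2^{-n}d_x\min\{1,|v_1|\}\}$. Replacing \eqref{N-x} by \eqref{N-v}, the same shell-by-shell estimate goes through; the additional $\|wg\|_\infty|v_1-v_2|\langle v_1\rangle^\gamma$ term in \eqref{N-v} contributes, after integration against $e^{-\nu(\bar v_2)s/2}$, precisely the tail $\|wg\|_\infty|v_1-v_2|$ recorded on the right-hand side of \eqref{M12}--\eqref{M13}.

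\emph{Main obstacle.} As in Lemma \ref{LF2}, the delicate point is the summation $\sum_{n=1}^{N_1-1}2^{(a-1)n}$: the factor $2^{an}$ comes from the worsening of $d_{(x_1-v\ell,x_2-v\ell)}^{-a}$ near the boundary, while the factor $2^{-n}$ comes from the shell widths $A_{n+1}-A_n\lesssim 2^{-n}$, and convergence requires exactly the hypothesis $a<1$. The second bookkeeping point is that the exponential prefactor $e^{-\nu(v)t_b(x_2,v)/2}$ in \eqref{M11} is evaluated at the exit time rather than placed inside the integral as in \eqref{F2x}; this is harmless because $t_b(x_2,v)\geq A_n$ for every $n$, so one can insert $e^{-\nu(v)A_n/2}$ on each shell to recover the shell-by-shell decay used in the proof of \eqref{F2x}, and then rewrite the leftover $e^{-\nu(v)(t_b(x_2,v)-A_n)/2}\leq 1$ harmlessly. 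Once these two points are settled, the remaining estimates are routine adaptations.
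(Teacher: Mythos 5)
Your plan is essentially the paper's own route: the paper omits the proof of this lemma, stating that it is "most same as that of Lemma \ref{LF2}," and your adaptation — the dyadic boundary-shell decomposition with \eqref{N-x}--\eqref{N-v} and \eqref{ng1} replacing the $F_1$ estimates, the sum $\sum 2^{(a-1)n}$ converging since $a<1$, and the external exponential $e^{-\frac12\nu(v)t_b}$ split to absorb the residual $\langle v\rangle^{\gamma}$/width factors and produce the $e^{-c\nu(v)d/|v|}$ decay via \eqref{e2} — is exactly that argument. The only slip is cosmetic: in \eqref{M12}--\eqref{M13} the $\|wg\|_\infty|v_1-v_2|\langle v_1\rangle^{\gamma}$ term is controlled by the outside factor $t_b\,e^{-\frac12\nu(v_1)t_b}\lesssim\nu(v_1)^{-1}$ rather than by an integration against $e^{-\nu(\bar v_2)s/2}$, which does not change the conclusion.
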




In what follows we study the H\"older regularity for $N_1(g), N_2(g)$, $M_1(f, g), M_2(f, g)$, and $M_3(f, g)$. We begin by showing the H\"older regularity of $N_1(g)$.

\begin{lemma}\label{LN1}
Assume that $g$ satisfies \eqref{gxv} and \eqref{g2}.
Then for every $x, x_1, x_2\in\Omega$ and $v, v_1, v_2\in\mathbb{R}^3\setminus \{0\}$,
\begin{align}
\ &\ \left|N_1(g)(x_1, v)-N_1(g)(x_2, v)\right| \label{N1x}\\
\lesssim &\ \widetilde C_g  \left(d_{(x_1, x_2)}^{-1} |x_1-x_2|(1+|v|^{-1}e^{-\frac{\nu(v)}{8|v|}d_{(x_1, x_2)}})\right)^{a};\notag \\ 
\ &\ \left|N_1(g)(x, v_1)-N_1(g)(x, v_2)\right| \label{N1v}\\
\lesssim &\ \widetilde C_g \left(d_x^{-1}|v_1-v_2|(1+|v_{1,2}|^{-1}e^{-c\frac{\nu(v_{1,2})}{|v_{1,2}|}d_x})\right)^{a}. \notag 
\end{align}
where $\widetilde C_g$ is as in Theorem \ref{T2}.
\end{lemma}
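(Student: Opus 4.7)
The plan is to reduce both inequalities to the integrated estimates \eqref{N11}--\eqref{N16}, the pointwise Hölder estimates \eqref{N+x}--\eqref{N-v}, and the exit-time lemmas \eqref{tb1}--\eqref{tb5}. The key preliminary is the following \textbf{uniform exponential bound}: since $\widehat C\|wg\|_\infty\le \frac 12$, we have $|\varphi(g)(x,v)|\le \frac{1}{2}\nu(v)$ by \eqref{g2}, hence
\begin{equation*}
\tfrac{1}{2}\nu(v)s\le \Phi(g)(x,v,s)\le \tfrac{3}{2}\nu(v)s,
\end{equation*}
and in particular $e^{-\Phi(g)(x,v,s)}\le e^{-\frac{1}{2}\nu(v)s}$. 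Combined with Lemma \ref{Ne1}, which gives $|N_+(g,g)(x,v)|\lesssim \|wg\|_\infty^2\langle v\rangle^\gamma$, this immediately yields the uniform bound $|N_1(g)(x,v)|\lesssim \|wg\|_\infty^2\langle v\rangle^{\gamma-1}$ via \eqref{tb1}. Thus both inequalities are trivial when the right-hand side exceeds $1$, and we may assume the quantities inside the parentheses are smaller than $1$.

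For the spatial estimate \eqref{N1x}, assume without loss of generality $t_b(x_1,v)\ge t_b(x_2,v)$ and split
\begin{align*}
N_1(g)(x_1,v)-N_1(g)(x_2,v)
=&\int_0^{t_b(x_2,v)}\bigl[e^{-\Phi(g)(x_1,v,s)}-e^{-\Phi(g)(x_2,v,s)}\bigr]N_+(g,g)(x_1-vs,v)\,ds\\
&+\int_0^{t_b(x_2,v)}e^{-\Phi(g)(x_2,v,s)}\bigl[N_+(g,g)(x_1-vs,v)-N_+(g,g)(x_2-vs,v)\bigr]\,ds\\
&+\int_{t_b(x_2,v)}^{t_b(x_1,v)}e^{-\Phi(g)(x_1,v,s)}N_+(g,g)(x_1-vs,v)\,ds.
\end{align*}
For the first term, the mean-value inequality $|e^{-\Phi_1}-e^{-\Phi_2}|\le |\Phi_1-\Phi_2|e^{-\frac{1}{2}\nu(v)s}$ reduces the bound to $\|wg\|_\infty^2$ times the integral \eqref{N11}. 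The second term is controlled directly by \eqref{N12}. The third term is handled with the pointwise bound on $N_+$ together with \eqref{tb2}. Collecting factors $\|wg\|_\infty^2\cdot(C_g+\|wg\|_\infty)+\|wg\|_\infty(C_g+\|wg\|_\infty)\|wg\|_\infty+\|wg\|_\infty^2$ yields exactly the prefactor $\widetilde C_g$.

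For the velocity estimate \eqref{N1v}, again only the $|v_1|\ge |v_2|$ case is needed. Introducing $\bar v_2=\tfrac{|v_1|}{|v_2|}v_2$, write $|N_1(g)(x,v_1)-N_1(g)(x,v_2)|\le|N_1(g)(x,v_1)-N_1(g)(x,\bar v_2)|+|N_1(g)(x,\bar v_2)-N_1(g)(x,v_2)|$. Each piece is decomposed as
\begin{align*}
&\int_0^{t_b(x,\bar v_2)}e^{-\Phi(g)(x,\bar v_2,s)}\bigl[N_+(g,g)(x-v_1s,v_1)-N_+(g,g)(x-\bar v_2s,\bar v_2)\bigr]\,ds\\
&+\int_0^{t_b(x,\bar v_2)}\bigl[e^{-\Phi(g)(x,v_1,s)}-e^{-\Phi(g)(x,\bar v_2,s)}\bigr]N_+(g,g)(x-v_1s,v_1)\,ds\\
&+\int_{t_b(x,\bar v_2)}^{t_b(x,v_1)}e^{-\Phi(g)(x,v_1,s)}N_+(g,g)(x-v_1s,v_1)\,ds,
\end{align*}
(and similarly for the $(\bar v_2,v_2)$ pair, using that $x_b(x,\bar v_2)=x_b(x,v_2)$ and $t_b(x,v_2)\ge t_b(x,\bar v_2)$). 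For the first integrand we invoke \eqref{N14} and \eqref{N16}. For the second, the mean-value trick splits the exponential difference into a $\nu(v_1)-\nu(\bar v_2)$ contribution handled by \eqref{tb3} and a $\varphi(g)$ contribution handled by \eqref{N13} and \eqref{N15}. For the third, \eqref{tb4} and \eqref{tb5} give the exit-time difference. The Hölder smallness assumption on the right-hand side is used to absorb the linear terms $\|wg\|_\infty|v_1-v_2|$ and $\|wg\|_\infty^2|v_1-v_2|$ into powers $|v_1-v_2|^a$ at the cost of an extra $d_x^{-a}(1+|v_{1,2}|^{-1}e^{-c\nu(v_{1,2})/|v_{1,2}|\cdot d_x})^a$ factor, assembling the prefactor $\widetilde C_g$.

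The main obstacle will be the bookkeeping in the velocity estimate: in particular, keeping control of the $|v_1|^{-1}e^{-c\nu(v_1)/|v_1|\cdot d_x}$ singular factor throughout the three terms above, and verifying that the various bounds supplied by \eqref{N13}--\eqref{N16} combine (after the two-step decomposition through $\bar v_2$) to exactly the declared constant $\widetilde C_g$. The estimate \eqref{e2} will be used repeatedly to trade velocity singularities for spatial ones, consistent with the philosophy of the paper's linearized theorem.
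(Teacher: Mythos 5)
Your proposal is correct and follows essentially the same route as the paper: the a priori bounds $\Phi(g)\ge\frac12\nu(v)s$ and $|N_1(g)|\lesssim \|wg\|_\infty^2\langle v\rangle^{\gamma-1}$ to dispose of the large-right-hand-side case, the same three-term splitting in $x$ (handled by \eqref{N11}, \eqref{N12}, \eqref{tb2}) and the same two-step decomposition through $\bar v_2$ in $v$ with the exponential difference, $N_+$ difference and exit-time tail controlled by \eqref{N13}--\eqref{N16}, \eqref{tb4}, \eqref{tb5}. The only cosmetic difference is citing \eqref{tb3} where the paper applies the mean value theorem to $\nu$ directly with \eqref{tb1}, which is the same estimate.
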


\begin{proof}
From \eqref{g2}, we have
\begin{equation} \label{Phi1}
\Phi(g)(x, v, s)\geq \frac 12\nu(v) s.
\end{equation}
Moreover, it follows from Lemma \ref{Ne1} and \eqref{tb1} that
\begin{align}
\left| N_+(g, g)(x, v) \right| &\lesssim  w^{-1}(v)\langle v\rangle^\gamma \| wg\|_\infty^2; \label{N+B} \\
|N_1(g)(x, v)| & \lesssim w^{-1}(v) \langle v\rangle^{\gamma-1} \| wg\|_\infty^2.\label{N1B}
\end{align}

For \eqref{N1x}, from \eqref{N1B}, it is obvious for $d_{(x_1, x_2)}^{-1} |x_1-x_2|(1+|v|^{-1}e^{-\frac{\nu(v)}{8|v|}d_{(x_1, x_2)}}) \geq 1$. Therefore, we consider the case where $d_{(x_1, x_2)}^{-1} |x_1-x_2|(1+|v|^{-1}e^{-\frac{\nu(v)}{8|v|}d_{(x_1, x_2)}})< 1$.
Without loss of generality, we assume that $t_b(x_1, v)\geq t_b(x_2, v)$.
By the mean value theorem, \eqref{tb2}, \eqref{N11}, \eqref{N12}, \eqref{Phi1}, and \eqref{N+B}, we have
\begin{align*}
\ &\left| N_1(g)(x_1, v)-N_1(g)(x_2, v)\right|  \\
\leq& \int_{0}^{t_b(x_2, v)} \left| e^{-\Phi(g)(x_1, v, s)}-e^{-\Phi(g)(x_2, v, s)}\right| \left|N_+(g, g)(x_1-vs, v)\right| ds \notag\\
\ & +\int_{0}^{t_b(x_2, v)}e^{-\Phi(g)(x_2, v, s)} \left| N_+(g, g)(x_1-vs, v)-N_+(g, g)(x_2-vs, v)\right|  ds \notag\\
\ & +\int_{t_b(x_2, v)}^{t_b(x_1, v)}e^{-\Phi(g)(x_1, v, s)} \left| N_+(g, g)(x_1-vs, v) \right| ds \notag \\
\lesssim &\ \frac{\nu(v)}{w(v)} \| wg\|_\infty^2  \int_0^{t_b(x_2, v)}e^{-\frac12\nu(v)s}\int_0^s \left| \varphi(g)(x_1-v\ell, v) - \varphi(g)(x_2-v\ell, v)\right| d\ell  ds \notag \\
\ & +\int_{0}^{t_b(x_2, v)}e^{-\frac12\nu(v)s} \left| N_+(g, g)(x_1-vs, v)-N_+(g, g)(x_2-vs, v)\right|  ds \notag\\
\ & +\int_{t_b(x_2, v)}^{t_b(x_1, v)}e^{-\frac12\nu(v)s} \left| N_+(g, g)(x_1-vs, v) \right| ds \notag\\
\lesssim &\ \| wg\|_\infty^2 (C_g+\| wg\|_\infty)d_{(x_1, x_2)}^{-a}|x_1-x_2|^a(1+|v|^{-a}e^{-\frac{\nu(v)}{8|v|}d_{(x_1, x_2)}}) \notag\\
\ &+ (C_g+\| wg\|_\infty) \| wg\|_\infty d_{(x_1, x_2)}^{-a}|x_1-x_2|^a(1+|v|^{-a}e^{-\frac{\nu(v)}{8|v|}d_{(x_1, x_2)}}) \notag \\
\ &+ \| wg\|_\infty^2 d_{(x_1, x_2)}^{-1}|x_1-x_2| |v|^{-1}e^{-\frac{\nu(v)}{|v|}d_{(x_1, x_2)}} \\
\lesssim &\ \widetilde C_g  \left(d_{(x_1, x_2)}^{-1} |x_1-x_2|(1+|v|^{-1}e^{-\frac{\nu(v)}{8|v|}d_{(x_1, x_2)}})\right)^{a}.
\end{align*}

For \eqref{N1v}, we only do the $|v_1|\geq |v_2|$ case.
Since \eqref{N1B}, \eqref{N1v} is obvious for $d_x^{-1}|v_1-v_2|(1+|v_{1}|^{-1}e^{-c\frac{\nu(v_{1})}{|v_{1}|}d_x}) \geq 1$, and thus we consider the case where $d_x^{-1}|v_1-v_2|(1+|v_{1}|^{-1}e^{-c\frac{\nu(v_{1})}{|v_{1}|}d_x})<1$.
Let $\bar v_2=\frac{|v_1|}{|v_2|}v_2 $. Then
\begin{align*}
\ & \left| N_1(g)(x, v_1)- N_1(g)(x, v_2)\right| \\
\leq &\ \left| N_1(g)(x, v_1)- N_1(g)(x, \bar v_2)\right| +\left| N_1(g)(x, \bar v_2)- N_1(g)(x, v_2)\right|=: I+II.
\end{align*}
Without loss of generality, we assume that $t_b(x, v_1)\geq t_b(x, \bar v_2)$.
By using the mean value theorem, \eqref{tb1}, \eqref{tb4}, \eqref{N13}, \eqref{N14}, \eqref{Phi1}, and \eqref{N+B}, we have
\begin{align*}
I &\leq \int_{0}^{t_b(x, \bar v_2)} \left| e^{-\Phi(g)(x, v_1, s)}-e^{-\Phi(g)(x, \bar v_2, s)}\right| \left|N_+(g, g)(x-v_1s, v_1)\right| ds\\
\ &\ +\int_{0}^{t_b(x, \bar v_2)}e^{-\Phi(g)(x, \bar v_2, s)} \left| N_+(g, g)(x-v_1s, v_1)-N_+(g, g)(x-\bar v_2s, \bar v_2)\right|  ds\\
\ &\ +\int_{t_b(x, \bar v_2)}^{t_b(x, v_1)}e^{-\Phi(g)(x, v_1, s)} \left| N_+(g, g)(x-v_1s, v_1) \right| ds\\
\ &\lesssim  \| wg\|_\infty^2 \int_0^{t_b(x, \bar v_2)}s e^{-\frac12\nu(v_1)s} |v_1-\bar v_2||\nabla \nu(v_1(t))| ds \\
\ &\ + \| wg\|_\infty^2  \int_0^{t_b(x, \bar v_2)}e^{-\frac12\nu(v_1)s} \left[ \int_0^s\left| \varphi(g)(x-v_1\ell, v_1) - \varphi(g)(x-\bar v_2\ell, \bar v_2)\right| d\ell \right] ds \\
\ &\ + \int_0^{t_b(x, \bar v_2)}e^{-\frac12\nu(\bar v_2)s}\left| N_+(g, g)(x-v_1s, v_1)-N_+(g, g)(x-\bar v_2s, \bar v_2) \right| ds\\
\ &\ +\| wg\|_\infty^2\langle v_1\rangle^\gamma \int_{t_b(x, \bar v_2)}^{t_b(x, v_1)}e^{-\frac12 \nu(v_1)s} ds \\
\ &\lesssim \widetilde C_g \left(d_x^{-a}|v_1-v_2|^a(1+|v_1|^{-a}e^{-c\frac{\nu(v_1)}{|v_1|}d_x}) + d_x^{-1}|v_1-v_2|\right)\\
\ &\lesssim \widetilde C_g \left(d_x^{-1}|v_1-v_2|(1+|v_1|^{-1}e^{-c\frac{\nu(v_1)}{|v_1|}d_x})\right)^a,
\end{align*}
where $v_1(t)=v_1+t(\bar v_2-v_1)$ for some $t\in[0, 1]$.
For $II$, we now have $t_b(x, v_2)\geq t_b(x, \bar v_2)$.
By using the mean value theorem, \eqref{tb1}, \eqref{tb5}, \eqref{N13}, \eqref{N14}, \eqref{Phi1}, and \eqref{N+B}, we have
\begin{align*}
II &\leq \int_{0}^{t_b(x, \bar v_2)} \left| e^{-\Phi(x, v_2, s)}-e^{-\Phi(x, \bar v_2, s)}\right| \left|N_+(g, g)(x-v_2s, v_2)\right| ds\\
\ &\ +\int_{0}^{t_b(x, \bar v_2)}e^{-\Phi(g)(x, \bar v_2, s)} \left| N_+(g, g)(x-v_2s, v_2)-N_+(g, g)(x-\bar v_2s, \bar v_2)\right|  ds\\
\ &\ +\int_{t_b(x, \bar v_2)}^{t_b(x, v_2)}e^{-\Phi(g)(x, v_2, s)} \left| N_+(g, g)(x-v_2s, v_2) \right| ds\\
\ &\lesssim \| wg\|_\infty^2 \int_0^{t_b(x, \bar v_2)}s e^{-\frac12\nu(\bar v_2)s} |v_2-\bar v_2||\nabla \nu(v_2(t))| ds \\
\ &\ + \| wg\|_\infty^2  \int_0^{t_b(x, \bar v_2)}e^{-\frac12\nu(\bar v_2)s} \left[ \int_0^s\left| \varphi(g)(x-v_2\ell, v_2) - \varphi(g)(x-\bar v_2\ell, \bar v_2)\right| d\ell \right] ds \\
\ &\ + \int_0^{t_b(x, \bar v_2)}e^{-\frac12\nu(\bar v_2)s}\left| N_+(g, g)(x-v_2s, v_2)-N_+(g, g)(x-\bar v_2s, \bar v_2) \right| ds\\
\ &\ +\| wg\|_\infty^2\langle v_2\rangle^\gamma \int_{t_b(x, \bar v_2)}^{t_b(x, v_2)}e^{-\frac12 \nu(v_2)s} ds \\
\ &\lesssim \widetilde C_g \left(d_x^{-a}|v_1-v_2|^a(1+|v_1|^{-a}e^{-c\frac{\nu(v_1)}{|v_1|}d_x}) + d_x^{-1}|v_1-v_2|\right)\\
\ &\lesssim \widetilde C_g \left(d_x^{-1}|v_1-v_2|(1+|v_1|^{-1}e^{-c\frac{\nu(v_1)}{|v_1|}d_x})\right)^a,
\end{align*}
where $v_2(t)=v_2+t(\bar v_2-v_2)$ for some $t\in[0, 1]$.
This completes the proof.
\end{proof}

\begin{lemma}\label{LN2}
Assume that $g$ satisfies \eqref{gxv} and \eqref{g2}.
Then for every $x, x_1, x_2\in\Omega$ and $v, v_1, v_2\in\mathbb{R}^3\setminus\{0\}$,
\begin{align*}
\left|N_2(g)(x_1, v)-N_2(g)(x_2, v)\right| &\lesssim \widetilde C_g \left(d_{(x_1, x_2)}^{-1} |x_1-x_2|(1+|v|^{-1}e^{-\frac{\nu(v)}{8|v|}d_{(x_1, x_2)}})\right)^{a}; \notag \\  
\left|N_2(g)(x, v_1)-N_2(g)(x, v_2)\right| &\lesssim \widetilde C_g \left(d_x^{-1}|v_1-v_2|(1+|v_{1,2}|^{-1}e^{-c\frac{\nu(v_{1,2})}{|v_{1,2}|}d_x})\right)^{a}. \notag 
\end{align*}
where $ \widetilde C_g$ is as in Theorem \ref{T2}.
\end{lemma}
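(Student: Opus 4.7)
The structure of $N_2(g)$ is identical to $F_2(f)$, with $N_1(g)$ playing the role of $F_1(f)$ and the weight $e^{-\Phi(g)(x,v,s)}$ replacing $e^{-\nu(v)s}$. My plan is therefore to mimic the proof of Lemma \ref{LF2} step by step, using Lemma \ref{LN1} in place of Lemma \ref{LF1} and feeding the extra discrepancies produced by $\Phi(g)$ into the estimates \eqref{M11}--\eqref{M13}. A preliminary $L^{\infty}$ bound of the form $|N_2(g)(x,v)| \lesssim \|wg\|_{\infty}^{2}\langle v\rangle^{\gamma-2}$, obtained from \eqref{N1B}, \eqref{A-1}, \eqref{tb1}, and \eqref{Phi1}, reduces everything to the case where the parenthetical factors in the target inequalities are less than one.

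For the space estimate, assume without loss of generality that $t_{b}(x_{1},v)\geq t_{b}(x_{2},v)$ and decompose
\begin{align*}
N_{2}(g)(x_{1},v)-N_{2}(g)(x_{2},v)
=&\int_{0}^{t_{b}(x_{2},v)}e^{-\Phi(g)(x_{2},v,s)}\!\!\int_{\mathbb{R}^{3}}\!k(v,\eta)\bigl[N_{1}(g)(x_{1}-vs,\eta)-N_{1}(g)(x_{2}-vs,\eta)\bigr]d\eta\,ds\\
&+\int_{0}^{t_{b}(x_{2},v)}\!\bigl[e^{-\Phi(g)(x_{1},v,s)}-e^{-\Phi(g)(x_{2},v,s)}\bigr]\!\!\int_{\mathbb{R}^{3}}\!k(v,\eta)N_{1}(g)(x_{1}-vs,\eta)d\eta\,ds\\
&+\int_{t_{b}(x_{2},v)}^{t_{b}(x_{1},v)}e^{-\Phi(g)(x_{1},v,s)}\!\!\int_{\mathbb{R}^{3}}\!k(v,\eta)N_{1}(g)(x_{1}-vs,\eta)d\eta\,ds.
\end{align*}
The first term is handled exactly as $I_{1}+\cdots+I_{4}$ in the proof of Lemma \ref{LF2}: I slice $\Omega$ into the boundary shells $\Omega_{n}$, transfer the Hölder constant from $N_{1}(g)$ via \eqref{N1x}, use \eqref{A-1} on the $\eta$-integral, and cash in the geometric gain from \eqref{323}. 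The second term is bounded by using \eqref{Phi1} together with the bound $|e^{-\Phi(x_{1})}-e^{-\Phi(x_{2})}|\leq e^{-\frac{1}{4}\nu(v)s}\int_{0}^{s}|\varphi(g)(x_{1}-v\ell,v)-\varphi(g)(x_{2}-v\ell,v)|d\ell$ and then invoking \eqref{M11} after performing the $\eta$-integral with \eqref{N1B}. The last piece collapses via \eqref{tb2} and \eqref{N1B}, just as $I_{5}$ did.

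For the velocity estimate, I take $|v_{1}|\geq |v_{2}|$ and split through $\bar v_{2}=\frac{|v_{1}|}{|v_{2}|}v_{2}$ into $N_{2}(g)(x,v_{1})-N_{2}(g)(x,\bar v_{2})$ and $N_{2}(g)(x,\bar v_{2})-N_{2}(g)(x,v_{2})$. Each of these is further broken into four pieces exactly as in the $II$ and $III$ decomposition of Lemma \ref{LF2}: the main $N_{1}(g)$-difference term is treated by the shell partition $\Omega_{n}'$ using \eqref{N1v}; the kernel-difference term uses Lemma \ref{kH}; the $\Phi(g)$-difference term is controlled by \eqref{M12} (resp.\ \eqref{M13}) after pairing with \eqref{N1B}; and the boundary term on $[t_{b}(x,\bar v_{2}),t_{b}(x,v_{i})]$ is dominated by \eqref{tb4}--\eqref{tb5}. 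The small extra pieces proportional to $|v_{1}-v_{2}|$ coming from \eqref{M12}--\eqref{M13} and \eqref{tb3} are absorbed into the desired exponent $a<1$.

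The main obstacle I anticipate is bookkeeping: arranging the shell decomposition $\Omega_{n}$ (resp.\ $\Omega_{n}'$) so that the factor $d_{(x_{1},x_{2})}^{-1}(1+|v|^{-1})$ (resp.\ $d_{x}^{-1}(1+|v_{1}|^{-1})$) inherited from \eqref{N1x} (resp.\ \eqref{N1v}) combines with the geometric ratios $2^{-n}$ from \eqref{323} (resp.\ \eqref{324}) and with the exponential decay $e^{-\nu(v)s/|v|}$ to yield the single clean factor $d_{(x_{1},x_{2})}^{-1}|x_{1}-x_{2}|(1+|v|^{-1}e^{-\frac{\nu(v)}{8|v|}d_{(x_{1},x_{2})}})$ in the final bound. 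This is precisely where the choice $N_{1}\sim 2^{-N_{1}}\lesssim (d_{(x_{1},x_{2})}^{-1}|x_{1}-x_{2}|(1+|v|^{-1}))^{a}$, as in \eqref{N1} of Lemma \ref{LF2}, is essential; once this is set up correctly the estimate for each piece is routine and the constants collapse into $\widetilde{C}_{g}$.
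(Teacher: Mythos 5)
Your proposal is correct and is essentially the paper's own argument: the paper omits the proof of Lemma \ref{LN2}, stating only that it follows the proofs of Lemmas \ref{LF2} and \ref{LN1} via \eqref{A-1}, \eqref{N1x}, and \eqref{N1v}, which is exactly the combination (shell decomposition for the $N_1$-difference, characteristic/exit-time estimates for the remaining pieces, and control of the $\Phi(g)$-weight discrepancies) you describe. One trivial relabeling: for the terms coming from $e^{-\Phi(g)(x_1,v,s)}-e^{-\Phi(g)(x_2,v,s)}$ (and their velocity analogues) integrated in $s$, the applicable estimates are the integrated ones \eqref{N11}, \eqref{N13}, \eqref{N15}, not \eqref{M11}--\eqref{M13}, which bound the endpoint quantity used for $M_1$.
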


\begin{proof}
Note that
\begin{equation*}
N_2(g)(x, v)=\int_0^{t_b(x, v)}e^{-\Phi(g)(x, v, s)}\int_{\mathbb{R}^3}k(v, \eta) N_1(g)(x-vs, \eta) d\eta ds.
\end{equation*}
By using \eqref{A-1}, \eqref{N1x}, and \eqref{N1v}, the proof is most same as that of Lemmas \ref{LF2} and \ref{LN1} and is omitted here.
We complete the proof.
\end{proof}

\begin{lemma}\label{LM1}
Assume that $g$ satisfies \eqref{gxv} and \eqref{g2} and $f\in L^\infty(\Gamma_-)$ satisfies \eqref{fb}.
Then for every $x, x_1, x_2\in\Omega$ and $v, v_1, v_2\in\mathbb{R}^3\setminus\{0\}$,
\begin{align}
\ &\ \left|M_1(f, g)(x_1, v)-M_1(f, g)(x_2, v)\right| \label{M1x}\\
\lesssim &\ (m+C_{f,g})\left(d_{(x_1, x_2)}^{-1} |x_1-x_2|(1+|v|^{-1}e^{-\frac{\nu(v)}{2|v|}d_{(x_1, x_2)}})\right)^{\min\{a, \beta\}}; \notag\\
\ &\ \left|M_1(f)(x, v_1)-M_1(f)(x, v_2)\right| \label{M1v}\\
\lesssim &\ (m+C_{f,g})\left( d_x^{-1} |v_1-v_2|(1+|v_{1,2}|^{-1}e^{-c\frac{\nu(v_{1,2})}{|v_{1,2}|}d_x})\right)^{\min\{a, \beta\}}, \notag
\end{align}
where $C_{f,g}$ is as in Theorem \ref{T2}.
\end{lemma}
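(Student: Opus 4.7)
The plan is to parallel the proof of Lemma~\ref{LF1}, treating the extra factor $\int_0^s \varphi(g)(x-v\ell,v)\,d\ell$ appearing in the exponent $\Phi(g)$ as a lower-order perturbation. Note first that \eqref{g2} together with \eqref{Phi1} yields the uniform bound $|M_1(f,g)(x,v)|\leq |f|_{\infty,-}$, so both \eqref{M1x} and \eqref{M1v} are trivial whenever the bracketed argument on the right-hand side is $\geq 1$; henceforth I restrict attention to the small-argument regime.

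For \eqref{M1x}, without loss of generality assume $t_b(x_1,v)\geq t_b(x_2,v)$. Writing $M_1(f,g)=f(x_b,v)\,e^{-\Phi(g)(x,v,t_b)}$, the triangle inequality splits the difference into a boundary-data part $|f(x_b(x_1,v),v)-f(x_b(x_2,v),v)|\,e^{-\Phi(g)(x_1,v,t_b(x_1,v))}$, handled by \eqref{fb} and \eqref{xbx} to give $m(d_{(x_1,x_2)}^{-1}|x_1-x_2|)^\beta$, and an exponential-difference part. For the latter, I would further insert the mixed exponent $e^{-\Phi(g)(x_1,v,t_b(x_2,v))}$, producing a \emph{time-difference} piece and a \emph{potential-difference} piece. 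The mean-value theorem, combined with $|\nu(v)+\varphi(g)|\leq \tfrac32\nu(v)$ (from \eqref{g2}), \eqref{tbx}, and the factorization $e^{-\tfrac12\nu(v)t_b(x_2,v)}$ afforded by \eqref{Phi1}, bounds the time piece by $d_{(x_1,x_2)}^{-1}|x_1-x_2||v|^{-1}e^{-\frac{\nu(v)}{2|v|}d_{(x_1,x_2)}}$. The potential piece is controlled exactly by \eqref{M11} after the same factorization.

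For \eqref{M1v}, mimicking Lemma~\ref{LF1}, introduce $\bar v_2=\frac{|v_1|}{|v_2|}v_2$ so that $x_b(x,\bar v_2)=x_b(x,v_2)$, and estimate $|M_1(f,g)(x,v_1)-M_1(f,g)(x,\bar v_2)|$ and $|M_1(f,g)(x,\bar v_2)-M_1(f,g)(x,v_2)|$ separately. Supposing $t_b(x,v_1)\geq t_b(x,\bar v_2)$ for the first difference, the analogous three-term decomposition yields: an $f$-difference handled by \eqref{fb} and \eqref{xbv}; a time piece bounded via the mean-value theorem together with \eqref{tb1}, \eqref{tb3}, \eqref{tb4}; and a potential piece reducing to \eqref{M12}. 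The second difference is treated identically, with \eqref{tb5} and \eqref{M13} in place of \eqref{tb4} and \eqref{M12}. The main obstacle will be the bookkeeping required to ensure each mean-value step carries the prefactor $e^{-c\nu(v_{1,2})/|v_{1,2}|\cdot d_x}$ demanded by the target bound; this decay is supplied by the lower bound \eqref{Phi1} on $\Phi(g)$ and by the exponential factors already built into \eqref{M11}--\eqref{M13}, in combination with the space-velocity singularity exchange \eqref{e2}.
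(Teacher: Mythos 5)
Your proposal is correct and follows essentially the same route as the paper: the same trivial-case reduction via \eqref{Phi1}, the same splitting into boundary-data, time-difference and potential-difference pieces controlled by \eqref{fb}, \eqref{xbx}--\eqref{xbv}, \eqref{tb2}, and \eqref{M11}--\eqref{M13}, and the same $\bar v_2$ device for the velocity estimate. The only minor point is that \eqref{tb4} and \eqref{tb5} cannot be cited verbatim since the exponent is $\Phi(g)$ rather than $\nu s$; as you indicate, one uses \eqref{g2} and \eqref{Phi1} to reduce to the same argument with $\tfrac12\nu$ in the exponent, which is exactly how the paper handles its terms $II_2$ and $III_2$.
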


\begin{proof}
From \eqref{g2}, one has \eqref{Phi1}. It follows from \eqref{Phi1} that
\begin{equation}\label{M1B}
\left| M_1(f, g)(x, v) \right| \leq |f|_{\infty, -}e^{-\frac12\nu(v) t_b(x, v)}\leq |f|_{\infty, -}.
\end{equation}
For \eqref{M1x}, since \eqref{M1B}, it is obvious for $d_{(x_1, x_2)}^{-1} |x_1-x_2|(1+|v|^{-1}e^{-\frac{\nu(v)}{2|v|}d_{(x_1, x_2)}})\geq 1$. Therefore, we consdier the case for $d_{(x_1, x_2)}^{-1} |x_1-x_2|(1+|v|^{-1}e^{-\frac{\nu(v)}{2|v|}d_{(x_1, x_2)}})< 1$.
Without loss of generality, we assume that $t_b(x_1, v)\geq t_b(x_2, v)$.
By using the mean value theorem, \eqref{xbx}, \eqref{tbx}, \eqref{tb2}, \eqref{fb}, \eqref{M11}, and \eqref{Phi1}, we have
\begin{align*}
\ & \left| M_1(f, g)(x_1, v)-M_1(f, g)(x_2, v)\right| \\
\leq&\ \left|f(x_b(x_1,v), v)-f(x_b(x_2,v), v)\right|e^{-\frac12\nu(v)t_b(x_1,v)}\notag \\
\ &+|f|_{\infty, -}\nu(v)\left(t_b(x_1,v)-t_b(x_2,v)\right) e^{-\frac12\nu(v)t_b(x_2, v)}\notag\\
\ &+|f|_{\infty, -} \left(\int_{t_b(x_2, v)}^{t_b(x_1, v)}\left| \varphi(g)(x_1-v\ell, v)\right| d\ell \right) e^{-\frac12\nu(v)t_b(x_2, v)} \notag \\
\ &+|f|_{\infty, -} \left(\int_0^{t_b(x_2, v)}\left| \varphi(g)(x_1-v\ell, v)-\varphi(g)(x_2-v\ell, v)\right| d\ell \right) e^{-\frac{\nu(v)}{2}t_b(x_2, v)} \notag \\
\lesssim&\ m \left(d_{(x_1, x_2)}^{-1} |x_1-x_2|\right)^\beta +|f|_{\infty, -}(1+\| wg\|_\infty)d_{(x_1, x_2)}^{-1} |x_1-x_2||v|^{-1}e^{-\frac{\nu(v)}{2|v|}d_{(x_1, x_2)}} \notag \\
\ &+|f|_{\infty, -}(C_g+\| wg\|_\infty) \left(d_{(x_1, x_2)}^{-1} |x_1-x_2|(1+|v|^{-1})\right)^{a}e^{-\frac{\nu(v)}{2|v|}d_{(x_1, x_2)}} \\
\lesssim &\ (m+C_{f,g})\left(d_{(x_1, x_2)}^{-1} |x_1-x_2|(1+|v|^{-1}e^{-\frac{\nu(v)}{2|v|}d_{(x_1, x_2)}})\right)^{\min\{a, \beta\}}.
\end{align*}

For \eqref{M1v}, we only do the $|v_1|\geq |v_2| $ case. Since \eqref{M1B}, \eqref{M1v} is obvious for $d_x^{-1}|v_1-v_2|(1+|v_{1}|^{-1}e^{-c\frac{\nu(v_{1})}{|v_{1}|}d_x})\geq 1$.
Therefore, we consider the case where $d_x^{-1}|v_1-v_2|(1+|v_{1}|^{-1}e^{-c\frac{\nu(v_{1})}{|v_{1}|}d_x})< 1$.
Let $\bar v_2:=\frac{|v_1|}{|v_2|}v_2$. Then
\begin{align*}
\ &\ \left|M_1(f, g)(x, v_1)-M_1(f, g)(x, v_2)\right| \\
\leq &\ \left| M_1(f, g)(x, v_1)-M_1(f, g)(x, \bar v_2)\right|+\left| M_1(f, g)(x, \bar v_2)-M_1(f, g)(x, v_2)\right| \\
=:& \ II+III.
\end{align*}
Note that $|v_1-\bar v_2|\leq 2|v_1-v_2|$, $|\bar v_2-v_2|\leq |v_1-v_2|$, and $x_b(x, v_2)=x_b(x, \bar v_2)$.

For $II$, we assume without loss of generality that $t_b(x, v_1) \geq t_b(x, \bar v_2)$. Then
\begin{align*}
II & \leq \left|f(x_b(x,v_1), v_1)-f(x_b(x, \bar v_2), \bar v_2)\right|e^{-\Phi(g)(x, v_1, t_b(x, v_1))}\\
\ &\quad +\left|f(x_b(x,\bar v_2), \bar v_2)\right| \left|e^{-\Phi(g)(x, v_1, t_b(x, v_1))}-e^{-\Phi(g)(x, v_1, t_b(x,\bar v_2))}\right|  \\
\ &\quad +\left|f(x_b(x,\bar v_2), \bar v_2)\right| \left|e^{-\Phi(g)(x, v_1, t_b(x, \bar v_2))}-e^{-\Phi(g)(x, \bar v_2, t_b(x,\bar v_2))}\right| \\
& =:  II_1+II_2+II_3.
\end{align*}
For $II_1$, by using \eqref{xbv},  \eqref{fb}, and \eqref{Phi1}, we have
\begin{align*}
\ II_1 &\lesssim m \left(d_x^{-1}\theta_{(v_1, \bar v_2)}+|v_1-v_2|\right)^\beta e^{-\frac12\nu(v_1)t_b(x, v_1)} \\
\ & \lesssim m \left[(d_x^{-1}|v_1-v_2|(1+|v_1|^{-1})\right)^\beta e^{-\frac{\nu(v_1)}{2|v_1|}d_x}.
\end{align*}
For $II_2$, we divide it into the following two case. \\
$(i)$ $\theta_{(v_1, \bar v_2)}\geq \frac{\pi}{3}$. By \eqref{e2} and \eqref{Phi1}, we have
\begin{align*}
II_2 &\leq |f|_{\infty, -}\left( e^{-\frac12\nu(v_1)t_b(x, v_1)}+ e^{-\frac12\nu(v_1)t_b(x, \bar v_2)}  \right) \lesssim |f|_{\infty, -}\theta_{(v_1, \bar v_2)} e^{-\frac{\nu(v_1)}{2|v_1|}d_x}\\
\ &\lesssim |f|_{\infty, -}|v_1-v_2||v_1|^{-1} e^{-\frac{\nu(v_1)}{2|v_1|}d_x}\lesssim |f|_{\infty, -}d_x^{-1}|v_1-v_2|.
\end{align*}
 $(ii)$ $\theta_{(v_1, \bar v_2)}< \frac{\pi}{3}$. One has that $|v_1(t)| \sim |v_1|$, where $v_1(t)=:v_1+t(\bar v_2-v_1)$ for $t\in [0, 1]$. By applying the mean value theorem, \eqref{e2}, \eqref{tbd}, \eqref{XY}, and \eqref{g2}, we have
\begin{align*}
II_2 &\leq |f|_{\infty, -}|v_1-\bar v_2| e^{-\Phi(g)(x, v_1, t_b(x, v_1(t)))} \\
\ & \qquad\qquad \times \left| \left[\nu(v_1)+\varphi(g)\left(x-v_1t_b(x, v_1(t)), v_1\right) \right]\nabla_v t_b(x, v_1(t)) \right| \\
\ &\lesssim |f|_{\infty, -}\langle v_1\rangle^\gamma d_x^{-1}|v_1-v_2| [t_b(x, v_1(t))]^2 e^{-\frac12\nu(v_1)t_b(x, v_1(t))}\\
\ &\lesssim  |f|_{\infty, -} d_x^{-1}|v_1-v_2|. 
\end{align*}
For $II_3$, by using the mean value theorem, \eqref{M12}, and \eqref{Phi1}, we have
\begin{align*}
II_3&\leq |f|_{\infty, -} |\nu(v_1)-\nu(\bar v_2)|t_b(x, \bar v_2)e^{-\frac12\nu(v_1)t_b(x, \bar v_2)} \\
\ &\quad +f|_{\infty, -} \left(\int_0^{t_b(x, \bar v_2)}|\varphi(g)(x-v_1\ell, v_1)-\varphi(g)(x-\bar v_2\ell, \bar v_2)| d\ell\right) e^{-\frac12\nu(v_1)t_b(x, \bar v_2)} \\
\ &\lesssim |f|_{\infty, -}|v_1-\bar v_2| |\nabla \nu(v_1(t))|t_b(x, \bar v_2)e^{-\frac12\nu(v_1)t_b(x, \bar v_2)}\\
\ &\quad + |f|_{\infty, -}\left( C_g\left[ d_x^{-1}|v_1-v_2|(1+|v_1|^{-1})\right]^a e^{-c\frac{\nu(v_1)}{|v_1|}d_x}+\| wg\|_\infty|v_1-v_2| \right) \\
\ &\lesssim |f|_{\infty, -}\left( C_g\left[ d_x^{-1}|v_1-v_2|(1+|v_1|^{-1})\right]^a e^{-c\frac{\nu(v_1)}{|v_1|}d_x}+(1+\| wg\|_\infty)|v_1-v_2| \right)
\end{align*}

For $III$, we have $t_b(x, v_2)\geq t_b(x, \bar v_2)$. Then
\begin{align*}
III & \leq \left|f(x_b(x,v_2), v_2)-f(x_b(x, \bar v_2), \bar v_2)\right|e^{-\Phi(x, v_2, t_b(x, v_2))}\\
\ &\quad +\left|f(x_b(x,\bar v_2), \bar v_2)\right| \left|e^{-\Phi(x, v_2, t_b(x, v_2))}-e^{-\Phi(x, v_2, t_b(x,\bar v_2))}\right|  \\
\ &\quad +\left|f(x_b(x,\bar v_2), \bar v_2)\right| \left|e^{-\Phi(x, v_2, t_b(x, \bar v_2))}-e^{-\Phi(x, \bar v_2, t_b(x,\bar v_2))}\right| \\
\ &=: III_1+III_2+III_3.
\end{align*}
For $III_1$, by using \eqref{fb} and \eqref{Phi1}, we have
\begin{align*}
III_1\leq m\left(|x_b(x, v_2)-x_b(x, \bar v_2)|+|v_2-\bar v_2|\right)^\beta e^{-\frac12\nu(v_2)t_b(x, v_2)} \lesssim m |v_1-v_2|^\beta.
\end{align*}
For $III_2$, we divide it into the following two cases. \\
$(i)$ $|v_2|< \frac{|v_1|}{2}$. One has $|v_1-v_2|\sim |v_1|$.
By using the mean value theorem and \eqref{Phi1}, we have
\begin{align*}
III_2 &\leq |f|_{\infty, -}\left( e^{-\frac12\nu(v_2)t_b(x, v_2)}+ e^{-\frac12\nu(v_2)t_b(x, \bar v_2)}  \right) \\
\ &\lesssim |f|_{\infty, -}\left[t_b(x, \bar v_2)\right]^{-1}\left[t_b(x, \bar v_2)\right] e^{-\frac12\nu(v_2)t_b(x, \bar v_2)}\\
\ &\lesssim |f|_{\infty, -}d_x^{-1}|v_1-v_2|.
\end{align*}
 $(ii)$ $\frac{|v_1|}{2}\leq |v_2|\leq |v_1|$. By applying the mean value theorem, \eqref{e2}, and \eqref{Phi1}, we have
\begin{align*}
III_2 &\leq |f|_{\infty, -}\left(\frac{\left| \overline{xx_b(x, v_2)}\right|}{|v_2|}- \frac{\left| \overline{xx_b(x, v_2)}\right|}{|\bar v_2|}\right) \left|\nu(v_2)+\varphi(g)(x-v_2\tilde t) \right| e^{-\Phi(g)(x, v_2, \tilde t)} \\
\ &\lesssim |f|_{\infty, -}\langle v_2\rangle^\gamma |v_1-v_2| \frac{\left| \overline{xx_b(x, v_2)} \right|}{|v_1||v_2|} e^{-\frac{\nu(v_2)}{|v_1|}\left| \overline{xx_b(x, v_2)} \right|}\\
\ &\lesssim  |f|_{\infty, -} d_x^{-1}|v_1-v_2|, 
\end{align*}
where $t_b(x, \bar v_2)\leq \tilde t\leq t_b(x, v_2)$.
For $III_3$, by using the mean value theorem, \eqref{M13}, and \eqref{Phi1}, we have
\begin{align*}
III_3&\leq |f|_{\infty, -} |\nu(v_2)-\nu(\bar v_2)|t_b(x, \bar v_2)e^{-\frac12\nu(\bar v_2)t_b(x, \bar v_2)} \\
\ &\quad +f|_{\infty, -} \left(\int_0^{t_b(x, \bar v_2)}|\varphi(g)(x-v_2\ell, v_2)-\varphi(g)(x-\bar v_2\ell, \bar v_2)| d\ell\right) e^{-\frac12\nu(\bar v_2)t_b(x, \bar v_2)} \\
\ &\lesssim |f|_{\infty, -}\left( C_g\left[ d_x^{-1}|v_1-v_2|(1+|v_1|^{-1})\right]^a e^{-c\frac{\nu(v_1)}{|v_1|}d_x}+(1+\| wg\|_\infty)|v_1-v_2| \right).
\end{align*}
Summing up, we get \eqref{M1v}.
This completes the proof.
\end{proof}

\begin{lemma}\label{LM2}
Assume that $g$ satisfies \eqref{gxv} and \eqref{g2} and $f\in L^\infty(\Gamma_-)$ satisfies \eqref{fb}.
Then for every $x, x_1, x_2\in\Omega$ and $v, v_1, v_2\in\mathbb{R}^3\setminus\{0\}$,
\begin{align*}
\ &\ \left|M_2(f, g)(x_1, v)-M_2(f, g)(x_2, v)\right| \\ 
\lesssim &\ (m+C_{f,g})\left(d_{(x_1, x_2)}^{-1} |x_1-x_2|(1+|v|^{-1}e^{-\frac{\nu(v)}{8|v|}d_{(x_1, x_2)}})\right)^{\min\{a, \beta\}}; \notag \\  
\ &\ \left|M_2(f, g)(x, v_1)-M_2(f, g)(x, v_2)\right| \\ 
\lesssim &\ (m+C_{f,g})\left(d_x^{-1}|v_1-v_2|(1+|v_{1,2}|^{-1}e^{-c\frac{\nu(v_{1,2})}{|v_{1,2}|}d_x})\right)^{\min\{a, \beta\}}. \notag 
\end{align*}
where $C_{f,g}$ is as in Theorem \ref{T2}.
\end{lemma}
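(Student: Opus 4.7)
The plan is to mimic closely the proof of Lemma \ref{LF2}, with the extra ingredients required to handle the weight $e^{-\Phi(g)(x,v,s)}$ in place of $e^{-\nu(v)s}$. First I would establish a crude pointwise bound: using \eqref{A-1}, \eqref{tb1}, \eqref{Phi1}, together with the trivial bound $|M_1(f,g)(y,\eta)|\leq |f|_{\infty,-}$ from \eqref{M1B}, one checks that $|M_2(f,g)(x,v)|\lesssim |f|_{\infty,-}\langle v\rangle^{\gamma-3}$. This disposes of both claims in the easy regime where the right-hand side is $\gtrsim 1$, so in all that follows one may assume the smallness condition on $d_{(x_1,x_2)}^{-1}(|x_1-x_2|+|v_1-v_2|)(1+|v_{1,2}|^{-1}e^{-c\nu(v_{1,2})d/|v_{1,2}|})$.

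For the spatial estimate I would assume without loss of generality $t_b(x_1,v)\geq t_b(x_2,v)$, write
\begin{align*}
M_2(f,g)(x_1,v)-M_2(f,g)(x_2,v)
&= \int_0^{t_b(x_2,v)} e^{-\Phi(g)(x_2,v,s)}\int_{\mathbb{R}^3} k(v,\eta)\\
&\qquad\times\bigl[M_1(f,g)(x_1-vs,\eta)-M_1(f,g)(x_2-vs,\eta)\bigr]d\eta ds \\
&\quad + \int_0^{t_b(x_2,v)}\bigl[e^{-\Phi(g)(x_1,v,s)}-e^{-\Phi(g)(x_2,v,s)}\bigr]\\
&\qquad\times\int_{\mathbb{R}^3} k(v,\eta) M_1(f,g)(x_1-vs,\eta) d\eta ds \\
&\quad + \int_{t_b(x_2,v)}^{t_b(x_1,v)} e^{-\Phi(g)(x_1,v,s)}\int_{\mathbb{R}^3}k(v,\eta) M_1(f,g)(x_1-vs,\eta)d\eta ds,
\end{align*}
and estimate each piece. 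The first summand is handled exactly as in Lemma \ref{LF2}: introduce the nested layers $\Omega_n=\{y\in\Omega : d(y,\partial\Omega)\leq 2^{-n}d_{(x_1,x_2)}\min\{1,|v|\}\}$ (with $\Omega_0$ the coarse layer), the times $A_n$ through the intersection points $y_n,z_n$, and the cutoff $N_1$ defined as in \eqref{N1}; then one plugs in Lemma \ref{LM1} in place of the $F_1$-regularity used before. The summation over $n$ in $I_1$ converges because $a<1$, and $I_2,I_3,I_4,I_5$ are treated verbatim, yielding the desired bound with the factor $e^{-\nu(v)d_{(x_1,x_2)}/(8|v|)}$ instead of $e^{-\nu(v)d/(4|v|)}$ due to the halved exponent from \eqref{Phi1}. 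The second summand is the genuinely new piece: use the mean-value inequality $|e^{-\Phi(g)(x_1,v,s)}-e^{-\Phi(g)(x_2,v,s)}|\leq e^{-\frac12\nu(v)s}\int_0^s |\varphi(g)(x_1-v\ell,v)-\varphi(g)(x_2-v\ell,v)|d\ell$, then apply \eqref{A-1} and \eqref{M11} (up to combining with the trivial bound on $M_1$). Finally the third summand is controlled by \eqref{A-1} together with \eqref{tb2}.

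The velocity estimate is obtained by the now-familiar trick: set $\bar v_2=\frac{|v_1|}{|v_2|}v_2$, split into $M_2(f,g)(x,v_1)-M_2(f,g)(x,\bar v_2)$ and $M_2(f,g)(x,\bar v_2)-M_2(f,g)(x,v_2)$, and in each term separate four contributions—the Hölder difference of $M_1$ in space (shift by $(v_1-\bar v_2)s$, bounded through Lemma \ref{LM1} and the layer decomposition as in Lemma \ref{LF2}), the Hölder difference of $k$ in velocity from Lemma \ref{kH}, the difference of the exponential weights $e^{-\Phi(g)(x,v_1,s)}-e^{-\Phi(g)(x,\bar v_2,s)}$, and the mismatch of the integration intervals $[0,t_b(x,v_1)]$ vs.\ $[0,t_b(x,\bar v_2)]$. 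The exponential difference is bounded by first estimating $|\nu(v_1)-\nu(\bar v_2)|\lesssim |v_1-v_2|\langle v_1\rangle^{\gamma-1}$ and then by \eqref{M12} (respectively \eqref{M13} for the $\bar v_2$--$v_2$ pair); the interval mismatch is handled by \eqref{tb4} and \eqref{tb5}.

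The main obstacle I anticipate is the bookkeeping of the additional $\Phi(g)$-dependent factors, particularly in the velocity estimate, where one must combine the layer decomposition governed by $\min\{1,|v_1|\}$ with the mean-value bounds \eqref{M12}--\eqref{M13} so that the final constants arrange themselves into the prescribed form $m+C_{f,g}$ and the exponential $e^{-c\nu(v_{1,2})d_x/|v_{1,2}|}$ is preserved in every term. This is essentially a careful matching of factors, and once the substitution $\nu(v)s\rightsquigarrow\Phi(g)(x,v,s)$ is handled uniformly (using $\Phi(g)\geq\frac12\nu(v)s$ on the gain side and \eqref{M11}--\eqref{M13} on the difference side), the proof reduces to the one of Lemma \ref{LF2}.
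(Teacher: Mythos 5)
Your proposal is correct and follows essentially the same route as the paper, which itself only remarks that the proof combines the layer decomposition of Lemma \ref{LF2} with the treatment of the weight $e^{-\Phi(g)}$ from Lemma \ref{LN1}, using \eqref{A-1}, \eqref{M1x}, and \eqref{M1v}. The only tiny slip is a citation: for the spatial exponential-difference term your mean-value bound is exactly the quantity controlled by \eqref{N11} (not \eqref{M11}, which carries the single factor $e^{-\frac12\nu(v)t_b}$ outside the $\ell$-integral), but the needed estimate is available in the paper and the argument goes through as you describe.
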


\begin{proof}
Note that
\begin{equation*}
M_2(f, g)(x, v)=\int_0^{t_b(x, v)}e^{-\Phi(g)(x, v, s)}\int_{\mathbb{R}^3}k(v, \eta) M_1(f, g)(x-vs, \eta) d\eta ds.
\end{equation*}
By using \eqref{A-1}, \eqref{M1x}, and \eqref{M1v}, the proof is most same as that of Lemmas \ref{LF2} and \ref{LN1} and is omitted here.
We complete the proof.
\end{proof}

\begin{lemma}\label{LM3}
Assume that $g$ satisfies \eqref{gxv} and \eqref{g2}.
Then for every $x, x_1, x_2\in\Omega$ and $v, v_1, v_2\in\mathbb{R}^3\setminus\{0\}$,
\begin{align}
\ &\ \left|M_3(f, g)(x_1, v)-M_3(f, g)(x_2, v)\right| \label{M3x}\\
\lesssim &\ \widetilde C_{f, g} \left(d_{(x_1, x_2)}^{-1} |x_1-x_2|(1+|v|^{-1}e^{-\frac{\nu(v)}{|v|}d_{(x_1, x_2)}})\right)^{a};\notag\\  
\ &\ \left|M_3(f, g)(x, v_1)-M_3(f, g)(x, v_2)\right| \label{M3v}\\
\lesssim &\ \widetilde C_{f, g} \left(d_x^{-1}|v_1-v_2|(1+|v_{1,2}|^{-1}e^{-c\frac{\nu(v_{1,2})}{|v_{1,2}|}d_x})\right)^{a}, \notag 
\end{align}
where $\widetilde C_{f, g}$ is as in Theorem \ref{T2}.
\end{lemma}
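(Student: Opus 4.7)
The plan is to adapt the strategy of Lemma \ref{LF3} for $F_3(f)$, with suitable modifications to absorb the extra damping $\Phi(g)$ and the $x$-, $v$-dependence it introduces. First I would set
\begin{equation*}
\widetilde G(f, g)(x, v) := \int_{\mathbb{R}^3} k(v, \eta) \int_0^{t_b(x, \eta)} e^{-\Phi(g)(x, \eta, \tau)} Kf(x-\eta\tau, \eta)\, d\tau\, d\eta,
\end{equation*}
so that $M_3(f, g)(x,v) = \int_0^{t_b(x, v)} e^{-\Phi(g)(x,v,s)} \widetilde G(f, g)(x - vs, v)\, ds$. Using \eqref{Phi1} we get the pointwise bound $|\widetilde G(f, g)(x,v)| \lesssim \|f\|_\infty \langle v\rangle^{2\gamma - 5}$ by the same computation that led to \eqref{GB} (the extra factor $\tfrac12$ in the exponent changes only the implicit constant). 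This lets me reduce \eqref{M3x}--\eqref{M3v} to the case where the Hölder ratios are small, as was done in Lemma \ref{LF3}.

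Next I would establish the Hölder regularity of $\widetilde G$ itself, mirroring Lemmas \ref{Gx} and \ref{Gv}. For the spatial regularity I use the spherical-coordinates change of variable (as in Lemma \ref{Gx}), and then differentiate in $s$ along the segment $x(s) = x_2 + s(x_1 - x_2)$. The new contributions, compared with the proof of Lemma \ref{Gx}, are the terms
\begin{equation*}
\frac{d}{ds}e^{-\Phi(g)(x(s), \eta, \tau)} = -\Bigl[\nu(\eta)\cdot 0 + (x_1-x_2)\cdot \nabla_x \!\!\int_0^\tau \varphi(g)(x(s)-\eta\ell, \eta)\, d\ell\Bigr] e^{-\Phi(g)(\cdots)},
\end{equation*}
which I control by the spatial Hölder estimate \eqref{N-x} for $\varphi(g)$, producing the factor $C_g$ in $\widetilde C_{f,g}$. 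The decomposition into the three subdomains $D_1, D_2, D_3$ used in Lemma \ref{Gx} carries over verbatim. For the velocity regularity of $\widetilde G$, I combine Lemma \ref{kH} with the velocity regularity of $\varphi(g)$ from \eqref{N-v}; the Lipschitz part of \eqref{N-v} is what generates the $\|wg\|_\infty$ contribution in $\widetilde C_{f,g}$.

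With $\widetilde G$ under control, the outer integration in $s$ is handled exactly as in Lemma \ref{LF3}. For \eqref{M3x}, assuming without loss of generality that $t_b(x_1, v) \geq t_b(x_2, v)$, I split
\begin{align*}
\ & M_3(f,g)(x_1, v) - M_3(f,g)(x_2, v) \\
= &\int_0^{t_b(x_2, v)} e^{-\Phi(g)(x_2, v, s)} \bigl[\widetilde G(f, g)(x_1 - vs, v) - \widetilde G(f, g)(x_2 - vs, v)\bigr] ds\\
\ & + \int_0^{t_b(x_2, v)} \bigl[e^{-\Phi(g)(x_1, v, s)} - e^{-\Phi(g)(x_2, v, s)}\bigr] \widetilde G(f, g)(x_1 - vs, v)\, ds \\
\ & + \int_{t_b(x_2, v)}^{t_b(x_1, v)} e^{-\Phi(g)(x_1, v, s)} \widetilde G(f, g)(x_1 - vs, v)\, ds.
\end{align*}
The first integral uses the spatial Hölder estimate of $\widetilde G$; the second uses $|e^{-A}-e^{-B}| \leq |A-B|e^{-\min(A,B)}$ together with \eqref{M11} to bound $|\Phi(g)(x_1, v, s) - \Phi(g)(x_2, v, s)|$; the third is controlled via \eqref{tb2}. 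The analogous splitting with $\bar v_2 = (|v_1|/|v_2|)v_2$ in the velocity variable, combined with \eqref{M12}, \eqref{M13}, \eqref{tb3}, \eqref{tb4}, \eqref{tb5}, yields \eqref{M3v}.

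The main obstacle is the bookkeeping in the Hölder estimate of $\widetilde G$: one must track precisely how the differentiation of $e^{-\Phi(g)}$ under the spherical-coordinates integration interacts with the kernel singularity at $\eta = 0$, making sure that the extra factor $|x_1 - x_2|$ produced by $\nabla_x \varphi(g)$ combines with the three-subdomain decomposition to give the claimed $|x_1 - x_2|^a$ rate with constant proportional to $C_g$ (and not to $C_g$ times an extra logarithm or negative power of $|v|$). Once this is done, assembling the final estimate is routine, and the specific form of $\widetilde C_{f,g} = \|f\|_\infty(1 + C_g + \|wg\|_\infty)$ emerges naturally from the three sources: the standard $\widetilde G$-regularity contribution ($1$), the $\varphi(g)$-Hölder contribution ($C_g$), and the $\varphi(g)$-Lipschitz-in-$v$ contribution ($\|wg\|_\infty$).
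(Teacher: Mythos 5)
Your overall architecture (introduce $\widetilde G(f,g)$, prove its space and velocity H\"older regularity, then treat the outer $s$-integral by the three-term splitting together with \eqref{M11}--\eqref{M13}, \eqref{tb2}--\eqref{tb5}) coincides with the paper's, and the outer part of your plan is sound. The gap is in the spatial H\"older estimate for $\widetilde G$, which is precisely where the new difficulty of this lemma sits. You propose to differentiate $e^{-\Phi(g)(x(s),\eta,\tau)}$ along $x(s)=x_2+s(x_1-x_2)$ and to control the resulting term involving $\nabla_x\varphi(g)$ by \eqref{N-x}. But $\varphi(g)$ has no gradient under the hypotheses: $g$ is only H\"older via \eqref{gxv}, so \eqref{N-x} gives a H\"older-$a$ modulus, not a Lipschitz bound, and that modulus carries the weight $d^{-a}$ of the evaluation points, which degenerates as the inner characteristic approaches $\partial\Omega$. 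Moreover, after the change of variables the two evaluation points are $x_i-X(x_i)\ell$ with different directions $X(x_1)\neq X(x_2)$, so the difference of $\varphi(g)$ also contains a velocity increment and needs \eqref{N-v} as well; this is the content of the paper's estimate \eqref{M32}. If you instead use $|e^{-A}-e^{-B}|\le |A-B|\,e^{-\min(A,B)}$, you only obtain $|x_1-x_2|^a$ multiplied by the singular factor $d_{(x_1-X(x_1)\ell,\,x_2-X(x_2)\ell)}^{-a}$, and it is not routine that this integrates against the kernel to the clean rate $\bigl(d_{(x_1,x_2)}^{-1}|x_1-x_2|\bigr)^a$ with constant proportional to $C_g$: the paper has to rerun the Lemma \ref{LF2} machinery inside the $G$-estimate, namely the dyadic shells $\widetilde\Omega_n$ along the inner ray, the choice $2^{-N}\le\bigl(d_{(x_1,x_2)}^{-1}|x_1-x_2|(1+\rho^{-1})\bigr)^a$, the summation of $2^{(a-1)j}$, and then an integration against $k(v,\eta)$ with auxiliary exponents $\beta_1,\beta_2$ adapted to $\gamma$ and $a$. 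Your remark that the $D_1,D_2,D_3$ decomposition of Lemma \ref{Gx} ``carries over verbatim'' does not supply this: that decomposition only handles the variation of $k$, of $|x-y|^{-2}$ and of the $\nu$-exponential, not of the $\varphi(g)$-part of $\Phi(g)$, which is the genuinely new term here.

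A minor additional point: for the velocity regularity of $\widetilde G$ you do not need \eqref{N-v} at all, since $\Phi(g)(x,\eta,\tau)$ is independent of the outer velocity $v$; the only $v$-dependence is through $k(v,\eta)$, so Lemma \ref{kH} with \eqref{tb1} suffices, exactly as in Lemma \ref{Gv}.
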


\begin{proof}
Define
\begin{equation*}
G(f, g)(x, v)=\int_{\mathbb{R}^3}\int_0^{t_b(x, \eta)}k(v,\eta)e^{-\Phi(g)(x, \eta, \tau)}Kf(x-\eta\tau, \eta)d\tau d\eta.
\end{equation*}
Then
\begin{equation*}
M_3(f, g)(x, v)=\int_0^{t_b(x, v)}e^{-\Phi(x, v, s)}G(f, g)(x-vs, v) ds.
\end{equation*}
It follows from the approach of studying $G(f)$ in Section 4 that
\begin{align}\label{Gfg}
\ & G(f, g)(x, v) \\
=&\int_0^\infty\int_\Omega k\left(v, X(x)\right)e^{-\Phi (g)(x, X(x), \frac{|x-y|}{\rho})}Kf\left(y, X(x)\right)\frac{\rho}{|x-y|^2}dy d\rho, \notag
\end{align}
where $X(x):=\frac{x-y}{|x-y|}\rho$ for simplicity.
To estimate \eqref{M3x}--\eqref{M3v}, we need to complete the following estimates:
\begin{align}
|G(f, g)(x_1, v)-G(f, g)(x_2, v)| &\lesssim \widetilde C_{f, g}\left( d_{(x_1, x_2)}^{-1}|x_1-x_2| \right)^a;  \label{Gfgx}  \\
 |G(f, g)(x, v_1)-G(f, g)(x, v_2)| &\lesssim \|f\|_\infty|v_1-v_2|^{a}.  \label{Gfgv}
\end{align}

For \eqref{Gfgx}, we divide the integration domain $\Omega$ into two subdomains: $D_1=\{y\in \Omega : |y-x_1|\leq |y-x_2|\}$ and $D_2=\{y\in \Omega : |y-x_1|> |y-x_2|\}$.
We shall only do the $D_1$ case since the other is similar.
Applying \eqref{Gfg} to the difference $G(f, g)(x_1, v)-G(f, g)(x_2, v)$, we have
\begin{align*}
\ &|G(f, g)(x_1, v)-G(f, g)(x_2, v)|_{D_1}\\
\leq & \int_0^\infty\int_{D_1} \left|k\left(v, X(x_1)\right)Kf\left(y, X(x_1)\right)\right| \frac{\rho}{|x_1-y|^{2}} \\
\ &\qquad\qquad\qquad \times \left| e^{-\Phi(g)(x_1, X(x_1), \frac{|x_1-y|}{\rho})} - e^{-\Phi(g)(x_2, X(x_2), \frac{|x_2-y|}{\rho})} \right| dy d\rho  \\
\ & +\int_0^\infty\int_{D_1}\rho e^{-\Phi(g)(x_2, X(x_2), \frac{|x_2-y|}{\rho})} \left|k\left(v, X(x_1)\right)Kf\left(y, X(x_1)\right)|x_1-y|^{-2} \right. \\
\ & \quad\quad \left.  -k\left(v, X(x_2)\right)Kf\left(y, X(x_2)\right) |x_2-y|^{-2}\right| dy d\rho\\
=:&\ G_1+G_2.
\end{align*}

For $G_1$, we divide again the integration domain $D_1$ into two subdomains :
\begin{align*}
D_{11}&=\{y\in\Omega : |y-x_1|\leq |y-x_2|, |y-x_2|\geq 2|x_1-x_2|\}; \\
D_{12}&=\{y\in\Omega : |y-x_1|\leq |y-x_2|< 2|x_1-x_2|\},
\end{align*}
and name the corresponding integrals as $G_{11}$ and $G_{12}$, respectively.
For $G_{11}$, by using the mean value theorem and \eqref{A-1}, we have
\begin{align*}
G_{11}\lesssim \|f\|_\infty \int_0^\infty\int_{D_{11}} \left|k\left(v, X(x_1)\right)\right| \frac{\langle\rho\rangle^{\gamma-2}\rho}{|x_1-y|^{2}}e^{-A(x_1, x_2, \rho ,y)} \sum_{i=1}^3 H_i(x_1, x_2, \rho, y)  dy d\rho
\end{align*}
where $A(x_1, x_2, \rho, y)$ is between $\Phi(g)(x_1, X(x_1), \frac{|x_1-y|}{\rho})$ and $\Phi(g)(x_2, X(x_2), \frac{|x_2-y|}{\rho})$,
\begin{align*}
H_1(x_1, x_2, \rho, y) &=\int_0^{\frac{|x_1-y|}{\rho}} \left| \varphi(g)(x_1-X(x_1)\ell, X(x_1))-\varphi(g)(x_2-X(x_2)\ell, X(x_2))\right| d\ell; \\
H_2(x_1, x_2, \rho, y) &=\int_{\frac{|x_1-y|}{\rho}}^{\frac{|x_2-y|}{\rho}} \left| \varphi(g)(x_2-X(x_2)\ell, X(x_2))\right| d\ell; \\
H_3(x_1, x_2, \rho, y) &= \left| \nu(X(x_1))\frac{|x_1-y|}{\rho}- \nu(X(x_2))\frac{|x_2-y|}{\rho} \right|.
\end{align*}
By \eqref{g2}, we now have
\begin{align}\label{A1}
A(x_1, x_2, \rho, y)\geq \min\left\{\nu\left(X(x_i)\right) \frac{|x_i-y|}{2\rho} : i=1, 2 \right\}\geq \frac{\nu_1\langle \rho\rangle^\gamma}{2\rho}|x_1-y|.
\end{align}

For $H_1$, let $\widetilde\Omega_0=\Omega$ and
\begin{align*}
\widetilde{\Omega}_n=\{x\in \Omega : d_x<2^{-n}d_{(x_1, x_2)}\min\{1, \rho\}\}, n \in\mathbb{N},
\end{align*}
and let $N\in\mathbb{N}$ be such that
\begin{equation*}
2^{-N}\leq \left(d_{(x_1, x_2)}^{-1}|x_1-x_2|(1+\rho^{-1})\right)^a.
\end{equation*}
Because of the convexity, there is a unique intersection point $X_i$ of the ray $\overrightarrow{x_i y}$ with $\partial\Omega$. Denote $x_{in}$ be the intersection point of $\overline{x_iX_i}$ with $\partial \widetilde\Omega_n \setminus \partial\Omega$.
By Proposition \ref{P3.2}, \eqref{Gk1}, \eqref{N-x}, and \eqref{N-v}, we have
\begin{equation}
|x_{in}-X_i| \lesssim 2^{-n}\min\{1, \rho\}  \label{M31}
\end{equation}
and
\begin{align}
\ &\left| \varphi(g)(x_1-X(x_1)\ell, X(x_1))-\varphi(g)(x_2-X(x_2)\ell, X(x_2))\right| \label{M32}\\
\lesssim &\ C_g|x_1-x_2|^a \langle\rho\rangle^{\gamma}d_{(x_1-X(x_1)\ell, x_2-X(x_2)\ell)}^{-a}\left(1+\frac{\rho^a\ell^a}{|x_1-y|^a}+\frac{\rho^a}{|x_1-y|^a}\right) \notag\\
\ & +\|wg\|_\infty\langle\rho\rangle^\gamma \rho \frac{|x_1-x_2|}{|x_1-y|}. \notag
\end{align}
Let $\widetilde{A}_n=\min\left\{ \frac{|x_1-x_{1n}|}{\rho}, \frac{|x_2-x_{2n}|}{\rho}\right\}$.
By applying the idea in the proof of Lemma \ref{LF2} and using \eqref{M31} and \eqref{M32}, we can obtain that for any $y\in \widetilde\Omega_n\setminus \widetilde\Omega_{n+1}$, $0\leq n\leq N$,
\begin{align*}
\ &H_1(x_1, x_2, \rho, y) \\
= & \int_0^{\widetilde{A}_1}\cdots\cdots +\int_{\widetilde{A}_1}^{\widetilde{A}_2}\cdots\cdots+ \ldots+\int_{\widetilde{A}_n}^{\frac{|x_1-y|}{\rho}}\cdots\cdots \\
\lesssim &\ C_g\langle \rho\rangle^\gamma\left( d_{(x_1, x_2)}^{-1}|x_1-x_2|(1+\rho^{-1})  \right)^\alpha\left[ \frac{|x_1-y|}{\rho}+\frac{|x_1-y|^{1-\alpha}}{\rho^{1-\alpha}} \right.\\
\ &\quad    \left. +\left(\sum_{j=1}^n 2^{(\alpha-1)j}\right)\left(1+\frac{\rho^\alpha}{|x_1-y|^{\alpha}}\right) \right] + \| wg\|_\infty\langle\rho\rangle^\gamma|x_1-x_2|,
\end{align*}
and for any $y\in\widetilde\Omega_{N+1}$,
\begin{align*}
\ &H_1(x_1, x_2, \rho, y) \\
= & \int_0^{\widetilde{A}_1}\cdots\cdots +\int_{\widetilde{A}_1}^{\widetilde{A}_2}\cdots\cdots+ \ldots+\int_{\widetilde{A}_{N+1}}^{\frac{|x_1-y|}{\rho}}\cdots\cdots \\
\lesssim &\ C_g\langle \rho\rangle^\gamma\left( d_{(x_1, x_2)}^{-1}|x_1-x_2|(1+\rho^{-1})  \right)^a\left[ \frac{|x_1-y|}{\rho}+\frac{|x_1-y|^{1-a}}{\rho^{1-a}} \right.\\
\ &\ \left. +\left(\sum_{j=1}^{N} 2^{(a-1)j}\right)\left(1+\frac{\rho^a}{|x_1-y|^{a}}\right) \right] + \| wg\|_\infty\langle\rho\rangle^\gamma|x_1-x_2| \\
\ &\ + \| wg\|_\infty \langle\rho\rangle^\gamma 2^{-N} \min\{1, \rho\}\rho^{-1}.
\end{align*}
This together with \eqref{A1} gives
\begin{align*}
\ &\ H_1(x_1, x_2, \rho, y)e^{-A(x_1, x_2, \rho, y)}\\
\lesssim&\ (C_g+\| wg\|_\infty)\langle \rho\rangle^\gamma\left( d_{(x_1, x_2)}^{-1}|x_1-x_2|(1+\rho^{-1})  \right)^a \left[ 1+\frac{\rho^a}{|x_1-y|^a} \right] e^{-c \frac{\langle \rho\rangle^\gamma}{\rho}|x_1-y|},
\end{align*}
and thus, by \eqref{A-1} and \eqref{e2}, we have
\begin{align*}
\ &\|f\|_\infty \int_0^\infty\int_{D_{11}} \left|k\left(v, X(x_1)\right)\right| \frac{\langle\rho\rangle^{\gamma-2}\rho}{|x_1-y|^{2}}e^{-A(x_1, x_2, \rho ,y)} H_1(x_1, x_2, \rho, y)  dy d\rho \\
\lesssim &\  \|f\|_\infty(C_g+\| wg\|_\infty)\left( d_{(x_1, x_2)}^{-1}|x_1-x_2|\right)^a  \\
\ &\times \int_0^\infty\int_{D_{11}} \left|k\left(v, X(x_1)\right)\right| \frac{\langle\rho\rangle^{2\gamma-2}\rho}{|x_1-y|^{2}}(1+\rho^{-1})^a \left[ 1+\frac{\rho^a}{|x_1-y|^a} \right] e^{-c \frac{\langle \rho\rangle^\gamma}{\rho}|x_1-y|} dy d\rho \\
\lesssim &\ \|f\|_\infty(C_g+\| wg\|_\infty) \left( d_{(x_1, x_2)}^{-1}|x_1-x_2|\right)^a \int_{\mathbb{R}^3}|k(v, \eta)| \frac{\langle \eta\rangle^{2\gamma-2}}{|\eta|}(1+|\eta|^{-1})^a \\
\ &\times \int_{|x_1-x_2|}^R \left[\left(\frac{|\eta|}{r}\right)^{1-\beta_1}\left(\frac{r}{|\eta|}\right)^{1-\beta_1}+\left(\frac{|\eta|}{r}\right)^{a}\left(\frac{|\eta|}{r}\right)^{\beta_2}\left(\frac{r}{|\eta|}\right)^{\beta_2} \right] e^{-c \frac{\langle \eta\rangle^\gamma}{|\eta|}r} dr d\eta \\
\lesssim &\ \|f\|_\infty(C_g+\| wg\|_\infty)\left( d_{(x_1, x_2)}^{-1}|x_1-x_2|\right)^a \\
\ &\qquad\qquad \times \int_{\mathbb{R}^3}|k(v, \eta)| \langle \eta\rangle^{2\gamma-2}(1+|\eta|^{-1})^a\left( |\eta|^{-\beta_1}+|\eta|^{a+\beta_2-1}\right) d\eta \\
\lesssim &\ \|f\|_\infty(C_g+\| wg\|_\infty)\left( d_{(x_1, x_2)}^{-1}|x_1-x_2|\right)^a, 
\end{align*}
where we have chosen $0<\beta_1<\min\{1, 3+\gamma-a\}$ and $\max\{0, |\gamma|-2\}<\beta_2<1-a$.

For $H_2$ and $H_3$, by the mean value theorem, we have
\begin{align*}
H_2(x_1, x_2, \rho, y) & \lesssim \|wg\|_\infty |x_1-x_2| \langle \rho\rangle^\gamma \rho^{-1}; \\
H_3(x_1, x_2, \rho, y) &=\left| \int_0^1\frac{d}{ds}\left[ \nu\left(X(x(s))\right)\frac{|x_1-y|}{\rho} \right] ds \right| \\
\ &\lesssim |x_1-x_2| \left(\langle\rho\rangle^\gamma \rho^{-1}+ \langle \rho\rangle^{\gamma-1}\right).
\end{align*}
By repeating the estimate for $G_{k13}$ in the proof of Lemma \ref{Gx}, we have
\begin{align*}
\ & \|f\|_\infty \int_0^\infty\int_{D_{11}} \left|k\left(v, X(x_1)\right)\right| \frac{\langle\rho\rangle^{\gamma-2}\rho}{|x_1-y|^{2}}e^{-A(x_1, x_2, \rho ,y)} \sum_{i=2}^3 H_i(x_1, x_2, \rho, y)  dy d\rho \\
\lesssim &\ (\| wg\|_\infty+1)\|f\|_\infty |x_1-x_2|^a.
\end{align*}

For $G_{12}$, by \eqref{A-1} and \eqref{A1} and by repeating the estimate for $G_{k3}$ in the proof of Lemma \ref{Gx},  we have
\begin{align*}
\ &\ G_{12}\\
\lesssim &\ \|f\|_\infty \int_0^\infty\int_{D_{12}} \left|k\left(v, X(x_1)\right)\right| \frac{\langle \rho\rangle^{\gamma-2}\rho}{|x_1-y|^{2}}\left(e^{-\nu\left(X(x_1)\right) \frac{|x_1-y|}{2\rho}}+e^{-\nu\left(X(x_2)\right) \frac{|x_2-y|}{2\rho}} \right) dy d\rho \\
\lesssim &\ \|f\|_\infty \int_0^\infty\int_{D_{12}} \left|k\left(v, X(x_1)\right)\right| \frac{\langle \rho\rangle^{\gamma-2}\rho}{|x_1-y|^{2}} e^{-c\frac{\langle\rho \rangle^\gamma}{\rho} |x_1-y|} dy d\rho \\
\lesssim &\ \|f\|_\infty|x_1-x_2|^a.
\end{align*}

For $G_2$, by \eqref{A1}, we have
\begin{align*}
\ & G_2 \\
\leq & \int_0^\infty\int_{D_1} \left| k\left(v, X(x_1)\right) \right| \frac{\rho}{|x_1-y|^2} e^{-c\frac{\langle\rho\rangle^\gamma}{\rho}|x_1-y|} \left| Kf\left(y, X(x_1)\right)-Kf\left(y, X(x_2)\right) \right| dy d\rho\\
\ &+ \int_0^\infty\int_{D_1} \rho \left| Kf\left(y, X(x_2)\right) \right|  \left| k\left(v, X(x_1)\right) e^{-\nu\left(X(x_1)\right) \frac{|x_1-y|}{2\rho}}|x_1-y|^{-2} \right. \\
\ &\qquad\qquad\qquad\qquad \left. -k\left(v,X(x_2)\right)e^{-\nu\left(X(x_2)\right) \frac{|x_2-y|}{2\rho}}|x_2-y|^{-2}\right| dy d\rho \\
\ &+\int_0^\infty\int_{D_1} \left| Kf\left(y, X(x_2)\right)k\left(v, X(x_1)\right)\right| \frac{\rho}{|x_1-y|^2}\\
\ &\qquad\qquad\qquad\qquad \times  \left| e^{-\nu\left(X(x_1)\right)\frac{|x_1-y|}{2\rho}} -e^{-\nu\left(X(x_2)\right)\frac{|x_2-y|}{2\rho}} \right| dy d\rho
\end{align*}
Then, by repeating the estimates of $G_k$ and $G_K$ in Lemma \ref{Gx} and the estimate of $G_{13}$ above, we can get
\begin{align*}
G_2 \lesssim \|f\|_\infty |x_1-x_2|^a. 
\end{align*}
Summing up, we get
\begin{align*}
\ &\left| G(f, g)(x_1, v)- G(f, g)(x_2, v) \right|  \\
\lesssim&\ \|f\|_\infty(C_g+\| wg\|_\infty)\left( d_{(x_1, x_2)}^{-1}|x_1-x_2|\right)^a +\|f\|_\infty|x_1-x_2|^a, \notag
\end{align*}
which implies \eqref{Gfgx}.

For \eqref{Gfgv}, since $G(f, g)$ is bounded, it is obvious for $|v_1-v_2|\geq 1$. Therefore, we consider the case for $|v_1-v_2|<1$. By Lemma \ref{kH} and \eqref{tb1}, we have
\begin{align*}
\ & \left| G(f, g)(x, v_1)-G(f, g)(x, v_2)\right| \\
\lesssim &\ \|f\|_\infty \int_{\mathbb{R}^3}|k(v_1, \eta)-k(v_2, \eta)|\langle \eta\rangle^{\gamma-3} d\eta \\
\lesssim &\ \|f\|_\infty |v_1-v_2|^{\widetilde\alpha_\gamma}\max\{\langle v_1\rangle^{2\gamma-4}, \langle v_2\rangle^{2\gamma-4} \} \\
\lesssim &\ \|f\|_\infty |v_1-v_2|^{a}. 
\end{align*}

Finally, by using \eqref{Gfgx}--\eqref{Gfgv} and repeating the proof of Lemma \ref{LN1}, we can get \eqref{M3x}--\eqref{M3v}.
This completes the proof.
\end{proof}

From the results of Lemmas \ref{LN1}-\ref{LM3}, we complete the proof of Theorem \ref{T2}.


\section{Proof of Theorem \ref{MT}}\label{Pf-Main}

{\bf Proof of Theorem \ref{MT}.} Let us consider the following iterative scheme
\begin{equation}\label{IS}
\begin{cases}
v\cdot\nabla_x f^{j+1}+Lf^{j+1}+\varphi(f^{j}) f^{j+1}=N_+(f^{j}, f^{j}), & \text{ in } \Omega\times\mathbb{R}^3,\\
f^{j+1}=r, & \text{ on } \Gamma_-,
\end{cases}
\end{equation}
for $j=0, 1, 2, \ldots$ and $f^0\equiv 0$.

For $j=0$, by Proposition \ref{LEU}, there is a unique solution $f^1$ to \eqref{IS} satisfying
\begin{equation}\label{f1b}
\| w f^1\|_\infty+| w f^1|_\infty\leq C_1| wr|_{\infty, -}\leq C_1\delta_0 \leq \min\{\delta_1, (2\widehat C)^{-1}\},
\end{equation}
where $\delta_1$ is as in Proposition \ref{LEU}, $\widehat C$ is as in \eqref{g2}, and $\delta_0>0$ is small enough such that $C_1\delta_0\leq \min\{\delta_1, (2\widehat C)^{-1}\}$.
Furthermore, if $\Omega$ is strictly convex and $r$ satisfies \eqref{rb}, it follows from Theorem \ref{T1} and \eqref{f1b} that
\begin{align}\label{f1h}
\ &\ |f^1(x_1, v_1)- f^1(x_2, v_2)|\\
\leq&\ 2C_2 m \left( d_{(x_1, x_2)}^{-1}(|x_1-x_2|+|v_1-v_2|)(1+|v_{1,2}|^{-1}e^{-c\frac{\nu(v_{1,2})}{|v_{1,2}|}d_{(x_1, x_2)}})\right)^{\min\{\alpha, \beta\}},\notag
\end{align}
where we choose $|wr|_{\infty, -}\leq \delta_0\leq m$.

For $j=1$, by Proposition \ref{LEU} with \eqref{f1b}, there is a unique solution $f^2$ to \eqref{IS} satisfying
\begin{align*}
\| w f^{2}\|_\infty+| w f^{2}|_{\infty, -} & \leq C_1 \left(| wr|_{\infty, -}+\|\nu^{-1} w N_+(f^{1}, f^{1})\|_\infty\right).
\end{align*}
It follows from Lemma \ref{Ne1} and \eqref{f1b} that
\begin{align}
\ &\| w f^{2}\|_\infty+| w f^{2}|_{\infty, -}  \label{f2b}\\
\leq&\ \widetilde C_1\left( | wr|_{\infty, -}+\| wf^{1}\|_\infty^2 \right) \leq \widetilde C_1|wr|_{\infty, -}\left(1+\widetilde C_1^{-1} C_1^2\delta_0\right)\notag\\
\leq&\ 2\widetilde C_1| wr|_{\infty, -}\leq 2\widetilde C_1 \delta_0 \leq \min\{\delta_1, (2\widehat C)^{-1}\}, \notag
\end{align}
where $\delta_0>0$ is small enough such that $\widetilde C_1^{-1} C_1^2\delta_0\leq 1$ and $2\widetilde C_1\delta_0\leq \min\{\delta_1, (2\widehat C)^{-1}\}$.
Furthermore, if $\Omega$ is strictly convex and $r$ satisfies \eqref{rb}, then $f^1$ satisfies \eqref{gxv} with $C_{f^1}=2C_2m$  by \eqref{f1h} and satisfies \eqref{g2} by \eqref{f1b}. It follows from Theorem \ref{T2}, \eqref{f1b}, and \eqref{f2b} that
\begin{align}
\ &\ \ |f^2(x_1, v_1)-f^2(x_2, v_2)|  \label{f2h} \\
\leq &\ \widetilde C_2\left(m+C_{f^2, f^1}+\widetilde C_{f^2, f^1}+\widetilde C_{f^1} \right) \notag \\
\ & \times \left( d_{(x_1, x_2)}^{-1}(|x_1-x_2|+|v_1-v_2|)(1+|v_{1,2}|^{-1}e^{-c\frac{\nu(v_{1,2})}{|v_{1,2}|}d_{(x_1, x_2)}})\right)^{\min\{\alpha, \beta\}} \notag \\
\leq &\ 2\widetilde C_2m \left(d_{(x_1, x_2)}^{-1}(|x_1-x_2|+|v_1-v_2|)(1+|v_{1,2}|^{-1}e^{-c\frac{\nu(v_{1,2})}{|v_{1,2}|}d_{(x_1, x_2)}})\right)^{\min\{\alpha, \beta\}}, \notag
\end{align}
where $\delta_0>0$ is small enough such that
\begin{align*}
\ & C_{f^2, f^1}+\widetilde C_{f^2, f^1}+\widetilde C_{f^1} \\
\leq&\ \delta_0\left[ (1+2\widetilde C_1)(1+2C_2m+C_1\delta_0) +2C_1C_2m+2C_1^2C_2m\delta_0+C_1^2\delta_0+C_1^3\delta_0 \right] \\
\leq &\ m.
\end{align*}

For $j=2$, by Proposition \ref{LEU} with \eqref{f2b}, there is a unique solution $f^3$ to \eqref{IS} satisfying
\begin{align}
\ &\| w f^{3}\|_\infty+| w f^{3}|_{\infty, -} \label{f3b}\\
\leq&\ C_1 \left(| wr|_{\infty, -}+\|\nu^{-1} w N_+(f^{2}, f^{2})\|_\infty\right) \leq \widetilde C_1\left( | wr|_{\infty, -}+\| wf^{2}\|_\infty^2 \right)\notag\\
\leq&\ \widetilde C_1| wr|_{\infty, -}\left(1+4\widetilde C_1\delta_0\right) \leq 2\widetilde C_1| wr|_{\infty, -}\leq 2\widetilde C_1 \delta_0 \leq \min\{ \delta_1, (2\widehat C)^{-1}\}, \notag
\end{align}
where $\delta_0>0$ is small enough such that
\begin{align}\label{D1}
4\widetilde C_1\delta_0\leq 1 \text{ and } 2\widetilde C_1 \delta_0 \leq \min\{ \delta_1, (2\widehat C)^{-1}\}.
\end{align}
Furthermore, if $\Omega$ is strictly convex and $r$ satisfies \eqref{rb}, then $f^2$ satisfies \eqref{gxv} with $C_{f^2}=2\widetilde C_2m$  by \eqref{f2h} and satisfies \eqref{g2} by \eqref{f2b}. It follows from Theorem \ref{T2}, \eqref{f2b}, and \eqref{f3b} that
\begin{align*}
\ & |f^3(x_1, v_1)-f^3(x_2, v_2)|\\
\leq &\ \widetilde C_2\left(m+C_{f^3, f^2}+\widetilde C_{f^3, f^2}+\widetilde C_{f^2} \right) \\
\ & \times \left( d_{(x_1, x_2)}^{-1}(|x_1-x_2|+|v_1-v_2|)(1+|v_{1,2}|^{-1}e^{-c\frac{\nu(v_{1,2})}{|v_{1,2}|}d_{(x_1, x_2)}})\right)^{\min\{\alpha, \beta\}} \\
\leq &\ 2\widetilde C_2m \left(d_{(x_1, x_2)}^{-1}(|x_1-x_2|+|v_1-v_2|)(1+|v_{1,2}|^{-1}e^{-c\frac{\nu(v_{1,2})}{|v_{1,2}|}d_{(x_1, x_2)}})\right)^{\min\{\alpha, \beta\}},
\end{align*}
where $\delta_0>0$ is small enough such that
\begin{align}
\ & C_{f^3, f^2}+\widetilde C_{f^3, f^2}+\widetilde C_{f^2} \label{D2}\\
\leq&\ \delta_0\left[ (1+2\widetilde C_1)(1+2\widetilde C_2m+2\widetilde C_1\delta_0) +4\widetilde C_1\widetilde C_2m+8\widetilde C_1^2\widetilde C_2m\delta_0+4\widetilde C_1^2\delta_0+8\widetilde C_1^3\delta_0 \right] \notag\\
\leq &\ m. \notag
\end{align}

By induction, we shall show that there exists a unique solution $f^{j+1}$ to \eqref{IS} for $j=2, 3, \ldots$ and $f^{j+1}$ satisfies
\begin{align}
\|  wf^{j+1}\|_\infty+| wf^{j+1}|_\infty & \leq 2\widetilde C_1 | wr|_{\infty, -}\leq 2\widetilde C_1\delta_0 \leq \min\{\delta_1, (2\widehat C)^{-1}\}. \label{i1}
\end{align}
Furthermore, if $\Omega$ is strictly convex and $r$ satisfies \eqref{rb}, then
\begin{align}\label{i2}
\ &\ |f^{j+1}(x_1, v_1)- f^{j+1}(x_2, v_2)|\\
\leq&\ 2\widetilde C_2m\left( d_{(x_1, x_2)}^{-1}(|x_1-x_2|+|v_1-v_2|)(1+|v_{1,2}|^{-1}e^{-c\frac{\nu(v_{1,2})}{|v_{1,2}|}d_{(x_1, x_2)}})\right)^{\min\{\alpha, \beta\}}.\notag
\end{align}
We have proved $j=2$.
Now we assume that \eqref{i1}--\eqref{i2} hold for $j=2, 3, \ldots, n-1$. For $j=n$, from Proposition \ref{LEU} with \eqref{i1}, there is a unique solution $f^{n+1}$ to \eqref{IS} satisfying
\begin{align}
\ &\|\widetilde w f^{n+1}\|_\infty+|\widetilde w f^{n+1}|_{\infty, -} \label{fjb}\\
\leq&\ C_1 \left(| wr|_{\infty, -}+\|\nu^{-1} w N_+(f^{n}, f^{n})\|_\infty\right) \leq \widetilde C_1\left( | wr|_{\infty, -}+\| wf^{n}\|_\infty^2 \right)\notag\\
\leq&\ \widetilde C_1| wr|_{\infty, -}\left(1+4\widetilde C_1\delta_0\right) \leq 2\widetilde C_1| wr|_{\infty, -}\leq 2\widetilde C_1 \delta_0 \leq \min\{ \delta_1, (2\widehat C)^{-1}\}, \notag
\end{align}
where $\delta_0>0$ satisfies \eqref{D1}.
Furthermore, if $\Omega$ is strictly convex, then we have that $f^n$ satisfies \eqref{gxv} with $C_{f^n}=2\widetilde C_2m$ by \eqref{i2} and satisfies \eqref{g2} by \eqref{i1}. It follows from Theorem \ref{T2}, \eqref{i1}, and \eqref{fjb} that
\begin{align*}
\ & |f^{n+1}(x_1, v_1)-f^{n+1}(x_2, v_2)|\\
\leq &\ \widetilde C_2\left(m+C_{f^{n+1}, f^n}+\widetilde C_{f^{n+1}, f^n}+\widetilde C_{f^n} \right) \\
\ & \times \left(d_{(x_1, x_2)}^{-1}(|x_1-x_2|+|v_1-v_2|)(1+|v_{1,2}|^{-1}e^{-c\frac{\nu(v_{1,2})}{|v_{1,2}|}d_{(x_1, x_2)}})\right)^{\min\{\alpha, \beta\}} \\
\leq &\ 2\widetilde C_2m \left( d_{(x_1, x_2)}^{-1}(|x_1-x_2|+|v_1-v_2|)(1+|v_{1,2}|^{-1}e^{-c\frac{\nu(v_{1,2})}{|v_{1,2}|}d_{(x_1, x_2)}})\right)^{\min\{\alpha, \beta\}},
\end{align*}
where $\delta_0>0$ satisfies \eqref{D2}.
Hence we now have \eqref{i1}--\eqref{i2} by induction.

It is easy to see that the difference $\bar f^j=f^{j+1}-f^j$ satisfies the following problem
\begin{equation}\label{IS-}
\begin{cases}
v\cdot\nabla_x \bar f^j+L\bar f^j +\varphi(f^{j})\bar f^j=-\varphi(\bar f^{j-1})f^j+N_+(\bar f^{j-1}, f^{j})+N_+(f^{j-1}, \bar f^{j-1}), \\
\bar f^j |_{\Gamma_-}=0.
\end{cases}
\end{equation}
Applying Proposition \ref{LEU} to \eqref{IS-} and using \eqref{i1}, we obtain
\begin{align}
\ & \| w \bar f^j\|_\infty+| w \bar f^j|_\infty \label{i-}\\
\leq &\ C_3\left( \|\nu^{-1} w\varphi(\bar f^{j-1})f^j \|_\infty +\|\nu^{-1} wN_+(\bar f^{j-1}, f^{j})\|_\infty+\|\nu^{-1} w N_+(f^{j-1}, \bar f^{j-1})\|_\infty  \right)\notag \\
\leq &\ \widetilde C_3\left( \| wf^j\|_\infty \| w\bar f^{j-1}\|_\infty + \| wf^{j-1}\|_\infty \| w\bar f^{j-1}\|_\infty \right) \notag\\
\leq &\ 2\widetilde C_1\widetilde C_3\delta_0 \| w\bar f^{j-1}\|_\infty \notag \\
\leq &\ \frac12 \| w\bar f^{j-1}\|_\infty, \notag
\end{align}
where $\delta_0>0$ is small enough such that $2\widetilde C_1\widetilde C_3\delta_0\leq 1/2$.
This implies that $\{f^j\}$ is a Cauchy sequence in $L^\infty$. Therefore, we can obtain the solution by taking the limit $f_*=\lim_{j \to \infty}f^j$.
The uniqueness is also obtained by using \eqref{i-}.

The proof of the non-negativity of the solution is most same as the argument in \cite[Lemma 3.1 and Theorem 3.1]{WZW} and is omitted here for simplicity of presentation.

If $\Omega$ is strictly convex, the continuity of $f_*$ is obtained by the $L^\infty$-convergence.
Moreover, if $r$ satisfies \eqref{rb}, by using \eqref{e2}, \eqref{i2}, and the $L^\infty$-convergence, we get
\begin{align*}
\left| f_*(x_1, v_1)-f_*(x_2, v_2)\right| \leq \overline C_2 m \left( d_{(x_1, x_2)}^{-2}(|x_1-x_2|+|v_1-v_2|)\right)^{\min\{\alpha, \beta\}}
\end{align*}
for $x_1, x_2\in\Omega$ and $v_1, v_2\in\mathbb{R}^3\setminus\{0\}$. Finally, by continuity of $f_*$, we can get
\begin{align*}
\left| f_*(x, v)-f_*(x, 0)\right| \leq \overline C_2 m \left(d_x^{-2}|v|\right)^{\min\{\alpha, \beta\}}.
\end{align*}
We complete this proof of the theorem.

\section*{Acknowledgments}
K.-C. Wu is supported by the National
Science and Technology Council under the grant NSTC 112-2636-M-006-001, 112-2628-M-006-006-MY4 and National Center
for Theoretical Sciences. K.-H. Wang is supported by the National Science and
Technology Council under the grant NSTC 113-2115-M-017-001.

\end{document}